\numberwithin{equation}{section}
\newcommand{\bbC}{\mathbb{C}}
\newcommand{\bbK}{\mathbb{K}}
\newcommand{\bbN}{\mathbb{N}}
\newcommand{\bbP}{\mathbb{P}}
\newcommand{\bbR}{\mathbb{R}}
\newcommand{\cA}{\mathcal{A}}
\newcommand{\cB}{\mathcal{B}}
\newcommand{\cC}{\mathcal{C}}
\newcommand{\cD}{\mathcal{D}}
\newcommand{\cE}{\mathcal{E}}
\newcommand{\cF}{\mathcal{F}}
\newcommand{\cH}{\mathcal{H}}
\newcommand{\cK}{\mathcal{K}}
\newcommand{\cL}{\mathcal{L}}
\newcommand{\cM}{\mathcal{M}}
\newcommand{\cP}{\mathcal{P}}
\newcommand{\cS}{\mathcal{S}}
\newcommand{\cU}{\mathcal{U}}
\newcommand{\cV}{\mathcal{V}}
\newcommand{\cW}{\mathcal{W}}
\newcommand{\cX}{\mathcal{X}}
\newcommand{\cZ}{\mathcal{Z}}
\newcommand{\proper}{\mathsf}
\newcommand{\pE}{\proper{E}}
\newcommand{\pN}{\proper{N}}
\newcommand{\scrD}{\mathscr{D}}
\newcommand{\frakE}{\mathfrak{E}} 
\newcommand{\frakN}{\mathfrak{N}}
\newcommand{\norm}[2]{     \| #1       \|_{ #2 }}
\newcommand{\seminorm}[2]{  | #1        |_{ #2 }}
\newcommand{\normiii}[2]{\vert\kern-0.25ex\vert\kern-0.25ex\vert #1 \vert\kern-0.25ex \vert\kern-0.25ex\vert_{ #2 }}
\newcommand{\Normiii}[2]{\left\vert\kern-0.25ex\left\vert\kern-0.25ex\left\vert #1 \right\vert\kern-0.25ex\right\vert\kern-0.25ex\right\vert_{ #2 }}
\newcommand{\scalar}[2]{     ( #1       )_{ #2 }}
\newcommand{\duality}[2]{     \langle #1       \rangle_{ #2 }}
\newcommand{\clos}[1]{\overline{#1}}
\newcommand{\dual}[1]{{#1}^{*}}
\newcommand{\mult}[1]{M_{#1}}
\newcommand{\normal}{\pN}
\newcommand{\white}{\cW}
\newcommand{\id}{\operatorname{Id}}
\newcommand{\rd}{\mathrm{d}}
\newcommand{\from}{\colon}
\newcommand{\mv}[1]{{\boldsymbol{\mathrm{#1}}}}
\newcommand{\trsp}{\ensuremath{\top}}
\newcommand{\s}{s}
\newcommand{\eps}{\varepsilon}
\newcommand{\GP}{Z}
\newcommand{\gp}{z}
\newcommand{\CC}{\widetilde{\cC}}
\newcommand{\ac}{\boldsymbol{a}} 
\newcommand{\act}{\widetilde{\ac}}
\newcommand{\At}{\widetilde{A}}
\newcommand{\cAt}{\widetilde{\cA}} 
\newcommand{\Lt}{\widetilde{L}}
\newcommand{\mut}{\widetilde{\mu}}
\newcommand{\mt}{\widetilde{m}}
\newcommand{\betat}{\widetilde{\beta}}
\newcommand{\gammat}{\widetilde{\gamma}}
\newcommand{\kappat}{\widetilde{\kappa}}
\newcommand{\lambdat}{\widetilde{\lambda}}
\newcommand{\sigmat}{\widetilde{\sigma}}
\newcommand{\taut}{\widetilde{\tau}}
\newcommand{\onormal}{\mathbf{n}} 
\newcommand{\erf}{\operatorname{erf}}
\newcommand{\obsmat}{\Phi}
\newcommand{\bobsmat}{\mv{\obsmat}}
\newcommand{\Hdot}[1]{\dot{H}^{#1}_L}
\newcommand{\hdot}[2]{\dot{H}^{#1}_{#2}}
\theoremstyle{plain} 
\newtheorem{lemma}{Lemma}[section] 
\newtheorem{proposition}[lemma]{Proposition}
\newtheorem{theorem}[lemma]{Theorem}
\newtheorem{corollary}[lemma]{Corollary} 
\theoremstyle{remark}
\newtheorem{remark}[lemma]{Remark} 
\newtheorem{assumption}[lemma]{Assumption} 
\newtheorem{definition}[lemma]{Definition}
\begin{document}

\begin{frontmatter}
	\title{Equivalence of measures and 
	asymptotically  optimal 
	linear prediction 
	for Gaussian 
	random fields  
	with fractional-order covariance operators}

\runtitle{Gaussian fields with fractional covariance operators}

\begin{aug}
	\author[A]{\fnms{David} \snm{Bolin}\ead[label=e1]{david.bolin@kaust.edu.sa}}
\and
\author[B]{\fnms{Kristin} \snm{Kirchner}\ead[label=e2,mark]{k.kirchner@tudelft.nl}}

\address[A]{CEMSE Division, King Abdullah University of Science and Technology, Thuwal, Saudi Arabia \printead{e1}}
\address[B]{Delft Institute of Applied Mathematics, Delft University of Technology, Delft, The Netherlands \printead{e2}}

\end{aug}

	\begin{abstract}
	 \, We consider Gaussian measures   
	$\mu, \widetilde{\mu}$ 
	on a separable Hilbert space, 
	with fractional-order covariance operators 
	$A^{-2\beta}$ resp.\   
	$\widetilde{A}^{-2\widetilde{\beta}}$\!,  
	and derive necessary and sufficient conditions 
	on $A, \widetilde{A}$ and $\beta, \widetilde{\beta} > 0$ 
	for I.~equivalence of the measures 
	$\mu$ and~$\widetilde{\mu}$, 
	and  
	II.~uniform asymptotic optimality of linear predictions 
	for $\mu$ based on the misspecified 
	measure $\widetilde{\mu}$. 
	These results  
	hold, e.g., for Gaussian processes 
	on compact metric spaces. 
	As an important special case, we consider the class 
	of generalized Whittle--Mat\'ern Gaussian random fields, 
	where $A$ and $\widetilde{A}$ are elliptic 
	second-order differential operators, 
	formulated on a bounded Euclidean 
	domain $\mathcal{D}\subset\mathbb{R}^d$ 
	and augmented with homogeneous 
	Dirichlet boundary conditions. 
	Our outcomes explain 
	why the predictive performances   
	of stationary and non-stationary models 
	in spatial statistics often are 
	comparable, 
	and 
	provide a crucial first step in deriving consistency results 
	for parameter estimation of generalized Whittle--Mat\'ern fields. 
\end{abstract}

\begin{keyword} 
	\kwd{Gaussian measures} 
	\kwd{kriging} 
	\kwd{elliptic differential operators} 
	\kwd{Whittle--Mat\'ern fields.}
\end{keyword}


\end{frontmatter}


\section{Introduction and preliminaries}\label{section:intro}

\subsection{Introduction}\label{subsec:intro}

Equivalence and orthogonality of Gaussian measures 
are essential concepts for investigating asymptotic properties 
of Gaussian random fields and stochastic processes. 
For example, they play a crucial role 
when proving 
consistency 
of maximum likelihood estimators 
for covariance parameters 
under infill asymptotics 
\citep{Anderes2010, Zhang2004}, 
or 
asymptotic optimality of 
linear predictions for random fields 
based on misspecified covariance models \citep{stein99}. 
However, for the latter equivalence 
of the Gaussian measures is not a necessary assumption. 
This is an immediate consequence of the 
\emph{necessary and sufficient} conditions 
for uniformly asymptotically optimal linear prediction,  
derived recently in \citep{bk-kriging} 
for Gaussian random fields on compact metric spaces. 
Both the necessary and sufficient 
conditions 
for equivalence of Gaussian measures as given by the 
Feldman--H\'ajek theorem \citep[][Theorem~2.25]{daPrato2014} and those 
for uniformly asymptotically optimal linear prediction 
\citep[][Assumption~3.3]{bk-kriging} 
are formulated in terms of the covariance operators 
rather than the covariance functions. 
Therefore, they may be difficult to verify. 
To the best\linebreak 
of our knowledge 
there are not even 
\emph{sufficient} conditions 
for asymptotic optimality available, 
which are easily verifiable,  
except for a few special cases such as 
stationary random fields on~$\bbR^d$\!, 
see e.g.~\citep{stein93,stein97}. 

In the present work we remedy this lack. 
By means of complex interpolation theory 
combined with operator theory for fractional powers 
of closed operators, 
we are able to characterize  
the necessary and sufficient conditions 
mentioned above 
for a wide class of stationary 
and non-stationary 
Gaussian processes. Specifically, 
in the first part we consider Gaussian measures 
on a generic separable Hilbert space 
with fractional-order covariance 
operators 
and translate the conditions of 
the Feldman--H\'ajek theorem 
(see Theorem~\ref{thm:feldman-hajek} 
in Appendix~\ref{appendix:feldman-hajek}) 
and of   
\citep[][Assumption~3.3]{bk-kriging} 
on the covariance operators $A^{-2\beta}$\!, $\At^{-2\betat}$   
to conditions on the non-fractional base operators $A, \At$. 

Our results  
are applicable to Gaussian random fields  
on compact metric spaces.  
As an important special case, we detail our outcomes for  
the class of generalized Whittle--Mat\'ern 
Gaussian random fields \citep{BK2020rational,cox2020,lindgren11}.
The Mat\'ern covariance family 
\cite{matern60} is highly popular 
in spatial statistics and machine learning,  
see e.g.~\citep{Guttorp2006,Rasmussen2006}. 
Given the parameters 
$\nu,\sigma^2\!,\kappa > 0$, 
which determine the smoothness, 
the variance, and the 
practical correlation range, respectively, 
the corresponding Mat\'ern covariance function 
is  
\begin{equation}\label{eq:matern_cov}
	\varrho(\s, \s') 
	= 
	\varrho_0\bigl( \norm{ \s - \s' }{\bbR^d} \bigr) , 
	\quad 
	\s, \s' \in \bbR^d, 
	\qquad 
	\text{where} 
	\qquad  
	\varrho_0(h)
	:= 
	\tfrac{\sigma^2}{2^{\nu-1}\Gamma(\nu)} \, (\kappa h)^{\nu} K_{\nu}(\kappa h). 
\end{equation}
Whittle \cite{whittle63} showed that the stationary solution 
$\GP\from\bbR^d\times\Omega\to\bbR$ to the 
stochastic partial differential equation (SPDE for short) 
\begin{equation}\label{eq:statmodel}
	\bigl( - \Delta + \kappa^2 \bigr)^{\beta} \GP = \white 
	\quad 
	\text{in} \;\; \bbR^d,
\end{equation} 
has covariance 
\eqref{eq:matern_cov} with range parameter $\kappa$, 
smoothness $\nu = 2\beta - \nicefrac{d}{2}$, and variance 
\begin{equation}\label{eq:spde_var}
	\sigma^2 
	= 
	(4\pi)^{-\nicefrac{d}{2}} \kappa^{-2\nu} 
	\bigl( \Gamma(\nu) / \Gamma( \nu + \nicefrac{d}{2} ) \bigr).  
\end{equation}
In the SPDE \eqref{eq:statmodel}  
$\Delta$ is the Laplacian 
(see Appendix~\ref{app:subsubsec:Ck}),   
$\beta > \nicefrac{d}{4}$, 
and $\white$ is 
Gaussian white noise. 

Motivated by this SPDE representation, 
Lindgren, Rue and Lindstr\"om \cite{lindgren11} 
suggested extensions 
of the Mat\'ern model to non-stationary models 
and to more general spatial domains. 
This has initiated an active research area, 
where spatial models based on SPDEs 
are proposed  
and investigated, see e.g.\ 
\citep{bakka2019,bolin11,fuglstad2015non-stationary,Hildeman2020}. 
Most of the extensions that have been considered  
are special cases of 
\emph{generalized Whittle--Mat\'ern} 
Gaussian random fields, 
which are defined through fractional-order SPDEs 
of the form 
\begin{equation}\label{eq:whittle-matern}
	\bigl( - \nabla \cdot(\ac \nabla) + \kappa^2 \bigr)^\beta \GP = \white 
	\quad 
	\text{in}\;\;\cD,
\end{equation}
where $\cD\subset\bbR^d$ is a bounded Euclidean 
domain with boundary $\partial\cD$,
$\kappa\from\cD\to\bbR$ is a bounded real-valued function,  
$\ac\from\cD\to\bbR^{d\times d}$ is a (sufficiently nice) positive matrix-valued function, 
and $\beta \in (\nicefrac{d}{4},\infty)$. 
The fractional power $L^\beta$ of the differential operator 
$L=- \nabla \cdot(\ac \nabla) + \kappa^2$ 
is understood in the spectral sense, where 
first $L$ is augmented with appropriate boundary conditions on $\partial\cD$
(usually, homogeneous Dirichlet or Neumann 
conditions, 
see Appendix~\ref{app:subsec:pdes}). 
For this class of models, 
$\kappa$ determines the \emph{local} correlation ranges, 
whereas $\ac$ describes 
\emph{local anisotropies}, 
see e.g.\ \citep{fuglstad2015non-stationary}. 
Whenever $\kappa$ is constant and $\ac$ is the identity matrix, 
the model \eqref{eq:whittle-matern} reduces to the 
\emph{classical} Whittle--Mat\'ern model \eqref{eq:statmodel} 
on~$\cD\subset\bbR^d$\!. 

Some properties of  
generalized Whittle--Mat\'ern fields 
have already been discussed in the literature 
\citep{BKK2018, BKK2020, cox2020, Herrmann2020}, 
but there are still 
considerably more results available for   
the original Gaussian Mat\'ern class on~$\bbR^d$\!. 
In particular, Zhang \citep{Zhang2004} and Anderes \citep{Anderes2010}   
investigated parameter estimation for Gaussian Mat\'ern fields on $\bbR^d$ 
under infill asymptotics. 
Thereby they showed that 
two Gaussian measures 
$\mu_d(0;\nu,\sigma^2\!,\kappa)$ 
and $\mu_d(0;\nu,\sigmat^2\!,\kappat)$,  
corresponding 
to zero-mean Gaussian Mat\'ern fields on $\bbR^d$ 
with parameters $\nu,\sigma^2\!,\kappa>0$ 
and $\nu,\sigmat^2\!,\kappat>0$, respectively, 
are equivalent if and only if  
\[
\begin{cases}
	\sigma^2 \kappa^{2\nu} = \sigmat^2 \kappat^{2\nu} 
	& \text{for $d \leq 3$,}\\
	\kappa = \kappat 
	\;\;\text{and}\;\; \sigma^2 = \sigmat^2  
	& \text{for $d \geq 5$}.
\end{cases}
\]
Until now, the case $d=4$ has remained open.  
Furthermore, by \cite[Theorem~12 in Chapter~4]{stein99}
$\mu_d(0;\widetilde{\nu},\sigmat^2\!,\kappat)$ 
provides uniformly asymptotically optimal linear prediction 
for $\mu_d(0;\nu,\sigma^2\!,\kappa)$ 
in any dimension $d\in \bbN$ if $\nu = \widetilde{\nu}$. 
Neither equivalence of measures 
nor asymptotic optimality of linear prediction 
have been characterized for 
generalized Whittle--Mat\'ern fields yet.\pagebreak  

Based on our general 
results for Gaussian measures  
with fractional-order covariance operators  
on a separable Hilbert space, 
combined with regularity theory 
for elliptic second-order partial differential equations, 
we are able to fill this gap in the second part of this work.  
Assuming that the coefficients $\ac,\act,\kappa,\kappat$ are smooth   
and that $\cD\subset\bbR^d$ has a smooth boundary~$\partial\cD$, 
we consider  
two Gaussian measures 
$\mu_d(0;\beta,\ac,\kappa)$ and 
$\mu_d(0;\betat, \act, \kappat)$ 
corresponding 
to  generalized Whittle--Mat\'ern fields 
\eqref{eq:whittle-matern} 
(using homogeneous Dirichlet 
boundary conditions on $\partial\cD$)
with parameters $(\beta,\ac,\kappa)$ 
and $(\betat, \act, \kappat)$, 
respectively. 
For this setting, we prove the following: 
\begin{enumerate}[label={\normalfont\Roman*.}]
	\item\label{enum:result1} 
	In dimension $d\leq 3$, 
	$\mu_d(0;\beta,\ac,\kappa)$ and 
	$\mu_d(0;\betat,\act,\kappat)$ are equivalent 
	if and only if $\beta = \betat$,  
	${\ac=\act}$ in~$\clos{\cD}$, 
	and 
	$\kappat^2-\kappa^2$ 
	satisfies certain boundary conditions 
	on $\partial\cD$. 
	In contrast, for $d\geq 4$,  
	the measures are equivalent if and only if 
	$\beta = \betat$ and 
	$\ac = \act$, 
	$\kappa^2 = \kappat^2$ in $\clos{\cD}$. 
	\item\label{enum:result2} 
	In any dimension $d \in\bbN$,  
	the model  
	$\mu_d(0;\betat, \act, \kappat)$ 
	provides uniformly asymptotically 
	optimal linear prediction for 
	the model $\mu_d(0;\beta,\ac,\kappa)$ 
	if and only if 
	$\beta = \betat$,  
	$c\ac = \act$ in~$\clos{\cD}$ holds 
	for some  $c\in(0,\infty)$,  
	and
	$\kappat^2 - c\kappa^2$ fulfills certain boundary conditions 
	on~$\partial\cD$. 
\end{enumerate}
These results 
cover the parameter range 
$\beta\in(\nicefrac{d}{4},\infty)\setminus\{k+\nicefrac{1}{4} : k \in \bbN\}$ and, in particular, 
also the case $d=4$ 
for the  
classical Mat\'ern model 
(when considered on a bounded domain). 
Moreover, 
to the best of our knowledge 
these are the first explicit results 
on equivalence of measures and 
asymptotic efficiency of  
linear predictions 
for this general class of models. 
Outcome~\ref{enum:result1} readily implies that, 
for ${d\leq 3}$, 
one cannot estimate all parameters 
of a generalized Whittle--Mat\'ern field 
consistently, 
and it provides a crucial first step 
towards showing consistency of 
maximum likelihood estimates 
for the parameters~$\beta, \ac$. 
Result~\ref{enum:result2} 
explains 
the comparable predictive performance of non-stationary  
and stationary models 
that has been 
noted for example in~\citep{fuglstad2015non-stationary}.  

The outline is as follows: 
In the next subsection  
we introduce 
preliminaries 
and our notation.  
Section~\ref{section:gm-on-hs} 
is concerned with the general case of 
Gaussian measures on Hilbert spaces 
with fractional-order covariance operators. 
These outcomes are applied, 
in Section~\ref{section:examples},  
to some first examples 
including the classical Whittle--Mat\'ern model 
on a bounded domain $\cD\subset\bbR^d$ 
and, in Section~\ref{section:wm-D}, 
to derive 
the results~\ref{enum:result1}$, $\ref{enum:result2} 
for generalized Whittle--Mat\'ern fields. 
In Section~\ref{section:numexp} the 
result~\ref{enum:result2} 
is verified 
in two simulation studies 
for non-stationary random fields on 
the unit interval $\cD = (0,1)$. 
The manuscript concludes with a discussion 
in Section~\ref{section:discussion} 
and contains five Appendices 
\ref{appendix:function-spaces}, 
\ref{appendix:feldman-hajek}, \ref{appendix:A-alpha}, 
\ref{appendix:proofs-lemmas}, \ref{appendix:interpol}
with auxiliary results and proofs.

\subsection{Preliminaries and notation}\label{subsec:notation} 

If not specified otherwise,
$\scalar{\,\cdot\,,\,\cdot\,}{E}$
denotes the inner product
on a Hilbert space~$E$, 
$\norm{\,\cdot\,}{E}$ the induced norm, 
$\id_E$ the identity on $E$, 
and 
$\cB(E)$ the Borel $\sigma$-algebra on $(E,\norm{\,\cdot\,}{E})$, 
that is the smallest $\sigma$-algebra 
containing all open sets. 
The scalar field $\bbK$ is either given 
by the real numbers $\bbR$ 
or the complex numbers~$\bbC$. 
The dual $\dual{E}$ of $E$ 
is the space 
containing all continuous linear functionals 
$f\from E\to\bbK$, 
and we call 
$\duality{\,\cdot\,, \,\cdot\,}{} 
\from \dual{E}\times E\to\bbK$, 
$\duality{f,\psi}{} := f(\psi)$ 
the duality pairing between 
$\dual{E}$ and $E$. 

The space of all bounded linear operators 
from $(E,\scalar{\,\cdot\,,\,\cdot\,}{E})$ 
to a second Hilbert space 
$(F,\scalar{\,\cdot\,,\,\cdot\,}{F})$ 
is denoted by $\cL(E;F)$. 
It is rendered 
a Banach space when equipped 
with the usual operator norm 
$\|T\|_{\cL(E;F)} := \sup_{\psi\in E\setminus\{0\}} \frac{ \norm{T\psi}{F} }{\norm{\psi}{E}}$.  
We call a linear operator $T\from E \to F$ 
an isomorphism 
if $T\in\cL(E;F)$ and $T^{-1}\in\cL(F;E)$, i.e., 
$T$ is bounded and has a bounded inverse. 
If $V$ is a vector space such that $E,F \subseteq V$ 
and if, in addition, $\id_V|_{E} \in \cL(E;F)$, 
then $E$ is continuously embedded in $F$  and 
we write 
$(E,\norm{\,\cdot\,}{E} ) 
\hookrightarrow 
(F, \norm{\,\cdot\,}{F})$. 
The notation 
$(E, \norm{\,\cdot\,}{E} ) \cong (F, \norm{\,\cdot\,}{F})$ 
indicates that  
$(E, \norm{\,\cdot\,}{E} )$ and $(F, \norm{\,\cdot\,}{F})$ 
are isomorphic, i.e., 
$(E, \norm{\,\cdot\,}{E} ) \hookrightarrow 
(F, \norm{\,\cdot\,}{F}) 
\hookrightarrow (E, \norm{\,\cdot\,}{E})$. 
Whenever $E=F$, we 
abbreviate 
$\cL(E) := \cL(E;E)$, 
and this convention holds also 
for all other spaces of operators to be introduced. 
The subspaces $\cK(E;F)\subseteq\cL(E;F)$ 
and 
$\cL_2(E;F)\subseteq\cL(E;F)$  
contain all compact operators and  
Hilbert--Schmidt operators, respectively. 
Note that $T\in\cL(E;F)$ is compact if and only if 
it is the limit in $\cL(E;F)$ of finite-rank operators, 
and $\cL_2(E;F)$ is a Hilbert space 
with 
the inner product 
$(T,S)_{\cL_2(E;F)} := \sum_{j \in \bbN}(Te_j, Se_j)_F$, 
where
$\{e_j\}_{j\in\bbN}$ is any orthonormal basis for~$E$.  
The adjoint 
of $T\in\cL(E;F)$ 
is identified with 
$T^*\in\cL(F;E)$ 
(via the Riesz maps on~$E$ and on~$F$). 
An operator $T\in\cL(E)$ 
is said to be 
orthogonal 
if $T T^* = T^* T = \id_E$, 
self-adjoint 
if $T = T^*$\!, 
nonnegative definite  
if $\scalar{T\psi, \psi}{E}\geq 0$ holds for all $\psi\in E$, 
and positive definite if 
there exists a constant  $\theta\in(0,\infty)$ such that 
$\scalar{T\psi,\psi}{E} \geq \theta \|\psi\|_E^2$ 
for all $\psi\in E$.  
A self-adjoint, nonnegative 
definite operator $T\in\cL(E)$ has a finite 
trace if $\sum_{j\in\bbN} (Te_j, e_j)_E < \infty$ 
holds for an 
(or, equivalently, any)
orthonormal basis $\{e_j\}_{j\in\bbN}$ 
of~$E$. 

A (possibly unbounded) 
linear operator $A$ on $E$ with domain 
$\scrD(A) = \{\psi\in E : \norm{A\psi}{E}<\infty \} \subseteq E$ 
is denoted by $A\from \scrD(A) \subseteq E \to E$. 
It is closed if its graph 
$\mathscr{G}(A):={\{(x,Ax): x \in \scrD(A)\}}$ 
is closed with respect to the norm 
$\norm{(x,Ax)}{\mathscr{G}(A)} := \norm{x}{E} + \norm{Ax}{E}$  
and densely defined if $\scrD(A)$ is dense in~$E$.  

Throughout this article, 
 $(\Omega,\mathscr{F},\bbP)$ 
is a complete probability space. 
For a Hilbert space 
$(E, \scalar{\,\cdot\,, \,\cdot\,}{E})$  
and $p\in[1,\infty)$, 
$L_p(\Omega;E)$ denotes the space 
of (equivalence classes of) $E$-valued, 
Bochner measurable random variables 
with finite $p$-th moment, 
with norm  
$\norm{\GP}{L_p(\Omega;E)}^p 
:=  
\int_\Omega \norm{\GP(\omega)}{E}^p  \, \rd \bbP(\omega)$. 
Further, $(\cX,d_\cX)$ is a 
connected, compact metric
space of infinite cardinality, equipped 
with a strictly positive and finite Borel measure $\nu_\cX$, 
and 
$L_2(\cX,\nu_\cX)$  
is the Hilbert space of 
(equivalence classes of) 
real-valued, Borel measurable,  
square-integrable   
functions on $\cX$,  
with  
$\norm{f}{L_2(\cX,\nu_\cX)}^2 
:= 
\int_{\cX} \seminorm{f(x)}{}^2 \, \rd \nu_\cX(x)$. 

Finally, we write 
$\bbR_{+} := (0,\infty)$ 
for the positive part of the real axis, 
$\bbN$ (or $\bbN_0$) for 
the set of positive (respectively, nonnegative) 
integers, and 
$\lfloor \,\cdot\, \rfloor$ 
(or $\lceil \,\cdot\, \rceil$)  
for the floor (respectively, ceiling) function. 

\section{Gaussian processes with fractional-order covariance operators}
\label{section:gm-on-hs} 

Throughout this section we let 
$(E, \scalar{\,\cdot\,, \,\cdot\,}{E})$ be a 
separable Hilbert space over $\bbR$ 
with $\dim(E)=\infty$. 
Furthermore, we assume that $\mu$ 
is a Gaussian measure on~$E$, 
i.e., for every $\psi\in E$, 
\[	
	\exists 
	m_\psi\in\bbR, \; \sigma_\psi^2 \in \bbR_+ 
	: 
	\quad 
	\forall 
	B \in \cB(\bbR) 
	\quad\,  
	\mu(\{ \phi\in E : \scalar{\psi, \phi}{E} \in B\})
	= 
	\bbP(\{ \omega\in\Omega: \gp_\psi(\omega) \in B\}), 
\]
where $\gp_\psi \from \Omega \to \bbR$ is a random variable,  
which is either  
Gaussian distributed with mean~$m_\psi$ and variance $\sigma_\psi^2$, or 
concentrated at $m_\psi$, i.e., 
$\bbP(\{\omega\in\Omega : \gp_\psi(\omega)=m_\psi\}) = 1$.  
Then, there exist 
a vector $m\in E$ and a 
bounded linear operator $\cC\from E \to E$ 
such that, for all $\psi,\psi' \in E$, 
\begin{equation}\label{eq:mu-mean-cov}   
	\textstyle 
	\scalar{m, \psi}{E}
	=
	\int_E \scalar{\phi, \psi}{E} \, \rd \mu(\phi), 
	\qquad  
	\scalar{\cC \psi, \psi'}{E} 
	= 
	\int_E \scalar{\psi, \phi-m}{E} \scalar{\phi-m, \psi'}{E} \,  \rd \mu(\phi). 
\end{equation}
The vector $m$ is the mean of the Gaussian measure $\mu$ 
and $\cC$ is its covariance operator. 
One can show that $\cC\from E \to E$ 
is self-adjoint, nonnegative definite, and has a finite trace 
\citep[][Theorem~2.3.1]{Bogachev1998}. 
Moreover,  
$\mu$ is uniquely determined by 
its mean $m$ and its covariance operator~$\cC$,   
and we therefore write 
$\mu = \normal(m,\cC)$. 

In this section we consider covariance operators of the 
form $\cC = A^{-2\beta}$\!, 
where~$A$ is an unbounded linear operator on $E$ 
and $\beta \in \bbR_{+}$. 
The  main objectives of this section are to characterize  
for two given Gaussian measures 
$\mu  := \normal\bigl( m, A^{-2\beta} \bigr)$  
and 
$\mut := \normal\bigl(\mt, \At^{-2\betat} \bigr)$ 
the following: 
\begin{enumerate}[label={\normalfont\Roman*.}] 
	\item 
	equivalence resp.\ orthogonality 
	of $\mu$ and $\mut$,  
	see Subsection~\ref{subsec:gm-on-hs:equivalence}, and 
	\item 
	uniform asymptotic optimality  
	of linear predictions for $\mu$ based on the misspecified 
	measure $\mut$ when $E = L_2(\cX, \nu_\cX)$, 
	see Subsection~\ref{subsec:gm-on-hs:kriging}. 
\end{enumerate}  
To this end, in Subsection~\ref{subsec:gm-on-hs:auxiliary} 
we first specify our assumptions on $A$ 
and state  
two auxiliary results. 

\subsection{Hilbert space setting and some auxiliary results}
\label{subsec:gm-on-hs:auxiliary} 

In what follows, we assume that 
$A\from\scrD(A) \subseteq E \to E$ 
is a densely defined, self-adjoint, positive definite 
linear operator, which has a compact 
inverse $A^{-1}\in\cK(E)$. 
In this case, 
$A\from\scrD(A) \subseteq E \to E$ 
is closed and there exists an 
orthonormal basis $\{e_j\}_{j\in\bbN}$ 
for $E$ consisting 
of eigenvectors of~$A$, 
with corresponding positive 
eigenvalues $(\lambda_j)_{j\in\bbN}$ 
accumulating only at~$\infty$. 
We assume that they 
are in non-decreasing order, 
$0 < \lambda_1 \leq \lambda_2 \leq \ldots$, 
and repeated 
according to multiplicity.  

For $\beta\in[0,\infty)$, the 
fractional power operator $A^\beta\from\scrD\bigl(A^\beta\bigr)\subseteq E \to E$ 
can then be defined using the spectral 
expansion 
\begin{equation}\label{eq:def:Abeta}  
	A^\beta \psi  
	:= 
	\sum\limits_{j\in\bbN} 
	\lambda_j^\beta 
	\scalar{ \psi, e_j }{E} \, e_j , 
	\qquad 
	\psi \in \scrD\bigl( A^\beta \bigr) \subseteq E. 
\end{equation}
Note that, for all $r\in[0,\infty)$, 
the domain of the operator $A^{\nicefrac{r}{2}}$\!, 
\begin{equation}\label{eq:def:hdotA}  
	\textstyle 
	\hdot{r}{A} 
	:= 
	\scrD\bigl( A^{\nicefrac{r}{2}} \bigr), 
	\quad 
	\scrD\bigl( A^{\nicefrac{r}{2}} \bigr)
	= 
	\Bigl\{ 
	\psi \in E : 
	\norm{A^{\nicefrac{r}{2}} \psi }{E}^2 
	=
	\sum\nolimits_{j\in\bbN} 
	\lambda_j^r \, 
	| \scalar{ \psi, e_j}{E} |^2
	< \infty 
	\Bigr\}, 
\end{equation} 
is itself a separable Hilbert space with respect to the inner product 
\[
	\textstyle 
	\scalar{ \phi, \psi}{r,A} 
	:= 
	\scalar{ A^{\nicefrac{r}{2}} \phi, A^{\nicefrac{r}{2}} \psi}{E} 
	= 
	\sum\nolimits_{j\in\bbN} 
	\lambda_j^r
	\, 
	\scalar{\phi, e_j}{E} 
	\scalar{e_j, \psi}{E}, 
\]
and the corresponding induced norm 
$\norm{\,\cdot\,}{r,A}$. 
Here, 
$A^0 := \id_E$ and $\hdot{0}{A} := E$. 
Recall that 
by definition  
the Cameron--Martin space 
of a Gaussian measure $\mu=\normal(m,\cC)$ on $E$ 
(aka.\ the reproducing kernel Hilbert space of $\cC$)
is the image of~$E$ under $\cC^{\nicefrac{1}{2}}$\!, 
endowed with the inner product 
$\scalar{\cC^{-\nicefrac{1}{2}}\,\cdot\,, 
	\cC^{-\nicefrac{1}{2}}\,\cdot\,}{E}$, 
cf.~\citep[][p.~44]{Bogachev1998}.   
It consists of all elements $v \in E$ 
such that the measure $\mu_v(B) := \mu(B-v)$ 
is absolutely continuous with respect to $\mu$ 
(i.e., $\mu(B)=0 \Rightarrow \mu_v(B) = 0$). 
In particular, for $\cC=A^{-2\beta}$  
we obtain that 
\begin{equation}\label{eq:general-CM-space} 
	\cC^{\nicefrac{1}{2}} (E) = A^{-\beta} (E) = \scrD\bigl( A^{\beta} \bigr) = \hdot{2\beta}{A}, 
	\qquad 
	\norm{\cC^{-\nicefrac{1}{2}}\,\cdot\,}{E}=\norm{\,\cdot\,}{2\beta,A}. 
\end{equation} 

We let $\hdot{-r}{A}$ denote the dual space 
of $\hdot{r}{A}$ after identification 
via the inner product  
on~$E$ which is continuously 
extended to a duality pairing. 
This means that 
for all $\phi\in E\subseteq \hdot{-r}{A}$,  
$\psi\in\hdot{r}{A}\subseteq E$, we have that 
$\langle \phi, \psi\rangle = \scalar{\phi, \psi}{E}$. 
It is an immediate consequence 
of these definitions that,  
for every $r,\vartheta\in\bbR$, 
the fractional power operator 
$A^\vartheta\from \hdot{r}{A} \to \hdot{r-2\vartheta}{A}$ 
is an isomorphism, 
possibly obtained as a continuous extension 
or restriction of 
$A^\vartheta \from \scrD\bigl(A^\vartheta\bigr) = \hdot{2\vartheta}{A} \to \hdot{0}{A} = E$. 
For ease of presentation, we postpone 
the technical proofs of the following 
two auxiliary results, Lemmas~\ref{lem:beta-gamma} 
and~\ref{lem:iff-A}, 
to Appendix~\ref{appendix:proofs-lemmas}. 

\begin{lemma}\label{lem:beta-gamma}  
	Let $A\from\scrD(A) \subseteq E \to E$ and 
	$\At\from\scrD(\At)\subseteq E \to E$ 
	be two densely defined, self-adjoint, 
	positive definite linear operators 
	with compact inverses 
	on $E$. 
	\begin{enumerate}[label={\normalfont(\roman*)}] 
		\item\label{lem:beta-gamma-HS}  
		Assume that there exists $\beta\in\bbR_+$ 
		such that $\At^\beta A^{-\beta}\from E \to E$ 
		is an isomorphism and additionally 
		$A^{-\beta} \At^{2\beta} A^{-\beta} 
		- \id_E \in\cL_2(E)$. 
		Then, for every $\gamma\in[-\beta,\beta]$, 
		also the operator $\At^\gamma A^{-\gamma}$ 
		is an isomorphism on $E$ 
		and 
		$A^{-\gamma} \At^{2\gamma} A^{-\gamma} 
		- \id_E \in \cL_2(E)$. 
		\item\label{lem:beta-gamma-compact} 
		Assume that there exists $\beta\in\bbR_+$ 
		such that $\At^\beta A^{-\beta}\from E \to E$ 
		is an isomorphism and additionally 
		$A^{-\beta} \At^{2\beta} A^{-\beta} 
		- \id_E \in\cK(E)$. 
		Then, for every $\gamma\in[-\beta,\beta]$, 
		also the operator $\At^\gamma A^{-\gamma}$ 
		is an isomorphism on $E$ 
		and 
		$A^{-\gamma} \At^{2\gamma} A^{-\gamma} 
		- \id_E \in \cK(E)$. 
	\end{enumerate}
\end{lemma}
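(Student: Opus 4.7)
I start by noting that (i) and (ii) admit essentially the same proof, with $\cL_2(E)$ replaced throughout by the closed two-sided ideal $\cK(E)$, so I focus on (i) and comment on (ii) at the end. The plan splits into an \emph{isomorphism} part and an \emph{ideal-membership} part.

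For the isomorphism claim, I would first rephrase the hypothesis. Unravelling the definition \eqref{eq:def:hdotA}, the assumption that $\At^\beta A^{-\beta}$ is a bounded isomorphism of $E$ is equivalent to $\hdot{2\beta}{A} = \hdot{2\beta}{\At}$ as sets with equivalent Hilbertian norms. Invoking the characterisation of the fractional-power scale $\{\hdot{\sigma}{A}\}_{\sigma\in\bbR}$ as a complex-interpolation scale, together with its self-duality with $E$ as pivot (cf.\ Appendix~\ref{appendix:interpol}), and likewise for $\At$, this endpoint equivalence propagates to $\hdot{2\gamma}{A} = \hdot{2\gamma}{\At}$ with equivalent norms for every $\gamma\in[-\beta,\beta]$. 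This in turn gives that $\At^\gamma A^{-\gamma}$ (interpreted, when $\gamma<0$, as the composition of the canonical extensions to the dual spaces $\hdot{-2|\gamma|}{A} = \hdot{-2|\gamma|}{\At}$) is an isomorphism of $E$.

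For the ideal-membership claim, the endpoint $\gamma=-\beta$ follows from an algebraic observation. Write $B_\gamma := A^{-\gamma}\At^{2\gamma}A^{-\gamma} - \id_E$ and $P := \id_E + B_\beta = A^{-\beta}\At^{2\beta}A^{-\beta}$. Since $\At^\beta A^{-\beta}$ is an iso, $P = (\At^\beta A^{-\beta})^*(\At^\beta A^{-\beta})$ is a bounded, positive iso of $E$, so $P^{-1}$ is bounded. The two-sided-ideal property of $\cL_2(E)$ in $\cL(E)$ then turns the identity $P^{-1}-\id_E = -P^{-1}B_\beta$ into $P^{-1}-\id_E\in\cL_2(E)$, and a direct computation yields $P^{-1} = A^\beta \At^{-2\beta} A^\beta = \id_E + B_{-\beta}$, whence $B_{-\beta}\in\cL_2(E)$. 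For intermediate $\gamma$ I would pass to the complexifications $E_\bbC, A_\bbC, \At_\bbC$ from \eqref{eq:complexification-space}--\eqref{eq:complexification-ip} and set up the operator-valued analytic family
\[
f(z) := A_\bbC^{-\beta z}\At_\bbC^{\,2\beta z}A_\bbC^{-\beta z} - \id_{E_\bbC}, \qquad -1 \le \Re z \le 1,
\]
which satisfies $f(\gamma/\beta) = B_\gamma$ on the real axis, takes values in $\cL_2(E_\bbC)$ at the real endpoints $z=\pm 1$, and is uniformly operator-norm bounded on the imaginary axis $\Re z = 0$, because each of $A_\bbC^{-it\beta}$ and $\At_\bbC^{2it\beta}$ is unitary on $E_\bbC$ by the spectral theorem. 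A Stein/three-lines interpolation in the Hilbert space $\cL_2(E_\bbC)$ should then yield $f(\theta)\in\cL_2(E_\bbC)$ for every $\theta\in[-1,1]$, and the HS statement on $E$ follows from the standard comparison of HS-norms of an operator and of its complexification.

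The main obstacle I anticipate is the interpolation step itself: one needs HS-norm control, not merely operator-norm control, of $f(\pm 1+it)$ uniformly in $t\in\bbR$ at the vertical boundary lines of the strip. Since $A_\bbC$ and $\At_\bbC$ do not commute, the unitaries $A_\bbC^{-it\beta}, \At_\bbC^{2it\beta}$ cannot simply be pulled outside the triple product to reduce $f(\pm 1+it)$ to $f(\pm 1)$ conjugated by unitaries. I would address this either by a careful conjugation/factorisation argument that traps the extra unitary factors in $\cL(E_\bbC)$-bounded pieces flanking a central HS factor, or by rewriting $B_\gamma$ directly as a Bochner integral via \eqref{eq:Aalphainv-integral} and bounding its $\cL_2$-norm through resolvent differences $(t\id_E+A)^{-1}-(t\id_E+\At)^{-1}$, whose HS-ness is inherited from the endpoint hypothesis. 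Part (ii) of the lemma then follows from verbatim the same argument with $\cL_2$ replaced throughout by $\cK$, using that $\cK(E)$ is likewise a closed two-sided ideal of $\cL(E)$ and that the corresponding three-lines/compactness preservation step is classical.
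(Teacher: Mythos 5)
Your isomorphism argument (endpoint norm equivalence of $\hdot{2\beta}{A}$ and $\hdot{2\beta}{\At}$, propagated by complexification and interpolation, with duality for negative $\gamma$) is exactly the paper's route and is fine, as is your algebraic treatment of the endpoint $\gamma=-\beta$. The problems are in the two remaining steps.

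First, the Stein/three-lines family $f(z)=A_\bbC^{-\beta z}\At_\bbC^{2\beta z}A_\bbC^{-\beta z}-\id_{E_\bbC}$ does not take values in $\cL_2(E_\bbC)$ on the boundary lines, and the obstruction you flag cannot be repaired by a conjugation argument. Writing $A_\bbC^{-\beta(1+it)}=A_\bbC^{-i\beta t}A_\bbC^{-\beta}$ and $\At_\bbC^{2\beta(1+it)}=\At_\bbC^{2\beta}\At_\bbC^{2i\beta t}$, one finds
$f(1+it)=A_\bbC^{-i\beta t}\bigl(P\,W_t-A_\bbC^{2i\beta t}\bigr)A_\bbC^{-i\beta t}$ with $P=A_\bbC^{-\beta}\At_\bbC^{2\beta}A_\bbC^{-\beta}=\id+S$, $S\in\cL_2$, and $W_t=A_\bbC^{\beta}\At_\bbC^{2i\beta t}A_\bbC^{-\beta}$. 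The term $SW_t$ is harmless, but the remainder is the \emph{additive} perturbation $A_\bbC^{\beta}\bigl(\At_\bbC^{2i\beta t}-A_\bbC^{2i\beta t}\bigr)A_\bbC^{-\beta}$, a difference of non-commuting unitaries conjugated by an unbounded similarity; it is not compact, let alone Hilbert--Schmidt, for $t\neq 0$. So the unitary factors cannot be "trapped" as bounded pieces flanking a central HS factor. Your fallback---the Bochner integral \eqref{eq:Aalphainv-integral}---is indeed what the paper does, but the essential point you omit is that the integral must be applied to $\cA=A^{2\beta}$, $\cAt=\At^{2\beta}$ with exponent $\theta=\nicefrac{\gamma}{\beta}$, and that the resulting integral is \emph{not} operator-norm convergent after conjugation by $\cAt^{\nicefrac{\theta}{2}}$ and $\cA^{\nicefrac{\theta}{2}}$ (the integrand decays only like $t^{-1}$ at infinity); the paper instead bounds $\|\At^{\gamma}(\At^{-2\gamma}-A^{-2\gamma})A^{\gamma}e_j\|_E$ separately for each eigenvector $e_j$ of $A$, where the extra factor $\alpha_j^{\nicefrac{(1+\theta)}{2}}(t+\alpha_j)^{-1}$ makes the integral evaluate exactly, and then sums the squares over $j$. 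That eigenvector-wise summation is the mechanism that produces the $\cL_2$ conclusion.

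Second, and consequently, part \ref{lem:beta-gamma-compact} does \emph{not} follow "verbatim with $\cL_2$ replaced by $\cK$". The eigenvector-wise estimate characterizes the Hilbert--Schmidt norm as a sum of squares over a fixed orthonormal basis; there is no analogous basis-wise criterion for compactness, and the operator-norm version of the Bochner integral diverges as noted above. The paper therefore proves the compact case by an entirely different argument (Lemma~\ref{lem:beta-gamma-complex} in Appendix~\ref{appendix:interpol}): compactness of $A^{-\beta}\At^{2\beta}A^{-\beta}-\id_E$ is used to make the quadratic-form deviation small on the co-finite-dimensional subspaces $E_{N_\eps}^\perp$ spanned by high-index eigenvectors of $A$, this smallness is interpolated (Lemmas~\ref{lem:interpol-subspace} and~\ref{lem:hdot-interpol}) to the intermediate spaces $\hdot{2\gamma}{A}\cap E_{N_\eps}^\perp$, and the operator $A^{-\gamma}\At^{2\gamma}A^{-\gamma}-\id_E$ is exhibited as an operator-norm limit of finite-rank operators. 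Some substitute for this step is needed; without it, your proof of (ii) is incomplete.
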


\begin{lemma}\label{lem:iff-A}  
	Let 
	$A\from\scrD(A) \subseteq E \to E$ and 
	$\At\from\scrD(\At)\subseteq E \to E$ 
	be two densely defined, self-adjoint, 
	positive definite linear operators 
	with compact inverses 
	on $E$, let  
	$\beta\in[1,\infty)$, and define 
	\begin{equation}\label{eq:frakNset}
		\frakN_\beta 
		:= 
		\{n\in\bbN : n \leq \beta\} \cup\{\beta\} 
		=
		\{1,\ldots,\lfloor\beta\rfloor\}\cup\{\beta\}. 
	\end{equation} 
	\begin{enumerate}[label={\normalfont(\roman*)}] 
		\item\label{lem:iff-A-HS}  
		$\At^\gamma A^{-\gamma}$ is an 
		isomorphism on $E$ and 
		$A^{-\gamma} \At^{2\gamma} A^{-\gamma} 
		- \id_E\in\cL_2(E)$ 
		for all $\gamma \in [-\beta,\beta]$ 
		if and only if 
		for all $\eta\in\frakN_\beta$ 
		there exist an orthogonal operator~$U_\eta$ on~$E$ 
		and $S_\eta\in\cL_2(E)$ such that 
		$\id_E+ S_\eta$ is invertible on $E$ and 
		$A^{\eta-1} \At A^{-\eta} = U_\eta (\id_E+ S_\eta)$. \pagebreak
		\item\label{lem:iff-A-compact} 
		$\At^\gamma A^{-\gamma}$ is an 
		isomorphism on $E$ and 
		$A^{-\gamma} \At^{2\gamma} A^{-\gamma} 
		- \id_E\in\cK(E)$ 
		for all $\gamma \in [-\beta,\beta]$ 
		if and only if 
		for every $\eta\in\frakN_\beta$ 
		there exist 
		an orthogonal operator $W_\eta$ on $E$  
		and $K_\eta\in\cK(E)$ such that 
		$\id_E+ K_\eta$ is invertible on~$E$ and 
		$A^{\eta-1} \At A^{-\eta} = W_\eta (\id_E+ K_\eta)$.   
		\item\label{lem:iff-A-iso}  
		The linear operator  
		$\At^\gamma A^{-\gamma}\from E \to E$ is an isomorphism   
		for all $\gamma\in[-\beta,\beta]$ if and only if 
		${\At - A\in\cL\bigl( \hdot{2\eta}{A};    \hdot{2(\eta-1)}{A}   \bigr)
		\cap \cL\bigl( \hdot{2\eta}{\At}; \hdot{2(\eta-1)}{\At} \bigr)}$ 
		holds for every $\eta\in\{1,\beta\}$.  
	\end{enumerate} 
\end{lemma}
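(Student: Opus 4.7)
The plan is to establish (i) via polar decomposition (forward direction) and a telescoping identity plus Lemma~\ref{lem:beta-gamma} (backward direction), with (ii) following the same template using $\cK(E)$ in place of $\cL_2(E)$, and (iii) handled by complex interpolation combined with iterated applications of $\At$. Write $T_\eta := A^{\eta-1}\At A^{-\eta}$ for brevity. The main obstacle is the fractional endpoint $\eta=\beta$ in the backward directions of (i) and (ii), where the telescoping is unavailable; a sandwich identity (see below) will bootstrap the Hilbert--Schmidt/compact condition at $\gamma=\beta$ from the one at $\gamma=\beta-1$.

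For the forward direction of (i), fix $\eta\in\frakN_\beta$. Since $\eta-1,\eta\in[0,\beta]$, the hypothesis combined with polar decomposition of an isomorphism whose modulus squared differs from $\id_E$ by a Hilbert--Schmidt operator yields factorizations $\At^\gamma A^{-\gamma} = V_\gamma(\id_E + S_\gamma')$ for $\gamma\in\{\eta-1,\eta\}$, with $V_\gamma$ orthogonal, $S_\gamma'\in\cL_2(E)$, and $\id_E+S_\gamma'$ invertible (the square-root step exploiting $B-\id_E = (B^{1/2}-\id_E)(B^{1/2}+\id_E)$ together with $(B^{1/2}+\id_E)^{-1}\in\cL(E)$). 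The algebraic identity
\[
T_\eta \;=\; \bigl(\At^{\eta-1}A^{-(\eta-1)}\bigr)^{-1}\cdot \At^\eta A^{-\eta},
\]
combined with the ideal property of $\cL_2(E)$ and its invariance under orthogonal conjugation, then fuses the two factorizations into $T_\eta = U_\eta(\id_E + S_\eta)$ with $U_\eta := V_{\eta-1}^*V_\eta$; part (ii) is identical with $\cK(E)$ substituted.

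For the backward direction of (i), the key observation is the telescoping
\[
T_1 T_2 \cdots T_n \;=\; \At^n A^{-n}, \qquad n\in\bbN,
\]
in which the interior $A^{-k}A^k$ pairs cancel. Hence the hypothesis at each integer $\eta\in\frakN_\beta$ yields $\At^n A^{-n}=V_n(\id_E+\widetilde S_n)$ for every $n\in\{1,\dots,\lfloor\beta\rfloor\}$, establishing the isomorphism/Hilbert--Schmidt condition at every integer $\gamma\in[0,\lfloor\beta\rfloor]$, which Lemma~\ref{lem:beta-gamma}(i) extends to $\gamma\in[-\lfloor\beta\rfloor,\lfloor\beta\rfloor]$. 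If $\beta\in\bbN$ this already suffices; otherwise the hypothesis at $\eta=\beta$ supplies $T_\beta^* T_\beta - \id_E = A^{-\beta}\At A^{2(\beta-1)}\At A^{-\beta} - \id_E \in\cL_2(E)$, while the sandwich identity
\[
A^{-\beta}\At^{2\beta}A^{-\beta} - A^{-\beta}\At A^{2(\beta-1)}\At A^{-\beta} \;=\; T_\beta^*\bigl[A^{-(\beta-1)}\At^{2(\beta-1)}A^{-(\beta-1)}-\id_E\bigr]T_\beta
\]
expresses the difference as a sandwich of an $\cL_2$-operator (furnished by Lemma~\ref{lem:beta-gamma} at $\gamma=\beta-1$, accessible since $\beta-1\leq\lfloor\beta\rfloor$) by bounded factors. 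Adding these two terms, together with the observation that $\At^\beta A^{-\beta} = (\At^{\beta-1}A^{-(\beta-1)})T_\beta$ is an isomorphism as a composition of two isomorphisms, gives the full isomorphism-and-$\cL_2$ condition at $\gamma=\beta$, which Lemma~\ref{lem:beta-gamma}(i) propagates to $[-\beta,\beta]$; part (ii) is analogous.

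For (iii), the forward direction is obtained from the isomorphism property at $\gamma\in\{\beta-1,\beta\}$ (the former extending from $\gamma=\beta$ via the isomorphism-only portion of the proof of Lemma~\ref{lem:beta-gamma}): the resulting norm equivalences $\hdot{2\eta}{A}\cong\hdot{2\eta}{\At}$ transport the tautological boundedness $\At\in\cL(\hdot{2\eta}{\At};\hdot{2(\eta-1)}{\At})$ into an analogous bound between the $A$-based spaces, giving the required estimate on $\At-A$; the $\At$-side and the $\eta=1$ case are symmetric. Conversely, the $\eta=1$ hypothesis alone yields isomorphism at every $\gamma\in[-1,1]$ and hence $\hdot{2\theta}{A}\cong\hdot{2\theta}{\At}$ for $\theta\in[0,1]$ via Lemma~\ref{lem:hdot-interpol}; complex interpolation between the $\eta=1$ and $\eta=\beta$ hypotheses then promotes $\At-A$ to lie in $\cL(\hdot{2\gamma}{A};\hdot{2(\gamma-1)}{A})$ for every $\gamma\in[1,\beta]$, and analogously on the $\At$-side. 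Iteratively applying $\At$ reduces any $v\in\hdot{2\beta}{A}$ to $\At^{\lfloor\beta\rfloor}v\in\hdot{2(\beta-\lfloor\beta\rfloor)}{A}=\hdot{2(\beta-\lfloor\beta\rfloor)}{\At}$, after which a final spectral application of $\At^{\beta-\lfloor\beta\rfloor}$ yields $\At^\beta v\in E$; the symmetric $\At$-side argument furnishes $A^\beta\At^{-\beta}\in\cL(E)$, so $\At^\beta A^{-\beta}$ is an isomorphism on $E$, and Lemma~\ref{lem:beta-gamma}'s isomorphism part extends this to every $\gamma\in[-\beta,\beta]$.
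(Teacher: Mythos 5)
Your proof is correct and follows essentially the same route as the paper's: polar decompositions via Lemma~\ref{lem:HS-compact-property}, extension from the endpoint exponent to the full interval via Lemma~\ref{lem:beta-gamma}, and complexification plus interpolation for part~\ref{lem:iff-A-iso}. The only substantive local differences are at the fractional endpoint in the backward direction of (i)/(ii), where your sandwich identity $A^{-\beta}\At^{2\beta}A^{-\beta}-T_\beta^*T_\beta = T_\beta^*\bigl[A^{-(\beta-1)}\At^{2(\beta-1)}A^{-(\beta-1)}-\id_E\bigr]T_\beta$ replaces the paper's re-factorization of the product $\bigl(\At^{\beta-1}A^{-(\beta-1)}\bigr)\bigl(A^{\beta-1}\At A^{-\beta}\bigr)$, and in the converse of (iii), where you bound $\At^\beta A^{-\beta}$ by iterating $\At=A+B$ on $\hdot{2\beta}{A}$ rather than inducting on the identity $\At^n A^{-n}=\At^{n-1}A^{-(n-1)}+\bigl(\At^{n-1}A^{-(n-1)}\bigr)A^{n-1}BA^{-n}$; both variants are valid and reach the same conclusions.
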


\subsection{Equivalence and orthogonality}
\label{subsec:gm-on-hs:equivalence} 

Two probability measures $\mu,\mut$ on $(E,\cB(E))$
are said to be equivalent 
if, for all Borel sets $B\in\cB(E)$, 
 $\mu(B)=0$ holds if and only if $\mut(B)=0$. 
In contrast, if there exists a Borel set 
$B\in\cB(E)$ such that $\mu(B)=0$ and $\mut(B)=1$, 
then $\mu$ and $\mut$ are said to be orthogonal. 
Two Gaussian measures $\mu, \mut$ 
are either equivalent or orthogonal 
\citep[][Theorem~2.7.2]{Bogachev1998}.  
As mentioned in the introduction, 
equivalence and orthogonality~of Gaussian measures 
are important concepts in statistical theory.  
For example, a crucial first step in proving that a parameter $\theta$ 
of a Gaussian process (with corresponding Gaussian measure~$\mu$) 
can be estimated consistently under infill asymptotics is often 
to define  $\mut$ as the Gaussian measure corresponding to 
the process with parameter $\widetilde{\theta} \neq \theta$ 
and to show that $\mu$ and $\mut$ are orthogonal, 
see~\citep{Zhang2004}. 

The following proposition provides necessary and sufficient conditions 
for equivalence of two Gaussian measures $\mu$ and $\mut$ 
when they have fractional-order covariance operators. 

\begin{proposition}\label{prop:A-equiv} 
	Let $A\from\scrD(A) \subseteq E \to E$ and 
	$\At\from\scrD(\At)\subseteq E \to E$ 
	be two densely defined, self-adjoint, 
	positive definite linear operators 
	with compact inverses on $E$. 
	In addition, 
	let $\beta\in[1,\infty)$,  
	$\betat \in \bbR_+$ be such that 
	$A^{-2\beta}$ and $\At^{-2\betat}$ 
	have finite traces on~$E$, 
	let $m,\mt \in E$,  
	and define $\delta:=\nicefrac{\betat}{\beta}\in\bbR_+$. 
	The Gaussian measures 
	$\mu = \normal\bigl( m, A^{-2\beta} \bigr)$ and 
	$\mut = \normal\bigl( \mt, \At^{-2\betat} \bigr)$ 
	are either equivalent or orthogonal. 
	They are equivalent if and only if 
	the following two conditions 
	are satisfied: 
	\begin{enumerate}[label={\normalfont(\alph*)}]
		\item\label{prop:A-equiv-iff-a} 
		the difference of the means 
		satisfies $m - \mt \in \hdot{2\beta}{A}$\!; 
		\item\label{prop:A-equiv-iff-b} 
		for all $\eta\in\frakN_\beta$, 
		where $\frakN_\beta$ is defined as in \eqref{eq:frakNset},  
		there exist an orthogonal operator 
		$U_\eta\in\cL(E)$ and   
		$S_\eta\in\cL_2(E)$  
		such that  
		$A^{\eta-1} \At^\delta  A^{-\eta} 
		= U_\eta (\id_E + S_\eta)$ 
		and $\id_E+S_\eta$ is invertible on~$E$. 
	\end{enumerate} 
	
	Condition~\ref{prop:A-equiv-iff-b} is in particular satisfied, 
	whenever 
	\[
		\At^{ \delta } - A \in 
		\cL_2\bigl( \hdot{2\eta}{A}; \hdot{2(\eta-1)}{A} \bigr) 
		\quad 
		\forall \eta\in\frakN_\beta, 
		\quad 
		\text{and} 
		\quad 
		\At^{ \delta } - A \in 
		\cL\bigl( \hdot{2\delta \eta}{\At}; \hdot{2\delta(\eta-1)}{\At} \bigr) 
		\quad 
		\forall\eta\in\{1,\beta\}. 
	\] 
\end{proposition}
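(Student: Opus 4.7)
The plan is to translate the conditions of the Feldman--H\'ajek theorem (Appendix~\ref{appendix:feldman-hajek}) into the form given in the proposition via the substitution $A' := \At^{\delta}$ with $\delta := \nicefrac{\betat}{\beta}$, which brings both covariance operators to a common exponent: $A^{-2\beta}$ and $(A')^{-2\beta}$. Gaussian measures on~$E$ are either equivalent or orthogonal \citep[][Theorem~2.7.2]{Bogachev1998}, so only the equivalence case needs characterization. Feldman--H\'ajek then gives that $\mu \sim \mut$ is equivalent to three conditions: (i) the ranges $A^{-\beta}(E)$ and $\At^{-\betat}(E)$ coincide, which amounts to $\At^{\betat} A^{-\beta}$ being an isomorphism on~$E$; (ii) $m - \widetilde{m}$ lies in this common range; and (iii) a Hilbert--Schmidt perturbation condition relating the two covariance operators.

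Identifying $A^{-\beta}(E) = \hdot{2\beta}{A}$ as a set with the induced norm yields condition~\ref{prop:A-equiv-iff-a} from (ii). For (i) and (iii), after substituting $A' = \At^\delta$, condition (i) becomes ``$(A')^{\beta} A^{-\beta}$ is an isomorphism on $E$'', and (iii) reads $A^{\beta}(A')^{-2\beta}A^{\beta} - \id_E \in \cL_2(E)$. Writing $T := A^{\beta}(A')^{-2\beta}A^{\beta} = S S^*$ with $S := A^{\beta}(A')^{-\beta}$, the operator $T$ is a positive self-adjoint isomorphism on~$E$ by (i); hence the identity $T^{-1} - \id_E = -T^{-1}(T-\id_E)$ shows that (iii) is equivalent to
\[
A^{-\beta}(A')^{2\beta} A^{-\beta} - \id_E \in \cL_2(E).
\]
These are precisely the hypotheses of Lemma~\ref{lem:beta-gamma}\ref{lem:beta-gamma-HS} applied to the pair $(A, A')$, and by Lemma~\ref{lem:iff-A}\ref{lem:iff-A-HS} they are equivalent to the existence, for every $\eta \in \frakN_\beta$, of an orthogonal $U_\eta \in \cL(E)$ and $S_\eta \in \cL_2(E)$ with $\id_E + S_\eta$ invertible and $A^{\eta-1} A' A^{-\eta} = U_\eta (\id_E + S_\eta)$. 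Substituting back $A' = \At^{\delta}$ yields~\ref{prop:A-equiv-iff-b}.

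For the ``in particular'' assertion, I again set $A' = \At^{\delta}$ and apply Lemma~\ref{lem:iff-A}\ref{lem:iff-A-iso} to the pair $(A, A')$. Using $\hdot{2\sigma}{A'} = \scrD(\At^{\delta\sigma}) = \hdot{2\delta\sigma}{\At}$, the assumption $\At^{\delta} - A \in \cL(\hdot{2\delta\eta}{\At}; \hdot{2\delta(\eta-1)}{\At})$ for $\eta\in\{1,\beta\}$ matches the hypothesis of that lemma, yielding the isomorphism property of $(A')^\gamma A^{-\gamma}$ for all $\gamma \in [-\beta,\beta]$. To obtain the Hilbert--Schmidt decomposition in~\ref{prop:A-equiv-iff-b}, I use
\[
A^{\eta-1} A' A^{-\eta} = \id_E + A^{\eta-1}(A' - A) A^{-\eta},
\]
whose second summand lies in $\cL_2(E)$ by the first part of the assumption. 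A polar decomposition combined with the square-root argument used in the proof of Lemma~\ref{lem:iff-A}\ref{lem:iff-A-HS} then converts this into the required form $U_\eta(\id_E + S_\eta)$ with $\id_E + S_\eta$ invertible.

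The main obstacle I expect is the passage from Feldman--H\'ajek condition~(iii) to the symmetric Hilbert--Schmidt form used by Lemma~\ref{lem:beta-gamma}\ref{lem:beta-gamma-HS}: the inversion step involves operators that are a priori only densely defined, so condition~(i) must be deployed carefully to justify that the relevant compositions extend to bounded isomorphisms on~$E$ and that $\cL_2$-perturbations of the identity are preserved under inversion. Once this bridge is in place, the remainder is essentially bookkeeping driven by the two operator-algebraic lemmas together with the identification $\hdot{2\sigma}{A'} = \hdot{2\delta\sigma}{\At}$.
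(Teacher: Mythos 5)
Your proposal is correct and follows essentially the same route as the paper: Feldman--H\'ajek, the substitution $A' = \At^{\nicefrac{\betat}{\beta}}$, Lemma~\ref{lem:beta-gamma}\ref{lem:beta-gamma-HS} together with Lemma~\ref{lem:iff-A}\ref{lem:iff-A-HS} for the equivalence of condition~\ref{prop:A-equiv-iff-b} with the Cameron--Martin and Hilbert--Schmidt conditions, and Lemma~\ref{lem:iff-A}\ref{lem:iff-A-iso} for the ``in particular'' part. The only cosmetic deviations are that you start from the transposed form of the Hilbert--Schmidt condition and bridge via $T^{-1}-\id_E=-T^{-1}(T-\id_E)$ (the paper's Theorem~\ref{thm:feldman-hajek} is already stated in the form needed), and that your final polar decomposition is superfluous since $U_\eta=\id_E$ works directly once invertibility of $\id_E+S_\eta$ is known.
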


\begin{proof}
	In order to derive the equivalence statement, 
	we apply the Feldman--H\'ajek theorem, see 
	Theorem~\ref{thm:feldman-hajek} 
	in Appendix~\ref{appendix:feldman-hajek}: 
	$\mu$ and $\mut$ are equivalent if and only if 
	\begin{enumerate}[label={\normalfont(\roman*)}]
		\item\label{proof:prop:A-equiv-iff-i} 
		the Cameron--Martin spaces 
		$\hdot{2\beta}{A}$  
		and 
		$\hdot{2\betat}{\At}$  
		(see \eqref{eq:general-CM-space})  
		are norm equivalent Hilbert spaces;  
		\item\label{proof:prop:A-equiv-iff-ii}  
		the difference of the means satisfies 
		$m - \mt \in \hdot{2\beta}{A}$\!; and 
		\item\label{proof:prop:A-equiv-iff-iii}  
		the operator 
		$A^{-\beta} \At^{2\betat} A^{-\beta} - \id_E$ 
		is a Hilbert--Schmidt operator on $E$. 
	\end{enumerate}
	
	It remains to prove that conditions 
	\ref{proof:prop:A-equiv-iff-i} and \ref{proof:prop:A-equiv-iff-iii} 
	are equivalent to 
	condition~\ref{prop:A-equiv-iff-b} of the proposition. 
	By Lemma~\ref{lem:iff-A}\ref{lem:iff-A-HS}, 
	applied for 
	the pair of operators $A, \At^\delta$, 
	\ref{prop:A-equiv-iff-b} 
	is equivalent to  
	$(\At^\delta)^\gamma A^{-\gamma} 
	= \At^{\delta\gamma} A^{-\gamma}$ 
	being an isomorphism on~$E$ 
	and 
	$A^{-\gamma}(\At^{\delta})^{2\gamma} A^{-\gamma} - \id_E 
	= 
	A^{-\gamma}\At^{2\delta\gamma} A^{-\gamma} - \id_E\in\cL_2(E)$ 
	for $\gamma\in[-\beta,\beta]$. 
	The choice $\gamma:=\beta$ 
	shows that 
	$\hdot{2\beta}{A} 
	\cong 
	\hdot{2\betat}{\At}$ 
	and 
	$A^{-\beta}\At^{2\betat} A^{-\beta} - \id_E \in \cL_2(E)$, i.e., 
	\ref{proof:prop:A-equiv-iff-i} and \ref{proof:prop:A-equiv-iff-iii} hold. 
	Conversely, if 
	\ref{proof:prop:A-equiv-iff-i} and~\ref{proof:prop:A-equiv-iff-iii} 
	are satisfied, 
	then by Lemma~\ref{lem:beta-gamma}\ref{lem:beta-gamma-HS} 
	we obtain that 
	$\At^{\delta\gamma} A^{-\gamma}$ 
	is an isomorphism on~$E$ 
	and 
	$A^{-\gamma}\At^{2\delta\gamma} A^{-\gamma} - \id_E \in \cL_2(E)$ 
	for all $\gamma\in[-\beta,\beta]$. 
	Thus, 
	\ref{prop:A-equiv-iff-b} is satisfied by 
	Lemma~\ref{lem:iff-A}\ref{lem:iff-A-HS}. 
	
	Since
	$A^{\eta-1} \At^\delta A^{-\eta} - \id_E 
	= A^{\eta-1} (\At^\delta - A) A^{-\eta}$, 
	the condition
	$\At^{ \delta } - A \in 
	\cL_2\bigl( \hdot{2\eta}{A}; \hdot{2(\eta-1)}{A} \bigr)$  
	implies that   
	$S_\eta 
	:= 
	A^{\eta-1} \At^\delta A^{-\eta} 
	- \id_E  \in \cL_2(E)$ 
	for all $\eta\in\frakN_\beta$. 
	Furthermore, 
	if 
	also 
	$\At^{ \delta } - A$ 
	is in  
	$\cL\bigl( \hdot{2\delta \eta}{\At}; \hdot{2\delta(\eta-1)}{\At} \bigr) 
	= 
	\cL\bigl( \hdot{2\eta}{\At^\delta}; \hdot{2(\eta-1)}{\At^\delta} \bigr) $   
	for $\eta\in\{1,\beta\}$, 
	then 
	by Lemma~\ref{lem:iff-A}\ref{lem:iff-A-iso} 
	(applied for the pair of operators $A,\At^\delta$) 
	$\At^{\delta\gamma} A^{-\gamma}$ is an isomorphism on $E$ for all 
	$\gamma\in[-\beta,\beta]$. 	
	Therefore, 
	$\id_E + S_\eta 
	= 
	A^{\eta-1} \At^{-\delta(\eta-1)} \At^{\delta \eta} A^{-\eta}$ 
	is invertible on $E$, and 
	\ref{prop:A-equiv-iff-b} holds 
	for the choice $U_\eta=\id_E$ for all 
	$\eta\in\frakN_\beta$. 
\end{proof} 

By applying Proposition~\ref{prop:A-equiv} 
for the pair of measures 
$\widehat{\mu}:=\normal\bigl(m,\widehat{A}^{-2}\bigr)$ 
and $\mut = \normal\bigl(\mt, \At^{-2\betat}\bigr)$, 
where ${\widehat{A} := A^\beta}$\!, 
we also obtain a corresponding result 
which includes the case 
that $\beta\in(0,1)$.  

\begin{corollary}\label{cor:A-equiv} 
	Under the assumptions of Proposition~\ref{prop:A-equiv} 
	on $m,\mt,A,\At$, and for   
	$\beta,\betat\in\bbR_+$ such that  
	$A^{-2\beta}\!, \At^{-2\betat}$ 
	have finite traces,  
	the Gaussian measures 
	$\mu = \normal\bigl( m, A^{-2\beta} \bigr)$ 
	and  
	$\mut = \normal\bigl( \mt, \At^{-2\betat} \bigr)$ 
	are equivalent if and only if 
	\begin{enumerate*}[label={\normalfont(\alph*)}]
		\item\label{cor:A-equiv-a} 
		$m - \mt \in \hdot{2\beta}{A}$ 
		and 
		\item\label{cor:A-equiv-b}  
		there exist an orthogonal operator $U\in\cL(E)$ and 
		$S\in\cL_2(E)$
		such that 
		$\At^{\betat} A^{-\beta} = U (\id_{E}+S)$ and  
		$\id_{E}+S$ is invertible on $E$. 
	\end{enumerate*} 
\end{corollary}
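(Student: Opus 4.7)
The plan is to reduce the corollary to Proposition~\ref{prop:A-equiv} by the device already hinted at just above the statement: set $\widehat{A} := A^\beta$ and apply the proposition with $\widehat{\beta} := 1$ in place of $\beta$. The point is that the restriction $\beta \in [1,\infty)$ in the proposition lives on the ``outer'' exponent, so forcing that exponent to equal~$1$ buys us full freedom for~$\beta$ on the base operator side.

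First I would verify that $\widehat{A} = A^\beta$ satisfies the standing hypotheses of Proposition~\ref{prop:A-equiv}. Working in the eigenbasis $\{e_j\}_{j\in\bbN}$ of~$A$ via \eqref{eq:def:Abeta}, the operator $\widehat{A}$ is diagonal with strictly positive eigenvalues $\lambda_j^\beta$ that still accumulate only at $\infty$. Consequently $\widehat{A}$ is densely defined (its domain contains the span of $\{e_j\}_{j\in\bbN}$), self-adjoint, positive definite, and $\widehat{A}^{-1} = A^{-\beta}$ inherits compactness from the decay of its eigenvalues. Moreover $\widehat{A}^{-2\widehat{\beta}} = \widehat{A}^{-2} = A^{-2\beta}$, which is of trace class by assumption, so the Gaussian measures $\widehat{\mu} := \normal(m, \widehat{A}^{-2\widehat{\beta}})$ and $\mu = \normal(m, A^{-2\beta})$ coincide. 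Applying Proposition~\ref{prop:A-equiv} to the pair $(\widehat{A}, \At)$ with exponents $(\widehat{\beta}, \widetilde{\beta}) = (1, \widetilde{\beta})$ is thus legal.

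From the proposition, equivalence of $\mu = \widehat{\mu}$ and $\widetilde{\mu}$ is then characterized by conditions~\ref{prop:A-equiv-iff-a} and~\ref{prop:A-equiv-iff-b} of that statement. For~\ref{prop:A-equiv-iff-a}, the Cameron--Martin-type condition becomes $m - \widetilde{m} \in \hdot{2\widehat{\beta}}{\widehat{A}} = \hdot{2}{A^\beta}$; but the spectral definition \eqref{eq:def:hdotA} gives $\norm{\psi}{2,A^\beta}^2 = \sum_{j} \lambda_j^{2\beta} |\scalar{\psi, e_j}{E}|^2 = \norm{\psi}{2\beta,A}^2$, so this is exactly $m - \widetilde{m} \in \hdot{2\beta}{A}$, i.e. condition~\ref{cor:A-equiv-a} of the corollary. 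For~\ref{prop:A-equiv-iff-b}, the relevant index set is $\frakN_{\widehat{\beta}} = \frakN_1 = \{1\}$ (see \eqref{eq:frakNset}), so the family of conditions collapses to the single requirement at $\eta = 1$, which reads $\widehat{A}^{0} \At^{\widetilde{\beta}/\widehat{\beta}} \widehat{A}^{-1} = \At^{\widetilde{\beta}} A^{-\beta} = U (\id_E + S)$ with $U$ orthogonal, $S \in \cL_2(E)$, and $\id_E + S$ boundedly invertible on $E$. This is precisely~\ref{cor:A-equiv-b}.

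There is no real obstacle here; the only point that requires a line of verification is the identification $\hdot{2}{A^\beta} = \hdot{2\beta}{A}$ with equal norms, together with the elementary check that the fractional power $A^\beta$ is itself a legitimate ``base operator'' in the sense of Proposition~\ref{prop:A-equiv}. Both are immediate from the spectral calculus and have been recorded above, so the proof amounts to writing out this translation.
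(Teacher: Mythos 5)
Your proposal is correct and follows exactly the route the paper intends: the paper's (implicit) proof of the corollary is precisely the application of Proposition~\ref{prop:A-equiv} to $\widehat{\mu}=\normal(m,\widehat{A}^{-2})$ with $\widehat{A}:=A^{\beta}$ and outer exponent $1$, so that $\frakN_{1}=\{1\}$ collapses condition~\ref{prop:A-equiv-iff-b} to the single requirement on $\At^{\betat}A^{-\beta}$. Your additional checks (that $A^{\beta}$ satisfies the standing hypotheses and that $\hdot{2}{A^{\beta}}=\hdot{2\beta}{A}$ isometrically) are exactly the details the paper leaves to the reader.
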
 

\begin{remark} 
	At first glance, 
	condition~\ref{cor:A-equiv-b} of Corollary~\ref{cor:A-equiv} 
	seems 
	easier compared to~\ref{prop:A-equiv-iff-b} 
	of Proposition~\ref{prop:A-equiv}. 
	We note that the advantage of the latter 
	is that, whenever $\beta=\betat$,  
	it is formulated solely in terms of the base operator $\At$ 
	(and not of powers thereof).  
\end{remark} 

\subsection{Uniformly asymptotically optimal linear prediction}
\label{subsec:gm-on-hs:kriging} 

Throughout this subsection, we let 
$(\cX,d_\cX)$ be a connected, compact metric space 
with positive, finite Borel measure $\nu_\cX$  
(see Subsection~\ref{subsec:notation}) 
and we consider $E = L_2(\cX,\nu_\cX)$.  
Suppose that $\GP\from \cX \times \Omega \to\bbR$ 
is a square-integrable random field with mean 
$m\in L_2(\cX,\nu_\cX) $ and 
covariance operator $A^{-2\beta}$\!, 
where 
$A\from\scrD(A)\subseteq L_2(\cX,\nu_\cX) \to L_2(\cX,\nu_\cX)$ 
is as described 
in Subsection~\ref{subsec:gm-on-hs:auxiliary}. 
Let $\mu = \normal\bigl( m, A^{-2\beta} \bigr)$ 
be the Gaussian measure corresponding to $\GP$ 
and define $\pE[\,\cdot\,]$ as the expectation under $\mu$. 
That is, for 
a random variable $Y\from\Omega\to L_2(\cX,\nu_\cX)$  
with distribution $\mu$ 
and a Borel measurable function 
$g \from  L_2(\cX,\nu_\cX) \to \bbR$, 
we have that  
$m = \pE[Y] := \int_{L_2(\cX,\nu_\cX)} y \,\rd\mu(y)$ 
and, provided that the integral 
$\int_{L_2(\cX,\nu_\cX)} g(y) \,\rd\mu(y)$ exists, 
we define  
$\pE[g(Y)] := \int_{L_2(\cX,\nu_\cX)} g(y) \,\rd\mu(y)$. 

To characterize optimal linear prediction for $\GP$, 
we introduce the centered process 
$\GP^0 := \GP - m$ 
and the vector space $\cZ^0$  
consisting of all linear combinations of the form 
${\alpha_1 \GP^0(x_1) + \ldots + \alpha_K \GP^0(x_K)}$, 
where $K\in\bbN$ and $\alpha_j \in \bbR$, $x_j \in \cX$ 
for all $j\in\{1,\ldots,K\}$.
We then define the Hilbert space~$\cH^0$ as the closure of $\cZ^0$ 
with respect to the norm $\norm{\,\cdot\,}{\cH^0}$ 
induced by the $L_2(\Omega,\bbP)$ inner product,  
\[ 
	\textstyle 
	\biggl( 
	\sum\limits_{j=1}^{K}  \alpha_j  \GP^0(x_j), 
	\, 
	\sum\limits_{k=1}^{K'} \alpha_k' \GP^0(x_k')
	\biggr)_{\cH^0} 
	:= 
	\sum\limits_{j=1}^{K}  
	\sum\limits_{k=1}^{K'} \alpha_j \alpha_k' 
	\pE\bigl[ \GP^0(x_j) \GP^0(x_k') \bigr].
\] 
Since any observation or linear predictor of $\GP$ 
can be represented as $h = c + h^0 $ for $c\in\bbR$ 
and $h^0\in \cH^0$\!, 
we introduce the Hilbert space $\cH$ 
as the direct sum $\cH := \bbR\oplus \cH^0$ 
equipped with the graph norm 
$\|h\|_\cH^2 = |c|^2 + \|h^0\|_{\cH^0}^2$. 
Suppose now that we want to predict $h\in\cH$ 
based on a set of observations $\{y_{nj}\}_{j=1}^n$,  
where $y_{nj} = c_{nj} + y_{nj}^0$ for $c_{nj}\in\bbR$ 
and $y_{nj}^0\in\cH^0$\!. 
Then, the best linear 
predictor 
(aka.\ kriging predictor, see e.g.\ \citep[][Section~1.2]{stein99} 
and \citep[][Section~2]{bk-kriging})  
of $h$ based on these observations 
is the $\cH$-orthogonal projection of $h$ onto the subspace 
\begin{equation}\label{eq:def:cHn}  
	\textstyle 
	\cH_n := 
	\bbR \oplus \cH^0_n 
	= 
	\biggl\{ \alpha_0 + \sum\limits_{j=1}^n \alpha_j y_{nj}^0 
	: \alpha_0, \ldots, \alpha_n \in \bbR \biggr\}, 
	\qquad
	\cH^0_n  
	:= 
	\operatorname{span}\bigl\{y_{nj}^0 \bigr\}_{j=1}^n. 
\end{equation} 
That is, the best 
linear predictor 
$h_n\in\cH_n$ satisfies
\begin{equation}\label{eq:def:hn}
	\textstyle 
	\scalar{ h_n - h, g_n}{\cH} 
	= 0
	\quad 
	\forall 
	g_n \in \cH_n, 
	\quad\;\; 
	\text{and} 
	\qquad 
	\| h_n - h \|_{\cH}
	= 
	\inf_{g_n \in \mathcal{H}_n} \| g_n - h \|_{\cH}. 
\end{equation}  

The question 
is now what happens 
if we replace $h_n$ 
with another linear predictor $\widetilde{h}_n$, 
which is computed based on an incorrect model. 
Specifically, let 
$\mut = \normal\bigl( \mt, \At^{-2\betat} \bigr)$ 
be a second Gaussian measure with corresponding expectation operator 
$\widetilde{\pE}[\,\cdot\,]$, 
and let $\widetilde{h}_n$ 
be the best 
linear predictor for the model $\mut$. 
We are interested in the quality 
of $\widetilde{h}_n$ 
compared to~$h_n$ 
asymptotically as $n\to\infty$. 
For this purpose, 
we assume that the 
set of observations 
$\bigl\{ \{ y_{nj} \}_{j=1}^n : n\in\bbN \bigr\}$ 
yields $\mu$-consistent kriging prediction, i.e.,  
\begin{equation}\label{eq:ass:Hn-dense}
	\lim\limits_{n\to\infty} 
	\pE\bigl[ (h_n - h)^2 \bigr]
	= 
	\lim\limits_{n\to\infty} 
	\| h_n - h \|_{\cH}^2   
	= 0, 
\end{equation}
and we let $\cS^\mu_{\mathrm{adm}}$ 
denote the set of all admissible sequences 
of observations which provide $\mu$-consistent kriging   
prediction:  
\begin{equation}\label{eq:def:S-adm} 
	\begin{split} 
		\cS^\mu_{\mathrm{adm}}  
		:= 
		\bigl\{ \{\cH_n\}_{n\in\bbN} \; \bigl| \; 
		&\forall n \in \bbN:  
		\cH_n \text{ is as in \eqref{eq:def:cHn} with}\,
		\dim(\cH_n^0)=n,  
		\\
		&\forall h\in\cH:  
		\{h_n\}_{n\in\bbN}   
		\text{ defined by \eqref{eq:def:hn}		
			satisfy \eqref{eq:ass:Hn-dense}} \bigr\}.  
	\end{split} 
\end{equation} 
By combining the results of 
Lemmas~\ref{lem:beta-gamma} and~\ref{lem:iff-A} 
with \citep[][Theorem~3.8]{bk-kriging} 
we obtain the following result 
on uniformly asymptotically optimal linear prediction when 
misspecifying $\mu$ by $\mut$. 

\begin{proposition}\label{prop:A-kriging} 
	Let $h_n, \widetilde{h}_n$ denote the best linear
	predictors of $h\in\cH$ based on~$\cH_n$~and the measures
	${\mu = \normal\bigl( m, A^{-2\beta} \bigr)}$ 
	and
	$\mut = \normal\bigl( \mt, \At^{-2\betat} \bigr)$, respectively. 
	Here, assume that 
	$m, \mt \in  L_2(\cX,\nu_\cX)$, 
	$A\from\scrD(A) \subseteq  L_2(\cX,\nu_\cX) \to  L_2(\cX,\nu_\cX)$ and 
	$\At\from\scrD(\At)\subseteq  L_2(\cX,\nu_\cX) \to  L_2(\cX,\nu_\cX)$ 
	are densely defined, self-adjoint, positive definite linear operators  
	with compact inverses on $L_2(\cX,\nu_\cX)$. 
	In addition, $\beta,\betat \in \bbR_+$ 
	are such that 
	$A^{-2\beta}$ and $\At^{-2\betat}$ 
	have finite traces on $ L_2(\cX,\nu_\cX)$ 
	and $\delta:=\nicefrac{\betat}{\beta}$. 
	
	\begin{enumerate}[label={\normalfont\Roman*.}, itemsep=3pt] 
		\item\label{prop:A-kriging-I} 
		Set $\cH_{-n}:=\bigl\{ h\in\cH:\pE\bigl[ (h_n - h)^2 \bigr]> 0 \bigr\}$. 
		Any of the 
		following four asymptotic statements,
		\begin{align}
			\lim_{n\to\infty}
			\sup_{h\in \cH_{-n}}
			\frac{
				\pE\bigl[ ( \widetilde{h}_n - h)^2 \bigr]
			}{
				\pE\bigl[ ( h_n - h)^2 \bigr]
			} 
			&= 1, &
			\lim_{n\to\infty}
			\sup_{h\in \cH_{-n}}
			\frac{
				\widetilde{\pE}\bigl[ ( h_n - h)^2 \bigr]
			}{
				\widetilde{\pE}\bigl[ ( \widetilde{h}_n - h)^2 \bigr]
			} 
			&= 1, 
			\label{eq:prop:A-kriging-1} 
			\\	
			\lim_{n\to\infty}
			\sup_{h\in \cH_{-n}}
			\left|
			\frac{
				\widetilde{\pE}\bigl[ ( h_n - h)^2 \bigr]
			}{
				\pE\bigl[ ( h_n - h)^2 \bigr]
			} - c 
			\right| 
			&= 0, &
			\;\;\;  
			\lim_{n\to\infty}
			\sup_{h\in \cH_{-n}}
			\left|
			\frac{
				\pE\bigl[ ( \widetilde{h}_n - h)^2 \bigr]
			}{
				\widetilde{\pE}\bigl[ ( \widetilde{h}_n - h)^2 \bigr]
			} - \frac{1}{c} 
			\right| 
			&= 0, 
			\label{eq:prop:A-kriging-2} 
		\end{align} 
		holds for all 
		$\{\cH_n\}_{n\in\bbN} \in \cS^\mu_{\mathrm{adm}}$ 
		(and in \eqref{eq:prop:A-kriging-2} for some $c\in\bbR_+$)
		if and only if 
		\begin{enumerate}[label={\normalfont(\alph*)}]
			\item\label{prop:A-kriging-iff-a} 
			the difference of the means 
			satisfies $m - \mt \in \hdot{2\beta}{A}$\!; 
			\item\label{prop:A-kriging-iff-b} 
			there exist $c\in\bbR_+$,   
			an orthogonal operator 
			$W$ on $L_2(\cX,\nu_\cX)$, 
			and $K\in\cK(L_2(\cX,\nu_\cX))$ 
			such that  
			$c^{\nicefrac{1}{2}} \At^{\betat} A^{-\beta} 
			= W (\id_{L_2(\cX,\nu_\cX)} + K)$ 
			and $\id_{L_2(\cX,\nu_\cX)} + K$ is invertible. 
		\end{enumerate} 
		In this case, the constant $c\in\bbR_+$ in condition \ref{prop:A-kriging-iff-b} 
		coincides with that in \eqref{eq:prop:A-kriging-2}.  
		
		\item\label{prop:A-kriging-II} 
		For $\beta\in[1,\infty)$, 
		condition \ref{prop:A-kriging-iff-b} 
		is equivalent to requiring that 
		there exists $c\in\bbR_+$ such that 
		for all $\eta\in\frakN_\beta$, 
		where $\frakN_\beta$ is defined as in \eqref{eq:frakNset}, 
		there exist   
		an orthogonal operator 
		$W_\eta$ on $L_2(\cX,\nu_\cX)$  
		and 
		${K_\eta \in \cK(L_2(\cX,\nu_\cX))}$   
		such that 
		$c^{\frac{1}{2\beta}} A^{\eta-1} \At^\delta A^{-\eta} 
		= W_\eta ( \id_{L_2(\cX,\nu_\cX)} + K_\eta )$ 
		and 
		$ \id_{L_2(\cX,\nu_\cX)}+K_\eta$ is invertible 
		on $L_2(\cX,\nu_\cX)$. 
		This is satisfied, whenever the following holds:  
		\[
			\forall \eta\in\frakN_\beta: 
			\quad 
			c^{\frac{1}{2\beta}} \At^{ \delta } - A \in 
			\cK\bigl( \hdot{2\eta}{A}; \hdot{2(\eta-1)}{A} \bigr) 
			\cap
			\cL\bigl( \hdot{2\delta \eta}{\At}; \hdot{2\delta(\eta-1)}{\At} \bigr). 
		\] 
	\end{enumerate} 
\end{proposition}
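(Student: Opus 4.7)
The plan is to apply \citep[][Theorem~3.6]{bk-kriging}, which characterizes uniform asymptotic optimality of linear prediction in terms of two conditions: (i) the means must differ by a Cameron--Martin element, and (ii) an operator built from the covariance operators, up to a positive multiplicative constant $c$, satisfies a compactness condition (the compact analog of the Hilbert--Schmidt requirement in the Feldman--H\'ajek theorem). The constant $c$ coincides with the asymptotic ratio appearing in~\eqref{eq:prop:A-kriging-2}.

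For Part~\ref{prop:A-kriging-I}, condition~\ref{prop:A-kriging-iff-a} is verbatim the mean-condition of Theorem~3.6. To translate the compactness requirement into the polar-decomposition form~\ref{prop:A-kriging-iff-b}, I would apply the compact version of Lemma~\ref{lem:HS-compact-property} (part~II) to the operator $T := c^{-\nicefrac{1}{2}} \At^{\betat} A^{-\beta}$: the identity $T^*T = c^{-1} A^{-\beta}\At^{2\betat}A^{-\beta}$ shows that $T^*T - \id_E$ being compact is equivalent to the covariance compactness condition delivered by Theorem~3.6, and this is further equivalent to the existence of an orthogonal $W$ and a compact $K$ with $\id_E + K$ invertible such that $T = W(\id_E + K)$. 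The isomorphism property of $T$---equivalently, $\hdot{2\beta}{A} \cong \hdot{2\betat}{\At}$---is already encoded in the hypotheses of Theorem~3.6 for the asymptotic-optimality statement.

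For Part~\ref{prop:A-kriging-II}, I would apply Lemma~\ref{lem:iff-A}\ref{lem:iff-A-compact} to the pair $(A,B)$, where $B := c^{-\nicefrac{1}{2\beta}}\At^{\delta}$ and $\delta := \nicefrac{\betat}{\beta}$. The operator $B$ inherits dense definition, self-adjointness, positive definiteness, and compact invertibility from $\At$. Choosing $\gamma = \beta$ in the equivalence of Lemma~\ref{lem:iff-A}\ref{lem:iff-A-compact} recovers exactly~\ref{prop:A-kriging-iff-b}, since $B^\beta A^{-\beta} = c^{-\nicefrac{1}{2}}\At^{\betat}A^{-\beta}$, while the alternative characterization running over $\eta \in \frakN_\beta$ becomes, via $A^{\eta-1} B A^{-\eta} = c^{-\nicefrac{1}{2\beta}} A^{\eta-1}\At^{\delta}A^{-\eta}$, precisely the reformulation stated in Part~\ref{prop:A-kriging-II}. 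The sufficient condition then follows analogously to the last step in the proof of Proposition~\ref{prop:A-equiv}: Lemma~\ref{lem:iff-A}\ref{lem:iff-A-iso} applied to $(A,B)$---using that $\hdot{2\eta}{B} = \hdot{2\delta\eta}{\At}$ with equivalent norms---yields the isomorphism property of $B^\gamma A^{-\gamma}$ on all of $[-\beta,\beta]$, and the assumed $\cK$-boundedness of $B - A$ between the indicated spaces provides the compactness part.

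The main obstacle will be the careful bookkeeping of the constant $c$: the lemmas of Section~\ref{section:gm-on-hs} carry no multiplicative scaling, so all invocations must be routed through the rescaled operator $B = c^{-\nicefrac{1}{2\beta}}\At^{\delta}$, and one must verify that the $c$ appearing in~\ref{prop:A-kriging-iff-b} is precisely the constant quantifying the asymptotic ratio in~\eqref{eq:prop:A-kriging-2}. A secondary point is to confirm that Theorem~3.6 of~\citep{bk-kriging} indeed delivers the compact---rather than Hilbert--Schmidt---form of the operator condition, which is exactly what permits asymptotic optimality in the absence of measure equivalence.
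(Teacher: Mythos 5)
Your proposal is correct and follows essentially the same route as the paper: both reduce to \cite[Theorem~3.6]{bk-kriging}, then handle Part~\ref{prop:A-kriging-II} via Lemmas~\ref{lem:beta-gamma}\ref{lem:beta-gamma-compact} and~\ref{lem:iff-A}\ref{lem:iff-A-compact}/\ref{lem:iff-A-iso} applied to the rescaled pair $A$, $c^{-\nicefrac{1}{2\beta}}\At^{\delta}$, and Part~\ref{prop:A-kriging-I} via the polar-decomposition characterization of Lemma~\ref{lem:HS-compact-property}.II (the paper routes this through the analogue of Corollary~\ref{cor:A-equiv} with $\widehat{A}:=A^\beta$, which amounts to the same thing). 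The only cosmetic point is that Lemma~\ref{lem:HS-compact-property}.II is stated for $TT^*-\id_E$ while you work with $T^*T-\id_E$; for invertible $T$ these are interchangeable, as the paper itself notes via \citep[Lemma~6.3.1(ii)]{Bogachev1998}.
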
 

\begin{proof} 
	By \cite[][Theorem~3.8 and Lemma~B.1]{bk-kriging} 
	any of the assertions in 
	\eqref{eq:prop:A-kriging-1} or \eqref{eq:prop:A-kriging-2}  
	holds for every  
	$\{\cH_n\}_{n\in\bbN} \in \cS^\mu_{\mathrm{adm}}$ 
	and 
	for some constant $c\in\bbR_+$ 
	if and only if 
	\begin{enumerate}[label={\normalfont(\roman*)}]
		\item\label{proof:prop:A-kriging-iff-i} 
		the Cameron--Martin spaces 
		$\hdot{2\beta}{A}$ 
		and 
		$\hdot{2\betat}{\At}$ 
		are norm equivalent Hilbert spaces;  
		\item\label{proof:prop:A-kriging-iff-ii} 
		the difference of the means satisfies 
		$m - \mt \in \hdot{2\beta}{A}$\!; and 
		\item\label{proof:prop:A-kriging-iff-iii} 
		$A^{-\beta} \At^{2\betat} A^{-\beta} - c^{-1} \id_E 
		= 
		c^{-1} \bigl( A^{-\beta} \bigl( c\At^{2\betat} \bigr) A^{-\beta} - \id_E \bigr)$ 
		is compact on $E$. 
	\end{enumerate} 
	The proof for \ref{prop:A-kriging-II}, 
	when $\beta\in[1,\infty)$, can then be completed 
	as in the proof of Proposition~\ref{prop:A-equiv}, 
	namely by using Lemma~\ref{lem:beta-gamma}\ref{lem:beta-gamma-compact} 
	and 
	Lemma~\ref{lem:iff-A}\ref{lem:iff-A-compact}/\ref{lem:iff-A-iso} 
	for the pair of operators~$A$ and 
	$c^{\frac{1}{2\beta}}  \At^\delta$\!. 
	Finally, the general statement \ref{prop:A-kriging-I} 
	for $\beta,\betat\in\bbR_+$ follows 
	similarly as Corollary~\ref{cor:A-equiv}. 
\end{proof}

\section{Some explicit choices for the base operators}
\label{section:examples} 

In this section we illustrate the abstract results of 
Section~\ref{section:gm-on-hs}
by two first examples 
before discussing their 
implications for 
generalized Whittle--Mat\'ern 
fields in the next section: 
In Subsection~\ref{subsec:examples:abstract} 
we consider 
the case that the base operators 
$A$ and $\At$ diagonalize with respect to  
the same eigenbasis $\{e_j\}_{j\in\bbN}$ for~$E$. \linebreak
This setting applies to 
classical Whittle--Mat\'ern fields 
with constant coefficients, which solve SPDEs 
of the form \eqref{eq:statmodel}   
on a bounded domain $\cD\subset\bbR^d$. 
We subsequently discuss this example 
in Subsection~\ref{subsec:wexamples:stat-wm}. 

\subsection{Operators with the same eigenbasis}
\label{subsec:examples:abstract}

We note that the scope for applications of the 
following corollary is considerably wider 
than the classical Whittle--Mat\'ern 
class discussed in 
Subsection~\ref{subsec:wexamples:stat-wm}. 
For instance, it can be used for 
periodic random fields 
on $\cX = [0,1]^d$ as considered by 
Stein~\cite{stein97}, 
random fields on 
the sphere $\cX=\mathbb{S}^2$ defined 
via the spherical harmonics, 
see e.g.\ \cite{Guinness2016} and \cite[Section~6.3]{bk-kriging}, 
or more generally Gaussian processes 
on compact Riemannian manifolds 
defined via the eigenfunctions 
of the Laplace--Beltrami operator~\cite{Borovitskiy2020}. 

\begin{corollary}\label{cor:same-eigenbasis}  
	Let $A\from\scrD(A) \subseteq E \to E$ and 
	$\At\from\scrD(\At)\subseteq E \to E$ 
	be two densely defined, self-adjoint, 
	positive definite linear operators 
	with compact inverses on $E$. 
	In addition, 
	assume that 
	$A$ and $\At$ diagonalize 
	with respect to the same 
	eigenbasis $\{e_j\}_{j\in\bbN}$ for $E$, i.e., 
	there exist corresponding eigenvalues 
	$\bbR_+ \ni \lambda_j, \widetilde{\lambda}_j \to \infty$ 
	(as $j\to\infty$) 
	such that $A e_j = \lambda_j e_j$ 
	and $\At e_j = \lambdat_j e_j$ 
	for all $j\in\bbN$. 
	Let $m,\mt\in E$, and 
	assume that 
	$\beta,\delta \in \bbR_+$ are such that 
	$A^{-2\beta}$ and $\At^{-2\delta\beta}$ 
	have finite traces on~$E$. 
	Then, the Gaussian measures 
	$\mu = \normal\bigl( m, A^{-2\beta} \bigr)$ 
	and
	$\mut = \normal\bigl( \mt, \At^{-2\delta\beta} \bigr)$
	satisfy the following: 
	\begin{enumerate}[label={\normalfont\Roman*.}]
		\item\label{cor:same-eigenbasis:CM} 
		The Cameron--Martin spaces for 
		$\mu$ and $\mut$ are isomorphic, 
		with equivalent norms, 
		if and only if there exist $c_-,c_+\in\bbR_+$ 
		such that $c_j:= \lambdat_j^\delta \lambda_j^{-1} \in [c_-,c_+]$ 
		for all $j\in\bbN$. 
		\item\label{cor:same-eigenbasis:equivalence}  
		The measures $\mu$ and $\mut$ are equivalent 
		if and only if 
		$m-\mt\in\hdot{2\beta}{A}$ 
		and 
		$\sum_{j\in\bbN} (c_j - 1)^2 < \infty$. 
		\item\label{cor:same-eigenbasis:kriging} 
		Any of the four assertions 
		in \eqref{eq:prop:A-kriging-1}, \eqref{eq:prop:A-kriging-2} 
		holds for 
		all $\{\cH_n\}_{n\in\bbN}\in\cS^\mu_{\mathrm{adm}}$ 
		and in \eqref{eq:prop:A-kriging-2} 
		for some $c\in\bbR_+$ 
		if and only if 
		$m-\mt\in\hdot{2\beta}{A}$  
		and 
		$\lim_{j\to\infty} c_j = \hat{c}$ for some $\hat{c}\in\bbR_+$. 
		Then, $c = \hat{c}^{-2\beta}$\!. 
	\end{enumerate} 
\end{corollary} 

\begin{proof} 
	Define  
	$c_- 
	:= 
	\inf_{j\in\bbN} \lambdat_j^\delta\lambda_j^{-1} 
	\in[0,\infty)$ 
	and 
	$c_+ 
	:= 
	\sup_{j\in\bbN} \lambdat_j^\delta\lambda_j^{-1}
	\in(0,\infty]$.  
	Then, we obtain that 
	$\norm{\phi}{2\delta\beta,\At} 
	\leq 
	c_+^{\beta} \norm{\phi}{2\beta,A}$ 
	and 
	$c_-^{\beta} 
	\norm{\psi}{2\beta,A} 
	\leq 
	\norm{\psi}{2\delta\beta,\At}$ 
	for all $\phi\in\hdot{2\beta}{A}$ and $\psi\in\hdot{2\delta\beta}{\At}$\!. 
	Therefore, 
	the Cameron--Martin spaces for $\mu$ and $\mut$, 
	see~\eqref{eq:general-CM-space}, 
	are isomorphic 
	if and only if 
	$c_->0$ and $c_+<\infty$. 
	
	Note that $(c_j)_{j\in\bbN}\subset\bbR_+$ 
	and $\sum_{j\in\bbN}(c_j-1)^2<\infty$ 
	imply that 
	$0<c_-\leq c_j\leq c_+<\infty$
	for all~$j\in\bbN$. 
	Thus,~\ref{cor:same-eigenbasis:equivalence} 
	follows from~\ref{cor:same-eigenbasis:CM} 
	and the Feldman--H\'ajek theorem, 
	Theorem~\ref{thm:feldman-hajek},  
	since by the mean value~theo-\vspace{-3mm}\linebreak\pagebreak
	
	\noindent
	rem  
	for $t\mapsto t^{2\beta}$, 
	$\| A^{-\beta} \At^{2\delta\beta} A^{-\beta} - \id_E \|_{\cL_2(E)}^2
	= 
	\sum_{j\in\bbN} 
	\bigl( c_j^{2\beta} - 1 \bigr)^2
	=    
	4\beta^2 
	\sum_{j\in\bbN}  
	\xi_j^{2(2\beta-1)} 
	( c_j - 1 )^2$\!, 
	where $(\xi_j)_{j\in\bbN}$ 
	satisfy that 
	$\xi_j \in [\min\{c_j,1\}, \max\{c_j,1\}] 
	\subseteq [\min\{c_-,1\}, \max\{c_+,1\}]$ 
	for all $j\in\bbN$. 
	
	Finally, the assertion~\ref{cor:same-eigenbasis:kriging} 
	has already been observed in 
	\cite[Corollary~5.1]{bk-kriging};  
	there formulated in terms of the 
	ratio $\gammat_j/\gamma_j\to c$ 
	(as $j\to\infty$) 
	of the eigenvalues 
	$(\gammat_j)_{j\in\bbN}$ 
	and $(\gamma_j)_{j\in\bbN}$
	of the covariance operators $\CC=\At^{-2\delta\beta}$ 
	and $\cC = A^{-2\beta}$. 
	Thus,  
	$\gammat_j / \gamma_j 
	=  \lambdat_j^{-2\delta\beta} \lambda_j^{2\beta} 
	= c_j^{-2\beta}$ 
	and the claim follows. 
\end{proof}

\begin{remark} 
	Note that for  
	$\mu=\normal\bigl( 0, A^{-2\beta} \bigr)$ 
	and
	$\mut=\normal\bigl( 0,\At^{-2\delta\beta} \bigr)$ 
	the conditions 
	in all parts 
	of Corollary~\ref{cor:same-eigenbasis} 
	are independent of $\beta\in\bbR_+$. 
	This implies that once a property  
	(equivalent Cameron--Martin spaces, 
	equivalence of measures, 
	or uniformly asymptotically optimal linear prediction)
	is established for 
	$\mu,\mut$ 
	and a fixed $\beta=\beta_0\in\bbR_+$, 
	it follows also for all other meaningful values 
	of $\beta\in\bbR_+$ 
	so that $A^{-2\beta}\!,\At^{-2\delta\beta}$ 
	have finite traces. 
	Thus, in the case 
	that $A$ and $\At$ diagonalize with 
	respect to the same eigenbasis, 
	besides concluding the corresponding 
	property for $\beta\leq\beta_0$  
	(by means of Lemma~\ref{lem:beta-gamma}) 
	one obtains it also for $\beta>\beta_0$. 
	This observation 
	holds even for more general 
	base operators $A,\At$. 
	Specifically, if their fractional 
	powers commute, i.e., 
	$\scrD( \At^\vartheta A^r ) 
	= 
	\scrD( A^r \At^\vartheta )$ 
	and 
	$\At^\vartheta A^r \psi 
	= 
	A^r \At^\vartheta \psi$  
	for all $r,\vartheta\in\bbR$ and 
	$\psi\in\scrD( \At^\vartheta A^r )$, 
	then the operators $A^{\eta-1}\At^\delta A^{-\eta}$, 
	$\eta\in\frakN_\beta$, 
	appearing in the conditions of 
	Propositions~\ref{prop:A-equiv} and~\ref{prop:A-kriging} 
	simplify to $\At^\delta A^{-1}$ 
	and the conditions become independent of $\beta\in[1,\infty)$. 
\end{remark} 

\subsection{Whittle--Mat\'ern operators with constant coefficients} 
\label{subsec:wexamples:stat-wm}

We now discuss \emph{classical}  
Whittle--Mat\'ern fields 
solving the SPDE \eqref{eq:statmodel}   
on a bounded domain. 
To this end, 
let $\emptyset\neq\cD\subset\bbR^d$ be a  
connected, bounded 
and open domain, 
with Lipschitz boundary $\partial\cD$. 
Further,  
\begin{equation}\label{eq:L-stat}
	L v  := \bigl( -\Delta + \kappa^2 \bigr) v, 
	\qquad 
	v \in \scrD(L) := H^2(\cD) \cap H^1_0(\cD), 
\end{equation}
is the negative Laplacian, 
shifted by $\kappa^2\in[0,\infty)$ 
and augmented 
with homogeneous Dirichlet boundary 
conditions, 
see Appendix~\ref{app:subsec:pdes}. 
By Proposition~\ref{prop:wellposed} 
$L^{-2\beta}$ has a finite trace 
if and only if $\beta\in(\nicefrac{d}{4},\infty)$. 
Recall that for the 
SPDE \eqref{eq:statmodel} on $\bbR^d$  
this condition corresponds 
to a positive smoothness 
parameter $\nu=2\beta-\nicefrac{d}{2}\in\bbR_+$.  
For two classical Whittle--Mat\'ern fields 
with parameters $(\beta,\tau,\kappa)$ and $(\betat,\taut,\kappat)$, 
where $\tau,\taut$ scale the variances 
of the fields, cf.~\eqref{eq:model_ex2},  
we obtain the following result 
from Corollary~\ref{cor:same-eigenbasis}. 

\begin{corollary}\label{cor:stat-wm} 
	Let $d\in\bbN$, 
	$\beta,\betat\in(\nicefrac{d}{4},\infty)$,  
	$\tau,\taut\in\bbR_+$,  
	and let $L,\Lt$ be defined as in \eqref{eq:L-stat} 
	with shift parameters $\kappa^2\in[0,\infty)$ 
	and $\kappat^2\in[0,\infty)$, respectively. 
	Assume that 
	$m,\mt\in L_2(\cD)$ 
	and consider the Gaussian measures 
	$\mu=\normal\bigl( m,\tau^{-2} L^{-2\beta} \bigr)$ and 
	$\mut=\normal\bigl( \mt,\taut^{-2} \Lt^{-2\betat} \bigr)$ 
	on the Hilbert space $L_2(\cD)$.   
	\begin{enumerate}[label={\normalfont\Roman*.}]
		\item\label{cor:stat-wm:CM} 
		The Cameron--Martin spaces for 
		$\mu,\mut$  
		are isomorphic, 
		with equivalent norms, 
		if and only if $\beta=\betat$. 
		\item\label{cor:stat-wm:equivalence}  
		In dimension $d\leq 3$, 
		$\mu$ and $\mut$ are equivalent 
		if and only if 
		$\beta=\betat$, 
		$\tau=\taut$ 
		and $m-\mt\in\Hdot{2\beta}$\!. 
		In dimension $d\geq 4$, 
		$\mu$ and $\mut$ are equivalent 
		if and only if 
		$\beta=\betat$, 
		$\tau=\taut$, 
		$m-\mt\in\Hdot{2\beta}$ 
		and $\kappa^2=\kappat^2$\!. 
		\item\label{cor:stat-wm:kriging} 
		In every dimension $d\in\bbN$, 
		any of the four assertions 
		in \eqref{eq:prop:A-kriging-1}, \eqref{eq:prop:A-kriging-2} 
		holds for 
		every sequence $\{\cH_n\}_{n\in\bbN}\in\cS^\mu_{\mathrm{adm}}$ 
		and in \eqref{eq:prop:A-kriging-2} for some $c\in\bbR_+$ 
		if and only if 
		$\beta=\betat$ and $m-\mt\in\Hdot{2\beta}$\!. 
	\end{enumerate} 
\end{corollary} 

\begin{proof}
	Letting $(\widehat{\lambda}_j)_{j\in\bbN}$ denote the 
	eigenvalues of the negative Dirichlet 
	Laplacian $-\Delta$
	with corresponding eigenfunctions 
	$\{e_j\}_{j\in\bbN}$ 
	forming an orthonormal basis of $L_2(\cD)$, 
	we find that $\{e_j\}_{j\in\bbN}$  
	is also an eigenbasis 
	for $L$ and for $\Lt$, 
	with eigenvalues 
	$\lambda_j = \widehat{\lambda}_j + \kappa^2$ 
	and $\lambdat_j = \widehat{\lambda}_j + \kappat^2$, respectively. 
	The asymptotic behavior 
	$\widehat{\lambda}_j \eqsim j^{\nicefrac{2}{d}}$, 
	see \eqref{eq:lambdaj}, 
	shows that 
	${c_j := \taut^{\nicefrac{1}{\beta}} \tau^{-\nicefrac{1}{\beta}} 
		\lambdat_j^{\delta} 
		\lambda_j^{-1} \in [c_-, c_+]}$ 
	holds for some ${c_-,c_+\in\bbR_+}$   
	and all $j\in\bbN$ 
	if and only if $\delta=\nicefrac{\betat}{\beta}=1$; 
	then,  
	$\lim_{j\to\infty} c_j = \bigl( \frac{\taut}{\tau} \bigr)^{\nicefrac{1}{\beta}}$ 
	so that $\tau=\taut$ is necessary for $\mu\sim\mut$. 
	Then, 
	$(c_j - 1)^2 \eqsim \bigl( \kappat^2 - \kappa^2 \bigr)^2 j^{-\nicefrac{4}{d}}$ 
	and all assertions  
	follow from Corollary~\ref{cor:same-eigenbasis}.\pagebreak 
\end{proof}

\section{Generalized Whittle--Mat\'ern  
	fields on Euclidean domains}
\label{section:wm-D} 

Throughout this section, 
let $\cD\subset\bbR^d$  
be a nonempty, connected, bounded and open domain 
with Lipschitz continuous 
boundary $\partial\cD$ 
(see~Definition~\ref{def:smooth-bdry} 
in Appendix~\ref{appendix:function-spaces}) 
and closure $\clos{\cD} = \cD\cup\partial\cD$. 

The purpose of this section 
is to generalize the results 
obtained in Corollary~\ref{cor:stat-wm} 
for classical Whittle--Mat\'ern 
fields to the class of generalized 
Whittle--Mat\'ern 
fields \eqref{eq:whittle-matern} on $\cD$, 
where $\kappa$ and $\ac$  
are functions 
describing 
spatially varying correlation ranges 
and anisotropies, respectively. 
The difficulty 
of this generalization 
lies in the fact that 
the covariance operators 
of two generalized 
Whittle--Mat\'ern fields
do not necessarily  
have the same eigenfunctions. 
For this reason, more 
sophisticated arguments 
and tools from 
spectral theory and PDE theory 
are needed.  
We refer 
to Appendix~\ref{appendix:function-spaces} 
for an overview
of several important results from PDE theory 
and all relevant function spaces, 
such as the Lebesgue spaces 
$L_p(\cD)$, $L_p(\partial\cD)$, $p\geq 1$, 
the spaces of smooth functions 
$C^\infty(\clos{\cD})$ and $C^\infty_c(\cD)$, 
the (fractional-order) Sobolev spaces $H^r(\cD)$ for $r\in\bbR_+$, 
and the subspace $H^1_0(\cD)\subset H^1(\cD)$. 

\subsection{Setting and summary of the main results}
\label{subsec:wm-D:summary}

In order to properly define the 
class of generalized Whittle--Mat\'ern fields \eqref{eq:whittle-matern},
we consider for $\beta\in\bbR_+$ the fractional-order SPDE 
\begin{align}\label{eq:Lbeta}
	L^\beta \GP =  \white, 
	\qquad 
	\bbP\text{-almost surely}, 
\end{align}
where $\white$ denotes Gaussian white noise 
on the Hilbert space $L_2(\cD)$, and 
$L^{\beta}$ is a (possibly fractional) power 
of an elliptic differential operator $L$
which determines the covariance structure of 
the random field~$\GP\from\clos{\cD}\times\Omega\to\bbR$.
Specifically, we assume that  
$L\from\mathscr{D}(L) \subseteq L_2(\cD) \cap H^1_0(\cD) \to L_2(\cD)$
is a linear, symmetric, second-order 
differential operator in divergence form 
with homogeneous Dirichlet boundary conditions 
(see Appendix~\ref{app:subsec:pdes}), 
formally given by 
\begin{equation}\label{eq:L-div} 
	L v  = - \nabla \cdot(\ac \nabla v) + \kappa^2 v,
	\qquad 
	v \in \scrD(L) \subseteq L_2(\cD) \cap H^1_0(\cD). 
\end{equation}
Here, we suppose that  
$\ac$ and $\kappa$ 
in~\eqref{eq:L-div}  and 
the spatial domain $\cD\subset\bbR^d$ 
satisfy the following conditions. 

\begin{assumption}
	\begin{enumerate}[label={\normalfont\Roman*.}] 
		\item\label{ass:a-kappa-D:a}  
		$\ac\from\overline{\cD}\to\bbR^{d\times d}$ 
		is symmetric 
		and
		uniformly positive definite, i.e.,
		\[ 
			\exists a_0 > 0 : 
			\quad
			\forall \xi \in \bbR^d : 
			\quad 
			\operatorname{ess} \inf\nolimits_{\s\in\cD} 
			\xi^\trsp \ac(\s) \xi 
			\geq a_0 \norm{ \xi }{\bbR^d}^2. 
		\]
		In addition,  
		$\ac=(\ac_{jk})_{j,k=1}^d$ is smooth, 
		$\ac_{jk} \in C^\infty(\clos{\cD})$ 
		for all  $j,k \in\{1,\ldots,d\}$. 
		\item\label{ass:a-kappa-D:kappa}  
		$\kappa \from \clos{\cD}\to\bbR$ 
		is smooth, $\kappa\in C^\infty(\clos{\cD})$. 
		\item\label{ass:a-kappa-D:D}  
		The domain $\cD\subset\bbR^d$ 
		has a smooth boundary $\partial\cD$ of 
		class $C^\infty$, 
		see~Definition~\ref{def:smooth-bdry} 
		in Appendix~\ref{appendix:function-spaces}. 
	\end{enumerate}\label{ass:a-kappa-D} 
\end{assumption} 

Provided that Assumptions~\ref{ass:a-kappa-D}.I--II    
are satisfied, the differential operator 
$L$ in \eqref{eq:L-div} 
is strongly elliptic and 
induces a symmetric, 
continuous and coercive   
bilinear form on~$H^1_0(\cD)$,    
\begin{equation}\label{eq:a-L} 
	a_L \from H^1_0(\cD)\times H^1_0(\cD) \to \bbR, 
	\qquad 
	a_L (u,v) 
	:= 
	\scalar{ \ac \nabla u, \nabla v }{L_2(\cD)}
	+ 
	\scalar{ \kappa^2 u, v }{L_2(\cD)}. 
\end{equation}
The domain of the 
operator $L\from\scrD(L)\subset L_2(\cD) \to L_2(\cD)$ 
is given by 
$\mathscr{D}(L) = 
H^2(\cD)\cap H^1_0(\cD)$ and, 
in particular, we find that 
$L$ 
is densely defined and self-adjoint. 
Furthermore, the Rellich--Kondrachov 
compactness theorem 
\cite[Theorem~6.3]{AdamsSobolev2003}  
implies that 
$L^{-1}\from L_2(\cD)\to H^1_0(\cD)\subset L_2(\cD)$ 
is compact on~$L_2(\cD)$, 
see Appendix~\ref{app:subsec:pdes} 
for more details.  
For this reason, there exists 
a countable system of eigenfunctions 
$\{e_j\}_{j\in\bbN}$ of $L$ 
which can be chosen as  
an orthonormal basis for $L_2(\cD)$. 
We assume that the 
corresponding positive eigenvalues 
$( \lambda_j )_{j\in\bbN}$
are in non-decreasing order, 
$0<\lambda_1\leq\lambda_2\leq\ldots$, 
and repeated according to multiplicity.  
The fractional power operator $L^\beta$ in 
the SPDE \eqref{eq:Lbeta} 
is then defined in the spectral sense 
as in \eqref{eq:def:Abeta}, 
with $A:=L$ and $E:=L_2(\cD)$.

Weyl's law \citep[Theorem~6.3.1]{Davies1995}  
states that the eigenvalues 
$(\lambda_j )_{j\in\bbN}\subset\bbR_+$ 
of the strongly elliptic second-order differential operator~$L$ 
satisfy the spectral asymptotics 
\begin{align}\label{eq:lambdaj}
	\exists c_\lambda, C_\lambda \in\bbR_+ : 
	\quad 
	c_\lambda j^{\nicefrac{2}{d}} 
	\leq 
	\lambda_j 
	\leq 
	C_\lambda j^{\nicefrac{2}{d}}
	\quad
	\forall j \in \bbN.
\end{align}

Existence and uniqueness of the solution $\GP$ 
to \eqref{eq:Lbeta} 
thus follow from 
\citep[Proposition~2.3, Remark~2.4]{BKK2020} 
or  
\citep[Lemma~3]{cox2020}. 
We recapitulate this result in 
the next proposition. 
\begin{proposition}\label{prop:wellposed}
	For $\cD\subset\bbR^d$, $d\in\bbN$, 
	let the differential operator $L$ 
	be as in \eqref{eq:L-div}, and 
	suppose that $\ac$ and $\kappa$ satisfy 
	Assumptions~\ref{ass:a-kappa-D}.I--II. 
	Then, the SPDE \eqref{eq:Lbeta} has a unique solution 
	$\GP\in L_p(\Omega;L_2(\cD))$ 
	for any $p\in[1,\infty)$---or, in other words, 
	the probability distribution 
	of $\GP$ in \eqref{eq:Lbeta} 
	defines a Gaussian measure on 
	the Hilbert space $L_2(\cD)$---if 
	and only if 
	$\beta \in (\nicefrac{d}{4}, \infty)$. 
\end{proposition}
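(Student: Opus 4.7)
The plan is to use the spectral representation of $L$ and reduce the problem to the summability of $\sum_j \lambda_j^{-2\beta}$, then invoke Weyl's law \eqref{eq:lambdaj}. The proof cited in the references proceeds by this route, so I would reproduce its essential steps here.

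First I would observe that $L\colon\scrD(L)\subset L_2(\cD)\to L_2(\cD)$ is densely defined, self-adjoint and positive definite with compact inverse, as already established from Assumption~\ref{ass:a-kappa-D} via elliptic regularity and the Rellich--Kondrachov theorem. Consequently, the fractional power $L^\beta$ is defined through the spectral expansion \eqref{eq:def:Abeta}, and its inverse $L^{-\beta}$ acts by $L^{-\beta}\psi=\sum_{j}\lambda_j^{-\beta}\scalar{\psi,e_j}{L_2(\cD)}e_j$. Writing Gaussian white noise formally as $\white=\sum_{j\in\bbN}\xi_j e_j$ with $\{\xi_j\}_{j\in\bbN}$ i.i.d.\ standard Gaussians, the candidate solution is
\begin{equation*}
\GP := L^{-\beta}\white=\sum_{j\in\bbN}\lambda_j^{-\beta}\xi_j\,e_j.
\end{equation*}

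Next I would verify that this series converges in $L_q(\Omega;L_2(\cD))$ for every $q\in[1,\infty)$ if and only if the covariance operator $L^{-2\beta}$ is of trace class on $L_2(\cD)$. By Parseval's identity and Gaussian hypercontractivity (equivalence of all $L_q$-norms on a fixed Wiener chaos), it suffices to look at $q=2$, for which orthogonality of $\{e_j\}$ in $L_2(\cD)$ and independence of $\{\xi_j\}$ give
\begin{equation*}
\bbE\bigl[\|\GP\|_{L_2(\cD)}^2\bigr]=\sum_{j\in\bbN}\lambda_j^{-2\beta}=\tr\bigl(L^{-2\beta}\bigr).
\end{equation*}
Thus the partial sums form a Cauchy sequence in $L_2(\Omega;L_2(\cD))$ exactly when this series converges, in which case the limit $\GP$ is Gaussian with mean $0$ and covariance operator $L^{-2\beta}$, i.e.\ its law is $\normal(0,L^{-2\beta})$ on $L_2(\cD)$. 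Uniqueness is then immediate, since $L^\beta$ is injective on its domain, so any two solutions $\GP_1,\GP_2$ must satisfy $L^\beta(\GP_1-\GP_2)=0$ almost surely.

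The final step is to apply Weyl's law \eqref{eq:lambdaj}: there exist $c_\lambda,C_\lambda>0$ with $c_\lambda j^{2/d}\leq\lambda_j\leq C_\lambda j^{2/d}$ for all $j\in\bbN$, whence
\begin{equation*}
c_\lambda^{-2\beta}\sum_{j\in\bbN}j^{-4\beta/d}\;\geq\;\sum_{j\in\bbN}\lambda_j^{-2\beta}\;\geq\;C_\lambda^{-2\beta}\sum_{j\in\bbN}j^{-4\beta/d}.
\end{equation*}
The comparison series converges if and only if $4\beta/d>1$, i.e.\ $\beta>\nicefrac{d}{4}$, which is precisely the stated range.

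There is no genuinely hard step here: all the analytic heavy lifting (spectral theory of $L$, elliptic regularity ensuring compact inverse, Weyl's law) has already been set up. The only point requiring a little care is to justify that convergence in $L_2(\Omega;L_2(\cD))$ upgrades to convergence in $L_q(\Omega;L_2(\cD))$ for every $q\in[1,\infty)$, but this follows from Fernique's theorem (or equivalently Kahane--Khintchine-type inequalities for Gaussian series in Hilbert space), so it is routine rather than an obstacle.
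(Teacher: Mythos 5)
Your proof is correct and follows exactly the route the paper relies on: the paper itself does not prove Proposition~3.3 but cites \citep[Proposition~2.3, Remark~2.4]{BKK2020} and \citep[Lemma~3]{cox2020}, whose argument is precisely the spectral expansion of $L^{-\beta}\white$, reduction to $\tr(L^{-2\beta})=\sum_j\lambda_j^{-2\beta}<\infty$, and Weyl's law \eqref{eq:lambdaj} to obtain the threshold $\beta>\nicefrac{d}{4}$. No substantive differences.
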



In the case that the parameters  
in~\eqref{eq:Lbeta} and \eqref{eq:L-div} 
are given by the 
triple 
\[
(\beta, \ac, \kappa) 
\in 
\bbR_+ 
\times 
C^{\infty}(\clos{\cD})^{d\times d} 
\times 
C^{\infty}(\clos{\cD}),
\] 
we say that $\GP$ solves the SPDE~\eqref{eq:Lbeta}
for $(\beta, \ac, \kappa)$.  
By Proposition~\ref{prop:wellposed} 
the probability distribution 
of the zero-mean generalized Whittle--Mat\'ern field 
$\GP\from\cD\times\Omega\to\bbR$ 
solving \eqref{eq:Lbeta} 
for the parameter triple  
$(\beta, \ac, \kappa)$
defines a Gaussian measure 
$\mu_d(0;\beta,\ac,\kappa)$ on $L_2(\cD)$ 
if and only if $\beta \in(\nicefrac{d}{4},\infty)$. 
In this case, 
for every Borel set $B\in\cB(L_2(\cD))$, 
it is given by 
\[
	\mu_d(0;\beta,\ac,\kappa)(B) 
	= 
	\bbP( \{ \omega\in\Omega : \GP(\,\cdot\,, \omega) \in B, 
	\; \GP \text{ solves } \eqref{eq:Lbeta} \} ). 
\]
Thus, it has mean zero  
and trace-class covariance operator 
$\cC=L^{-2\beta}\in\cL( L_2(\cD) )$, 
cf.~\eqref{eq:mu-mean-cov}, 
\begin{align*}  
	\scalar{\cC \psi, \psi'}{L_2(\cD)} 
	&= 
	\int_{L_2(\cD)} \scalar{\psi, \phi}{L_2(\cD)} \scalar{\phi, \psi'}{L_2(\cD)} \, \rd\mu(\phi)
	\\ 
	&= 
	\int_\Omega  \scalar{\psi, \GP(\,\cdot\,, \omega)}{L_2(\cD)}  
	\scalar{ \GP(\,\cdot\,, \omega), \psi' }{L_2(\cD)} \, \rd \bbP(\omega)
	= 
	\bigl( L^{-2\beta} \psi, \psi' \bigr)_{L_2(\cD)}. 
\end{align*} 

In summary, 
for $\beta\in(\nicefrac{d}{4},\infty)$, 
the Whittle--Mat\'ern field 
$\GP\from \cD\times\Omega \to \bbR$ 
in \eqref{eq:Lbeta} induces 
a Gaussian 
measure on $L_2(\cD)$ 
given by 
$\mu_d(0;\beta,\ac,\kappa) 
= 
\normal\bigl( 0, L^{-2\beta} \bigr)$, 
see \eqref{eq:mu-mean-cov}.  
More generally, we 
consider  
for an arbitrary mean value function $m\in L_2(\cD)$: 
\begin{equation}\label{eq:def:mu}
	\mu_d(m;\beta,\ac,\kappa) 
	:= 
	\normal\bigl( m, L^{-2\beta} \bigr).  
\end{equation}
The goal of this section 
is to identify the following: 
\begin{enumerate}[label=(\alph*)] 
	\item\label{enum:goal1} the Cameron--Martin space 
	for the Gaussian measure 
	$\mu_d(m;\beta,\ac,\kappa)$, 
	as well as   
	necessary and sufficient conditions
	for the Cameron--Martin spaces of two measures 
	$\mu_d(m; \beta, \ac, \kappa)$ and 
	$\mu_d(\mt; \betat, \act, \kappat)$ 
	to be isomorphic and norm equivalent;  
	\item\label{enum:goal2}  
	necessary and sufficient conditions
	for two measures 
	$\mu_d(m; \beta, \ac, \kappa)$ and 
	$\mu_d(\mt; \betat, \act, \kappat)$ 
	to be equivalent (respectively, orthogonal); and 
	\item\label{enum:goal3} 
	necessary and sufficient conditions 
	for $\mu_d(\mt; \betat, \act, \kappat)$ 
	to provide uniformly asymptotically optimal linear prediction
	in the case that $\mu_d(m; \beta, \ac, \kappa)$ 
	is the correct model. 
\end{enumerate} 

These questions are addressed in the following  
Subsections~\ref{subsec:wm-D:CM}, \ref{subsec:wm-D:equiv} 
and \ref{subsec:wm-D:kriging}. 
We will see that 
the necessary and sufficient conditions 
mentioned in \ref{enum:goal1}, \ref{enum:goal2} and \ref{enum:goal3} above 
all include the requirement that $\beta=\betat$. 
Depending on the value of $\beta\in(\nicefrac{d}{4},\infty)$, 
it is solely the behavior of $\delta_{\ac} := \act - \ac$ 
and $\delta_{\kappa^2} := \kappat^2 - \kappa^2$ 
at the boundary $\partial\cD$ that matters 
for \ref{enum:goal1}, see Theorem~\ref{thm:CM}. 
Finally, 
for \ref{enum:goal2} and \ref{enum:goal3} 
also conditions on $\ac,\act$ and $\kappa^2\!,\kappat^2$ 
inside the domain $\cD\subset\bbR^d$ are imposed 
which for \ref{enum:goal2}, 
equivalence of measures, additionally depend 
on the dimension $d\in\bbN$, 
see Theorems~\ref{thm:equivalence} and \ref{thm:kriging}. 
We summarize the main outcomes 
of these theorems in Table~\ref{tab:summary}.

\begin{table}
	\caption{Necessary and sufficient conditions for 
		(a) isomorphic, norm equivalent 
		Cameron--Martin spaces 
		of $\mu := \mu_d(0;\beta,\ac,\kappa)$ 
		and $\mut := \mu_d(0;\betat,\act,\kappat)$, 
		(b) equivalence of measures $\mu\sim\mut$, and 
		(c) uniformly asymptotically optimal linear prediction 
		when misspecifying $\mu$ by $\mut$. 
		Here, $\delta_{\ac}(\s) := \act(\s) -\ac(\s)$, 
		$\delta_{c,\kappa^2}(\s) := \kappat^2(\s) - c\kappa^2(\s)$,  
		$\delta_{\kappa^2}(\s) := \delta_{1,\kappa^2}(\s)$, 
		$\onormal$ is the outward pointing normal on $\partial\cD$, 
		and ``b.c.'' stands for ``boundary conditions''\!.}
	\label{tab:summary}
	\begin{tabular}{@{}lcccc@{}}
		\hline\rule{0pt}{-\normalbaselineskip} \\[-10pt]  
		& \multicolumn{4}{c}{Interval for $\beta$, 
			assuming that $\beta\notin\{k+\nicefrac{1}{4}:k\in\bbN\}$} \\
		\cline{2-5}\rule{0pt}{-\normalbaselineskip}  \\[-7pt] 
		Conditions for 
		&
		$(\nicefrac{d}{4},\nicefrac{5}{4})$ 
		& 
		$(\nicefrac{5}{4},\nicefrac{9}{4})$ 
		&
		$(\nicefrac{9}{4},\nicefrac{13}{4})$ 
		& 
		$(\nicefrac{13}{4},\infty)$ 
		\\[1mm] \hline\rule{0pt}{-\normalbaselineskip} \\[-7pt]
		Isomorphic 
		& 
		\multicolumn{1}{c|}{
			\multirow{2}{*}{$\beta=\betat$}} 
		& 
		\multicolumn{1}{c|}{$\beta = \betat$}  
		& 
		\multicolumn{2}{c}{$\beta = \betat$} 
		\\
		Cameron--Martin spaces 
		& 
		\multicolumn{1}{c|}{} 
		& 
		\multicolumn{1}{c|}{+ b.c.\ on $\delta_{\ac}$}  
		& 
		\multicolumn{2}{c}{+ b.c.\ on $\delta_{\ac}$ and $\delta_{\kappa^2}$} 
		\\[1mm] \cline{2-5}\rule{0pt}{-\normalbaselineskip}  \\[-7pt] 
		Asymptotically optimal   
		& 
		\multicolumn{2}{c|}{
			$\beta = \betat, \ c \ac=\act$} 
		& 
		\multicolumn{1}{c|}{$\beta = \betat, \ c\ac=\act,$}    
		& 
		$\beta = \betat, \ c\ac=\act\;$ 
		\\
		linear prediction
		& 
		\multicolumn{2}{c|}{for some $c\in(0,\infty)$} 
		& 
		\multicolumn{1}{c|}{$\bigl(\ac\nabla\delta_{c,\kappa^2}\bigr)\big|_{\partial\cD}\cdot\onormal = 0$}    
		& 
		+ b.c.\ on $\delta_{c,\kappa^2}$ 
		\\[1mm] \cline{2-5}\rule{0pt}{-\normalbaselineskip}  \\[-7pt] 
		Equivalence of measures
		& 
		\multicolumn{2}{c|}{
			$\beta = \betat, \ \ac=\act$} 
		& 
		\multicolumn{1}{c|}{$\beta = \betat, \ \ac=\act,$}    
		& 
		$\beta = \betat, \ \ac=\act\;$ 
		\\
		in dimension $d\leq 3$ 
		& 
		\multicolumn{2}{c|}{} 
		& 
		\multicolumn{1}{c|}{$\bigl(\ac\nabla\delta_{\kappa^2}\bigr)\big|_{\partial\cD}\cdot\onormal = 0$}    
		& 
		+ b.c.\ on $\delta_{\kappa^2}$ 
		\\[1mm] \cline{2-5}\rule{0pt}{-\normalbaselineskip}  \\[-7pt]  
		Equivalence of measures
		& 
		\multicolumn{4}{c}{
			\multirow{2}{*}{$\beta = \betat, \quad \ac=\act, \quad \kappa^2 = \kappat^2$}}
		\\
		in dimension $d\geq 4$ 
		& 
		& 
		& 
		& 
		\\ 
		\hline
	\end{tabular}
\end{table}

\subsection{Cameron--Martin spaces}
\label{subsec:wm-D:CM}

We first characterize the  
function spaces 
which in the context 
of Whittle--Mat\'ern fields 
act as  
Cameron--Martin spaces. 
The next result is a generalization of 
\citep[][Lemma~3.1]{thomee2006} 
(where $L=-\Delta$ and $r \in \bbN_0$).  

\begin{lemma}\label{lem:Hdot-Sobolev} 
	Suppose that Assumptions~\ref{ass:a-kappa-D}.I--III 
	are satisfied and 
	let $\Hdot{r}$ be defined 
	according to \eqref{eq:def:hdotA} 
	with $E=L_2(\cD)$ and $L$ 
	as in~\eqref{eq:L-div}.   
	Then, for every 
	$r\in\bbR_+$, the space 
	$\Hdot{r}$ is a subspace 
	of~$H^r(\cD)$ 
	and 
	$\bigl( \Hdot{r}, \norm{\,\cdot\,}{r,L} \bigr) 
	\hookrightarrow 
	\bigl( H^{r}(\cD), \norm{\,\cdot\,}{H^r(\cD)} \bigr)$. 
	Furthermore, 
	for every $r\in\bbR_+\setminus\frakE$, 
	where 
	\begin{equation}\label{eq:exception} 
		\frakE 
		:= 
		\{2k + \nicefrac{1}{2} : k \in \bbN_0 \}, 
	\end{equation}
	we have the identification 
	\begin{equation}\label{eq:Hdot-Sobolev} 
		\Hdot{r} 
		= 
		\bigl\{ 
		v\in H^{r}(\cD) : 
		\bigl( \kappa^2 - \nabla \cdot (\ac \nabla) \bigr)^j v = 0 
		\text{ in } 
		L_2(\partial\cD)   
		\ \; \forall 
		j\in \bbN_0 
		\text{ with } 
		j \leq \bigl\lfloor \tfrac{2r-1}{4} \bigr\rfloor 
		\bigr\}, 
	\end{equation} 
	and 
	on the space $\Hdot{r}$ the norm 
	$\norm{\,\cdot\,}{r,L}$ is equivalent to the 
	Sobolev norm $\norm{\,\cdot\,}{H^r(\cD)}$. 
\end{lemma}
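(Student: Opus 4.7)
The plan is a two-step approach: first identify $\Hdot{2k}$ for integer $k\in\bbN_0$ by iterated elliptic regularity for $L$, then extend to fractional $\sigma$ via complex interpolation combined with Lemma~\ref{lem:hdot-interpol} from Appendix~\ref{appendix:interpol}. The continuous embedding $\Hdot{\sigma}\hookrightarrow H^\sigma(\cD)$ for the full range $\sigma\in\bbR_+$ will then follow as a byproduct by interpolating the boundedness of $L^{-k}\from L_2(\cD)\to H^{2k}(\cD)$ against the standard Sobolev interpolation scale.

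For the integer rungs, the cases $\sigma=0$ and $\sigma=2$ are immediate from the already-established identity $\scrD(L) = H^2(\cD)\cap H^1_0(\cD)$ (recalled after \eqref{eq:a-L}) together with the classical $H^2$-regularity estimate $\norm{v}{H^2(\cD)}\lesssim \norm{Lv}{L_2(\cD)}+\norm{v}{L_2(\cD)}$ available under Assumptions~\ref{ass:a-kappa-D}.I--III; the lower-order term is absorbed via $\norm{v}{L_2}\leq \lambda_1^{-\nicefrac{1}{2}}\norm{L^{\nicefrac{1}{2}}v}{L_2}$. For $\sigma=2k$ I would induct on $k$: if $v\in\scrD(L^{k+1})$ then $f:=Lv\in\scrD(L^k)$, which by the induction hypothesis lies in $H^{2k}(\cD)$ with $L^j f|_{\partial\cD}=0$ for $j\leq k-1$; since $L^{j+1}v = L^j f$, the requisite compatibility conditions $L^j v|_{\partial\cD}=0$ for $j\leq k$ follow, and smooth-coefficient elliptic regularity lifts $v$ from $H^{2k}(\cD)$ to $H^{2k+2}(\cD)$ with $\norm{v}{2(k+1),L}=\norm{L^{k+1}v}{L_2}\asymp \norm{v}{H^{2(k+1)}(\cD)}$.

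For fractional $\sigma\in\bbR_+\setminus\frakE$, choose $k\in\bbN$ with $2k\geq\sigma$, set $\theta:=\sigma/(2k)$, complexify $L$ as in Section~\ref{subsec:gm-on-hs:auxiliary}, and invoke Lemma~\ref{lem:hdot-interpol} to identify the complexification of $\Hdot{\sigma}$ with $\bigl[\hdot{0}{L_\bbC},\hdot{2k}{L_\bbC}\bigr]_\theta$ isometrically; this descent to $\bbR$ reduces the task to computing the complex interpolation space between $L_2(\cD)$ and $\scrD(L^k) = \{v\in H^{2k}(\cD) : L^j v|_{\partial\cD}=0,\ j\leq k-1\}$. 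This is exactly the Lions--Magenes/Grisvard setting: for $\sigma\notin\frakE$ (equivalently, $2k\theta-\nicefrac{1}{2}\notin 2\bbN_0$), the interpolation space agrees with the subspace of $H^\sigma(\cD)$ retaining precisely those boundary conditions $L^j v = 0$ meaningful at regularity~$\sigma$, namely those with $2j+\nicefrac{1}{2}<\sigma$, i.e.\ $j\leq\lfloor(2\sigma-1)/4\rfloor$. Norm equivalence transfers by the interpolation inequality. The embedding $\Hdot{\sigma}\hookrightarrow H^\sigma(\cD)$ for the full range, including $\sigma\in\frakE$, follows by interpolating $L^{-k}\in\cL(L_2(\cD);H^{2k}(\cD))$ between consecutive integer rungs.

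The hard part is intrinsic to the exceptional set $\frakE$. At $\sigma\in\frakE$ the trace operator corresponding to the "next" boundary condition $L^j v|_{\partial\cD}$ is no longer continuous from $H^\sigma(\cD)$ to $L_2(\partial\cD)$, so the interpolation space is strictly larger than the na\"ive guess in~\eqref{eq:Hdot-Sobolev}: it coincides with a Lions-type space incorporating a non-closed weighted trace condition. This is precisely why \eqref{eq:Hdot-Sobolev} must be formulated with $\sigma\in\bbR_+\setminus\frakE$. A second, more technical, point is ensuring that the \emph{complex} interpolation appearing via Lemma~\ref{lem:hdot-interpol} is compatible with the Lions--Magenes framework; this is automatic here since $L$ is a positive self-adjoint operator on a Hilbert space, for which the complex and real interpolation scales coincide in the relevant range.
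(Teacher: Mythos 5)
Your overall architecture (integer rungs by elliptic regularity, then complex interpolation via Lemma~\ref{lem:hdot-interpol}) is a legitimate alternative skeleton, and your treatment of the integer case and of the embedding $\Hdot{\sigma}\hookrightarrow H^\sigma(\cD)$ for all $\sigma\in\bbR_+$ essentially coincides with the paper's Step~2a. The problem is that the entire difficulty of the lemma is concentrated in the one step you outsource: the identification of $\bigl[L_2(\cD),\scrD(L^k)\bigr]_\theta$ with the subspace of $H^{2k\theta}(\cD)$ cut out by the boundary conditions $L^jv=0$ for $2j+\nicefrac{1}{2}<2k\theta$. For $k\geq 2$ this is \emph{not} ``exactly the Lions--Magenes/Grisvard setting'' in a form you can cite off the shelf: Theorems~11.5--11.7 in Chapter~1 of Lions--Magenes characterize $[H^m_0(\cD),L_2(\cD)]_\theta$, where $H^m_0(\cD)$ carries \emph{all} normal-derivative conditions $\partial^jv/\partial\onormal^j=0$, $j\leq m-1$, whereas $\scrD(L^k)$ carries only the operator-adapted conditions $L^jv=0$, $j\leq k-1$; these spaces already differ for $k=2$. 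The result you actually need is Grisvard's (or Seeley's) characterization of $\scrD(\Lambda^\theta)$ for the realization $\Lambda=L^k$ of a $2k$-th order properly elliptic operator under the boundary system $\{\gamma_0L^j\}_{j=0}^{k-1}$; invoking it requires verifying normality and the complementing condition for that iterated system, and at that point the theorem being cited is essentially the statement of the lemma itself. As written, the pivotal identification is asserted rather than proved.

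The paper deliberately avoids this machinery. It uses the classical interpolation identifications only on the bottom rung $\sigma\in(0,2]$, where $\scrD(L^{\nicefrac{\sigma}{2}})$ genuinely is a Lions--Magenes/Grisvard space (see \eqref{eq:Hdot-Sobolev-1-2}--\eqref{eq:Hdot-Sobolev-0-12}), and then propagates to $\sigma=2k+\sigma_0$ by two elementary, two-sided arguments: for the inclusion $\supseteq$ in \eqref{eq:Hdot-Sobolev}, the divergence identity \eqref{eq:div-identity} is used to move $(\kappa^2-\nabla\cdot(\ac\nabla))^k$ across the $L_2$-pairing with the eigenfunctions, yielding $\norm{v}{2k+\sigma_0,L}=\norm{(\kappa^2-\nabla\cdot(\ac\nabla))^kv}{\sigma_0,L}$ as in \eqref{eq:sigma-sigma0} and reducing everything to the bottom rung; for $\subseteq$, a separate induction combines elliptic regularity with the trace theorem applied to $Lv\in\Hdot{\sigma-2}$. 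To complete your route you must either verify Grisvard's hypotheses for the iterated boundary-value problem $(L^k,\{\gamma_0L^j\}_{j<k})$ explicitly, or replace the citation by a hands-on argument of the paper's type.
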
  

\begin{remark} 
	The coefficients $\ac,\kappa$ of the second-order 
	differential operator $L$ in \eqref{eq:L-div} enter 
	the characterization  \eqref{eq:Hdot-Sobolev} 
	of the Hilbert space $\Hdot{r}$ 
	(that is, the domain of the operator $L^{\nicefrac{r}{2}}$)  
	only for $r > \nicefrac{5}{2}$. 
	In this case, it is solely the behavior of $\ac$ and $\kappa$ 
	at the boundary~$\partial\cD$ that determines $\Hdot{r}$\!. 
	If $r\in\frakE$ belongs to the exception set, 
	then the norm~$\norm{\,\cdot\,}{r,L}$ generates 
	a strictly finer topology than the Sobolev 
	norm~$\norm{\,\cdot\,}{H^r(\cD)}$,  
	cf.~\cite[][Theorem~11.7 in Chapter~1]{LionsMagenesI}. 
	We discuss this further in Section~\ref{section:discussion}. 
\end{remark}   

\begin{proof}[Proof of Lemma~\ref{lem:Hdot-Sobolev}] 
	We recall that by the divergence theorem, 
	Theorem~\ref{thm:div-thm} in Appendix~\ref{appendix:function-spaces}, 
	we have 
	\begin{equation}\label{eq:div-identity} 
		\begin{split} 
			( v_1, (\kappa^2 - \nabla\cdot(\ac \nabla)) v_2 &)_{L_2(\cD)} 
			- 
			\scalar{ (\kappa^2 - \nabla\cdot(\ac \nabla)) v_1, v_2}{L_2(\cD)} 
			\\
			&= 
			\int_{\partial\cD} 
			\bigl[ v_2 (\ac\nabla v_1 \cdot \onormal) - v_1 (\ac\nabla v_2 \cdot \onormal) \bigr] 
			\, \rd S 
			\quad\;\; 
			\forall 
			v_1, v_2 \in H^2(\cD), 
		\end{split} 
	\end{equation} 
	where $\rd S$ is the $(d-1)$-dimensional 
	surface measure on $\partial\cD$ 
	and $\onormal\from\partial\cD\to\bbR^d$ is 
	the outward pointing unit normal vector field, 
	see also 
	Subsection~\ref{app:subsubsec:Lp} 
	and Remark~\ref{rem:onormal} 
	in Appendix~\ref{appendix:function-spaces}.  
	
	\textbf{Step 1}: $\supseteq$ in \eqref{eq:Hdot-Sobolev}. 
	First, we consider the case $r\in(0,2]$, $r\neq\nicefrac{1}{2}$, 
	in \eqref{eq:Hdot-Sobolev}. 
	The equivalence   
	\begin{equation}\label{eq:Hdot-Sobolev-1-2} 
		\bigl(\Hdot{r}, \norm{\,\cdot\,}{r, L} \bigr) 
		\cong 
		\bigl(H^r(\cD)\cap H^1_0(\cD), \norm{\,\cdot\,}{H^r(\cD)} \bigr), 
		\quad 
		r\in[1,2],  
	\end{equation}
	can be shown in a similar manner      
	as \cite[][Lemma~2]{cox2020}. 
	Moreover, since 
	$\Hdot{0} = L_2(\cD)$ and  $\Hdot{1}=H^1_0(\cD)$, 
	it follows from \cite[][Theorem~8.1]{Grisvard1967} 
	that 
	\begin{align} 
		\bigl(\Hdot{r}, \norm{\,\cdot\,}{r, L} \bigr) 
		&\cong 
		\bigl(\{v\in H^r(\cD) : v=0 \text{ in }L_2(\partial\cD)\}, 
		\norm{\,\cdot\,}{H^r(\cD)} \bigr), 
		&& 
		r\in(\nicefrac{1}{2},1),   
		\label{eq:Hdot-Sobolev-12-1}
		\\
		\bigl(\Hdot{r}, \norm{\,\cdot\,}{r, L} \bigr) 
		&\cong 
		\bigl(H^r(\cD), \norm{\,\cdot\,}{H^r(\cD)} \bigr), 
		&& 
		r\in(0,\nicefrac{1}{2}), 
		\label{eq:Hdot-Sobolev-0-12}
	\end{align} 
	see also \cite[][Theorems~11.5 and 11.6 in Chapter~1]{LionsMagenesI}. 
	
	Now let 
	$r = 2k + r_0$ 
	for some $k\in\bbN$ and 
	$r_0\in(0,\nicefrac{1}{2})$,   
	and assume that $v \in H^{r}(\cD)$ 
	is such that 
	$( \kappa^2 - \nabla \cdot (\ac \nabla) )^j v = 0$ 
	in $L_2(\partial\cD)$ 
	for all $j\in\{0,1,\ldots,k-1\}$. 
	Then, by using the boundary conditions of $v$ 
	and of the eigenfunctions $\{e_j\}_{j\in\bbN}$ 
	in \eqref{eq:div-identity}, 
	we obtain that 
	\begin{equation}\label{eq:sigma-sigma0} 
		\begin{split}  
			\norm{v}{r, L}^2
			&= 
			\sum\limits_{j\in\bbN} \lambda_j^{2k+r_0} \scalar{v, e_j}{L_2(\cD)}^2 
			=
			\sum\limits_{j\in\bbN} 
			\lambda_j^{r_0} 
			\bigl( v, (\kappa^2 - \nabla \cdot (\ac \nabla) )^k e_j \bigr)_{L_2(\cD)}^2 
			\\
			&=
			\sum\limits_{j\in\bbN} 
			\lambda_j^{r_0}
			\bigl( (\kappa^2 - \nabla \cdot (\ac \nabla) )^k v, e_j \bigr)_{L_2(\cD)}^2 
			= 
			\bigl\| (\kappa^2 - \nabla \cdot (\ac \nabla))^k v \bigr\|_{r_0, L}^2. 
		\end{split} 
	\end{equation} 
	By the identification \eqref{eq:Hdot-Sobolev-0-12}, 
	there exist constants $C'\!,C\in\bbR_+$, independent of $v$, such that 
	\begin{equation}\label{eq:estimate-sigma0}
		\bigl\| (\kappa^2 - \nabla \cdot (\ac \nabla))^k v \bigr\|_{r_0,L}^2 
		\leq 
		C'  
		\bigl\| (\kappa^2 - \nabla \cdot (\ac \nabla))^k v \bigr\|_{H^{r_0}(\cD)}^2 
		\leq C 
		\norm{v}{H^{2k+r_0}(\cD)}^2, 
	\end{equation} 
	where we used 
	the regularity of $\kappa\in C^{\infty}(\clos{\cD})$, 
	$\ac\in C^{\infty}(\clos{\cD})^{d\times d}$ 
	in the last step. 
	This shows that $v\in \Hdot{r}$ and  
	$\norm{v}{r,L}\leq C \norm{v}{H^r(\cD)}$, 
	where the constant $C\in\bbR_+$ is 
	independent of $v$. 

	Assume now that $r=2k+r_0$   
	for some $k\in\bbN$ and $r_0 \in (\nicefrac{1}{2}, 2]$, 
	and let $v \in H^r(\cD)$ be such that 
	$( \kappa^2 - \nabla \cdot (\ac \nabla) )^j v = 0$ 
	in $L_2(\partial\cD)$ 
	for all $j\in \{0,1,\ldots,k\}$. 
	Then, 
	as in \eqref{eq:sigma-sigma0}, we obtain  that 
	${\norm{v}{r,L}^2  
	=  
	\bigl\| (\kappa^2-\nabla\cdot(\ac\nabla))^k v \bigr\|_{r_0,L}^2}$. 
	Since by assumption also 
	the trace of ${(\kappa^2 - \nabla \cdot (\ac \nabla) )^k v}$
	vanishes in $L_2(\partial\cD)$, we conclude 
	by the equivalences in \eqref{eq:Hdot-Sobolev-1-2} 
	and \eqref{eq:Hdot-Sobolev-12-1} 
	that the estimates in \eqref{eq:estimate-sigma0}
	also hold in this case, 
	with $C'\!, C\in\bbR_+$ 
	independent of $v$.   
	
	\textbf{Step 2:} $\subseteq$ in \eqref{eq:Hdot-Sobolev}. 
	For the reverse inclusion we show that 
	\textbf{a)} for all $r\in\bbR_+$ 
	and all $v\in \Hdot{r}$, we have that  
	$v \in H^r(\cD)$ with $\norm{v}{H^r(\cD)} \leq C \norm{v}{r,L}$,   
	and \textbf{b)} in the case that 
	$r\notin\frakE$, see \eqref{eq:exception}, 
	every $v\in\Hdot{r}$ also satisfies the boundary conditions 
	in \eqref{eq:Hdot-Sobolev}.  
	For \textbf{a)} we first prove the regularity 
	result ${\bigl( \Hdot{r}, \norm{\,\cdot\,}{r,L}\bigr) 
	\hookrightarrow 
	\bigl( H^r(\cD), \norm{\,\cdot\,}{H^r(\cD)}\bigr)}$
	for all integers 
	$r\in\{ \{ 2k-1, 2k \} : k\in\bbN\}$,  
	via induction with respect to $k\in\bbN$. 
	The cases $r\in\{1,2\}$ (i.e., $k=1$) 
	are part of \eqref{eq:Hdot-Sobolev-1-2}. 
	
	For the induction step $k-1\to k$, let $k\geq 2$ 
	and 
	$v\in \Hdot{2k-1} = \scrD\bigl( L^{k-\nicefrac{1}{2}} \bigr)$. 
	Then, there exists  
	$\psi\in L_2(\cD)$ such that  
	$v = L^{-(k-\nicefrac{1}{2})} \psi$ 
	and 
	$\widetilde{v} := L^{-(k-\nicefrac{3}{2})} \psi$ 
	satisfies 
	${\widetilde{v}\in\scrD\bigl( L^{k-\nicefrac{3}{2}} \bigr) = \Hdot{2k-3}}$.  
	Thus,  
	$L v = \widetilde{v} \in H^{2k-3}(\cD)$ 
	follows from the induction hypothesis, 
	and there exists a 
	constant $C'\!\in\bbR_+$, 
	which is independent of $v\in\Hdot{2k-1}$, 
	such that 
	$\norm{Lv}{H^{2k-3}(\cD)} 
	\leq C' \norm{Lv}{2k-3,L} 
	= C' \norm{v}{2k-1,L}$. 
	As $v\in \Hdot{2k-1}\subset \scrD(L)$, 
	this regularity of 
	$Lv \in H^{2k-3}(\cD)$ 
	implies by Theorem~\ref{thm:regularity}
	that 
	$v \in H^{2k-1}(\cD)$,  
	\begin{align*} 
		\norm{v}{H^{2k-1}(\cD)} 
		&\leq 
		\widehat{C} \, 
		\bigl( 
		\norm{L v}{H^{2k-3}(\cD)}  
		+ 
		\norm{v}{H^{2k-2}(\cD)}
		\bigr) 
		\\
		&\leq 
		\widehat{C} 
		\left( 
		C' 
		\norm{v}{2k-1,L} 
		+ 
		C'' 
		\norm{v}{2k-2,L}
		\right) 
		\leq 
		C 
		\norm{v}{2k-1,L},  
	\end{align*} 
	where all constants are independent of $v$. 
	Here, we also used that 
	by the induction hypothesis 
	$\bigl( \Hdot{2k-2}, \norm{\,\cdot\,}{2k-2,L}\bigr) 
	\hookrightarrow 
	\bigl( H^{2k-2}(\cD), \norm{\,\cdot\,}{H^{2k-2}(\cD)}\bigr)$
	holds. 
	Suppose now that 
	$v \in \Hdot{2k} = \scrD\bigl( L^k \bigr)$. 
	Then, similarly as above, 
	we obtain from the induction hypothesis 
	that 
	$Lv \in H^{2k-2}(\cD)$ 
	with 
	$\norm{Lv}{H^{2k-2}(\cD)} 
	\leq C' \norm{v}{2k,L}$ 
	and, again by 
	Theorem~\ref{thm:regularity}, 
	the regularity  
	$v \in H^{2k}(\cD)$ 
	follows, with 
	\[
		\norm{v}{H^{2k}(\cD)} 
		\leq 
		\widehat{C} \, 
		\bigl( 
		\norm{L v}{H^{2k-2}(\cD)}  
		+ 
		\norm{v}{H^{2k-1}(\cD)}
		\bigr) 
		\leq 
		C 
		\norm{v}{2k,L}.  
	\]

	By means of complexification and interpolation arguments 
	(see Lemma~\ref{lem:hdot-interpol} 
	in Appendix~\ref{appendix:interpol},   
	\cite[][Theorem~1 in Section~4.3.1]{Triebel1978} 
	and \cite[][Theorem~2.6]{Lunardi2018})
	we subsequently obtain the continuous embedding  
	${\bigl( \Hdot{r}, \norm{\,\cdot\,}{r,L}\bigr) 
	\hookrightarrow 
	\bigl( H^r(\cD), \norm{\,\cdot\,}{H^r(\cD)}\bigr)}$ 
	for the whole range $r\in\bbR_+$. 
	
	\textbf{Step 2b)} Finally, 
	it can also be shown via induction with respect to 
	$k\in\bbN_0$ 
	that 
	\begin{align*} 
		&
		\forall r\in(2k,2k+\nicefrac{1}{2}), 
		&
		\forall v\in \Hdot{r}:
		&& 
		\bigl( \kappa^2 - \nabla \cdot (\ac\nabla) \bigr)^j v 
		&= 
		0
		\;\ \text{in}\;\; 
		L_2(\partial\cD), 
		\quad  
		0\leq j \leq k-1, 
		\\
		&
		\forall r\in(2k+\nicefrac{1}{2},2k+2], 
		\hspace{-0.1cm} 
		& 
		\forall v\in \Hdot{r}: 
		&&
		\bigl( \kappa^2 - \nabla \cdot (\ac\nabla) \bigr)^j v 
		&= 
		0
		\;\ \text{in}\;\;  
		L_2(\partial\cD), 
		\quad  
		0\leq j \leq k.  
	\end{align*}
	Specifically, 
	the case $k=0$ is part of 
	\eqref{eq:Hdot-Sobolev-1-2}, 
	\eqref{eq:Hdot-Sobolev-12-1} 
	and \eqref{eq:Hdot-Sobolev-0-12}. 
	For the induction step $k-1\to k$, 
	let $k\in\bbN$, and $v_1\in\Hdot{r_1}$, 
	$v_2\in\Hdot{r_2}$, 
	where $r_1 \in (2k,2k+\nicefrac{1}{2})$ 
	and $r_2 \in (2k+\nicefrac{1}{2},2k+2]$. 
	As we have already proven, Sobolev regularity follows: 
	$v_1\in H^{r_1}(\cD)$ and 
	$v_2\in H^{r_2}(\cD)$.  
	Since $r_1 > 2k$ and 
	${r_2 > 2k + \nicefrac{1}{2}}$, 
	the trace theorem,   
	Theorem~\ref{thm:trace} in 
	Appendix~\ref{appendix:function-spaces}, 
	guarantees that the traces are well-defined,  
	${(\kappa^2 - \nabla\cdot(\ac\nabla))^{j_1} v_1 \in L_2(\partial\cD)}$  
	 and 
	$(\kappa^2 - \nabla\cdot(\ac\nabla))^{j_2} v_2 \in L_2(\partial\cD)$ 
	for all $j_1 \in \{0,1,\ldots,k-1\}$ and $j_2 \in \{0,1,\ldots,k\}$, respectively.
	Furthermore, the induction hypothesis implies that 
	$Lv_1 \in \Hdot{r_1-2}$ and  
	$Lv_2 \in \Hdot{r_2-2}$ satisfy the boundary conditions 
	\begin{align*} 
		(\kappa^2 - \nabla\cdot(\ac\nabla))^{j_1} v_1
		= (\kappa^2 - \nabla\cdot(\ac\nabla))^{j_1-1} (Lv_1) 
		&= 0 
		\;\ \text{in}\;\; 
		L_2(\partial\cD), 
		\quad 1\leq j_1 \leq k-1 , 
		\\
		(\kappa^2 - \nabla\cdot(\ac\nabla))^{j_2} v_2
		= 
		(\kappa^2 - \nabla\cdot(\ac\nabla))^{j_2-1} (Lv_2) 
		&= 0 
		\;\ \text{in}\;\; 
		L_2(\partial\cD), 
		\quad 1\leq j_2 \leq k. 
	\end{align*} 
	Since $v_1, v_2 \in \Hdot{2} = H^2(\cD)\cap H^1_0(\cD)$, 
	we obtain that also 
	$v_1=v_2=0$ in $L_2(\partial\cD)$. 
\end{proof} 

Now we are ready to characterize the 
Cameron--Martin space for 
$\mu_d(m;\beta,\ac,\kappa)$ 
in~\eqref{eq:def:mu}. 

\begin{proposition}\label{prop:CM} 
	Let $d\in\bbN$, 
	$\beta\in(\nicefrac{d}{4},\infty)$, $m\in L_2(\cD)$ 
	and suppose 
	Assumptions~\ref{ass:a-kappa-D}.I--III. 
	Then,~the Cameron--Martin space 
	of the Gaussian measure 
	$\mu_d(m;\beta,\ac,\kappa)$ 
	in~\eqref{eq:def:mu} 
	with covariance operator $\cC=L^{-2\beta}$ 
	is given by 
	$\cC^{\nicefrac{1}{2}}(L_2(\cD)) = \Hdot{2\beta}$\!, 
	cf.\ \eqref{eq:def:hdotA}, 
	and 
	it is continuously embedded in 
	$H^{2\beta}(\cD)$.   
	
	In the case that 
	$2\beta\notin\frakE$, 
	with $\frakE$ as given in \eqref{eq:exception}, 
	it can be identified as in 
	\eqref{eq:Hdot-Sobolev} 
	and there 
	exist constants $c_0,c_1,\dual{c}_0,\dual{c}_1>0$, 
	depending on $\beta,\ac,\kappa,\cD$, 
	such that 
	\begin{align} 
		c_0 \norm{v}{H^{2\beta}(\cD)}^2 
		&\leq 
		\bigl( \cC^{-\nicefrac{1}{2}} v, \cC^{-\nicefrac{1}{2}}v \bigr)_{L_2(\cD)} 
		\leq 
		c_1 \norm{v}{H^{2\beta}(\cD)}^2  
		&& 
		\forall 
		v \in \Hdot{2\beta} 
		= 
		\cC^{\nicefrac{1}{2}}(L_2(\cD)), 
		\label{eq:prop:CM-Cinv}
		\\
		\dual{c}_0 \norm{v}{H^{-2\beta}(\cD)}^2 
		&\leq 
		\;\;\;
		\bigl( \cC^{\nicefrac{1}{2}} v, \cC^{\nicefrac{1}{2}} v \bigr)_{L_2(\cD)}
		\;\;\;\,
		\leq 
		\dual{c}_1 \norm{v}{H^{-2\beta}(\cD)}^2  
		&& 
		\forall 
		v \in \Hdot{-2\beta} =  
		\cC^{-\nicefrac{1}{2}}(L_2(\cD)). 
		\label{eq:prop:CM-C}
	\end{align}  
\end{proposition}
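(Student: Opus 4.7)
The first claim, $\cC^{\nicefrac{1}{2}}(L_2(\cD)) = \Hdot{2\beta}$, is an immediate consequence of the spectral calculus underlying the definitions \eqref{eq:def:Abeta} and \eqref{eq:def:hdotA}: since $\cC^{\nicefrac{1}{2}} = L^{-\beta}$ is an isomorphism from $L_2(\cD)$ onto $\scrD(L^\beta) = \Hdot{2\beta}$, the Cameron--Martin space coincides as a set with $\Hdot{2\beta}$, and its norm squared $\scalar{\cC^{-1}v,v}{L_2(\cD)} = \norm{L^\beta v}{L_2(\cD)}^2 = \norm{v}{2\beta,L}^2$ agrees with the natural norm on $\Hdot{2\beta}$. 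The continuous embedding $\Hdot{2\beta} \hookrightarrow H^{2\beta}(\cD)$ is then the special case $\sigma = 2\beta$ of Lemma~\ref{lem:Hdot-Sobolev}.

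Assuming next that $2\beta \notin \frakE$, Lemma~\ref{lem:Hdot-Sobolev} provides both the identification \eqref{eq:Hdot-Sobolev} for $\sigma = 2\beta$ and the two-sided equivalence of $\norm{\,\cdot\,}{2\beta,L}$ with the Sobolev norm $\norm{\,\cdot\,}{H^{2\beta}(\cD)}$ on $\Hdot{2\beta}$. Substituting this equivalence into $\scalar{\cC^{-1}v,v}{L_2(\cD)} = \norm{v}{2\beta,L}^2$ yields \eqref{eq:prop:CM-Cinv} at once.

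For the dual estimate \eqref{eq:prop:CM-C}, the plan is to dualize through the $L_2(\cD)$-pairing. By the same spectral argument, $\cC^{-\nicefrac{1}{2}} = L^\beta$ maps $L_2(\cD)$ onto $\Hdot{-2\beta}$, and $\scalar{\cC v,v}{L_2(\cD)} = \norm{L^{-\beta}v}{L_2(\cD)}^2 = \norm{v}{-2\beta,L}^2$. Since $\Hdot{-2\beta}$ is by construction the dual of $\Hdot{2\beta}$ relative to the $L_2(\cD)$-extended pairing, one has
\[
\norm{v}{-2\beta,L}
=
\sup\bigl\{\, |\scalar{v,\phi}{L_2(\cD)}| : \phi \in \Hdot{2\beta},\; \norm{\phi}{2\beta,L} = 1 \,\bigr\}.
\]
Replacing $\norm{\phi}{2\beta,L}$ by the equivalent $\norm{\phi}{H^{2\beta}(\cD)}$ from the previous step reduces matters to comparing this restricted dual norm with the full $\norm{\,\cdot\,}{H^{-2\beta}(\cD)}$.

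The hard part will be that $\Hdot{2\beta}$ is typically a proper closed subspace of $H^{2\beta}(\cD)$ (cut out by the vanishing trace conditions in \eqref{eq:Hdot-Sobolev}), so the restriction map $H^{-2\beta}(\cD) \to \Hdot{-2\beta}$ is in general not injective. To overcome this, I would combine a Hahn--Banach extension with a continuous right inverse of the boundary trace operators $v \mapsto (\kappa^2 - \nabla\cdot(\ac\nabla))^j v |_{\partial\cD}$ on $H^{2\beta}(\cD)$; the existence of such a right inverse (available by standard trace theory for smooth domains, as in \cite[Chapter~1, Section~8]{LionsMagenesI}) together with the equivalence $\Hdot{2\beta} \cong H^{2\beta}(\cD)\ominus($complement of boundary-constrained subspace$)$ produces, for every $v \in \Hdot{-2\beta}$, an extension $\widetilde v \in H^{-2\beta}(\cD)$ of equivalent norm, which yields \eqref{eq:prop:CM-C}.
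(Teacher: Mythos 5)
Your identification of the Cameron--Martin space, the computation $\scalar{\cC^{-1}v,v}{L_2(\cD)}=\norm{v}{2\beta,L}^2$, and the derivation of \eqref{eq:prop:CM-Cinv} from Lemma~\ref{lem:Hdot-Sobolev} coincide exactly with the paper's proof. The divergence is in \eqref{eq:prop:CM-C}: the paper disposes of it in one line, asserting that the norm equivalence \eqref{eq:prop:CM-Cinv} dualizes because $\Hdot{-2\beta}$ is the dual of $\Hdot{2\beta}$, whereas you correctly observe that $\Hdot{2\beta}$ is in general a proper closed subspace of $H^{2\beta}(\cD)$ and try to bridge the resulting mismatch between the dual of the subspace and the dual of the full Sobolev space.

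Your repair does not close that gap as written. First, note which direction is free: shrinking the supremum from $H^{2\beta}(\cD)$ to $\Hdot{2\beta}$ (where the two norms are equivalent) gives $\norm{v}{-2\beta,L}\leq C\norm{v}{H^{-2\beta}(\cD)}$, which is the \emph{upper} bound in \eqref{eq:prop:CM-C}; it is the \emph{lower} bound $\norm{v}{H^{-2\beta}(\cD)}\leq C\norm{v}{-2\beta,L}$ that is delicate. Second, a Hahn--Banach extension (with or without a trace lifting) applied to $v|_{\Hdot{2\beta}}$ produces \emph{some} functional $\widetilde v\in H^{-2\beta}(\cD)$ that agrees with $v$ on $\Hdot{2\beta}$ and satisfies $\norm{\widetilde v}{H^{-2\beta}(\cD)}\lesssim\norm{v}{-2\beta,L}$, but $\widetilde v$ need not coincide with $v$ on all of $H^{2\beta}(\cD)$ (they may differ by a functional annihilating $\Hdot{2\beta}$, precisely because the restriction map is not injective), so this controls the quotient norm $\inf\{\norm{\widetilde v}{H^{-2\beta}(\cD)}:\widetilde v|_{\Hdot{2\beta}}=v\}$ rather than $\norm{v}{H^{-2\beta}(\cD)}$ itself. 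The estimate \eqref{eq:prop:CM-C} is therefore immediate only if one reads $\norm{\,\cdot\,}{H^{-2\beta}(\cD)}$ as the dual norm of $\bigl(\Hdot{2\beta},\norm{\,\cdot\,}{H^{2\beta}(\cD)}\bigr)$ --- which is how the paper's one-line duality argument implicitly proceeds --- not as the dual norm of the full space that your sketch targets. Either adopt that reading, in which case \eqref{eq:prop:CM-C} follows from \eqref{eq:prop:CM-Cinv} purely by duality and the extension machinery is unnecessary, or state and resolve the extension-versus-identity issue explicitly; your current outline leaves the lower bound open.
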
 

\begin{proof} 
	That the Cameron--Martin space 
	is given by $\Hdot{2\beta}$ 
	has already been observed 
	in~\eqref{eq:general-CM-space}. 
	Furthermore, whenever 
	$2\beta\notin\frakE = \{ 2k + \nicefrac{1}{2} : k\in\bbN_0 \}$, 
	we obtain  
	\eqref{eq:prop:CM-Cinv} 
	from Lemma~\ref{lem:Hdot-Sobolev} 
	which also implies the norm 
	equivalence \eqref{eq:prop:CM-C} 
	on $\Hdot{-2\beta}$ 
	as this is the dual space of $\Hdot{2\beta}$. 
\end{proof} 

\begin{remark}\label{rem:continuity} 
	Proposition~\ref{prop:CM}  
	shows that under 
	Assumptions~\ref{ass:a-kappa-D}.I--III
	the Cameron--Martin space 
	for the Gaussian measure 
	$\mu_d(m;\beta,\ac,\kappa)$ 
	in \eqref{eq:def:mu} 
	with $\beta \in (\nicefrac{d}{4},\infty)$ 
	is 
	\[ 
		\cC^{\nicefrac{1}{2}}(L_2(\cD)) 
		= 
		\Hdot{2\beta} 
		\hookrightarrow 
		H^{2\beta}(\cD) 
		\hookrightarrow 
		C^0(\clos{\cD})  
	, 
	\] 
	where the last relation 
	is one of the Sobolev embeddings, 
	see e.g.\ \cite[Theorem~4.6.1.(e)]{Triebel1978}. 
	In particular, the random field 
	$\GP\from\clos{\cD}\times\Omega\to\bbR$ 
	which solves the SPDE~\eqref{eq:Lbeta}
	for $(\beta,\ac,\kappa)$ 
	is continuous ($\bbP$-almost surely and in $L_p$-sense 
	for any $p\in[1,\infty)$)
	and its covariance kernel $\varrho$ 
	is continuous on $\clos{\cD\times\cD}$. 
\end{remark} 

An important consequence of Proposition \ref{prop:CM}  and  
Lemma~\ref{lem:iff-A}\ref{lem:iff-A-iso} is the following result 
on equivalence of  
Cameron--Martin spaces for 
Gaussian measures 
defined as in \eqref{eq:def:mu} with  
different parameters. 

\begin{theorem}\label{thm:CM} 
	Suppose Assumption~\ref{ass:a-kappa-D}.III 
	and that each of the 
	parameter tuples   
	$(\ac,\kappa)$,   
	$(\act,\kappat)$
	fulfills Assumptions~\ref{ass:a-kappa-D}.I--II. 
	Let $\beta\in\bbR_{+}$ be such that 
	$2\beta\notin\frakE$, 
	with $\frakE$ as in~\eqref{eq:exception}, 
	and let $L, \Lt$ be defined 
	as in \eqref{eq:L-div}
	with coefficients $\ac, \kappa$ 
	and $\act, \kappat$, 
	respectively. 
	Then, for all $\gamma\in[-\beta, \beta]$, the  
	operator $\Lt^\gamma L^{-\gamma}$ 
	is an isomorphism on $L_2(\cD)$ 
	(and, thus, $\Hdot{2\gamma}\!, \hdot{2\gamma}{\Lt}$ 
	are norm equivalent spaces)  
	if and only if, 
	for all $j\in\bbN_0$ with 
	$j \leq \lfloor \beta - \nicefrac{5}{4}\rfloor$, 
	the following hold:  
	\begin{equation}\label{eq:condition-iso}
		\begin{split} 
			\forall v\in\Hdot{2\beta}:
			\quad 
			\bigl( \kappa^2 - \nabla \cdot (\ac \nabla) \bigr)^j 
			\bigl( \delta_{\kappa^2} - \nabla\cdot (\delta_{\ac} \nabla) \bigr) v 
			&= 
			0 
			\;\ \text{in}\;\;  
			L_2(\partial\cD), 
			\\
			\forall \widetilde{v}\in\hdot{2\beta}{\Lt}: 
			\quad 
			\bigl( \kappat^2 - \nabla \cdot (\act \nabla) \bigr)^j 
			\bigl( \delta_{\kappa^2} - \nabla\cdot (\delta_{\ac} \nabla) \bigr) \widetilde{v}  
			&= 
			0 
			\;\ \text{in}\;\; 
			L_2(\partial\cD). 
		\end{split} 
	\end{equation}
	Here, 
	we set $\delta_{\kappa^2}(\s) := \kappat^2(\s) - \kappa^2(\s)$ 
	and $\delta_{\ac}(\s) := \act(\s) - \ac(\s)$ for all $\s\in\clos{\cD}$. 
	
	Furthermore, 
	the Cameron--Martin spaces 
	of two Gaussian measures 
	$\mu_d(0;\beta,\ac,\kappa)$,  
	$\mu_d(0;\betat,\act,\kappat)$, 
	defined according to~\eqref{eq:def:mu} 
	with   
	$\beta,\betat \in (\nicefrac{d}{4},\infty)$,  
	where $d\in\bbN$ and 
	$2\beta\notin\frakE$,   
	are isomorphic  
	with equivalent norms  
	if and only if 
	$\beta = \betat$
	and \eqref{eq:condition-iso} 
	holds for all 
	$j\in\bbN_0$ with 
	$j \leq \lfloor \beta - \nicefrac{5}{4}\rfloor$. 
\end{theorem}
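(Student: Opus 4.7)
The plan is to deduce the first assertion from Lemma~\ref{lem:iff-A}\ref{lem:iff-A-iso} applied with $A=L$ and $\At=\Lt$, combined with the Sobolev characterization of $\Hdot{\sigma}$ in Lemma~\ref{lem:Hdot-Sobolev}. Write
$B := \Lt - L = \delta_{\kappa^2} - \nabla\cdot(\delta_{\ac}\nabla)$,
which under Assumptions~\ref{ass:a-kappa-D}.I--II is a second-order differential operator with $C^\infty(\clos{\cD})$-coefficients. Lemma~\ref{lem:iff-A}\ref{lem:iff-A-iso} reduces the isomorphism property of $\Lt^\gamma L^{-\gamma}$ on $L_2(\cD)$ for all $\gamma\in[-\beta,\beta]$ to
\[
B \in \cL\bigl(\hdot{2\eta}{L};\hdot{2(\eta-1)}{L}\bigr) \cap \cL\bigl(\hdot{2\eta}{\Lt};\hdot{2(\eta-1)}{\Lt}\bigr), \quad \eta\in\{1,\beta\}.
\]

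For $\eta=1$ this is automatic, since $\Hdot{2}=\hdot{2}{\Lt}=H^2(\cD)\cap H^1_0(\cD)$ and smoothness of the coefficients makes $B\colon H^2(\cD)\to L_2(\cD)$ bounded. For $\eta=\beta$, the hypothesis $2\beta\notin\frakE$ also keeps $2(\beta-1)$ out of $\frakE$, so Lemma~\ref{lem:Hdot-Sobolev} identifies both $\Hdot{2\beta}$ and $\Hdot{2(\beta-1)}$ as Sobolev functions obeying the trace conditions indexed by $j\leq\lfloor\beta-\nicefrac{1}{4}\rfloor$ and $j\leq\lfloor\beta-\nicefrac{5}{4}\rfloor$ respectively. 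Smoothness of $\delta_{\ac},\delta_{\kappa^2}$ delivers $B\colon H^{2\beta}(\cD)\to H^{2(\beta-1)}(\cD)$ boundedly, so the only remaining requirement for $Bv\in\Hdot{2(\beta-1)}$ when $v\in\Hdot{2\beta}$ is exactly the first line of \eqref{eq:condition-iso}; the symmetric analysis with $L$ and $\Lt$ swapped yields the second line.

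For the second assertion, Proposition~\ref{prop:CM} identifies the two Cameron--Martin spaces with $\Hdot{2\beta}$ and $\hdot{2\widetilde\beta}{\Lt}$, endowed with their graph norms. I would first argue that norm equivalence forces $\beta=\widetilde\beta$: the embedding $\Hdot{2\beta}\hookrightarrow L_2(\cD)$ is unitarily equivalent to $L^{-\beta}$ and hence has singular values $\lambda_j^{-\beta}\sim j^{-2\beta/d}$ by Weyl's law~\eqref{eq:lambdaj}; likewise $\hdot{2\widetilde\beta}{\Lt}\hookrightarrow L_2(\cD)$ has singular values $\sim j^{-2\widetilde\beta/d}$. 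If the two Hilbert spaces coincide as subsets of $L_2(\cD)$ with equivalent norms, the $s$-number inequality $s_j(AB)\leq\|A\|\,s_j(B)$ forces these two sequences to differ only by bounded multiplicative factors, and equality of the decay rates gives $\beta=\widetilde\beta$. Once $\beta=\widetilde\beta$ is established, norm equivalence of the Cameron--Martin spaces means precisely that $\Lt^\beta L^{-\beta}$ is an isomorphism on $L_2(\cD)$; complex interpolation between $\Hdot{0}=L_2(\cD)=\hdot{0}{\Lt}$ and $\Hdot{2\beta}\cong\hdot{2\beta}{\Lt}$ (as in Lemma~\ref{lem:hdot-interpol}) promotes this to an isomorphism for every $\gamma\in[-\beta,\beta]$, and the first assertion closes the argument.

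The main obstacle is justifying $\beta=\widetilde\beta$ without imposing an extra exception-set hypothesis on $2\widetilde\beta$; the singular-value/Weyl's-law argument above handles this cleanly. After that, the theorem reduces to the bookkeeping of boundary trace indices provided by Lemma~\ref{lem:Hdot-Sobolev}, which is routine.
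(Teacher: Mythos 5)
Your argument follows the paper's proof almost exactly: the reduction of the isomorphism property to $\Lt-L\in\cL\bigl(\Hdot{2\eta};\Hdot{2(\eta-1)}\bigr)\cap\cL\bigl(\hdot{2\eta}{\Lt};\hdot{2(\eta-1)}{\Lt}\bigr)$ for $\eta\in\{1,\beta\}$ via Lemma~\ref{lem:iff-A}\ref{lem:iff-A-iso}, the translation of membership in $\Hdot{2(\beta-1)}$ into the trace conditions of \eqref{eq:condition-iso} via Lemma~\ref{lem:Hdot-Sobolev}, and the use of Weyl's law \eqref{eq:lambdaj} to force $\beta=\betat$ (your $s$-number phrasing is equivalent to the paper's identification of the Cameron--Martin spaces with weighted sequence spaces).

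The one genuine gap is the range $\beta\in(0,1)$, which the theorem's hypotheses $\beta\in\bbR_+$, $2\beta\notin\frakE$ allow: Lemma~\ref{lem:iff-A} is only stated (and only makes sense, since $\frakN_\beta$ must contain $1$) for $\beta\in[1,\infty)$, so your main tool is unavailable there. In that regime the condition set in \eqref{eq:condition-iso} is empty and one must prove the isomorphism unconditionally; the paper does this directly from the identifications \eqref{eq:Hdot-Sobolev-1-2}--\eqref{eq:Hdot-Sobolev-0-12}, which show that $\Hdot{2\beta}$ and $\hdot{2\beta}{\Lt}$ coincide with the same Sobolev (sub)space with equivalent norms for $2\beta\in(0,2]\setminus\{\nicefrac{1}{2}\}$, and then extends to all $\gamma\in[-\beta,\beta]$ by complexification, interpolation and duality. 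This is an easy fix, but as written your proof does not cover the theorem's full parameter range. Everything else --- the automatic $\eta=1$ case, the observation that $2\beta\notin\frakE$ implies $2(\beta-1)\notin\frakE$, and the closing interpolation step promoting the single isomorphism at $\gamma=\beta$ to all $\gamma\in[-\beta,\beta]$ in the second assertion --- is sound and matches the paper.
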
 

\begin{proof} 
	In order to derive the first assertion, 
	we distinguish 
	two cases,  
	\textbf{Case I:} $\beta\in(0,1)$, $\beta\neq\nicefrac{1}{4}$ 
	and 
	\textbf{Case II:} $\beta\in[1,\infty)$, $2\beta\notin\frakE$. 
	
	In \textbf{Case I}, 
	$\beta\in(0,1)$, $\beta\neq\nicefrac{1}{4}$, 
	there are no conditions imposed in \eqref{eq:condition-iso} 
	and we obtain the relation    
	$\bigl(\Hdot{2\beta}, \norm{\,\cdot\,}{2\beta,L}\bigr) 
	\cong 
	\bigl(\hdot{2\beta}{\Lt}, \norm{\,\cdot\,}{2\beta,\Lt}\bigr)$ 
	from one of the identifications 
	in \eqref{eq:Hdot-Sobolev-1-2}, \eqref{eq:Hdot-Sobolev-12-1}  
	or \eqref{eq:Hdot-Sobolev-0-12}. 
	Consequently, $\Lt^\beta L^{-\beta}$ 
	is an isomorphism on $L_2(\cD)$ 
	and by complexification and interpolation, 
	see Lemma~\ref{lem:hdot-interpol} in Appendix~\ref{appendix:interpol}, 
	the same is true for $\Lt^\gamma L^{-\gamma}$ 
	and all $\gamma\in[-\beta,\beta]$. 
	
	\textbf{Case II:} 
	For $\beta\in[1,\infty)$, 
	Lemma~\ref{lem:iff-A}\ref{lem:iff-A-iso} 
	shows that 
	$\Lt^\gamma L^{-\gamma}$ is an isomorphism 
	on $L_2(\cD)$ for every  
	$\gamma\in[-\beta, \beta]$ 
	if and only if 
	$\Lt - L \in \cL\bigl( \Hdot{2\eta}; \Hdot{2(\eta-1)}\bigr) \cap  
	\cL\bigl( \hdot{2\eta}{\Lt}; \hdot{2(\eta-1)}{\Lt} \bigr)$ 
	holds for $\eta\in\{1,\beta\}$. 
	The claim 
	then follows from identifying 
	$\Hdot{2\beta-2}$ and $\hdot{2\beta-2}{\Lt}$ 
	according to~\eqref{eq:Hdot-Sobolev} 
	in Lemma~\ref{lem:Hdot-Sobolev}, 
	combined with 
	the regularity 
	$(\Lt-L)v\in H^{2\eta-2}(\cD)$ 
	which holds  
	for all $v\in\Hdot{2\eta}\cup\hdot{2\eta}{\Lt} \subseteq H^{2\eta}(\cD)$ 
	and every $\eta\in\{1,\beta\}$,  
	since  
	$\kappa,\kappat\in C^\infty(\clos{\cD})$ 
	and 
	$\ac,\act\in C^\infty(\clos{\cD})^{d\times d}$ 
	are smooth. 
	
	We now prove the second claim.   
	By Proposition~\ref{prop:CM} 
	the Cameron--Martin spaces are 
	$\Hdot{2\beta}$ and $\hdot{2\betat}{\Lt}$\!. 
	If we identify the Hilbert space 
	$L_2(\cD)$ with the  
	space $\ell^2$ of square-summable sequences, 
	Weyl's law~\eqref{eq:lambdaj} 
	(applied for 
	$L$ and $\Lt$)
	shows that 
	$\Hdot{2\beta}$ can be identified 
	with 
	${\bigl\{ 
	(c_j)_{j\in\bbN} 
	: \bigl\{ j^{2\beta/d} c_j \bigr\}_{j\in\bbN} \in\ell^2 
	\bigr\} \subset \ell^2}$ 
	and 
	$\hdot{2\betat}{\Lt}$ 
	with 
	$\bigl\{ 
	(\widetilde{c}_j)_{j\in\bbN} 
	: \bigl\{ j^{2\betat/d} \, \widetilde{c}_j \bigr\}_{j\in\bbN} \in\ell^2 
	\bigr\} 
	\subset \ell^2$. 
	For   
	this reason, 
	$\Hdot{2\beta}$ and 
	$\hdot{2\betat}{\Lt}$ can be isomorphic 
	only if $\beta = \betat$. 
	In the case that $\beta=\betat$ and $2\beta\notin\frakE$, 
	sufficiency and necessity of the 
	conditions \eqref{eq:condition-iso} 
	for $\Hdot{2\beta}$ 
	and $\hdot{2\betat}{\Lt} = \hdot{2\beta}{\Lt}$ 
	to be isomorphic follow from 
	the first part of this theorem. 
\end{proof} 

We end this subsection with a discussion 
of the conditions \eqref{eq:condition-iso}. 
In what follows, we suppose that the assumptions of 
Theorem~\ref{thm:CM} on the coefficients of $L,\Lt$ 
and on the domain $\cD\subset\bbR^d$ are satisfied. 
Firstly, we note that for 
$\beta\in(0,\nicefrac{5}{4})$ 
no boundary conditions on 
$\delta_{\kappa^2}$ or $\delta_{\ac}$ are imposed 
and the spaces $\Hdot{2\beta}$ and $\hdot{2\beta}{\Lt}$
are isomorphic, independently of the choice of $\kappa,\kappat,\ac,\act$. 
Next, consider the case that 
$\beta\in(\nicefrac{5}{4},\nicefrac{9}{4})$.  
Then, the conditions 
\eqref{eq:condition-iso}  
say that 
\[
	\bigl( \delta_{\kappa^2} - \nabla\cdot(\delta_{\ac} \nabla) \bigr) v 
	= 
	\bigl( \kappat^2 - \nabla\cdot( \act \nabla) \bigr) v 
	- 
	\bigl( \kappa^2 - \nabla\cdot( \ac \nabla) \bigr) v 
	= 0
	\;\ \text{in}\;\; L_2(\partial\cD) 
\]
has to hold 
for every  $v \in \Hdot{2\beta} \cup \hdot{2\beta}{\Lt}$\!. 
By \eqref{eq:Hdot-Sobolev}, 
for all $\beta\in(\nicefrac{5}{4},\infty)$, 
every $v\in\Hdot{2\beta} \cup \hdot{2\beta}{\Lt}$ 
satisfies the boundary condition  
$v=0$  
in $L_2(\partial\cD)$. 
Therefore, in this case  
 \eqref{eq:condition-iso} 
simplifies to the requirement 
that 
$\nabla\cdot( \delta_{\ac} \nabla v) =0$ 
in $L_2(\partial\cD)$ 
for all $v\in \Hdot{2\beta} \cup \hdot{2\beta}{\Lt}$\!. 
In particular, note that no assumptions are imposed on 
$\kappa,\kappat$. 
Finally, we consider the case that  
$c\ac=\act$ for some $c\in\bbR_+$ 
and $\beta\in(\nicefrac{9}{4}, \nicefrac{13}{4})$. 
Since $\Hdot{2\beta}\cong\hdot{2\beta}{\Lt}$ holds 
if and only if $\hdot{2\beta}{cL}\cong\hdot{2\beta}{\Lt}$\!, 
we thus need that for all 
$v\in\Hdot{2\beta}=\hdot{2\beta}{cL}$ and 
$\widetilde{v}\in\hdot{2\beta}{\Lt}$: 
\begin{equation}\label{eq:cond-delta-kappa} 
	\bigl( \kappa^2 - \nabla \cdot (\ac \nabla) \bigr) 
	\bigl( \delta_{c,\kappa^2} v\bigr) 
	=
	0
	\;\ \text{in} \;\; L_2(\partial\cD) 
	\quad 
	\text{and}  
	\quad 
	\bigl( \kappat^2 - c \nabla \cdot (\ac \nabla) \bigr)  
	\bigl( \delta_{c,\kappa^2} \widetilde{v} \bigr) 
	=
	0
	\;\ \text{in} \;\; L_2(\partial\cD), 
\end{equation}
where $\delta_{c,\kappa^2}(\s) := \kappat^2(\s) - c\kappa^2(\s)$. 
Since $( \kappa^2 - \nabla \cdot (\ac \nabla) )v = v = 0$ 
in $L_2(\partial\cD)$,  
this gives  
\begin{align*} 
	0 
	= 
	\bigl( \kappa^2 - \nabla \cdot (\ac \nabla) \bigr) 
	\bigl( \delta_{c,\kappa^2} v\bigr) 
	&= 
	\delta_{c,\kappa^2} \bigl( \kappa^2 - \nabla \cdot (\ac \nabla) \bigr) v 
	- 
	2 ( \ac \nabla v ) \cdot \nabla \delta_{c,\kappa^2}
	- 
	v \nabla \cdot (\ac \nabla \delta_{c,\kappa^2}  ) 
	\\
	&= 
	- 2 ( \ac \nabla v ) \cdot \nabla \delta_{c,\kappa^2} 
	\;\  \text{in} \;\; L_2(\partial\cD), 
\end{align*} 
for all $v\in\Hdot{2\beta}$ 
and, similarly, 
$( \ac \nabla \widetilde{v} ) \cdot \nabla \delta_{c,\kappa^2} 
= 0$ in $L_2(\partial\cD)$ follows for all $\widetilde{v}\in\hdot{2\beta}{\Lt}$\!. 
The traces of $v, \widetilde{v}$ vanish in $L_2(\partial\cD)$ 
and $\partial\cD$ is smooth. 
Therefore, also the traces of all tangential components of 
$\nabla v, \nabla\widetilde{v}$ 
vanish and  
${\nabla v =  \tfrac{\partial v }{\partial\onormal} \onormal}$, 
${\nabla \widetilde{v} =  \frac{\partial \widetilde{v}}{\partial\onormal} \onormal}$ 
with equality in $L_2\bigl( \partial\cD;\bbR^d \bigr)$, 
where  
$\onormal$ 
is the outward pointing unit normal on~$\partial\cD$, 
see Remark~\ref{rem:onormal} in Appendix~\ref{appendix:function-spaces}. 
For $\beta\in(\nicefrac{9}{4}, \nicefrac{13}{4})$ and 
${r \in(\nicefrac{3}{2}, 2)}$, 
$\Hdot{2\beta}\!, \hdot{2\beta}{\Lt}$ 
are dense in $H^r(\cD)\cap H^1_0(\cD)$ 
and the trace map  
$v \mapsto \bigl\{ \tfrac{\partial^j v}{\partial \onormal^j} : j = 0,1 \bigr\}$
of 
$H^r(\cD) \to H^{r-\nicefrac{1}{2}}(\partial\cD) \times H^{r-\nicefrac{3}{2}} (\partial\cD)$ 
is surjective, 
see~Theorem~\ref{thm:trace} in Appendix~\ref{appendix:function-spaces}. 
Since also $H^{r-\nicefrac{3}{2}} (\partial\cD)$ 
is dense in $L_2(\partial\cD)$, 
the requirement 
\eqref{eq:cond-delta-kappa} simplifies 
to the following condition on 
$\delta_{c,\kappa^2}=\kappat^2 - c \kappa^2$:  
\begin{align}
	\hspace{1cm}  
	\forall v \in \Hdot{2\beta} : 
	&& 
	(\ac\nabla v) \cdot \nabla\delta_{c,\kappa^2} 
	=
	\tfrac{\partial v}{\partial \onormal} (\ac\onormal) \cdot \nabla\delta_{c,\kappa^2} 
	&= 
	0
	\;\ \text{in} \;\; L_2(\partial\cD) 
	\hspace{2cm}  
	\notag 
	\\
	\Longleftrightarrow\qquad 
	&&
	(\ac  \nabla \delta_{c,\kappa^2}) \cdot \onormal 
	&= 
	0 
	\;\ \text{on} \;\; \partial\cD. 
	\qquad 
	\label{eq:condition-delta-kappa} 
\end{align}

\subsection{Equivalence and orthogonality of Whittle--Mat\'ern measures}
\label{subsec:wm-D:equiv}

The main outcomes of this section 
are necessary and sufficient conditions 
on the parameters involved 
for two Gaussian measures 
$\mu_d(m;\beta,\ac,\kappa)$ 
and 
$\mu_d(\mt;\betat,\act,\kappat)$, 
defined according to \eqref{eq:def:mu}, 
to be equivalent, see Theorem~\ref{thm:equivalence}. 
In order to derive this result, we first formulate 
three lemmas which will guarantee 
sufficiency (Lemma~\ref{lem:HS:d<4}) 
and necessity (Lemmas~\ref{lem:a-notcompact} and~\ref{lem:kappa-notHS}) 
of the conditions.   

\begin{lemma}\label{lem:HS:d<4} 
	Let $d\in\{1,2,3\}$ and let  
	$\beta \in (\nicefrac{d}{4},\infty)$ be such that 
	$2\beta\notin\frakE$, 
	with $\frakE$ as given in~\eqref{eq:exception}. 
	In addition, suppose Assumption~\ref{ass:a-kappa-D}.III 
	and 
	let the operators~$L$ and~$\Lt$ be defined as 
	in \eqref{eq:L-div} 
	with coefficients $\ac,\kappa$ 
	and $\ac,\kappat$, respectively, where 
	$\ac$ fulfills Assumption~\ref{ass:a-kappa-D}.I 
	and $\kappa,\kappat$ are such that  
	Assumption~\ref{ass:a-kappa-D}.II 
	is satisfied and 
	\eqref{eq:condition-iso} holds for all $j\in\bbN_0$ with  
	$j\leq\lfloor\beta-\nicefrac{5}{4}\rfloor$. 
	Then, the operator 
	$\Lt^\beta L^{-\beta}$ is an isomorphism on $L_2(\cD)$ 
	and 
	$L^{-\beta} \Lt^{2\beta} L^{-\beta} - \id_{L_2(\cD)}$  
	is Hilbert--Schmidt  on $L_2(\cD)$. 
\end{lemma}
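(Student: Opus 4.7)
The plan is to reduce both conclusions to the abstract framework of Section~\ref{section:gm-on-hs}, exploiting the crucial simplification $\act=\ac$, which turns $\Lt-L=\delta_{\kappa^2}$ into a smooth multiplication operator. The dimensional restriction $d\leq 3$ then enters only through the fact that on a bounded Euclidean domain the Sobolev embedding $H^{2\eta}(\cD)\hookrightarrow H^{2(\eta-1)}(\cD)$ is Hilbert--Schmidt when $d<4$, since the gap of two derivatives exceeds $\nicefrac{d}{2}$ (equivalently, by Weyl's law \eqref{eq:lambdaj}, $\sum_j \lambda_j^{-2}<\infty$). The isomorphism claim is immediate: under hypothesis \eqref{eq:condition-iso}, Theorem~\ref{thm:CM} directly yields that $\Lt^\beta L^{-\beta}$ is an isomorphism on $L_2(\cD)$.

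For the Hilbert--Schmidt claim in the range $\beta\in[1,\infty)$, I will apply the sufficient condition of Proposition~\ref{prop:A-equiv} with $\betat=\beta$ (so $\delta=1$ and $\At^\delta=\Lt$), which reduces the task to verifying
\begin{equation*}
\delta_{\kappa^2} \in \cL_2\bigl(\Hdot{2\eta};\,\Hdot{2(\eta-1)}\bigr) \qquad \forall\,\eta\in\frakN_\beta,
\end{equation*}
together with $\delta_{\kappa^2}\in\cL\bigl(\hdot{2\eta}{\Lt};\hdot{2(\eta-1)}{\Lt}\bigr)$ for $\eta\in\{1,\beta\}$. The argument factors the multiplication as
\begin{equation*}
\Hdot{2\eta} \hookrightarrow H^{2\eta}(\cD)\xrightarrow{\;\delta_{\kappa^2}\cdot\;} H^{2\eta}(\cD)\hookrightarrow H^{2(\eta-1)}(\cD),
\end{equation*}
where the first embedding is continuous by Lemma~\ref{lem:Hdot-Sobolev}, the middle map is bounded by smoothness of $\delta_{\kappa^2}$, and the last embedding is Hilbert--Schmidt by the dimensional condition $d\leq 3$. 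To upgrade the target from $H^{2(\eta-1)}(\cD)$ to $\Hdot{2(\eta-1)}$, I observe that the trace conditions characterizing $\Hdot{2(\eta-1)}$ via Lemma~\ref{lem:Hdot-Sobolev} require $(\kappa^2-\nabla\cdot(\ac\nabla))^j(\delta_{\kappa^2}v)=0$ on $\partial\cD$ for $j\leq\lfloor\eta-\nicefrac{5}{4}\rfloor$; but these are precisely the conditions imposed by \eqref{eq:condition-iso} (with $\delta_{\ac}=0$) restricted to the subrange $j\leq\lfloor\eta-\nicefrac{5}{4}\rfloor\leq\lfloor\beta-\nicefrac{5}{4}\rfloor$. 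The analogous statement on $\Lt$-scales follows identically, since $\ac=\act$ makes \eqref{eq:condition-iso} symmetric in the two scales.

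For the complementary range $\beta\in(\nicefrac{d}{4},1)$, Lemma~\ref{lem:iff-A} no longer applies. Here I will enlarge $\beta$ by taking $\beta_0:=1$, noting that $2\beta_0=2\notin\frakE$ and that \eqref{eq:condition-iso} at $\beta_0$ is vacuous (the index set of $j$ is empty), so the hypotheses of the previous paragraph hold trivially at $\beta_0$. Applying that argument at $\beta_0$ yields both the isomorphism of $\Lt L^{-1}$ and $L^{-1}\Lt^{2}L^{-1}-\id_{L_2(\cD)}\in\cL_2(L_2(\cD))$. Lemma~\ref{lem:beta-gamma}\ref{lem:beta-gamma-HS} then propagates both properties down to every $\gamma\in[-\beta_0,\beta_0]$, in particular to the given $\gamma=\beta$.

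The hard part will be the boundary-condition verification: confirming that $\delta_{\kappa^2}v$ actually lands in $\Hdot{2(\eta-1)}$ and not merely in $H^{2(\eta-1)}(\cD)$. After expanding $(\kappa^2-\nabla\cdot(\ac\nabla))^j(\delta_{\kappa^2}v)$ via the Leibniz rule, each resulting summand carries either a derivative of $v$ whose trace vanishes (by $v\in\Hdot{2\eta}$) or a factor whose boundary behaviour is controlled by \eqref{eq:condition-iso} on $\delta_{\kappa^2}$; tracking this combinatorics---in particular reconciling the index $\lfloor\eta-\nicefrac{5}{4}\rfloor$ required for the target with the index $\lfloor\beta-\nicefrac{5}{4}\rfloor$ supplied by \eqref{eq:condition-iso}---is routine but accounts for the bulk of the bookkeeping.
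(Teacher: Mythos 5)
Your proposal is correct in substance and follows the paper's overall architecture (isomorphism from Theorem~\ref{thm:CM}; reduction of the Hilbert--Schmidt claim to the mapping property of $\Lt-L=M_{\delta_{\kappa^2}}$ between the $\Hdot{\cdot}$ scales via Lemma~\ref{lem:iff-A}\ref{lem:iff-A-HS}; a case split at $\beta=1$), but it differs in the two technical devices. For $\beta\geq 1$ the paper composes $L^{\eta-1}(\Lt-L)L^{-\eta}$ on the right with $L^{-\nicefrac{d}{4}-\nicefrac{\eps_0}{2}}\in\cL_2$ after establishing $\Lt-L\in\cL\bigl(\Hdot{2\eta-\nicefrac{d}{2}-\eps_0};\Hdot{2(\eta-1)}\bigr)$, whereas you realize the smoothing through Maurin's theorem (the embedding $H^{2\eta}(\cD)\hookrightarrow H^{2(\eta-1)}(\cD)$ is Hilbert--Schmidt for $d\leq 3$); both hinge on $d<4$ in the same way and are interchangeable. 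For $\beta\in(\nicefrac{d}{4},1)$ your bootstrap --- run the integer-order argument at $\beta_0=1$, where \eqref{eq:condition-iso} is vacuous, and propagate down with Lemma~\ref{lem:beta-gamma}\ref{lem:beta-gamma-HS} --- is genuinely different from and arguably cleaner than the paper's route, which uses the algebraic identity $\Lt^{2\beta}-L^{2\beta}=\tfrac12(\Lt^\beta+L^\beta)(\Lt^\beta-L^\beta)+[\cdots]^*$ together with Lemma~\ref{lem:A-alpha-difference} on differences of fractional powers; your version dispenses with Appendix~\ref{appendix:A-alpha} entirely.

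One caveat on the step you defer to ``bookkeeping'': the issue is not reconciling the index ranges $\lfloor\eta-\nicefrac54\rfloor\leq\lfloor\beta-\nicefrac54\rfloor$, but that \eqref{eq:condition-iso} is stated only for $v$ in the \emph{smaller} space $\Hdot{2\beta}$, while your factorization needs $(\kappa^2-\nabla\cdot(\ac\nabla))^j(\delta_{\kappa^2}v)=0$ in $L_2(\partial\cD)$ for all $v$ in the \emph{larger} space $\Hdot{2\eta}$ when $\eta<\beta$. This does not follow by restriction; you must extend the vanishing of the traces from $\Hdot{2\beta}$ to $\Hdot{2\eta}$, either by density of $\Hdot{2\beta}$ in $\Hdot{2\eta}$ combined with continuity of the relevant trace maps on $H^{2\eta}(\cD)$ (the argument the paper carries out, pushed to the intermediate space $\Hdot{2\eta-\nicefrac{d}{2}-\eps_0}$), or by invoking Theorem~\ref{thm:CM} at level $\eta$ (legitimate since $2\eta\notin\frakE$ for every $\eta\in\frakN_\beta$). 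With that extension supplied, your argument closes.
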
 

\begin{proof} 
	Firstly, we note that by Theorem~\ref{thm:CM} 
	the operator 
	$\Lt^\gamma L^{-\gamma}$ is an isomorphism on $L_2(\cD)$ 
	for all $\gamma\in[-\beta,\beta]$. 
	To prove the Hilbert--Schmidt property 
	of 
	$L^{-\beta} \Lt^{2\beta} L^{-\beta} - \id_{L_2(\cD)}$,  
	we distinguish  
	\textbf{Case~I:} $\beta\in(\nicefrac{d}{4},1)$ 
	and 
	\textbf{Case II:} $\beta\in[1,\infty)$, $2\beta\notin\frakE$. 
	
	\textbf{Case I:} For $\beta\in(\nicefrac{d}{4},1)$, 
	we first observe the identity 
	\begin{align*} 
		\Lt^{2\beta} 
		- 
		L^{2\beta}  
		&=
		\tfrac{1}{2} 
		\bigl(\Lt^{\beta} + L^\beta \bigr) 
		\bigl(\Lt^{\beta} - L^\beta \bigr)  
		+ 
		\tfrac{1}{2} 
		\bigl(\Lt^{\beta} - L^\beta \bigr) 
		\bigl(\Lt^{\beta} + L^\beta \bigr) 
		\\
		&=
		\tfrac{1}{2} 
		\bigl(\Lt^{\beta} + L^\beta \bigr) 
		\bigl(\Lt^{\beta} - L^\beta \bigr)  
		+ 
		\tfrac{1}{2} 
		\bigl[ 
		\bigl(\Lt^{\beta} + L^\beta \bigr) 
		\bigl(\Lt^{\beta} - L^\beta \bigr)  
		\bigr]^*\!. 
	\end{align*} 
	Since for $S\in\cL_2(E)$ we have $S^*\in\cL_2(E)$ 
	with $\norm{S^*}{\cL_2(E)} = \norm{S}{\cL_2(E)}$, 
	we estimate 
	\begin{align*} 
		\bigl\| 
		L^{-\beta} \bigl( \Lt^{2\beta} - L^{2\beta} \bigr) L^{-\beta} 
		\bigr\|_{\cL_2(L_2(\cD))} 
		&\leq 
		\bigl\| 
			L^{-\beta}
			\bigl( \Lt^{\beta} + L^\beta \bigr) 
			\bigl( \Lt^{\beta} - L^\beta \bigr) 
			L^{-\beta}
		\bigr\|_{\cL_2(L_2(\cD))} 
		\\
		&\leq 
		\bigl( 
		\bigl\|  
			L^{-\beta}\Lt^{\beta} \bigr\|_{\cL(L_2(\cD))} 
		+ 1 
		\bigr) 
		\bigl\| 
			\bigl(\Lt^\beta - L^\beta\bigr) L^{-\beta} 
		\bigr\|_{\cL_2(L_2(\cD))}. 
	\end{align*} 
	By the isomorphism property of $\Lt^\beta L^{-\beta}$\!,  
	the operator $L^{-\beta}\Lt^{\beta}$ is bounded 
	on $L_2(\cD)$. 
	Furthermore, since $(\Lt-L)\psi = \delta_{\kappa^2}\psi$ 
	and $\delta_{\kappa^2} := \kappat^2-\kappa^2 \in C^\infty(\clos{\cD})$, 
	we find that $\Lt-L\in\cL( L_2(\cD) )$.  
	Thus, by Lemma~\ref{lem:A-alpha-difference} 
	and Remark~\ref{rem:A-alpha-difference} 
	in Appendix~\ref{appendix:A-alpha}, 
	also $\Lt^\beta-L^\beta\in\cL(L_2(\cD))$,  
	and 
	\[
		\bigl\|
			\bigl( \Lt^\beta - L^\beta \bigr) L^{-\beta} 
		\bigr\|_{\cL_2(L_2(\cD))} 
		\leq 
		\bigl\|
			\Lt^\beta - L^\beta 
		\bigr\|_{\cL(L_2(\cD))} 
		\bigl\| 
			L^{-\beta} 
		\bigr\|_{\cL_2(L_2(\cD))} 
		<\infty.  
	\]
	Here, the Hilbert--Schmidt property of 
	$L^{-\beta} \in \cL_2(L_2(\cD))$ 
	for $\beta\in(\nicefrac{d}{4},1)$ follows from  
	the spectral 
	asymptotics \eqref{eq:lambdaj} of the operator $L$ 
	since, for any $\eps\in\bbR_+$,  
	\begin{equation}\label{eq:weyl-HS}   
		\bigl\| L^{- (\nicefrac{d}{4}+\eps) } \bigr\|_{\cL_2(L_2(\cD))}^2 
		= 
		\sum\limits_{j\in\bbN} \lambda_j^{-\nicefrac{d}{2} - 2\eps}
		\leq 
		c_\lambda^{-\nicefrac{d}{2} - 2\eps} 
		\sum\limits_{j\in\bbN} j^{- 1 - \nicefrac{(4\eps)}{d}} 
		< \infty. 
	\end{equation}
	
	\textbf{Case II:}
	Let $\frakN_\beta$ be as in~\eqref{eq:frakNset}
	and  
	$\eta\in\frakN_\beta$. 
	Pick $\eps_0 \in (0,\nicefrac{1}{2})$  
	such that 
	$2\eta-\nicefrac{d}{2}-\eps_0\notin\frakE$ 
	holds for all $\eta\in\frakN_\beta$. 
	Then, by Lemma~\ref{lem:Hdot-Sobolev},    
	on $\Hdot{2\eta-\nicefrac{d}{2}-\eps_0}$ 
	the norm $\norm{\,\cdot\,}{2\eta-\nicefrac{d}{2}-\eps_0, L}$ 
	is equivalent to the 
	Sobolev norm $\norm{\,\cdot\,}{H^{2\eta-d/2-\eps_0}(\cD)}$. 
	Furthermore, $\Hdot{2\eta}$ is dense 
	in $\Hdot{2\eta-\nicefrac{d}{2}-\eps_0}$ and 
	for any fixed ${\psi\in\Hdot{2\eta-\nicefrac{d}{2}-\eps_0}}$\!,  
	$\delta\in\bbR_+$ there exists 
	$v_\delta\in\Hdot{2\eta}$ such that  
	$\norm{\psi-v_\delta}{2\eta-\nicefrac{d}{2}-\eps_0, L}< \delta$. 
	As \eqref{eq:condition-iso}   
	is assumed, for every $\eta\in\frakN_\beta$ 
	and all 
	$j\in\bbN_0$ with 
	$j\leq\lfloor\eta-\nicefrac{5}{4}\rfloor$, 
	we have 
	$\bigl( \kappa^2 - \nabla\cdot(\ac\nabla)\bigr)^j \bigl( \delta_{\kappa^2} v \bigr) = 0$ 
	in $L_2(\partial\cD)$ 
	for all $v \in \Hdot{2\eta}$.  
	Since $1-\eps_0\in(\nicefrac{1}{2},1)$,  
	by the trace theorem, 
	Theorem~\ref{thm:trace} in Appendix~\ref{appendix:function-spaces}, 
	there are $C, \widehat{C}, C'\! \in\bbR_+$ 
	independent of $\delta, v_\delta$ and $\psi$ such that, 
	for all  
	$j\in\bbN_0$ with 
	$j\leq\lfloor\eta-\nicefrac{5}{4}\rfloor$, 
	\begin{align*} 
		\bigl\| 
		\bigl( \kappa^2 
		&- \nabla\cdot(\ac\nabla)\bigr)^j \bigl( \delta_{\kappa^2} \psi \bigr) 
		\bigr\|_{L_2(\partial\cD)} 
		= 
		\bigl\| 
		\bigl( \kappa^2 - \nabla\cdot(\ac\nabla)\bigr)^j 
		\bigl( \delta_{\kappa^2} (\psi - v_\delta) \bigr)  
		\bigr\|_{L_2(\partial\cD)}
		\\ 
		&\leq 
		C   
		\bigl\| 
		\bigl( \kappa^2 - \nabla\cdot(\ac\nabla)\bigr)^j 
		\bigl( \delta_{\kappa^2} (\psi - v_\delta) \bigr)  
		\bigr\|_{H^{1-\eps_0}(\cD)} 
		\leq 
		\widehat{C} \,    
		\| \psi - v_\delta \|_{H^{2j+1-\eps_0}(\cD)} 
		\\ 
		&\leq 
		\widehat{C} \,    
		\| \psi - v_\delta \|_{H^{2\eta-3/2-\eps_0}(\cD)} 
		\leq 
		\widehat{C} \,   
		\| \psi - v_\delta \|_{H^{2\eta-d/2-\eps_0}(\cD)}
		\leq 
		C' 
		\| \psi - v_\delta \|_{2\eta-\nicefrac{d}{2}-\eps_0, L} 
		< \delta. 
	\end{align*}  
	As $\psi\in\Hdot{ 2\eta-\nicefrac{d}{2}-\eps_0}$ 
	and $\delta\in\bbR_+$ were arbitrary,  
	we conclude that 
	for every $\eta\in\frakN_\beta$ and 
	all $j\in\bbN_0$ with $j\leq\lfloor\eta-\nicefrac{5}{4}\rfloor$, 
	the following behavior on the boundary is satisfied:  
	\begin{equation}\label{eq:condition-hs}
		\forall \psi\in\Hdot{2\eta-\nicefrac{d}{2}-\eps_0}:
		\quad 
		\bigl( \kappa^2 - \nabla \cdot (\ac \nabla) \bigr)^j 
		\bigl( \delta_{\kappa^2} \psi \bigr) 
		= 
		0 
		\;\ \text{in}\;\; 
		L_2(\partial\cD) . 
	\end{equation}
	Furthermore, we have 
	$\nicefrac{d}{2} + \eps_0\in(\nicefrac{1}{2},2)$. 
	Therefore, 
	the regularity of 
	$\delta_{\kappa^2} \in C^\infty(\clos{\cD})$ 
	and \eqref{eq:condition-hs} 
	imply using the identification \eqref{eq:Hdot-Sobolev} 
	for $\Hdot{2(\eta-1)}$ that 
	$B := \Lt-L \in 
	\cL\bigl( \Hdot{2\eta-\nicefrac{d}{2}-\eps_0}, \Hdot{2(\eta-1)} \bigr)$  
	holds for every $\eta\in\frakN_\beta$. 
	This is equivalent to 
	$L^{\eta-1} B L^{-\eta+\nicefrac{d}{4}+\nicefrac{\eps_0}{2}} 
	\in \cL(L_2(\cD))$ 
	and we conclude that the operator 
	$S_\eta:= 
	L^{\eta-1} \Lt L^{-\eta} - \id_{L_2(\cD)} 
	= 
	L^{\eta-1} (\Lt - L) L^{-\eta}$ 
	is Hilbert--Schmidt on $L_2(\cD)$, since
	\begin{align*} 
		\| L^{\eta-1} (\Lt - L) L^{-\eta} \|_{\cL_2(L_2(\cD))} 
		&= 
		\bigl\| L^{\eta-1} B L^{-\eta+\nicefrac{d}{4}+\nicefrac{\eps_0}{2}} 
		L^{- \nicefrac{d}{4}-\nicefrac{\eps_0}{2}} \bigr\|_{\cL_2(L_2(\cD))} 
		\\
		&\leq 
		\bigl\| L^{\eta-1} B L^{-\eta+\nicefrac{d}{4}+\nicefrac{\eps_0}{2}} 
		\bigr\|_{\cL(L_2(\cD))} 
		\bigl\| L^{- \nicefrac{d}{4}-\nicefrac{\eps_0}{2}} \bigr\|_{\cL_2(L_2(\cD))}  
		< \infty 
	\end{align*}
	follows for all $\eta\in\frakN_\beta$ 
	by recalling \eqref{eq:weyl-HS}. 
	We thus obtain 
	the Hilbert--Schmidt property 
	of the operator $L^{-\beta}\Lt^{2\beta}L^{-\beta}-\id_{L_2(\cD)}$ 
	from Lemma~\ref{lem:iff-A}\ref{lem:iff-A-HS},  
	using $U_\eta=\id_{L_2(\cD)}$ for all $\eta\in\frakN_\beta$. 
\end{proof}  

For ease of presentation, 
the proof of the next lemma 
is postponed to Appendix~\ref{appendix:proof-a-notcompact}. 

\begin{lemma}\label{lem:a-notcompact}  
	Let $c\in\bbR_+$, $d\in\bbN$,  
	and suppose Assumption~\ref{ass:a-kappa-D}.III. 
	Let~$L$ and~$\Lt$ be defined as 
	in~\eqref{eq:L-div} 
	with coefficients $\ac, \kappa$ 
	and $\act, \kappat$, respectively, where 
	$\ac,\act$ fulfill Assumption~\ref{ass:a-kappa-D}.I 
	and 
	$\kappa,\kappat$ satisfy Assumption~\ref{ass:a-kappa-D}.II. 
	If $c \ac \neq \act$, then the operator 
	$L^{-\nicefrac{1}{4}} \Lt^{\nicefrac{1}{2}} L^{-\nicefrac{1}{4}} 
	- c^{\nicefrac{1}{2}} \id_{L_2(\cD)}$  
	is not compact on $L_2(\cD)$. 
\end{lemma}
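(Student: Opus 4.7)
The plan is to argue by contrapositive via an approximate-eigenfunction (Weyl) sequence. Suppose, toward a contradiction, that $T_c$ is compact on $L_2(\cD)$; then every bounded weakly null sequence would be mapped by $T_c$ to a strongly null one. I will instead construct a sequence $(\phi_n)_{n\in\bbN}\subset L_2(\cD)$ that is uniformly bounded, weakly convergent to zero, and satisfies $\liminf_{n\to\infty}\|T_c\phi_n\|_{L_2(\cD)}>0$, contradicting compactness.

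Since $c\ac\neq\act$ on the closure $\clos{\cD}$ and both coefficients are continuous, I can pick an interior point $s_0\in\cD$ and a unit vector $\xi_0\in\bbR^d$ with $\xi_0^\trsp[\act(s_0)-c\ac(s_0)]\xi_0\neq 0$. Choose a cutoff $\chi\in C^\infty_c(\cD)$ with $\chi(s_0)\neq 0$, supported in a ball around $s_0$ so small that $p(s):=[\xi_0^\trsp\act(s)\xi_0/\xi_0^\trsp\ac(s)\xi_0]^{\nicefrac{1}{2}}$ stays bounded away from $c^{\nicefrac{1}{2}}$ on $\operatorname{supp}\chi$. Take the oscillatory wave packets $\phi_n(s):=\chi(s)\exp(in\xi_0\cdot s)$ (or, if finer phase-space localization is needed, Gaussian coherent states $\phi_n(s):=n^{d/4}\chi(\sqrt{n}(s-s_0))\exp(in\xi_0\cdot s)$). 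Each $\phi_n$ is compactly supported in $\cD$, $\|\phi_n\|_{L_2(\cD)}$ is bounded, and $\phi_n\rightharpoonup 0$ in $L_2(\cD)$ by the Riemann--Lebesgue lemma. The analytic heart of the argument is the asymptotic identity
\[
L^{-\nicefrac{1}{4}}\Lt^{\nicefrac{1}{2}}L^{-\nicefrac{1}{4}}\phi_n = p(\cdot)\phi_n + r_n, \qquad \|r_n\|_{L_2(\cD)}\to 0,
\]
which immediately yields $\|T_c\phi_n\|_{L_2(\cD)}\to \|(p-c^{\nicefrac{1}{2}})\chi\|_{L_2(\cD)}>0$, the desired contradiction. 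The identity itself should follow by composing the semiclassical actions $L^\alpha\phi_n=n^{2\alpha}(\xi_0^\trsp\ac(\cdot)\xi_0)^\alpha\phi_n+o(n^{2\alpha})$ and the analogue for $\Lt^\alpha$, for $\alpha\in\{-\nicefrac{1}{4},\nicefrac{1}{2}\}$: the leading powers $n^{-1/2}\cdot n\cdot n^{-1/2}=n^0$ cancel, leaving the principal-symbol multiplier $p(\cdot)$.

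The main technical obstacle is to make these symbolic asymptotics rigorous for fractional and negative powers of $L,\Lt$ and to control the lower-order remainders through the composition in the $L_2(\cD)$-norm. The cleanest route is to invoke Seeley's theorem on complex powers of elliptic boundary value problems: in the interior of $\cD$, $L^\alpha$ and $\Lt^\alpha$ are classical pseudodifferential operators of order $2\alpha$ with principal symbols $(\xi^\trsp\ac(s)\xi)^\alpha$ and $(\xi^\trsp\act(s)\xi)^\alpha$, respectively, and the action on compactly supported wave packets of the above form is then a standard stationary-phase calculation. An alternative route, closer in spirit to the integral techniques already employed in this paper (cf.\ \eqref{eq:Aalphainv-integral}), is to exploit the Balakrishnan representation $L^{-\theta}=\tfrac{\sin(\pi\theta)}{\pi}\int_0^\infty t^{-\theta}(t\id_{L_2(\cD)}+L)^{-1}\,\rd t$ and its analogue for $\Lt$, analyzing the resolvent $(t\id_{L_2(\cD)}+L)^{-1}\phi_n$ via elliptic regularity for $(t-\nabla\cdot(\ac\nabla)+\kappa^2)u=\phi_n$: the high-frequency modulation forces the leading contribution to be multiplication by $(t+n^2\xi_0^\trsp\ac(\cdot)\xi_0)^{-1}$, and integrating against $t^{-\theta}$ then recovers the desired symbolic multiplier modulo commutator remainders that vanish in $L_2(\cD)$ as $n\to\infty$.
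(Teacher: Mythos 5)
Your strategy is sound and genuinely different from the paper's. You treat $L^{-\nicefrac{1}{4}}\Lt^{\nicefrac{1}{2}}L^{-\nicefrac{1}{4}}$ as a zeroth-order pseudodifferential operator in the interior and test it against wave packets microlocalized at a point $(s_0,\xi_0)$ where the principal symbol differs from $c^{\nicefrac{1}{2}}$; non-compactness then follows because a weakly null, norm-bounded sequence is not sent to a norm-null one. The paper instead avoids all symbolic calculus: it builds an explicit \emph{orthonormal} family of anisotropic sine products supported in a thin box aligned with an eigenvector of $\Theta_0=\act(s_0)-c\ac(s_0)$, so that the Dirichlet energy concentrates in that direction; this yields a one-sided quadratic-form bound $\scalar{(\Lt-cL)v,v}{L_2(\cD)}\gtrsim|\theta_1|\,\|\nabla v\|_{L_2(\cD)}^2$ on an infinite-dimensional closed subspace, which is transferred to the square roots via operator monotonicity (Kato's theorem, together with the elementary bound $\sqrt{x+y}\ge\sqrt{x}+\tfrac{y}{2\sqrt{x+y}}$), forcing the compression of $\widehat{T}_c$ to an infinite-dimensional subspace to have spectrum bounded away from zero. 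Your route is conceptually cleaner (it sees both signs of the symbol defect at once and identifies the exact obstruction $p-c^{\nicefrac{1}{2}}$), whereas the paper's is elementary and self-contained, using only form inequalities and the Heinz--Kato inequality.

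The one point you must not leave as an assertion is precisely the identity $L^{-\nicefrac{1}{4}}\Lt^{\nicefrac{1}{2}}L^{-\nicefrac{1}{4}}\phi_n=p(\cdot)\phi_n+r_n$ with $\|r_n\|_{L_2(\cD)}\to0$: that \emph{is} the lemma, and your write-up defers it entirely to "Seeley plus stationary phase" or to a resolvent computation that is only sketched. Two concrete issues need handling. First, Seeley's description of $L^z$ as a classical $\Psi$DO with symbol $(\xi^\trsp\ac(s)\xi)^z$ is valid only in the interior; the Dirichlet realization's complex powers carry nontrivial boundary contributions, so after the first application of $L^{-\nicefrac{1}{4}}$ the output is no longer compactly supported and you must show (e.g.\ by pseudo-locality and non-stationary phase, since the packet is microlocalized away from $\partial\cD$) that the boundary-affected part is $o(1)$ in $L_2(\cD)$ through the subsequent compositions. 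Second, with variable coefficients the "composition of semiclassical actions" produces commutators between multiplication by $(\xi_0^\trsp\ac(\cdot)\xi_0)^{\alpha}$ and the modulation $e^{in\xi_0\cdot s}$, which must be shown to be of lower order uniformly. Both are standard but neither is free, and without them the proof is a program rather than an argument.
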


\begin{lemma}\label{lem:kappa-notHS} 
	Let $d\in\bbN$, $d\geq 4$,   
	and suppose Assumption~\ref{ass:a-kappa-D}.III. 
	Let the operators~$L$ and~$\Lt$ be defined as 
	in \eqref{eq:L-div} 
	with coefficients $\ac,\kappa$ 
	and $\ac,\kappat$, respectively, where 
	$\ac$ fulfills Assumption~\ref{ass:a-kappa-D}.I 
	and $\kappa,\kappat$ satisfy 
	\noindent Assumption~\ref{ass:a-kappa-D}.II.  
	If $\kappa^2 \neq \kappat^2$, then 
	$L^{-\nicefrac{1}{2}} \Lt L^{-\nicefrac{1}{2}} - \id_{L_2(\cD)}$  
	is not Hilbert--Schmidt on $L_2(\cD)$. 
\end{lemma}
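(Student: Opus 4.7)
The plan is to argue by contradiction through a Schwartz kernel analysis. Since $\ac=\act$, the two differential parts of $L,\Lt$ coincide and $\Lt-L=M_\delta$, where $\delta := \kappat^2-\kappa^2$; hence the operator in the lemma equals $Q := L^{-1/2} M_\delta L^{-1/2}$. I assume for contradiction that $Q \in \cL_2(L_2(\cD))$, so $Q$ admits a Schwartz kernel $K_Q \in L_2(\cD\times\cD)$ with $\|Q\|_{\cL_2(L_2(\cD))}^2 = \|K_Q\|_{L_2(\cD\times\cD)}^2 < \infty$. The goal is to show that $K_Q$ has a diagonal singularity too strong to lie in $L_2(\cD\times\cD)$ when $d\geq 4$. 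Since $\delta\not\equiv 0$ and is continuous, and replacing $\delta$ by $-\delta$ preserves $\|Q\|_{\cL_2(L_2(\cD))}$, I may fix $x_0 \in \cD$ with $\delta(x_0)>0$ and $r_0, c_0 > 0$ such that $B := B(x_0, r_0) \Subset \cD$ and $\delta \geq c_0$ on~$B$.

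Next I set up the kernel identity. The Bochner representation $L^{-1/2} = \pi^{-1/2}\int_0^\infty t^{-1/2} e^{-tL}\,\rd t$, together with nonnegativity of the Dirichlet heat kernel of $L$ (maximum principle) and the semigroup property, yields an a.e.\ nonnegative kernel $K_{1/2}$ of $L^{-1/2}$ with $\int_\cD K_{1/2}(x,z) K_{1/2}(z,y)\,\rd z = G(x,y)$, the Dirichlet Green's function of $L$. Testing $\langle Qf,g\rangle = \int_\cD \delta(z)(L^{-1/2}f)(z)(L^{-1/2}g)(z)\,\rd z$ against $f,g\in C_c^\infty(\cD)$ and applying Fubini (justified because $G \in L^1_{\mathrm{loc}}(\cD\times\cD)$ for $d\geq 3$ and $\delta\in L_\infty(\cD)$) identifies
\[
K_Q(x,y) = \int_\cD K_{1/2}(x,z)\,\delta(z)\,K_{1/2}(z,y)\,\rd z \qquad \text{for a.e.\ } (x,y)\in \cD\times\cD.
\]
For $(x,y)\in B(x_0,r_0/2)\times B(x_0,r_0/2)$ I split the $z$-integral at $\partial B$. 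On $B$, $\delta \geq c_0$ and $K_{1/2}\geq 0$, so the interior piece satisfies $\int_B K_{1/2}(x,z)\delta(z)K_{1/2}(z,y)\,\rd z \geq c_0 \bigl(G(x,y)-\int_{\cD\setminus B} K_{1/2}(x,z)K_{1/2}(z,y)\,\rd z\bigr)$; both the subtrahend and the outer contribution $\int_{\cD\setminus B} K_{1/2}(x,z)\delta(z)K_{1/2}(z,y)\,\rd z$ are uniformly bounded on $B(x_0,r_0/2)^2$ because $|x-z|\geq r_0/2$ and $|z-y|\geq r_0/2$ there force $K_{1/2}(x,z), K_{1/2}(z,y)$ to stay bounded. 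Invoking the classical pointwise lower bound $G(x,y) \geq c\,|x-y|^{-(d-2)}$ on interior compact subsets of $\cD$ (valid for smooth $\ac,\kappa$ via a parametrix/Schauder comparison with the frozen-coefficient fundamental solution, or Aronson-type two-sided heat kernel estimates), I conclude that $K_Q(x,y) \geq c'\,|x-y|^{-(d-2)}$ on a small diagonal neighborhood inside $B(x_0,r_0/2)^2$.

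The contradiction is then a short polar-coordinate computation: for $d\geq 4$,
\[
\|K_Q\|_{L_2(\cD\times\cD)}^2 \;\geq\; (c')^2 \!\!\int_{B(x_0,r_0/4)} \!\!\int_{|y-x|<\rho} |x-y|^{-2(d-2)} \,\rd y\,\rd x \;\asymp\; \int_0^\rho r^{3-d}\,\rd r \;=\; +\infty,
\]
since $\int_0^\rho r^{3-d}\,\rd r$ diverges at $0$ precisely when $d\geq 4$. This contradicts $K_Q\in L_2(\cD\times\cD)$, so $Q = L^{-1/2}\Lt L^{-1/2}-\id_{L_2(\cD)}$ cannot be Hilbert--Schmidt. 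The principal technical obstacle is the pointwise lower bound $G(x,y)\gtrsim |x-y|^{-(d-2)}$ on interior compact sets for the variable-coefficient operator $L$: for $L=-\Delta$ it is immediate from the explicit fundamental solution, but for smooth matrix coefficients $\ac$ and potential $\kappa^2$ it requires either a parametrix construction combined with Schauder estimates or invoking the Aronson/Davies two-sided Gaussian heat kernel bounds, and this is the step whose external machinery must be cited with care.
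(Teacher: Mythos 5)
Your proposal is correct in outline, but it takes a genuinely different route from the paper. The paper's proof is operator-theoretic and avoids pointwise kernel estimates entirely: in a first step it treats the case where $\delta_{\kappa^2}=\kappat^2-\kappa^2$ is uniformly bounded away from zero on $\clos{\cD}$, where the multiplier $\mult{\delta_{\kappa^2}}$ is an isomorphism on $L_2(\cD)$, $\Hdot{1}$ and $\Hdot{-1}$, so that $L^{-\nicefrac{1}{2}}\mult{\delta_{\kappa^2}}L^{\nicefrac{1}{2}}$ is boundedly invertible and the Hilbert--Schmidt norm of $L^{-\nicefrac{1}{2}}(\Lt-L)L^{-\nicefrac{1}{2}}$ is bounded below by a multiple of $\|L^{-1}\|_{\cL_2(L_2(\cD))}$, which is infinite for $d\geq 4$ by Weyl's law; it then localizes to a ball where $\delta_{\kappa^2}$ has a fixed sign via the min--max principle, comparing the eigenvalues of $L^{-\nicefrac{1}{2}}\mult{\delta_{\kappa^2}}L^{-\nicefrac{1}{2}}$ on $\cD$ with those of the analogous operator on the subdomain through zero-extension of $H^1_0$ functions. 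Your kernel-based argument instead exhibits the failure of square-integrability directly: the kernel of $L^{-\nicefrac{1}{2}}\mult{\delta_{\kappa^2}}L^{-\nicefrac{1}{2}}$ inherits the Green's function singularity $|x-y|^{-(d-2)}$ near a point where $\delta_{\kappa^2}$ is bounded away from zero, and $|x-y|^{-2(d-2)}$ fails to be locally integrable over the diagonal exactly when $d\geq 4$. This is arguably more illuminating about why $d=4$ is the threshold, and your handling of the off-diagonal contributions and the Fubini justification is sound. The price is the reliance on positivity of the Dirichlet heat semigroup and on interior two-sided Gaussian bounds (Aronson/Davies) to get $G(x,y)\gtrsim|x-y|^{-(d-2)}$ on compact subsets for the variable-coefficient operator with potential --- machinery that is classical but considerably heavier than the Weyl-asymptotics-plus-min-max toolkit the paper uses, and which, as you correctly note, must be cited with care (in particular the uniform-in-time lower bound for the Dirichlet heat kernel on interior compacts, and the fact that the bounded nonnegative potential $\kappa^2$ only changes the kernel by controlled factors). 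Both arguments localize in essentially the same way, by finding a ball on which $\delta_{\kappa^2}$ has a definite sign.
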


\begin{proof} 
	As in Theorem~\ref{thm:CM}, 
	we define $\delta_{\kappa^2}\in C^\infty(\clos{\cD})$ 
	by 
	$\delta_{\kappa^2}(\s) := \kappat^2(\s) - \kappa^2(\s)$, 
	$\s \in \clos{\cD}$. 
	Furthermore, 
	we set 
	$\delta^+(\s) := \max\{ \delta_{\kappa^2}(\s),0 \}$ 
	and 
	$\delta^-(\s) := -\min\{\delta_{\kappa^2}(\s),0 \}$,  
	$\s\in\clos{\cD}$.

	\textbf{Step 1}: 
	We first prove the claim for the case 
	that either $\delta^+(\s) \geq \delta_0 \in\bbR_+$ 
	holds for all $\s\in\clos{\cD}$ 
	or 
	${\delta^-(\s) \geq \delta_0 \in \bbR_+}$  
	holds for all $\s\in\clos{\cD}$. 
	Then, 
	$\delta_{\kappa^2}^{-1}(\s) := 1/\delta_{\kappa^2}(\s)$ 
	is well-defined, 
	$\delta_{\kappa^2}^{-1}\in C^\infty(\clos{\cD})$, and 
	the multiplier  
	$\mult{\delta_{\kappa^2}}\from L_2(\cD) \to L_2(\cD)$, 
	$v\mapsto \delta_{\kappa^2} v$, 
	is an isomorphism with 
	$\mult{\delta_{\kappa^2}}^{-1} = \mult{\delta_{\kappa^2}^{-1}}$. 
	Moreover, for every $v\in H_0^1(\cD)$, we have  
	$\delta_{\kappa^2} v 
	= \delta_{\kappa^2}^{-1} v = 0$ in $L_2(\partial\cD)$ as well as  
	$\nabla(\delta_{\kappa^2} v) 
	= v \nabla\delta_{\kappa^2} + \delta_{\kappa^2}\nabla v$
	and 
	$\nabla\bigl( \delta_{\kappa^2}^{-1} v \bigr) 
	= v \nabla\delta_{\kappa^2}^{-1} + \delta_{\kappa^2}^{-1}\nabla v$ 
	in $L_2(\cD)$. 
	Combining these relations 	
	with the identification 
	$\bigl(\Hdot{1}, \norm{\,\cdot\,}{1,L} \bigr) 
	\cong 
	\bigl( H^1_0(\cD), \norm{\,\cdot\,}{H^1(\cD)} \bigr)$, 
	see \eqref{eq:Hdot-Sobolev},  
	shows that 
	$\mult{\delta_{\kappa^2}}, \mult{\delta_{\kappa^2}^{-1}} 
	\in \cL\bigl( \Hdot{1} \bigr)$. 
	Since the operators 
	$\mult{\delta_{\kappa^2}}, \mult{\delta_{\kappa^2}^{-1}}$ 
	are self-adjoint on~$L_2(\cD)$, 
	also $\mult{\delta_{\kappa^2}}, \mult{\delta_{\kappa^2}^{-1}} 
	\in \cL\bigl( \Hdot{-1} \bigr)$ follows.   
	We conclude that  
	$L^{-\nicefrac{1}{2}} M_{\delta_{\kappa^2}} L^{\nicefrac{1}{2}}$ 
	is bounded on $L_2(\cD)$ and has a bounded 
	inverse,   
	$L^{-\nicefrac{1}{2}} M_{\delta_{\kappa^2}^{-1}} L^{\nicefrac{1}{2}} 
	\in \cL(L_2(\cD))$.
	Thus, 
	\begin{align*} 
		\bigl\| 
		L^{-\nicefrac{1}{2}}
		(\Lt - L) 
		L^{-\nicefrac{1}{2}} 
		\bigr\|_{\cL_2(L_2(\cD))} 
		&= 
		\bigl\| 
		L^{-\nicefrac{1}{2}} 
		\mult{\delta_{\kappa^2}} 
		L^{\nicefrac{1}{2}} L^{-1} 
		\bigr\|_{\cL_2(L_2(\cD))} 
		\\[-3pt]
		&\geq 
		\bigl\| 
		L^{-\nicefrac{1}{2}} 
		\mult{\delta_{\kappa^2}^{-1}} 
		L^{\nicefrac{1}{2}} 
		\bigr\|_{\cL(L_2(\cD))}^{-1} 
		\bigl\| L^{-1} \bigr\|_{\cL_2(L_2(\cD))} . 
	\end{align*}  
	The asymptotic behavior \eqref{eq:lambdaj} 
	implies 
	that
	$\bigl\| L^{-1} \bigr\|_{\cL_2(L_2(\cD))}^2 
	= 
	\sum\nolimits_{j\in\bbN} \lambda_j^{-2} 
	\geq 
	C_\lambda^{-2} 
	\sum\nolimits_{j\in\bbN} 
	j^{-1} =\infty$ 
	for $d\geq 4$ 
	and, hence, 
	$L^{-\nicefrac{1}{2}} \Lt L^{-\nicefrac{1}{2}} - \id_{L_2(\cD)} 
	= 
	L^{-\nicefrac{1}{2}}
	(\Lt - L)L^{-\nicefrac{1}{2}} \notin \cL_2(L_2(\cD))$.  
	
	\textbf{Step 2a}: 
	Suppose now that $\emptyset\neq \cD_0 \Subset \cD$ 
	is an open ball $\cD_0 := B(\s_0,r_0)$
	with center $\s_0\in\cD$ and radius $r_0\in\bbR_+$ 
	such that $\delta^+(\s)\geq \delta_0 \in\bbR_+$ 
	for all $\s\in\clos{\cD}_0$. 
	Then, 
	the self-adjoint compact 
	operator 
	$L^{-\nicefrac{1}{2}} \mult{\delta_{\kappa^2}}L^{-\nicefrac{1}{2}} 
	\in\cK(L_2(\cD))$ 
	has infinitely many positive 
	eigenvalues 
	${\mu_1^+ \geq \mu_2^+ \geq \ldots > 0}$  
	that are bounded from below 
	by those of the compact operator
	${L_0^{-\nicefrac{1}{2}} \mult{\delta_{\kappa^2}|_{\cD_0}} L_0^{-\nicefrac{1}{2}} 
		\in\cK(L_2(\cD_0))}$, 
	where 
	$L_0 \from \scrD(L_0) \subset L_2(\cD_0) \to L_2(\cD_0)$   
	is defined as in \eqref{eq:L-div} 
	with respect to the spatial domain $\cD_0\Subset\cD$ 
	and the coefficients $\ac|_{\clos{\cD}_0}$ and~$\kappa|_{\clos{\cD}_0}$. 
	This follows from 
	the min-max theorem, 
	see e.g.\ \cite[][Theorem X.4.3]{DunfordSchwartz1963}, 
	showing 
	that the eigenvalues 
	$\widetilde{\mu}_1^+ \geq \widetilde{\mu}_2^+ \geq \ldots >0$ 
	of the positive operator  
	$L_0^{-\nicefrac{1}{2}} \mult{\delta_{\kappa^2}|_{\cD_0}} 
	L_0^{-\nicefrac{1}{2}}$ satisfy  
	\begin{align*}
		0 < 
		\widetilde{\mu}_n^+ 
		&=  
		\max_{\substack{U_0 \subset L_2(\cD_0), \\  
				\dim(U_0) = n }} \, 
		\min_{w_0 \in U_0\setminus \{0\}} 
		\frac{\scalar{L_0^{-\nicefrac{1}{2}} 
				\mult{\delta_{\kappa^2}|_{\cD_0}} 
				L_0^{-\nicefrac{1}{2}} w_0, w_0}{L_2(\cD_0)}}{\scalar{w_0, w_0}{L_2(\cD_0)}} 
		\\[-9pt]
		&= 
		\max_{\substack{V_0 \subset H^1_0(\cD_0), \\  
				\dim(V_0) = n }} \, 
		\min_{v_0 \in V_0\setminus \{0\}} 
		\frac{\scalar{ \mult{\delta_{\kappa^2}|_{\cD_0}} v_0, v_0}{L_2(\cD_0)}}{
			 \duality{L_0 v_0, v_0}{} } , 
	\end{align*} 
	where we also used that 
	$\hdot{1}{L_0} \cong H^1_0(\cD_0)$. 
	If $\overline{v}_0\from\cD \to \bbR$ denotes the zero extension 
	of $v_0\from \cD_0\to \bbR$, 
	then 
	$\overline{v}_0\in H^1_0(\cD)$ holds 
	if and only if $v_0\in H^1_0(\cD_0)$, 
	cf.~\cite[Theorem~5.29]{AdamsSobolev2003}. 
	Consequently,  
	if we define the closed subspace 
	${\cV_0 
	:=
	\bigl\{ v \in H^1_0(\cD) \, \bigl| \, 
	\exists v_0 \in H^1_0(\cD_0)
	\text{ such that } v = \overline{v}_0   
		\bigr\}\subset H^1_0(\cD)} 
		\cong \Hdot{1}$, 
	we find 	
	\begin{align*}
		0< \widetilde{\mu}_n^+
		& 
		= 
		\max_{\substack{V \subset \cV_0, \\  
				\dim(V) = n }} \, 
		\min_{v\in V\setminus \{0\}} 
		\frac{\scalar{ \mult{\delta_{\kappa^2}} v, v}{L_2(\cD)}}{ 
			 \duality{L v, v}{} }  
		\leq 
		\max_{\substack{V \subset H_0^1(\cD), \\  
				\dim(V) = n }} \, 
		\min_{v\in V\setminus \{0\}} 
		\frac{\scalar{ \mult{\delta_{\kappa^2}} v, v}{L_2(\cD)}}{ 
			 \scalar{ L^{\nicefrac{1}{2}} v, L^{\nicefrac{1}{2}} v}{L_2(\cD)}}  
		\\[-3pt] 
		&= 
		\max_{\substack{U \subset L_2(\cD), \\
				\dim(U) = n }} \, 
		\min_{w\in U\setminus \{0\}} 
		\frac{\scalar{L^{-\nicefrac{1}{2}} \mult{\delta_{\kappa^2}} 
				L^{-\nicefrac{1}{2}} w, w}{L_2(\cD)}}{\scalar{w, w}{L_2(\cD)}} 
		=
		\mu_n^+ . 
	\end{align*} 
	We conclude that, 
	if $\delta_{\kappa^2}(\s) = \delta^+(\s) \geq \delta_0 > 0$ 
	for all $\s\in\cD_0$, then 
	${L^{-\nicefrac{1}{2}} \mult{\delta_{\kappa^2}}L^{-\nicefrac{1}{2}} 
	\in\cK( L_2(\cD) )}$ 
	has infinitely many positive eigenvalues 
	$\{\mu_n^+\}_{n\in\bbN}$ 
	satisfying $\mu_n^+ \geq \widetilde{\mu}^+_n$, 
	where $\{\widetilde{\mu}_n^+\}_{n\in\bbN}$  
	are the positive eigenvalues 
	of $L_0^{-\nicefrac{1}{2}} \mult{\delta_{\kappa^2}|_{\cD_0}} 
	L_0^{-\nicefrac{1}{2}}$\!.    
	
	\textbf{Step 2b}: 
	Suppose next that $\emptyset\neq \cD_0 \Subset \cD$ 
	is an open ball $\cD_0 := B(\s_0,r_0)$
	with center $\s_0\in\cD$ and radius $r_0\in\bbR_+$ 
	such that $\delta^-(\s)\geq \delta_0\in\bbR_+$ 
	for all $\s\in\clos{\cD}_0$. 
	Then, as in \textbf{Step~2a}
	we find that the operator 
	$L^{-\nicefrac{1}{2}} \mult{-\delta_{\kappa^2}}L^{-\nicefrac{1}{2}} 
	\in\cK( L_2(\cD) )$ 
	has infinitely many positive eigenvalues  
	$\{\mu_n^-\}_{n\in\bbN}$ 
	bounded from below by those 
	of  
	$L_0^{-\nicefrac{1}{2}} \mult{-\delta_{\kappa^2}|_{\cD_0}}L_0^{-\nicefrac{1}{2}}$ 
	denoted by $\{\widetilde{\mu}_n^-\}_{n\in\bbN}$. 
	
	\textbf{Step 3}: 
	Assume that $\kappa^2\neq\kappat^2$. 
	Then there exist 
	$\s_0\in\cD$ and $r_0,\delta_0\in\bbR_+$ 
	such that $B(\s_0, r_0)\Subset\cD$ and 
	such that 
	\textbf{a)}  
	$\delta^+(\s)\geq \delta_0$ 
	or 
	\textbf{b)}  
	$\delta^-(\s)\geq \delta_0$  
	for all $\s\in\clos{\cD}_0$. 
	By \textbf{Step~2} 
	the operator 
	$L^{-\nicefrac{1}{2}} \mult{\delta_{\kappa^2}} L^{-\nicefrac{1}{2}}$ 
	has 
	in case \textbf{a)} 
	infinitely many positive eigenvalues 
	$\{\mu_n^+\}_{n\in\bbN}$ which are bounded from below 
	by $\{\widetilde{\mu}_n^+\}_{n\in\bbN}$, i.e., by those of 
	$L_0^{-\nicefrac{1}{2}} \mult{\delta_{\kappa^2}|_{\cD_0}} L_0^{-\nicefrac{1}{2}}$\!,  
	and 
	in case \textbf{b)}  
	infinitely many negative eigenvalues 
	$\{-\mu_n^-\}_{n\in\bbN}$
	which are bounded from above 
	by $\{-\widetilde{\mu}_n^-\}_{n\in\bbN}$, i.e., by those of  
	$L_0^{-\nicefrac{1}{2}} \mult{\delta_{\kappa^2}|_{\cD_0}} L_0^{-\nicefrac{1}{2}}$\!.  
	By \textbf{Step~1} the eigenvalues 
	of $L_0^{-\nicefrac{1}{2}} \mult{\delta_{\kappa^2}|_{\cD_0}} L_0^{-\nicefrac{1}{2}}$ 
	are not square-summable~and 
	by \textbf{Step~2} neither 
	those 
	of $L^{-\nicefrac{1}{2}} \mult{\delta_{\kappa^2}} L^{-\nicefrac{1}{2}}$ 
	can be, i.e., 
	$L^{-\nicefrac{1}{2}} \mult{\delta_{\kappa^2}} L^{-\nicefrac{1}{2}}\notin \cL_2(L_2(\cD))$. 
\end{proof}    

We now can combine the aforegoing 
Lemmas~\ref{lem:HS:d<4}, \ref{lem:a-notcompact} and \ref{lem:kappa-notHS}  
with the general results on 
Gaussian measures 
with fractional-order covariance operators 
of Section~\ref{section:gm-on-hs} 
to deduce the following result 
on\vspace{-4mm}\linebreak\pagebreak

\noindent 
equivalence of 
Gaussian measures 
of generalized Whittle--Mat\'ern type. 

\begin{theorem}\label{thm:equivalence} 
	Let $d\in\bbN$, $\beta,\betat\in(\nicefrac{d}{4},\infty)$ 
	be such that $2\beta\notin\frakE$, with 
	$\frakE$ as in~\eqref{eq:exception}, 
	and suppose Assumption~\ref{ass:a-kappa-D}.III. 
	Let
	$L$ and $\Lt$ be defined as in~\eqref{eq:L-div}, 
	with coefficients $\ac,\kappa$ and $\act,\kappat$, respectively, 
	where each of the 
	tuples   
	$(\ac,\kappa)$ and~$(\act,\kappat)$ 
	fulfills Assumptions~\ref{ass:a-kappa-D}.I--II. 
	Assume that $m, \mt \in L_2(\cD)$ 
	and let the Gaussian measures 
	$\mu_d(m;\beta,\ac,\kappa)$ 
	and 
	$\mu_d(\mt; \betat,\act,\kappat)$ 
	be defined according to \eqref{eq:def:mu}. 
	\begin{enumerate}[label={\normalfont\Roman*.}, topsep=2pt]
		\item\label{thm:equivalence-d<4} 
		In dimension $d\leq 3$, 
		the Gaussian measures 
		$\mu_d(m;\beta,\ac,\kappa)$ 
		and 
		$\mu_d(\mt; \betat,\act,\kappat)$ 
		are equivalent 
		if and only if 
		$\beta=\betat$, 
		$m-\mt \in\Hdot{2\beta}$\!, 
		$\ac=\act$, and the boundary 
		conditions \eqref{eq:condition-iso} hold for 
		every $j\in\bbN_0$ with $j\leq\lfloor\beta-\nicefrac{5}{4}\rfloor$.  
		\item\label{thm:equivalence-d>3}  
		In dimension $d\geq 4$, 
		the Gaussian measures 
		$\mu_d(m;\beta,\ac,\kappa)$ 
		and 
		$\mu_d(\mt;\betat,\act,\kappat)$ 
		are equivalent 
		if and only if 
		$\beta=\betat$, 
		$m-\mt \in\Hdot{2\beta}$\!, 
		$\ac=\act$, 
		and $\kappa^2=\kappat^2$.  
	\end{enumerate}
\end{theorem}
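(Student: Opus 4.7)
The plan is to combine the general equivalence criterion of Corollary~\ref{cor:A-equiv} (which covers both regimes $\beta\in(\nicefrac{d}{4},1)$ and $\beta\in[1,\infty)$) with Theorem~\ref{thm:CM} on isomorphic Cameron--Martin spaces, and to exploit the three dedicated lemmas (Lemma~\ref{lem:HS:d<4} for sufficiency in low dimensions, and Lemmas~\ref{lem:a-notcompact} and~\ref{lem:kappa-notHS} to rule out equivalence when coefficients do not match). By Corollary~\ref{cor:A-equiv} applied to $A := L$, $\At := \Lt$, equivalence is characterized by (a) $m-\widetilde{m} \in \Hdot{2\beta}$ and (b) the existence of an orthogonal $U$ and $S\in\cL_2(L_2(\cD))$ with $\id + S$ invertible such that $\Lt^{\betat} L^{-\beta} = U(\id + S)$. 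Condition~(b) splits into the isomorphism of the Cameron--Martin spaces plus the Hilbert--Schmidt property of $L^{-\beta}\Lt^{2\betat}L^{-\beta}-\id_{L_2(\cD)}$.

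First I would argue $\beta=\betat$ is necessary: by Weyl's law~\eqref{eq:lambdaj} the Cameron--Martin spaces $\Hdot{2\beta}$ and $\hdot{2\betat}{\Lt}$ can, via an orthonormal basis identification of $L_2(\cD)$ with $\ell^2$, only be isomorphic if $\beta=\betat$ (this was already used in the proof of Theorem~\ref{thm:CM}). Having reduced to $\beta=\betat$, the boundary conditions~\eqref{eq:condition-iso} for every $j\in\bbN_0$ with $j\leq\lfloor\beta-\nicefrac{5}{4}\rfloor$ are then necessary and sufficient for the Cameron--Martin spaces to be isomorphic, by Theorem~\ref{thm:CM}.

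For assertion~\ref{thm:equivalence-d<4} (the case $d\leq 3$), sufficiency is immediate: if $\ac=\act$, $\beta=\betat$ and \eqref{eq:condition-iso} holds, Lemma~\ref{lem:HS:d<4} delivers precisely the Hilbert--Schmidt property of $L^{-\beta}\Lt^{2\beta}L^{-\beta}-\id_{L_2(\cD)}$, and Corollary~\ref{cor:A-equiv} yields equivalence. For necessity of $\ac=\act$, suppose $\ac\neq\act$ but the two measures are equivalent. Then $\Lt^\beta L^{-\beta}$ is an isomorphism on $L_2(\cD)$ (guaranteed by Theorem~\ref{thm:CM} once the Cameron--Martin spaces are norm equivalent) and $L^{-\beta}\Lt^{2\beta}L^{-\beta}-\id\in\cL_2\subset\cK$. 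Applying Lemma~\ref{lem:beta-gamma}\ref{lem:beta-gamma-compact} with $\gamma:=\nicefrac{1}{4}\in[-\beta,\beta]$ (valid since $\beta>\nicefrac{d}{4}\geq\nicefrac{1}{4}$) yields that $L^{-\nicefrac{1}{4}}\Lt^{\nicefrac{1}{2}}L^{-\nicefrac{1}{4}}-\id_{L_2(\cD)}\in\cK(L_2(\cD))$, contradicting Lemma~\ref{lem:a-notcompact} (with $c=1$).

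For assertion~\ref{thm:equivalence-d>3} (the case $d\geq 4$), sufficiency is trivial because under the stated conditions $L=\Lt$, so Corollary~\ref{cor:A-equiv} reduces to requiring only $m-\widetilde{m}\in\Hdot{2\beta}$. For necessity, the same argument as above forces $\ac=\act$; once that is established, assume for contradiction $\kappa^2\neq\kappat^2$. Since $\beta>\nicefrac{d}{4}\geq 1$, we may apply Lemma~\ref{lem:beta-gamma}\ref{lem:beta-gamma-HS} with $\gamma:=\nicefrac{1}{2}\in[-\beta,\beta]$ to propagate the Hilbert--Schmidt property at $\gamma=\beta$ down to $L^{-\nicefrac{1}{2}}\Lt L^{-\nicefrac{1}{2}}-\id_{L_2(\cD)}\in\cL_2(L_2(\cD))$, directly contradicting Lemma~\ref{lem:kappa-notHS}. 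Note that in this dimension regime the boundary conditions~\eqref{eq:condition-iso} reduce to conditions on $\delta_{\kappa^2}$ alone, which are automatically satisfied once $\kappa^2=\kappat^2$. The main delicacy is to ensure that the $\gamma$-values chosen when invoking Lemma~\ref{lem:beta-gamma} always lie in the admissible range $[-\beta,\beta]$ and that the isomorphism hypothesis needed there is already in place via Theorem~\ref{thm:CM}; both are verified from $\beta>\nicefrac{d}{4}$.
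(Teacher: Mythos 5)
Your proposal is correct and follows essentially the same route as the paper: the paper invokes the Feldman--H\'ajek theorem directly (of which Corollary~\ref{cor:A-equiv} is just a repackaging via Lemma~\ref{lem:HS-compact-property}), identifies the Cameron--Martin conditions through Theorem~\ref{thm:CM}, proves sufficiency for $d\leq 3$ via Lemma~\ref{lem:HS:d<4}, and obtains necessity by propagating the Hilbert--Schmidt/compactness property to $\gamma=\nicefrac{1}{4}$ and $\gamma=\nicefrac{1}{2}$ with Lemma~\ref{lem:beta-gamma} and then applying Lemmas~\ref{lem:a-notcompact} and~\ref{lem:kappa-notHS}, exactly as you do. The only immaterial difference is that you use part~\ref{lem:beta-gamma-compact} of Lemma~\ref{lem:beta-gamma} at $\gamma=\nicefrac{1}{4}$ where the paper uses part~\ref{lem:beta-gamma-HS} and then passes from Hilbert--Schmidt to compact.
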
 

\begin{proof} 
	For the derivation of these results  
	we apply the 
	Feldman--H\'ajek theorem, 
	see Theorem~\ref{thm:feldman-hajek} 
	in Appendix~\ref{appendix:feldman-hajek}.  
	To this end, we let 
	$\cC=L^{-2\beta}$ and $\CC=\Lt^{-2\betat}$ 
	denote the covariance operators  
	corresponding to  
	$\mu_d(m;\beta,\ac,\kappa)$ 
	and 
	$\mu_d(\mt;\betat,\act,\kappat)$, 
	respectively.  
	By Theorem~\ref{thm:CM} 
	the Cameron--Martin 
	spaces $\cC(L_2(\cD))=\Hdot{2\beta}$   
	and $\CC(L_2(\cD))=\hdot{2\betat}{\Lt}$  
	are norm equivalent spaces  
	(and thus condition~\ref{fh-1} 
	of Theorem~\ref{thm:feldman-hajek} is fulfilled)  
	if and only 
	if $\beta=\betat$ and 
	\eqref{eq:condition-iso} holds 
	for all 
	$j\in\bbN_0$ with $j\leq\lfloor\beta-\nicefrac{5}{4}\rfloor$. 
	Next we note that condition~\ref{fh-2} 
	of Theorem~\ref{thm:feldman-hajek} 
	is equivalent to requiring that $m-\mt\in\Hdot{2\beta}$\!.  
	
	Assuming that $\beta=\betat$ 
	with $2\beta\notin\frakE$, 
	we complete the proof by showing that 
	conditions \ref{fh-1} and \ref{fh-3} 
	of Theorem~\ref{thm:feldman-hajek} hold simultaneously, i.e., 
	$\Lt^\beta L^{-\beta}$ is an isomorphism on $L_2(\cD)$ 
	and 
	the operator 
	$L^{-\beta} \Lt^{2\beta} L^{-\beta} - \id_{L_2(\cD)}$ is 
	Hilbert--Schmidt on $L_2(\cD)$ 
	\ref{thm:equivalence-d<4} 
	in dimension $d\leq 3$ if and only if 
	$\ac=\act$ and \eqref{eq:condition-iso} holds 
	for all 
	$j\in\bbN_0$ with $j\leq\lfloor\beta-\nicefrac{5}{4}\rfloor$; 
	and 
	\ref{thm:equivalence-d>3} 
	for $d\geq 4$ if and only if 
	$\ac=\act$ and $\kappa^2=\kappat^2$\!. 
	
	\ref{thm:equivalence-d<4} If $d\leq 3$, 
	$\ac=\act$, and 
	\eqref{eq:condition-iso} holds 
	for all 
	$j\in\bbN_0$ with $j\leq\lfloor\beta-\nicefrac{5}{4}\rfloor$, 
	then $\Lt^\beta L^{-\beta}$ 
	is an isomorphism on $L_2(\cD)$ 
	and  
	$L^{-\beta}  \Lt^{2\beta} L^{-\beta} - \id_{L_2(\cD)}$ 
	is  Hilbert--Schmidt  on~$L_2(\cD)$ 
	by Lemma~\ref{lem:HS:d<4}. 
	Conversely, if
	$\Lt^\beta L^{-\beta}$ 
	is an isomorphism on $L_2(\cD)$ 
	and 
	${L^{-\beta}  \Lt^{2\beta} L^{-\beta} - \id_{L_2(\cD)} \in \cL_2(L_2(\cD))}$, 
	then by Lemma~\ref{lem:beta-gamma}\ref{lem:beta-gamma-HS} 
	for every $\gamma\in[-\beta,\beta]$  
	also the operator $\Lt^\gamma L^{-\gamma}$ 
	is an isomorphism on $L_2(\cD)$ 
	and 
	$L^{-\gamma}  \Lt^{2\gamma} L^{-\gamma} - \id_{L_2(\cD)}$ 
	is a Hilbert--Schmidt operator on $L_2(\cD)$. 
	Since $2\beta\notin\frakE$ is assumed, 
	by Theorem~\ref{thm:CM} 
	the conditions 
	\eqref{eq:condition-iso} 
	have to be satisfied  
	for all $j\in\bbN_0$ with $j\leq\lfloor\beta-\nicefrac{5}{4}\rfloor$. 
	Furthermore, the choice $\gamma=\nicefrac{1}{4}$ shows that 
	$L^{-\nicefrac{1}{4}} \Lt^{\nicefrac{1}{2}} L^{-\nicefrac{1}{4}} - \id_{L_2(\cD)}$ is 
	Hilbert--Schmidt and, thus, compact on~$L_2(\cD)$. 
	Lemma~\ref{lem:a-notcompact} (with $c=1$) therefore 
	implies then that 
	$\ac = \act$ has to hold.  
	
	\ref{thm:equivalence-d>3} If $d\geq 4$, 
	$\ac=\act$ and $\kappa^2=\kappat^2$, 
	then $L=\Lt$ so that the isomorphism property 
	of $\Lt^\beta L^{-\beta}$ and the Hilbert--Schmidt property of 
	$L^{-\beta}  \Lt^{2\beta} L^{-\beta} - \id_{L_2(\cD)}$ are trivial. 
	Conversely, if 
	$\Lt^\beta L^{-\beta}$ is an isomorphism on $L_2(\cD)$ 
	and 
	$L^{-\beta}\Lt^{2\beta}L^{-\beta} - \id_{L_2(\cD)} \in\cL_2(L_2(\cD))$ 
	in dimension $d\geq 4$, 
	then $\beta > \nicefrac{d}{4} \geq 1$ and 
	by Lemma~\ref{lem:beta-gamma}\ref{lem:beta-gamma-HS} 
	also  the operators 
	$L^{-\nicefrac{1}{4}} \Lt^{\nicefrac{1}{2}} L^{-\nicefrac{1}{4}} - \id_{L_2(\cD)}$  
	as well as 
	$L^{-\nicefrac{1}{2}} \Lt L^{-\nicefrac{1}{2}} - \id_{L_2(\cD)}$ 
	are Hilbert--Schmidt (and, thus, compact) on $L_2(\cD)$. 
	By Lemma~\ref{lem:a-notcompact} 
	$\ac = \act$ follows and, 
	subsequently,  
	Lemma~\ref{lem:kappa-notHS} 
	shows that $\kappa^2=\kappat^2$\!. 
\end{proof}

\subsection{Uniformly asymptotically optimal linear prediction}
\label{subsec:wm-D:kriging} 

In contrast to equivalence of the Gaussian measures 
$\mu_d(m;\beta,\ac,\kappa)$ and 
$\mu_d(\mt;\betat,\act,\kappat)$, 
the necessary and sufficient conditions 
for uniformly asymptotically optimal linear prediction 
\eqref{eq:prop:A-kriging-1}, \eqref{eq:prop:A-kriging-2} 
when misspecifying 
$\mu_d(m;\beta,\ac,\kappa)$ by  
$\mu_d(\mt;\betat,\act,\kappat)$ 
derived in this subsection 
will not depend on the dimension~$d$ 
of the spatial domain $\cD\subset\bbR^d$\!. 
The key to prove this result is the next lemma. 

\begin{lemma}\label{lem:compact}  
	Let $d\in\bbN$, $c\in\bbR_+$, and let  
	$\beta \in (\nicefrac{d}{4},\infty)$ be such that 
	$2\beta\notin\frakE$, 
	where $\frakE$ is as in \eqref{eq:exception}. 
	In addition, suppose Assumption~\ref{ass:a-kappa-D}.III 
	and 
	let the operators~$L, \Lt$ be defined as 
	in \eqref{eq:L-div} 
	with coefficients $\ac,\kappa$ 
	and $\act,\kappat$, respectively, where 
	$\ac$ fulfills Assumption~\ref{ass:a-kappa-D}.I, 
	$c\ac=\act$, 
	and $\kappa,\kappat$ are such that  
	Assumption~\ref{ass:a-kappa-D}.II 
	is satisfied and 
	\eqref{eq:condition-iso} holds for all $j\in\bbN_0$ with  
	$j\leq\lfloor\beta-\nicefrac{5}{4}\rfloor$. 
	Then, 
	$\Lt^\beta L^{-\beta}$ is an isomorphism on~$L_2(\cD)$ 
	and 
	$L^{-\beta} \Lt^{2\beta} L^{-\beta} - c^{2\beta} \id_{L_2(\cD)}\in\cK( L_2(\cD) )$. 
\end{lemma}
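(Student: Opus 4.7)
The plan is to reduce Lemma~\ref{lem:compact} to a version of Lemma~\ref{lem:HS:d<4} in which the two base operators differ only by a multiplication, and then to mirror that proof with compactness in place of the Hilbert--Schmidt property. I would set $\widetilde{A} := c^{-1}\Lt$, so that $\widetilde{A}$ is self-adjoint and positive definite on $L_2(\cD)$, shares its eigenvectors (hence $\hdot{2\gamma}{\widetilde{A}} = \hdot{2\gamma}{\Lt}$ with equivalent norms) and---because $\act = c\ac$---its second-order part with $L$. Thus $\widetilde{A} = -\nabla\cdot(\ac\nabla) + c^{-1}\kappat^2$ and $\widetilde{A} - L = c^{-1}\delta_{c,\kappa^2}$ is multiplication by the smooth function $\delta_{c,\kappa^2} := \kappat^2 - c\kappa^2$. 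The identity $\widetilde{A}^{2\beta} = c^{-2\beta}\Lt^{2\beta}$ gives
\[
L^{-\beta}\Lt^{2\beta}L^{-\beta} - c^{2\beta}\id_{L_2(\cD)}
=
c^{2\beta}\bigl(L^{-\beta}\widetilde{A}^{2\beta}L^{-\beta} - \id_{L_2(\cD)}\bigr),
\]
so it is enough to show compactness of the parenthesized operator for $(L,\widetilde{A})$; isomorphism of $\Lt^\beta L^{-\beta} = c^\beta \widetilde{A}^\beta L^{-\beta}$ follows directly from Theorem~\ref{thm:CM} applied to $(L,\Lt)$ under the hypothesis~\eqref{eq:condition-iso}.

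To transfer the hypothesis from $(L,\Lt)$ to $(L,\widetilde{A})$, I would use the pointwise identity
\[
\delta_{\kappa^2}\psi - \nabla\cdot(\delta_{\ac}\nabla\psi) = \delta_{c,\kappa^2}\psi + (c-1)L\psi,
\]
valid on sufficiently smooth $\psi$, with $\delta_{\ac} = (c-1)\ac$. Applied at $v\in\Hdot{2\eta}$ for $\eta\in\frakN_\beta$, \eqref{eq:condition-iso} becomes $(\kappa^2-\nabla\cdot(\ac\nabla))^j(\delta_{c,\kappa^2}v) + (c-1)(\kappa^2-\nabla\cdot(\ac\nabla))^{j+1}v = 0$ in $L_2(\partial\cD)$; since $j\leq\lfloor\eta-\nicefrac{5}{4}\rfloor$ implies $j+1\leq\lfloor\eta-\nicefrac{1}{4}\rfloor$, Lemma~\ref{lem:Hdot-Sobolev} kills the second term, leaving $(\kappa^2-\nabla\cdot(\ac\nabla))^j(\delta_{c,\kappa^2}v)=0$ on $\partial\cD$. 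A symmetric computation on $\hdot{2\eta}{\Lt}$ using $L = c^{-1}\Lt - c^{-1}\delta_{c,\kappa^2}$ and Lemma~\ref{lem:Hdot-Sobolev} applied to $\Lt$ yields $\Lt^j(\delta_{c,\kappa^2}\widetilde{v})=0$ on $\partial\cD$. Absorbing the scalar factors in $\widetilde{A}-L$ and in $(c^{-1}\kappat^2-\nabla\cdot(\ac\nabla))^j = c^{-j}\Lt^j$ produces exactly the conditions \eqref{eq:condition-iso} for the pair $(L,\widetilde{A})$.

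With these boundary conditions in hand I would follow the structure of the proof of Lemma~\ref{lem:HS:d<4}. In Case~I ($\beta\in(\nicefrac{d}{4},1)$), the decomposition
\[
\widetilde{A}^{2\beta} - L^{2\beta} = \tfrac{1}{2}(\widetilde{A}^\beta + L^\beta)(\widetilde{A}^\beta - L^\beta) + \tfrac{1}{2}\bigl[(\widetilde{A}^\beta + L^\beta)(\widetilde{A}^\beta - L^\beta)\bigr]^*,
\]
combined with boundedness of $\widetilde{A}-L$ on $L_2(\cD)$, Lemma~\ref{lem:A-alpha-difference} (which gives $\widetilde{A}^\beta - L^\beta\in\cL(L_2(\cD))$), and the isomorphism of $\widetilde{A}^\beta L^{-\beta}$, reduces the compactness of $L^{-\beta}(\widetilde{A}^{2\beta}-L^{2\beta})L^{-\beta}$ to that of $L^{-\beta}\in\cK(L_2(\cD))$, which holds for any $\beta>0$ since $\lambda_j\to\infty$. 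In Case~II ($\beta\in[1,\infty)$, $2\beta\notin\frakE$), I would invoke Lemma~\ref{lem:iff-A}\ref{lem:iff-A-compact} for $(L,\widetilde{A})$ with $W_\eta = \id$: writing $L^{\eta-1}\widetilde{A} L^{-\eta} = \id_{L_2(\cD)} + c^{-1}L^{\eta-1}\delta_{c,\kappa^2}L^{-\eta}$ reduces the task to proving $\delta_{c,\kappa^2}\in\cK(\Hdot{2\eta};\Hdot{2(\eta-1)})$ for every $\eta\in\frakN_\beta$. I would obtain this by factoring
\[
\Hdot{2\eta}\hookrightarrow\hookrightarrow\Hdot{2\eta-2\eps_0}\xrightarrow{\mult{\delta_{c,\kappa^2}}}\Hdot{2(\eta-1)},
\]
choosing $\eps_0\in(0,\nicefrac{1}{2})$ with $2\eta-2\eps_0\notin\frakE$ for every $\eta\in\frakN_\beta$: compactness of the embedding follows from Lemma~\ref{lem:Hdot-Sobolev} and the Rellich--Kondrachov theorem on the smooth bounded~$\cD$, while boundedness of $\mult{\delta_{c,\kappa^2}}$ comes from smoothness of $\delta_{c,\kappa^2}$ together with the boundary-condition transfer extended from $\Hdot{2\eta}$ to $\Hdot{2\eta-2\eps_0}$ by density and continuity of the trace maps $v\mapsto (\kappa^2-\nabla\cdot(\ac\nabla))^j(\delta_{c,\kappa^2}v)|_{\partial\cD}$. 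Invertibility of $\id_{L_2(\cD)} + c^{-1}L^{\eta-1}\delta_{c,\kappa^2}L^{-\eta}$ then follows from the factorization $L^{\eta-1}\widetilde{A} L^{-\eta} = (\widetilde{A}^{-(\eta-1)}L^{\eta-1})^*(\widetilde{A}^\eta L^{-\eta})$ together with the isomorphism of $\widetilde{A}^\gamma L^{-\gamma}$ for $\gamma\in[-\beta,\beta]$.

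The delicate step is the density-plus-trace-continuity argument in Case~II, which is a direct analog of the derivation of \eqref{eq:condition-hs} in the proof of Lemma~\ref{lem:HS:d<4}; once it is in place, the rest is bookkeeping around the scaling $\widetilde{A} = c^{-1}\Lt$ and the combinatorics of~\eqref{eq:condition-iso}.
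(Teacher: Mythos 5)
Your proposal is correct and follows essentially the same route as the paper's proof: the same splitting into $\beta<1$ (via the symmetrized factorization of $\Lt^{2\beta}-c^{2\beta}L^{2\beta}$, Lemma~\ref{lem:A-alpha-difference} and compactness of $L^{-\beta}$) and $\beta\geq 1$ (via transferring the boundary conditions to the rescaled pair, factoring the multiplication by $\delta_{c,\kappa^2}$ through a compact power $L^{-\eps_0/2}$ after a density-and-trace argument, and invoking Lemma~\ref{lem:iff-A}\ref{lem:iff-A-compact} with $W_\eta=\id$). The only differences are cosmetic: you rescale $\Lt$ to $c^{-1}\Lt$ where the paper rescales $L$ to $cL$, and you make the boundary-condition transfer explicit via the identity $\delta_{\kappa^2}\psi-\nabla\cdot(\delta_{\ac}\nabla\psi)=\delta_{c,\kappa^2}\psi+(c-1)L\psi$ where the paper obtains it from Theorem~\ref{thm:CM} applied to the rescaled pair.
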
 

\begin{proof}
	By Theorem~\ref{thm:CM} 
	$\Lt^\gamma L^{-\gamma}$ is an isomorphism on $L_2(\cD)$ 
	for all $\gamma\in[-\beta,\beta]$. 
	To prove compactness 
	of 
	$L^{-\beta} \Lt^{2\beta} L^{-\beta} - c^{2\beta} \id_{L_2(\cD)}$, 
	similarly as in the proof of Lemma~\ref{lem:HS:d<4}, 
	we distinguish two cases, 
	\textbf{Case I:} 
	$d\in\{1,2,3\}$, $\beta\in(\nicefrac{d}{4},1)$ 
	and 
	\textbf{Case II:} $\beta\in[1,\infty)$. 
	
	\textbf{Case I:} If $d\in\{1,2,3\}$ and  
	$\beta\in(\nicefrac{d}{4},1)$, 
	then we use the identity 
	\begin{equation}\label{eq:proof:lem:compact-1} 
		\begin{split} 
			L^{-\beta} \Lt^{2\beta} L^{-\beta} - c^{2\beta} \id_{L_2(\cD)} 
			&= 
			\tfrac{1}{2}
			L^{-\beta} 
			\bigl( \Lt^{\beta} + c^\beta L^\beta \bigr) 
			\bigl( \Lt^{\beta} - c^\beta L^\beta \bigr) L^{-\beta} 
			\\
			&\quad + 
			\tfrac{1}{2} 
			\bigl[ 
			L^{-\beta} 
			\bigl( \Lt^{\beta} + c^\beta L^\beta \bigr) 
			\bigl( \Lt^{\beta} - c^\beta L^\beta \bigr) 
			L^{-\beta}  
			\bigr]^*\!. 
		\end{split} 
	\end{equation} 
	Clearly, $L^{-\beta}\Lt^{\beta}\in\cL(L_2(\cD))$ 
	is bounded, 
	since $\Lt^\beta L^{-\beta}$ 
	is an isomorphism. 
	Furthermore, since 
	${(\Lt-cL)\psi = \delta_{c,\kappa^2}\psi}$, 
	where 
	$\delta_{c,\kappa^2}:=\kappat^2-c\kappa^2\in C^\infty(\clos{\cD})$, 
	we find that $\Lt-cL \in \cL( L_2(\cD) )$.  
	Thus, by Lemma~\ref{lem:A-alpha-difference}  
	and Remark~\ref{rem:A-alpha-difference}
	also $\Lt^\beta-c^\beta L^\beta\in\cL(L_2(\cD))$ 
	holds.  
	Combining these observations with \eqref{eq:proof:lem:compact-1}  
	and $L^{-\beta}\in\cK(L_2(\cD))$ 
	shows that 
	$L^{-\beta} \Lt^{2\beta} L^{-\beta} - c^{2\beta} \id_{L_2(\cD)} \in\cK(L_2(\cD))$. 
	
	\textbf{Case II:}
	Define the operator $L_c := c L$. 
	Then, also $\Lt^\gamma L_c^{-\gamma}$ 
	is an isomorphism on $L_2(\cD)$ 
	for all $\gamma\in[-\beta,\beta]$. 
	By Theorem~\ref{thm:CM},  
	for all $\eta\in\frakN_\beta$, 
	where $\frakN_\beta$ is as in \eqref{eq:frakNset}, 
	and 
	all $j\in\bbN_0$ with $j\leq \lfloor\eta-\nicefrac{5}{4}\rfloor$, 
	\[ 
		\forall v \in \hdot{2\eta}{L_c} = \Hdot{2\eta} : 
		\quad 
		\bigl( \kappa^2 - \nabla\cdot(\ac\nabla)\bigr)^j \bigl( \delta_{c,\kappa^2} v \bigr) = 0 
		\;\ \text{in}\;\; 
		L_2(\partial\cD) . 
	\] 
	We pick $\eps_0\in(0,2)$ such that 
	$2\eta-\eps_0\notin\frakE$ for all $\eta\in\frakN_\beta$, 
	and we fix $\eta\in\frakN_\beta$, 
	$\psi\in\Hdot{2\eta-\eps_0}$,~and  $\delta\in\bbR_+$. 
	By density of $\Hdot{2\eta}$ in $\Hdot{2\eta-\eps_0}$, 
	there exists $v_\delta\in\Hdot{2\eta}$ 
	such that $\norm{\psi-v_\delta}{2\eta-\eps_0,L}<\delta$. 
	Furthermore, by Lemma~\ref{lem:Hdot-Sobolev}
	on $\Hdot{2\eta-\eps_0}$ the norm $\norm{\,\cdot\,}{2\eta-\eps_0,L}$ 
	is equivalent to 
	the Sobolev norm $\norm{\,\cdot\,}{H^{2\eta-\eps_0}(\cD)}$.  
	 
	Thus, by the trace theorem,  
	Theorem~\ref{thm:trace} in Appendix~\ref{appendix:function-spaces}, 
	and by noting that 
	$\nicefrac{5}{2}-\eps_0\in(\nicefrac{1}{2}, \nicefrac{5}{2})$,  
	for all  
	$j\in\bbN_0$ with 
	$j\leq\lfloor\eta-\nicefrac{5}{4}\rfloor$, 
	we find that 
	\begin{align*} 
		\bigl\| 
		\bigl( \kappa^2 
		&- \nabla\cdot(\ac\nabla)\bigr)^j \bigl( \delta_{c,\kappa^2} \psi \bigr) 
		\bigr\|_{L_2(\partial\cD)} 
		= 
		\bigl\| 
		\bigl( \kappa^2 - \nabla\cdot(\ac\nabla)\bigr)^j 
		\bigl( \delta_{c,\kappa^2} (\psi - v_\delta) \bigr)  
		\bigr\|_{L_2(\partial\cD)}
		\\ 
		&\leq 
		C \bigl\| 
		\bigl( \kappa^2 - \nabla\cdot(\ac\nabla)\bigr)^j 
		\bigl( \delta_{c,\kappa^2} (\psi - v_\delta) \bigr)  
		\bigr\|_{H^{5/2-\eps_0}(\cD)} 
		\leq 
		\widehat{C}  
		\| \psi - v_\delta \|_{H^{2\eta-\eps_0}(\cD)} 
		< C' \delta, 
	\end{align*}  
	where the constants $C,\widehat{C},C'\!\in\bbR_+$ 
	are independent of $\delta,v_\delta$ and $\psi$. 
	Since $\psi\in\Hdot{2\eta-\eps_0}$ 
	and $\delta\in\bbR_+$ were arbitrary,  
	we thus find that,  
	for every $\eta\in \frakN_\beta$ and 
	all $j\in\bbN_0$ with $j\leq\lfloor\eta-\nicefrac{5}{4}\rfloor$, 
	\begin{equation}\label{eq:condition-compact}
		\forall \psi\in \hdot{2\eta-\eps_0}{L_c}=\Hdot{2\eta-\eps_0}:
		\quad 
		\bigl( \kappa^2 - \nabla \cdot (\ac \nabla) \bigr)^j 
		\bigl( \delta_{c,\kappa^2} \psi \bigr) 
		= 
		0 
		\;\ \text{in}\;\; 
		L_2(\partial\cD) . 
	\end{equation}

	Since $\eps_0\in(0,2)$, 
	\eqref{eq:condition-compact}  
	and the regularity of 
	$\delta_{c,\kappa^2} \in C^\infty(\clos{\cD})$ 
	imply by identifying $\hdot{2(\eta-1)}{L_c}$ as in \eqref{eq:Hdot-Sobolev} 
	that 
	$B_c := \Lt-cL \in 
	\cL\bigl( \hdot{2\eta-\eps_0}{L_c}, \hdot{2(\eta-1)}{L_c} \bigr)$ 
	and 
	$L_c^{\eta-1} B_c L_c^{-\eta+\nicefrac{\eps_0}{2}} 
	\in \cL(L_2(\cD))$. 
	Then, we find that 
	$K_\eta 
	:= 
	L_c^{\eta-1} \Lt L_c^{-\eta} - \id_{L_2(\cD)} 
	=
	L_c^{\eta-1} (\Lt - L_c) L_c^{-\eta} 
	=
	\bigl( L_c^{\eta-1} B_c L_c^{-\eta+\nicefrac{\eps_0}{2}} \bigr) L_c^{-\nicefrac{\eps_0}{2}}  
	\in 
	\cK(L_2(\cD))$
	for every $\eta\in\frakN_\beta$, because   
	$L_c^{-\nicefrac{\eps_0}{2}} 
	= 
	c^{-\nicefrac{\eps_0}{2}} L^{-\nicefrac{\eps_0}{2}}$ 
	is compact on $L_2(\cD)$. 
	Applying Lemma~\ref{lem:iff-A}\ref{lem:iff-A-compact} 
	(for $\At:=\Lt$ and $A:=L_c$, 
	using ${W_\eta:=\id_{L_2(\cD)}}$ 
	for every $\eta\in\frakN_\beta$)
	finally yields compactness of  
	$L^{-\gamma}\Lt^{2\gamma}L^{-\gamma} - c^{2\gamma}\id_{L_2(\cD)} 
	= 
	c^{2\gamma} \bigl( L_c^{-\gamma}\Lt^{2\gamma}L_c^{-\gamma} - \id_{L_2(\cD)} \bigr)$ 
	on $L_2(\cD)$ for all $\gamma\in[-\beta,\beta]$. 
\end{proof}

\begin{theorem}\label{thm:kriging} 
	Let $d\in\bbN$, $\beta,\betat\in(\nicefrac{d}{4},\infty)$ 
	be such that $2\beta\notin\frakE$, with 
	$\frakE$  as in~\eqref{eq:exception}, 
	and let Assumption~\ref{ass:a-kappa-D}.III 
	be satisfied. 
	Suppose that 
	$L, \Lt$ are defined as in~\eqref{eq:L-div}, 
	with coefficients $\ac,\kappa$ and $\act,\kappat$, respectively, 
	where each of the 
	tuples   
	$(\ac,\kappa)$ and~$(\act,\kappat)$ 
	fulfills Assumptions~\ref{ass:a-kappa-D}.I--II. 
	Let $m,\mt \in L_2(\cD)$ 
	and the Gaussian measures 
	$\mu_d(m;\beta,\ac,\kappa)$ 
	and 
	$\mu_d(\mt;\betat,\act,\kappat)$ 
	be defined according to \eqref{eq:def:mu}. 
	In addition, let $h_n, \widetilde{h}_n$ denote the best linear
	predictors of $h\in\cH$  based on~$\cH_n$ and the measures
	$\mu_d(m;\beta,\ac,\kappa)$ resp.\ 
	$\mu_d(\mt;\betat,\act,\kappat)$, 
	see \eqref{eq:def:cHn}--\eqref{eq:def:S-adm}. 
	Then, any of the four assertions in 
	\eqref{eq:prop:A-kriging-1},  
	\eqref{eq:prop:A-kriging-2} 
	holds for some $c\in\bbR_+$ and  all 
	$\{\cH_n\}_{n\in\bbN} \in \cS^\mu_{\mathrm{adm}}$
	if and only if 
	$\beta=\betat$,  
	$m-\mt\in\Hdot{2\beta}$\!, 
	the boundary conditions~\eqref{eq:condition-iso}  
	hold for every $j\in\bbN_0$ with $j\leq \lfloor\beta-\nicefrac{5}{4} \rfloor$, 
	and 
	there exists a  constant $\hat{c}\in\bbR_+$ such that 
	$\hat{c}\ac=\act$. 
\end{theorem}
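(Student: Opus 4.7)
The plan is to deduce Theorem~\ref{thm:kriging} from the abstract kriging characterization in Proposition~\ref{prop:A-kriging}\ref{prop:A-kriging-I}, applied to the operator pair $(L,\Lt)$ on the Hilbert space $E=L_2(\cD)$. Theorem~\ref{thm:CM} will translate the Cameron--Martin (smoothness) content of the conditions into the equality $\beta=\betat$ together with the boundary conditions~\eqref{eq:condition-iso}; Lemma~\ref{lem:compact} will supply sufficiency of the coefficient constraint $\hat{c}\ac=\act$; and Lemma~\ref{lem:a-notcompact}, combined with the propagation mechanism in Lemma~\ref{lem:beta-gamma}\ref{lem:beta-gamma-compact}, will yield its necessity. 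This mirrors the structure of the proof of Theorem~\ref{thm:equivalence}, with compactness playing the role that the Hilbert--Schmidt property played there and with a scalar $\hat{c}>0$ now allowed in the coefficient identity.

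\textbf{Sufficiency.} Assume $\beta=\betat$, that~\eqref{eq:condition-iso} holds for every $j\in\bbN_0$ with $j\leq\lfloor\beta-\nicefrac{5}{4}\rfloor$, that $m-\widetilde{m}\in\Hdot{2\beta}$, and that $\hat{c}\ac=\act$ for some $\hat{c}\in\bbR_+$. By Theorem~\ref{thm:CM} the operator $\Lt^\beta L^{-\beta}$ is an isomorphism on $L_2(\cD)$, and by Lemma~\ref{lem:compact} applied with $c:=\hat{c}$ one has $L^{-\beta}\Lt^{2\beta}L^{-\beta}-\hat{c}^{2\beta}\,\id_{L_2(\cD)}\in\cK(L_2(\cD))$. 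Setting $T:=\hat{c}^{-\beta}\Lt^\beta L^{-\beta}\in\cL(L_2(\cD))$, we thus have that $T$ is an isomorphism with $T^{*}T-\id_{L_2(\cD)}\in\cK(L_2(\cD))$, and a polar decomposition of $T$ (of the same type as in the proof of Lemma~\ref{lem:iff-A}\ref{lem:iff-A-HS}, but producing a compact, rather than Hilbert--Schmidt, remainder via the functional calculus for $\sqrt{\id_{L_2(\cD)}+(T^*T-\id_{L_2(\cD)})}$) yields $T=W(\id_{L_2(\cD)}+K)$ with $W$ orthogonal, $K\in\cK(L_2(\cD))$, and $\id_{L_2(\cD)}+K$ invertible. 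This is exactly condition~\ref{prop:A-kriging-iff-b} of Proposition~\ref{prop:A-kriging}\ref{prop:A-kriging-I} with $c:=\hat{c}^{2\beta}$; condition~\ref{prop:A-kriging-iff-a} is the mean assumption, so the proposition delivers any of~\eqref{eq:prop:A-kriging-1}--\eqref{eq:prop:A-kriging-2}.

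\textbf{Necessity.} Conversely, suppose one of the four asymptotic statements holds for every $\{\cH_n\}_{n\in\bbN}\in\cS^\mu_{\mathrm{adm}}$ and some $c\in\bbR_+$. Then Proposition~\ref{prop:A-kriging}\ref{prop:A-kriging-I} produces $m-\widetilde{m}\in\Hdot{2\beta}$ together with a factorization $c^{-1/2}\Lt^{\betat}L^{-\beta}=W(\id_{L_2(\cD)}+K)$ for some orthogonal $W$, compact $K$, and invertible $\id_{L_2(\cD)}+K$. In particular $\Lt^{\betat}L^{-\beta}$ is an isomorphism on $L_2(\cD)$, so the Cameron--Martin spaces $\Hdot{2\beta}$ and $\hdot{2\betat}{\Lt}$ are norm-equivalent, and Theorem~\ref{thm:CM} then forces $\beta=\betat$ together with the boundary conditions~\eqref{eq:condition-iso} for all $j\in\bbN_0$ with $j\leq\lfloor\beta-\nicefrac{5}{4}\rfloor$. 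Multiplying the factorization by its adjoint gives $c^{-1}L^{-\beta}\Lt^{2\beta}L^{-\beta}-\id_{L_2(\cD)}\in\cK(L_2(\cD))$. I now apply Lemma~\ref{lem:beta-gamma}\ref{lem:beta-gamma-compact} to the operator pair $A:=L$ and $\At:=c^{-1/(2\beta)}\Lt$: the isomorphism hypothesis at $\gamma=\beta$ follows from the preceding step, while the compactness hypothesis is what we just extracted. The conclusion of the lemma at $\gamma=\nicefrac{1}{4}$ reads $L^{-\nicefrac{1}{4}}\Lt^{\nicefrac{1}{2}}L^{-\nicefrac{1}{4}}-c^{1/(4\beta)}\,\id_{L_2(\cD)}\in\cK(L_2(\cD))$, and Lemma~\ref{lem:a-notcompact} read contrapositively then forces $\hat{c}\ac=\act$ with $\hat{c}:=c^{1/(2\beta)}\in\bbR_+$.

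\textbf{Main obstacle.} The principal subtlety lies in the necessity direction, where one must extract the pointwise coefficient identity $\hat{c}\ac=\act$ from a global operator-theoretic compactness statement formulated at the ``top'' scale $\gamma=\beta$, at which the smoothing effect of $L^{-\beta}$ easily absorbs substantial differences between $\ac$ and $\act$. The descent to the critical scale $\gamma=\nicefrac{1}{4}$ via the interpolation-based propagation of Lemma~\ref{lem:beta-gamma}\ref{lem:beta-gamma-compact} is exactly what makes Lemma~\ref{lem:a-notcompact} --- proved by constructing oscillating test functions supported in a subcube where $\act-\hat{c}\ac$ has a non-vanishing eigenvalue --- applicable. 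It is this bridge between the abstract fractional-power framework and the behavior of the PDE coefficients at a fixed order, rather than any fundamentally new estimate, that constitutes the crux of the argument.
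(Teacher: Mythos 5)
Your proposal is correct and follows essentially the same route as the paper: reduction via Proposition~\ref{prop:A-kriging} (equivalently \citep[Theorem~3.6]{bk-kriging}) to the three operator conditions, Theorem~\ref{thm:CM} for the Cameron--Martin part, Lemma~\ref{lem:compact} for sufficiency of $\hat{c}\ac=\act$, and Lemma~\ref{lem:a-notcompact} for its necessity. The only difference is that you spell out the descent to $\gamma=\nicefrac{1}{4}$ via Lemma~\ref{lem:beta-gamma}\ref{lem:beta-gamma-compact} explicitly (with the correct bookkeeping $\hat{c}=c^{1/(2\beta)}$), whereas the paper leaves this step implicit by analogy with the proof of Theorem~\ref{thm:equivalence}.
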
 

\begin{proof} 
	By \cite[][Theorem~3.8 and Lemma~B.1]{bk-kriging} 
	any of the assertions in  
	\eqref{eq:prop:A-kriging-1}, \eqref{eq:prop:A-kriging-2}  
	holds for 
	some constant $c\in\bbR_+$ 
	and all   
	$\{\cH_n\}_{n\in\bbN} \in \cS^\mu_{\mathrm{adm}}$ 
	if and only if 
	\begin{enumerate*}[label={\normalfont(\roman*)}]
		\item\label{proof:thm:kriging-i} 
		$\Hdot{2\beta}$ 
		and 
		$\hdot{2\betat}{\Lt}$ 
		are norm equivalent Hilbert spaces;   
		\item\label{proof:thm:kriging-ii} 
		$m - \mt \in \Hdot{2\beta}$\!; and 
		\item\label{proof:thm:kriging-iii} 
		$L^{-\beta} \Lt^{2\beta} L^{-\beta} - c^{-1} \id_{L_2(\cD)}$ 
		is compact on $E$. 
	\end{enumerate*}  
	
	By Theorem~\ref{thm:CM} 
	$\Hdot{2\beta}$ 
	and $\hdot{2\betat}{\Lt}$
	are norm equivalent 
	if and only 
	if $\beta=\betat$ and 
	\eqref{eq:condition-iso} holds 
	for every  
	${j\in\bbN_0}$ with $j\leq\lfloor\beta-\nicefrac{5}{4}\rfloor$. 
	Assuming that $\beta=\betat$ 
	with $2\beta\notin\frakE$, 
	sufficiency of 
	\begin{enumerate*}[label=(\alph*)]
		\item the boundary conditions \eqref{eq:condition-iso} holding  
		for all 
		$j\in\bbN_0$ with $j\leq\lfloor\beta-\nicefrac{5}{4}\rfloor$,  
		combined with  
		\item the existence of $\hat{c} \in\bbR_+$ such that  
		$\hat{c}\ac=\act$, 
	\end{enumerate*}
	for conditions \ref{proof:thm:kriging-i} and \ref{proof:thm:kriging-iii}   
	is proven in Lemma~\ref{lem:compact} (showing 
	in particular that $c^{-1}=\hat{c}^{2\beta}$). 
	Necessity of (a) and (b) follows from 
	Theorem~\ref{thm:CM} and Lemma~\ref{lem:a-notcompact}. 
\end{proof}

\section{Illustration by simulations}\label{section:numexp}

In this section we illustrate the theoretical findings 
of the previous sections by two different examples of kriging prediction 
based on misspecified generalized Whittle--Mat\'ern models 
\eqref{eq:whittle-matern}, see also \eqref{eq:Lbeta} 
and~\eqref{eq:L-div}, on $\cD := (0,1)$. 
We first consider a non-fractional model with $\beta=1$ 
and discuss the difference 
between a misspecification of $\kappa^2$ and of $\ac$. 
We then consider the role of $\beta$ 
when misspecifying $\kappa^2$\!. 
These examples verify, in particular, that one obtains asymptotic optimality 
even if $\kappa^2$ is misspecified for $\beta<\nicefrac{9}{4}$. 
In contrast, when $\beta > \nicefrac{9}{4}$, 
it is the behavior of $\kappa^2$ at the boundary $\partial\cD = \{0,1\}$  
of the domain $\cD=(0,1)$ 
that determines whether asymptotic optimality is achieved or not,  
see Table~\ref{tab:summary}.
The results are implemented in {\sc Matlab} using the \texttt{ppfem} 
package \citep{ppfem} for discretizing the models. 

To facilitate interpreting the parameters, 
we make a small adjustment to the Whittle--Mat\'ern model 
\eqref{eq:whittle-matern} by including 
a constant $\tau\in\bbR_+$ which scales the variance of the solution:
\begin{equation}\label{eq:model_ex2}
	\bigl( - \nabla \cdot (\ac \nabla) + \kappa^2 \bigr)^\beta (\tau \GP) 
	= \white \quad \text{in} \;\; \cD= (0,1).
\end{equation}
Note that this constant has no effect on the kriging prediction. 

\subsection{The difference between 
	\texorpdfstring{$\kappa^2$ and $\ac$}{reaction and diffusion coefficients}} 

Consider \eqref{eq:model_ex2} with 
$\beta = 1$, $\ac\equiv 1$,  
$\kappa^2 \equiv 1200$, and $\tau = \frac{1}{2} \kappa^{-\nicefrac{3}{2}}$\!.  
These choices result in a process $\GP$ with practical correlation range $0.1$ 
and a variance of approximately $1$ 
at the center of the domain, see~\eqref{eq:spde_var}. 

We approximate the solution 
$\GP\from[0,1]\times\Omega\to\bbR$ of \eqref{eq:model_ex2} 
using a finite element method (FEM) with $N=1000$ 
equally spaced 
continuous, piecewise linear 
basis functions 
$\{\varphi_k\}_{k=1}^N$, 
aka.\ ``hat functions''\!. 
The resulting approximation can be written as 
${\GP(\s) \approx \sum_{k=1}^N \gp_k \varphi_k(\s)}$, 
where the distribution of the weights $\{\gp_k\}_{k=1}^N$ 
is zero-mean multivariate Gaussian 
with covariance matrix $\mv{C} = \mv{L}^{-1}\mv{M}\mv{L}^{-1}$\!. 
The matrix $\mv{L}$ has elements 
$L_{jk} = a_L(\varphi_j,\varphi_k)$, 
where $a_L(\,\cdot\,, \,\cdot\,)$ denotes the bilinear form induced by $L$, 
see \eqref{eq:a-L} in Subsection~\ref{subsec:wm-D:summary}, 
and $\mv{M}$ is the mass matrix (aka.\ Gramian) with elements 
$M_{jk} = (\varphi_j,\varphi_k)_{L_2(\cD)}$. 
For details on the implementation, such as 
the assembling of the matrices, 
see \cite{lindgren11} or \cite{BK2020rational}. 

To evaluate the effect  of misspecifying the covariance function, 
the predictive performance of the correct model is compared 
with two misspecified models. 
For both misspecified models, we use 
the correct values of $\beta, \tau$, and we set
\begin{equation}\label{eq:models1}
	\bigl(\kappa^2(\s), \ac(\s) \bigr) = 
	\begin{cases}
		\bigl( 1200 f(\s)^{-1}, 1 \bigr) & \text{for model 1}, \\
		\bigl( 1200, f(\s) \bigr) & \text{for model 2}, 
	\end{cases}
	\qquad \s\in\clos{\cD}=[0,1]. 
\end{equation}
Here, 
$f(\s) = 1 + \frac1{2} \erf\Bigl( \frac{\delta(\s-0.5)}{\sqrt{2}} \Bigr)$ 
is a sigmoid function defined through the error function, 
and $\delta\in\bbR_+$ is a parameter 
that determines the rate of change of 
$f(\s)$ at $\s=0.5$. 
Thus, for the first model the coefficient 
$\kappa^2$\!, 
which is constant in the true model, 
is misspecified by a function, 
whereas in the second model 
this scenario applies to the coefficient~$\ac$. 
Note that, for both models, 
$\kappa^2(\s)$ and $\ac(\s)$ attain the correct values 
at $\s=0.5$.
The two misspecified models are approximated 
by means of a FEM approximation 
with the same basis functions as used for the  
true model. 

We consider kriging 
prediction in two different scenarios.
In both scenarios, we use 
\[
	\cE_n(h) := \frac{\pE\bigl[(\widetilde{h}_n-h)^2\bigr]}{\pE\bigl[(h_n - h)^2\bigr]} - 1,
\]
as a measure of efficiency of the   
best linear predictor obtained by a misspecified model. 
The quantity $\cE_n(h)$ is always nonnegative and should converge to zero 
if the misspecified model provides asymptotically optimal  
linear prediction, 
see Proposition~\ref{prop:A-kriging} and Theorem~\ref{thm:kriging}. 

In the first scenario, we predict integral values of $\GP$. 
Specifically, for $\ell\in\bbN$, 
let ${I_\ell := (\GP, e_\ell )_{L_2(\cD)}}$ 
be the integral over the product of 
the process 
with $e_\ell(\s) := \sqrt{2}\sin(\ell \pi \s)$,   
which is the $\ell$-th eigenfunction of the 
(negative) Dirichlet Laplacian $- \Delta$ on $\cD=(0,1)$.  
In order to evaluate $I_\ell$, 
we use the FEM approximation 
$I_\ell \approx \sum_{k=1}^N \gp_k (e_\ell,\varphi_k)_{L_2(\cD)}$ 
and evaluate the integral $\obsmat_{\ell k} = (e_\ell,\varphi_k)_{L_2(\cD)}$ 
by means of Gauss--Legendre quadrature. 
Collecting these elements in a matrix $\bobsmat$ 
we find that the 
joint distribution of $(I_1,\ldots, I_N)$ is 
multivariate Gaussian with 
mean zero and 
covariance matrix 
$\mv{\Sigma} = \bobsmat\mv{C}\bobsmat^\top$\!. 
Given  $I_1,\ldots, I_n$, 
we then predict $h = I_{\ell}$ for all $\ell\in\{ n+1,\ldots, N\}$. 
The variance of the error of this predictor can be obtained as 
$\pE\bigl[(h_n - h)^2\bigr] 
= 
\mv{\Sigma}_{\ell,\ell} - \mv{\Sigma}_{\ell,1:n}\mv{\Sigma}_{1:n,1:n}^{-1}\mv{\Sigma}_{\ell,1:n}^\top$. 
Here $\mv{\Sigma}_{\ell,1:n}$ denotes the first $n$ elements 
of the $\ell$-th row of $\mv{\Sigma}$ and $\mv{\Sigma}_{1:n,1:n}$ 
is the $n\times n$ sub-matrix corresponding to the $n$ observations. 
If we let $\widetilde{\mv{C}}$ denote the covariance matrix 
for the weights of a model with misspecified parameters 
and set $\widetilde{\mv{\Sigma}} = \bobsmat\widetilde{\mv{C}}\bobsmat^\top$, 
we similarly obtain that 
\[ 
\pE\bigl[(\widetilde{h}_n-h)^2\bigr] 
= 
\mv{\Sigma}_{\ell,\ell}  
+ 
\widetilde{\mv{\Sigma}}_{\ell,1:n} 
\widetilde{\mv{\Sigma}}_{1:n,1:n}^{-1} 
\mv{\Sigma}_{1:n,1:n}\widetilde{\mv{\Sigma}}_{1:n,1:n}^{-1} 
\widetilde{\mv{\Sigma}}_{\ell,1:n}^\top 
- 
2 \mv{\Sigma}_{\ell,1:n} 
\widetilde{\mv{\Sigma}}_{1:n,1:n}^{-1} 
\widetilde{\mv{\Sigma}}_{\ell,1:n}^\top.
\]
The left panel of Figure~\ref{fig1} shows
\begin{equation}\label{eq:En_max}
	\cE_{I,n}^{\max} 
	:= 
	\max \bigl\{ \cE_{I,n}^\ell :  n+1 \leq \ell \leq N\bigr\} , 
	\quad\;\; 
	\cE_{I,n}^\ell := \cE_n(I_\ell),\quad \ell \in \{n+1,\ldots,N\},
\end{equation}
as a function of $n$ for both misspecified models, 
where we consider values for $n$ up to $500$, 
so that the maximum in \eqref{eq:En_max} is taken 
over at least $500$ elements for each $n$. 
This error is computed for three different values of $\delta$, 
namely $\delta\in\{1,10,100\}$, 
where a larger value of $\delta$ intuitively should cause 
a bigger error for the misspecified model. 
\begin{figure}[t]
	\includegraphics[width=0.49\linewidth]{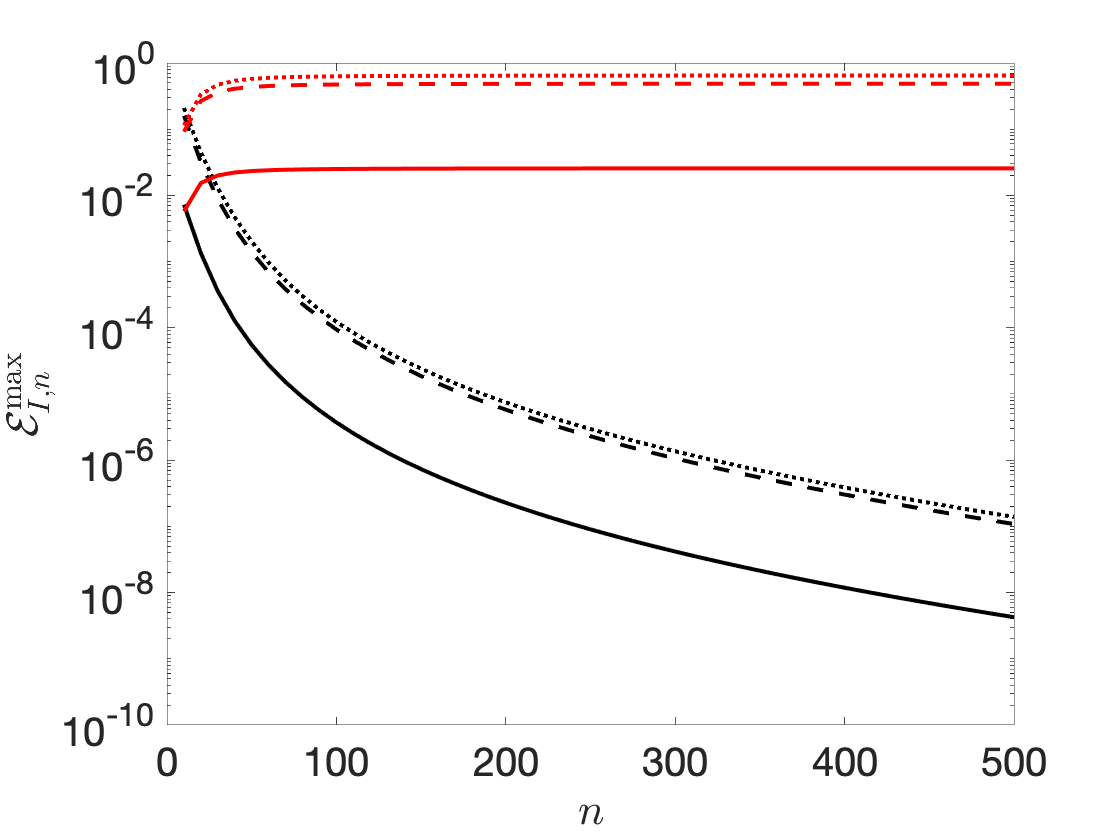}
	\hfill 
	\includegraphics[width=0.49\linewidth]{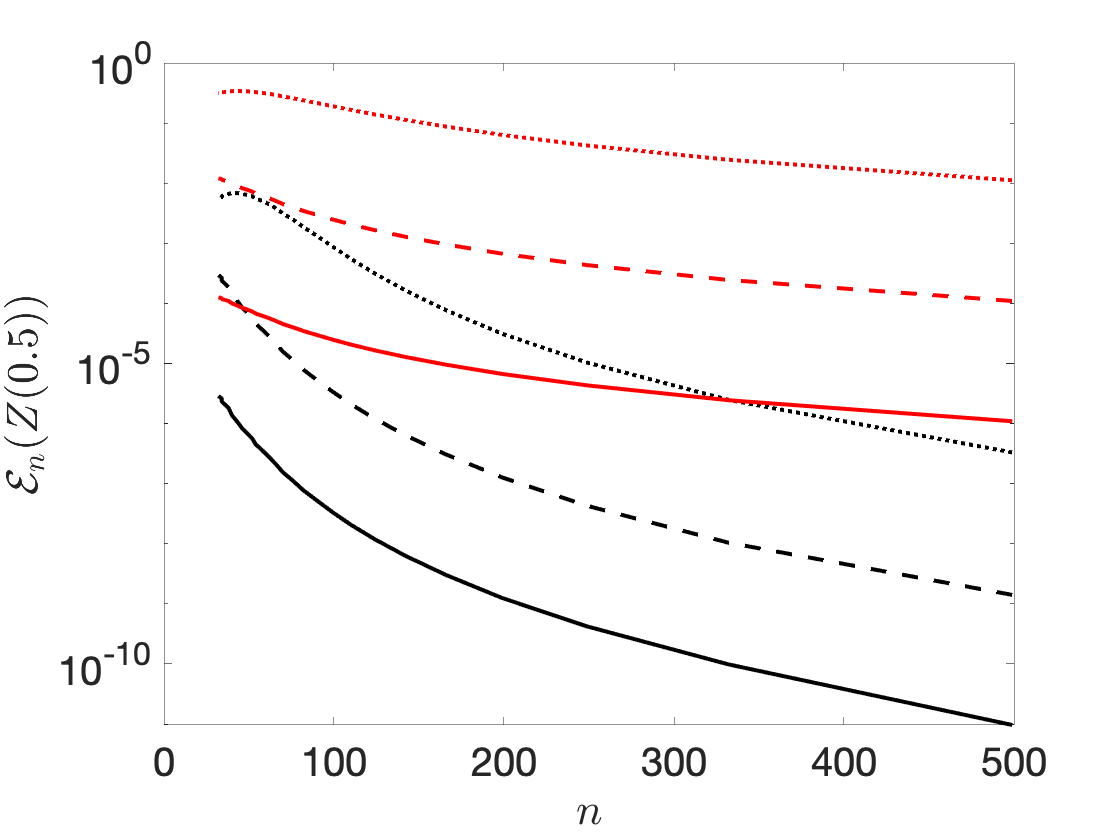} 
	\vspace{-1.5mm} 
	\caption{The results for model 1 (black) and model 2 (red) 
		for the first example \eqref{eq:models1} 
		with integral observations (left) and 
		point observations (right). 
		Solid lines correspond to $\delta= 1$, 
		dashed to $\delta = 10$, 
		and dotted to $\delta = 100$.}
	\label{fig1}
\end{figure}
We can clearly see that model 2 does not 
provide asymptotically optimal linear prediction in this scenario, 
but model 1 does. 
This holds for each of the three different values of $\delta$, 
and is in line with our theoretical findings: 
Theorem~\ref{thm:kriging} (see also Table~\ref{tab:summary}) 
shows that only the model with misspecified $\kappa^2$ 
should provide asymptotically optimal linear prediction.  

As a second scenario, we let $h = \GP(\s_0)$ 
with $\s_0 = 0.5$ and compute predictions of $h$ 
based on observations of $\GP(\s)$ at $n$ locations 
$\s_1, \s_2,\ldots$ in $\cD=(0,1)$ chosen as $\s_{2j} = s_0+j\delta_o$ 
and ${\s_{2j-1} = s_0 - j\delta_o}$ for $j\in\bbN$. 
Here $\delta_o\in(0,\nicefrac{1}{2})$ is a constant  
that determines the distance between the observations.  
The only difference in the calculations in this case is 
that the matrix $\bobsmat$ now contains the elements 
$\obsmat_{\ell k} = \varphi_k(\s_{\ell-1})$.
We again compute 
predictions based on both misspecified models 
and use $\cE_n(\GP(\s_0))$ 
to measure the accuracy. 
The right panel of Figure~\ref{fig1} shows 
the results as functions of $n$ 
for the two different models 
and the three different values of $\delta$. 
We can now see that  model~2 has a larger error 
compared to model~1. 
However, also the error of model~2 
seem to converge to zero in this case, 
although at a worse rate compared to model~1. 

We recall that Theorem~\ref{thm:kriging} 
in Subsection~\ref{subsec:wm-D:kriging} 
specifies necessary and sufficient conditions 
for \emph{uniform} asymptotic optimality of linear prediction 
based on misspecified Whittle--Mat\'ern models. 
Here, uniformity means that the supremum of 
$\cE_n(h)$ 
taken over all  ${h\in\cH_{-n}=\{h\in\cH : \pE[(h_n-h)^2] > 0 \}}$ 
should converge to zero as $n\to\infty$, 
see \eqref{eq:prop:A-kriging-1}. 
In particular, 
the outcomes of the second example, 
where one specific $h\in\cH$ is fixed, 
do not contradict 
the results of Subsection~\ref{subsec:wm-D:kriging}. 
Interestingly, they suggest, however, that 
the conditions of Theorem~\ref{thm:kriging} 
and of \cite[][Assumption~3.3]{bk-kriging}
are \emph{not necessary} for 
asymptotically optimal linear prediction 
when predicting the random field 
at a single location. 

\subsection{The effect of the smoothness parameter}

We again consider the Whittle--Mat\'ern model \eqref{eq:model_ex2} 
on $\cD= (0,1)$, this time for $\ac\equiv 1$ and $\beta\in\{1,2,3\}$. 
For the approximation of the solution $\GP$, 
we use a finite element discretization
with $N=2000$ equally spaced 
hat functions as basis functions. 
For $\beta=2$ and $\beta=3$ 
we follow the iterative approach of \cite{lindgren11} 
and~\cite{BK2020rational}. 
That is, we replace the matrix 
$\mv{L}$ (corresponding to the operator $L$ for $\beta=1$) 
by $\mv{L}\mv{M}^{-1}\mv{L}$ when $\beta=2$ 
and by $\mv{L}\mv{M}^{-1}\mv{L}\mv{M}^{-1}\mv{L}$ when $\beta=3$ 
(corresponding to the operators $L^2$ and $L^3$, respectively). 

As a baseline model, we consider  \eqref{eq:model_ex2} 
with $\ac\equiv 1$, 
${\tau 
= 
(4\pi)^{-\nicefrac{1}{4}} 
\kappa^{\nicefrac{1}{2}-2\beta} 
( \Gamma(2\beta-\nicefrac{1}{2}) / \Gamma(2\beta) )^{\nicefrac{1}{2}}}$\!, 
and $\kappa^2 \equiv 100(4\beta-1)$, 
so that the model has practical correlation range $0.2$ 
and variance close to $1$ at the center of the domain, 
cf.~\eqref{eq:spde_var}. 
For $\beta\in\{1,2,3\}$, 
we consider two different models 
of the form~\eqref{eq:model_ex2}, 
where we keep $\ac\equiv 1$ and the constant $\tau$ 
fixed to their correct values but misspecify $\kappa^2$ by  
\begin{equation}
	\kappa^2(\s)
	= 
	100(4\beta-1) 
	\cdot 
	\begin{cases}
		1 - 1.5\s^2 + \s^3 & \text{for model 1},\\
		1 + \s - 1.5 \s^3   & \text{for model 2}, 
	\end{cases}
	\qquad \s\in\clos{\cD}=[0,1], 
\end{equation}
see the right panel of Figure~\ref{fig2}.
In both cases, $\kappa^2(\s)$ takes the correct value 
at $\s=0$ 
and half of the correct value at $\s=1$. 
The main difference between the two models is 
that the derivative of $\kappa^2$ vanishes on the boundary 
for model~1, but not for model~2. 
Because of this, model~1 induces the same boundary conditions 
as the baseline model, 
whereas model~2 changes the boundary condition when $\beta=3$, 
cf.~\eqref{eq:Hdot-Sobolev} and Theorem~\ref{thm:CM}. 
From the results of Subsections~\ref{subsec:wm-D:summary} 
and~\ref{subsec:wm-D:kriging} 
we know that the behavior of $\kappa^2$ 
for the two alternative models implies that model~1 
will provide uniformly asymptotically optimal linear prediction 
for all values of $\beta\in\{1,2,3\}$ whereas model~2 only will do so 
for $\beta=1$ and $\beta=2$
(see Table~\ref{tab:summary}, 
where $c=1$ and $\delta_{c,\kappa^2}$ 
has a derivative that does not vanish at the boundary 
$\s\in\{0,1\}$). 

\begin{figure}[t]
	\includegraphics[width=0.49\linewidth]{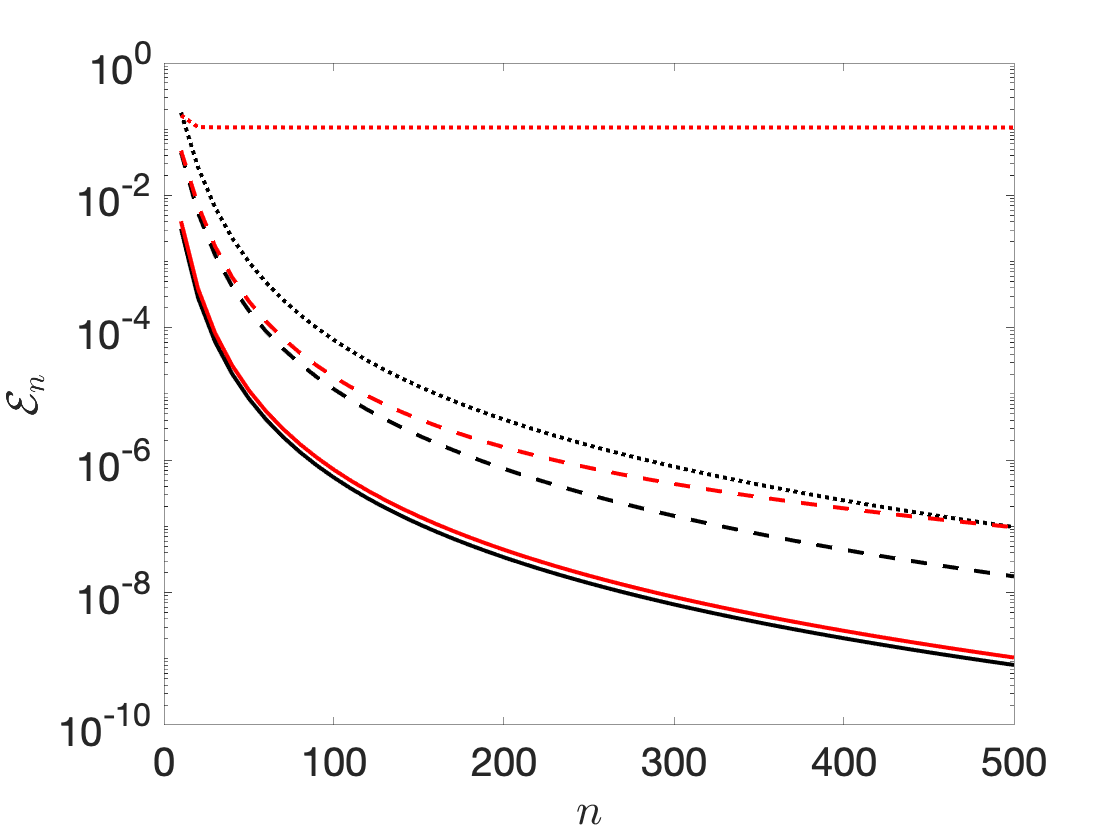}
	\hfill 
	\includegraphics[width=0.49\linewidth]{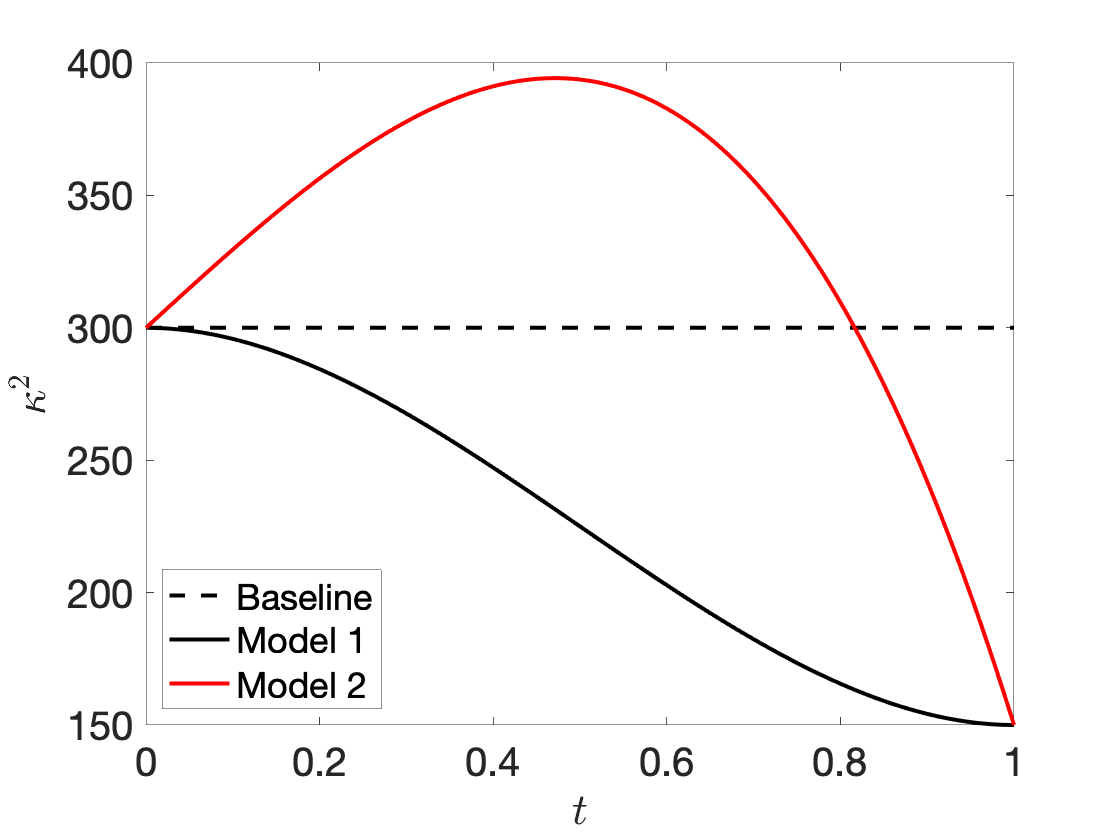} 
	\caption{Left: the results for model 1 (black) and model 2 (red) 
		in the second example, with $\beta=1$ (solid), 
		$\beta=2$ (dashed), and $\beta=3$ (dotted).  
		Right: $\kappa^2$ for the two models when $\beta=1$.}
	\label{fig2}
\end{figure}

To investigate this, 
we again consider predicting the integral values 
$I_\ell = (\GP, e_\ell )_{L_2(\cD)}$. 
Given observations of 
$I_1,\ldots, I_n$ we predict $h = I_\ell$ 
for $\ell\in\{n+1, \ldots, N\}$ 
and compute $\cE_{I,n}^{\max}$  
as the largest error among these predictions, see \eqref{eq:En_max}.  
Figure~\ref{fig2} shows $\cE_{I,n}^{\max}$ 
as a function of 
$n$ for both misspecified models 
in the three cases $\beta\in\{1,2,3\}$. 
The figure verifies that  both misspecified models 
provide asymptotically optimal predictions when $\beta\in\{1,2\}$ 
but, for $\beta=3$, only the predictions based on model~1 
behave asymptotically optimal.

\section{Discussion}\label{section:discussion}

In the general setting of Gaussian measures 
with fractional-order covariance operators 
on separable Hilbert spaces, 
we have derived necessary and sufficient conditions for 
\begin{enumerate*}[label={\normalfont\Roman*.}]
	\item\label{enum:discussion1} equivalence of Gaussian measures 
	in Proposition~\ref{prop:A-equiv}, and 
	\item\label{enum:discussion2} uniform asymptotic optimality 
	of linear (kriging) prediction 
	based on  
	misspecified Gaussian measures 
	in Proposition~\ref{prop:A-kriging}. 
\end{enumerate*} 
These conditions are formulated in terms of the  
non-fractional 
base operators,  
and are therefore in many situations 
simpler to verify than those for 
\ref{enum:discussion1}~as given by the Feldman--H\'ajek theorem 
and those for 
\ref{enum:discussion2} as stated in \citep[][Assumption~3.3]{bk-kriging}. 
As a first explicit example, we have applied 
these results to classical Whittle--Mat\'ern fields, 
see~Corollary~\ref{cor:stat-wm}. 

In the second part of the manuscript, 
we adopted the general results to derive necessary and sufficient conditions 
for \ref{enum:discussion1} and \ref{enum:discussion2} 
in terms of the (possibly function-valued) parameters 
of generalized Whittle--Mat\'ern fields 
on bounded Euclidean domains, 
see \eqref{eq:Lbeta}, \eqref{eq:L-div} and~\eqref{eq:def:mu}. 
The outcomes of Theorems~\ref{thm:CM},~\ref{thm:equivalence} 
and~\ref{thm:kriging} cover the whole 
range of admissible fractional orders $\beta\in(\nicefrac{d}{4},\infty)$ 
except for the cases 
$2\beta\in\frakE$, i.e., 
$\beta\in\{k+\nicefrac{1}{4}:k\in\bbN\}$, see also Table~\ref{tab:summary}. 
For ease of presentation, we refrained from detailing 
the results for $2\beta\in\frakE$ and we will now briefly comment 
on this situation. 
In the case that $r\in\frakE$ belongs to the 
discrete exception set \eqref{eq:exception}, 
on $\Hdot{r}$ the Sobolev norm $\norm{\,\cdot\,}{H^r(\cD)}$ 
will not be equivalent to the norm 
$\norm{\,\cdot\,}{r,L}=\norm{L^{\nicefrac{r}{2}} \,\cdot\,}{L_2(\cD)}$ 
defined through the 
fractional power operator~$L^{\nicefrac{r}{2}}$\!, 
as the topology on $\Hdot{r}$ 
is strictly finer than that on $H^r(\cD)$. 
It is well-known 
(see e.g.\ \cite[][Theorem~11.7 in Chapter~1]{LionsMagenesI})
that, for instance, for $r=\nicefrac{1}{2}$ 
the norm $\norm{\,\cdot\,}{\nicefrac{1}{2},L}$ is equivalent 
to the norm 
\[
	\norm{v}{H^{1/2}_{00}(\cD)} 
	:= 
	\bigl( 
	\norm{v}{H^{1/2}(\cD)}^2 
	+ 
	\norm{\rho^{-\nicefrac{1}{2}} v}{L_2(\cD)}^2 
	\bigr)^{\nicefrac{1}{2}} , 
\]
where $\rho\in C^\infty(\clos{\cD})$ is 
a function which is 
positive in the interior $\cD$ 
and for which the limit 
$\lim_{\s\to\s_0} \frac{\rho(\s)}{\mathrm{dist}(\s,\partial\cD)}$ exists 
and is not zero 
for all $\s_0\in\partial\cD$, 
where  $\mathrm{dist}(\s,\partial\cD)$ denotes 
the distance of $\s$ to the boundary $\partial\cD$. 
For example, in the case that 
$\beta=\nicefrac{5}{4}$ and 
$2\beta=\nicefrac{5}{2}\in\frakE$, 
we therefore expect the Cameron--Martin spaces 
of the Gaussian Whittle--Mat\'ern 
measures $\mu(0;\beta,\ac,\kappa)$ 
and $\mu(0;\beta,\ac,\kappat)$, see~\eqref{eq:def:mu}, to be 
isomorphic with equivalent norms 
for any choice of the coefficients $\kappa,\kappat\in C^\infty(\clos{\cD})$ 
since 
$\delta_{\kappa^2} = \kappat^2-\kappa^2\in C^\infty(\clos{\cD})$ 
and 
$\rho^{-\nicefrac{1}{2}} \delta_{\kappa^2} v \in L_2(\cD)$ 
for all $v \in \Hdot{\nicefrac{5}{2}}\cup\hdot{\nicefrac{5}{2}}{\Lt}
\subset H^{\nicefrac{1}{2}}_{00}(\cD)$. 
For 
$\beta=\nicefrac{9}{4}$, 
we expect this to hold 
if and only if 
\[ 
	\bigl( \kappa^2 - \nabla\cdot(\ac\nabla) \bigr) 
	\bigl( \delta_{\kappa^2} v \bigr) \in H^{\nicefrac{1}{2}}_{00}(\cD)
	\qquad\text{and}\qquad 
	\bigl( \kappat^2 - \nabla\cdot(\ac\nabla) \bigr) 
	\bigl( \delta_{\kappa^2} \widetilde{v} \bigr) \in H^{\nicefrac{1}{2}}_{00}(\cD), 
\]
for all $v\in\Hdot{\nicefrac{9}{2}}$ and 
all $\widetilde{v}\in\hdot{\nicefrac{9}{2}}{\Lt}$. 
Similarly, as in 
\eqref{eq:cond-delta-kappa}, \eqref{eq:condition-delta-kappa} 
this results in the condition 
$\rho^{-\nicefrac{1}{2}} (\ac\nabla\delta_{\kappa^2} ) \cdot\nabla v \in L_2(\cD)$ 
for all $v\in\Hdot{\nicefrac{9}{2}}\cup\hdot{\nicefrac{9}{2}}{\Lt}$. 
This means that $\ac\nabla\delta_{\kappa^2}$ 
has to satisfy a certain decay behavior towards to the boundary $\partial\cD$. 
Analogous conditions can also be derived 
for $\beta\in\{\nicefrac{13}{4},\nicefrac{17}{4}, \ldots\}$ 
and for the case that $\ac\neq\act$. 
Furthermore, although  
we have addressed only Gaussian measures in this work, 
the results for~\ref{enum:discussion2} 
extend to non-Gaussian processes, 
since the kriging predictor solely depends 
on the first two moments of the process. 

As a natural extension of the results of this work, 
generalized Whittle--Mat\'ern fields on manifolds or surfaces 
can be considered in future work.  
This extension is of relevance for practical applications in statistics, 
where for instance models on the sphere often play an important role. 
In fact, for a smooth surface $\cM$ without boundary, such as the sphere, 
the transition from the abstract results of Section~\ref{section:gm-on-hs} 
to Whittle--Mat\'ern fields on~$\cM$ 
should be more straightforward compared to the arguments used 
in Section~\ref{section:wm-D} for bounded Euclidean domains. 
This is suggested by the fact that on a smooth surface $\cM$ 
(and for smooth coefficients~$\ac,\kappa$)
the space $\Hdot{r}$ is isomorphic 
to the Sobolev space $H^r(\cM)$, 
and not to a proper subspace thereof \eqref{eq:Hdot-Sobolev} 
containing only functions which satisfy certain boundary 
conditions.

\begin{appendix}
	 
	\section{Function spaces, differential calculus and PDEs}
	\label{appendix:function-spaces}
	
	\subsection{Function spaces}
	\label{app:subsec:function-spaces}  
	
	Throughout this section, 
	let $\cD$  
	be a nonempty, connected, bounded and open domain
	in the Euclidean 
	space $\bbR^d$, $d\in\bbN$. 
	The closure of $\cD$ in $\bbR^d$ 
	is denoted by $\clos{\cD}$. 
	We assume that the boundary of $\cD$,  
	given by $\partial\cD=\clos{\cD}\setminus\cD$, 
	is Lipschitz continuous, 
	see Definition~\ref{def:smooth-bdry}.  
	We write 
	$\cD_0\Subset\cD$ 
	whenever $\clos{\cD}_0\subset\cD$. 
	
	In what follows, we introduce 
	several vector spaces 
	of real-valued functions 
	on $\cD$ or $\clos{\cD}$.

	\subsubsection{Continuous, continuously differentiable, and smooth functions} 
	\label{app:subsubsec:Ck} 
	
	For a $d$-tuple  
	$\boldsymbol{\alpha} = (\alpha_1,\ldots,\alpha_d) \in \bbN_0^d$ 
	of nonnegative integers, 
	we call $\boldsymbol{\alpha}$ a multi-index 
	and define the differential operator 
	$D^{\boldsymbol{\alpha}} 
	:= \tfrac{\partial^{\alpha_1}}{\partial \s_1^{\alpha_1}} 
	\cdots 
	\tfrac{\partial^{\alpha_d}}{\partial \s_d^{\alpha_d}}$. 
	In this case, 
	we say that $D^{\boldsymbol{\alpha}}$ 
	has order $|\boldsymbol{\alpha}| := \sum_{j=1}^d \alpha_j$.  
	Note, in particular, that $D^{(0,\ldots,0)} f = f$. 
	Furthermore, the gradient $\nabla$ 
	and the Laplace operator~$\Delta$ 
	(aka.\ Laplacian) 
	are given by 
	$\nabla 
	:= 
	\bigl( 
	\tfrac{\partial}{\partial \s_1} , 
	\ldots, 
	\tfrac{\partial}{\partial \s_d}
	\bigr)^\top$ 
	and 
	$\Delta := \nabla \cdot \nabla 
	= \sum_{j=1}^d \tfrac{\partial^2}{\partial \s_j^2}$, 
	respectively. 
	
	For any nonnegative integer $k\in\bbN_0$ 
	we then introduce the function spaces 
	\begin{align*} 
	C^k(\cD)  
	&:= 
	\{f\from \cD\to\bbR : 
	D^{\boldsymbol{\alpha}} f \text{ is continuous on $\cD$ for all } 
	|\boldsymbol{\alpha}|\leq k\}, 
	\\  
	C^k(\clos{\cD})  
	&:= 
	\{f\from \cD\to\bbR : 
	D^{\boldsymbol{\alpha}} f \text{ is uniformly continuous on $\cD$ for all }
	|\boldsymbol{\alpha}|\leq k\}. 
	\end{align*}
	Note that, since $\cD\subset\bbR^d$ 
	is assumed to be bounded, 
	the definition of $C^k(\clos{\cD})$ implies that 
	$D^{\boldsymbol{\alpha}} f$ has a continuous extension  
	to $\clos{\cD}$ for any $|\boldsymbol{\alpha}|\leq k$. 
	Furthermore, we define 
	$C^\infty(\cD) 
	:= 
	\bigcap\nolimits_{k\in\bbN_0} C^k(\cD)$, 
	$C^\infty(\clos{\cD}) 
	:= 
	\bigcap\nolimits_{k\in\bbN_0} C^k(\clos{\cD})$, 
	and we let the vector space $C_c^\infty(\cD)$ 
	consist of those functions in $C^\infty(\cD)$ 
	that have compact support. 
	Here, we say that a function 
	$f\from \cD\to\bbR$ 
	has compact support 
	if its support, defined by 
	$\operatorname{supp}(f)
	:= \clos{\{s\in\cD:f(s)\neq 0\}}$, 
	satisfies 
	$\operatorname{supp}(f) \Subset \cD$.

	\subsubsection{Lebesgue spaces} 
	\label{app:subsubsec:Lp} 
	
	For $p\in[1,\infty)$, we let $L_p(\cD)$ 
	be the space of all Lebesgue measurable,  
	real-valued functions $f$ defined on $\cD$ 
	for which 
	\begin{equation}\label{eq:def:Lp-norm}
		\textstyle 
		\| f \|_{L_p(\cD)} 
		:=
		\left(
		\int_\cD 
		|f(\s)|^p \, \rd \s
		\right)^{\nicefrac{1}{p}} 
		<\infty. 
	\end{equation}
	Identifying functions in $L_p(\cD)$ which  
	are equal almost everywhere in $\cD$  
	renders $L_p(\cD)$ 
	a vector space
	containing all the equivalence classes 
	of Lebesgue measurable functions 
	satisfying \eqref{eq:def:Lp-norm}, 
	such that two functions are equivalent 
	whenever they are equal almost everywhere in~$\cD$. 
	Furthermore, for every $p\in[1,\infty)$, 
	$L_p(\cD)$ is a Banach space 
	when equipped with the norm  
	$\|\,\cdot\,\|_{L_p(\cD)}$ 
	in~\eqref{eq:def:Lp-norm}; 
	in the case that $p=2$ 
	this norm is induced by an 
	inner product that renders 
	$L_2(\cD)$ a Hilbert space. 
	
	A Lebesgue measurable function 
	$f\from\cD\to\bbR$ on $\cD$ 
	is called essentially bounded if there exists 
	a constant $K\in\bbR_+$ such that $|f(\s)|\leq K$ 
	holds for almost all $\s\in\cD$. 
	The greatest lower bound 
	of such constants is called the essential supremum 
	of $|f|$ and is denoted by 
	$\operatorname{ess\,sup}_{\s\in\cD}|f(\s)|$. 
	We let $L_\infty(\cD)$ be the vector space 
	of all Lebesgue measurable functions 
	which are essentially bounded on $\cD$,  
	where again functions are identified 
	if they are equal almost everywhere in $\cD$. 
	This vector space is a Banach space 
	with respect to the norm 
	\begin{equation}\label{eq:def:Linf-norm} 
		\| f \|_{L_\infty(\cD)} 
		:= 
		\operatorname{ess\,sup}_{\s\in\cD}|f(\s)| 
		< \infty.  
	\end{equation} 
	
	It is common to ignore the distinction 
	between a function and its equivalence 
	class. We thus write $f=0$ in $L_p(\cD)$ 
	whenever $f(\s)=0$ for almost all $\s\in\cD$, 
	and $f\in L_p(\cD)$ whenever 
	$f$ satisfies \eqref{eq:def:Lp-norm} if $p\in[1,\infty)$ 
	or \eqref{eq:def:Linf-norm} if $p=\infty$. 
	For more details on the Lebesgue spaces $L_p(\cD)$, 
	see e.g.~\cite[Chapter~2]{AdamsSobolev2003}. 
	
	In the context of boundary value problems, 
	also Lebesgue spaces of functions 
	defined on the boundary play an important role. 
	For $p\in[1,\infty)$, we define  
	$L_p(\partial\cD)$ as the vector space containing 
	all (equivalence classes of) real-valued 
	functions $f\from\partial\cD\to\bbR$ such that 
	$\| f \|_{L_p(\partial\cD)} 
	:= 
	\bigl( 
	\int_{\partial\cD} |f(\s)|^p \, \rd S 
	\bigr)^{\nicefrac{1}{p}} 
	< \infty$, 
	where $\rd S$ denotes the $(d-1)$-dimensional surface 
	measure on $\partial\cD$. 
	We refer, e.g., to \cite[Paragraph~5.35]{AdamsSobolev2003} 
	or \cite[Section~7.3]{LionsMagenesI} for a detailed definition 
	of the surface measure $\rd S$.

	\subsubsection{Sobolev spaces} 
	\label{app:subsubsec:Hr} 
	
	For a multi-index $\boldsymbol{\alpha}=(\alpha_1,\ldots,\alpha_d)$ 
	we call $v_{\boldsymbol{\alpha}} \in L_1(\cD)$ 
	the $\boldsymbol{\alpha}$-th weak derivative 
	of $u\in L_1(\cD)$ provided that 
	$v_{\boldsymbol{\alpha}}$ satisfies 
	\[
		\textstyle 
		\int_{\cD} u(s) D^{\boldsymbol{\alpha}} \phi(s) \, \rd s
		= 
		(-1)^{|\boldsymbol{\alpha}|} 
		\int_{\cD} 
		v_{\boldsymbol{\alpha}}(s) \phi(s) 
		\, \rd s 
		\quad 
		\forall \phi \in C^\infty_c(\cD). 
	\]
	In this case, we write 
	$D^{\boldsymbol{\alpha}} u = v_{\boldsymbol{\alpha}}$ 
	and understand the derivative in weak sense.\pagebreak 
	
	For an integer $k\in\bbN$, 
	the Sobolev space $H^k(\cD)$ is then defined 
	as follows, 
	\[
		H^k(\cD) 
		:= 
		\left\{ 
		u \in L_2(\cD) : 
		D^{\boldsymbol{\alpha}} u 
		\text{ exists in the weak sense and } 
		D^{\boldsymbol{\alpha}} u \in L_2(\cD) 
		\text{ for all } 
		|\boldsymbol{\alpha}|\leq k 
		\right\}, 
	\]
	equipped with the norm 
	$\| u\|_{H^k(\cD)}^2  
	:= 
	\sum\nolimits_{0\leq |\boldsymbol{\alpha}| \leq k} 
	\| D^{\boldsymbol{\alpha}} u \|_{L_2(\cD)}^2$ 
	and the corresponding inner product. 
	Further, we set $H^0(\cD) := L_2(\cD)$ 
	and we let $H^1_0(\cD)\subset H^1(\cD)$ be the 
	closure of $C^\infty_c(\cD)$ in $H^1(\cD)$. 
	
	We proceed with the definition of 
	$H^r(\cD)$ for general values of $r\in\bbR_+$, 
	following \cite[Section~2]{DiNezza2012} 
	and \cite[Section~1.11.4]{Yagi2010}. 
	To this end, we first introduce, for $\theta\in(0,1)$, the 
	Gagliardo seminorm 
	\[
		|u|_{H^\theta(\cD)} 
		:= 
		\biggl( 
		\int_{\cD} \int_{\cD} 
		\frac{ |u(\s) - u(t)|^2 }{\| \s - t \|_{\bbR^d}^{d+2\theta} } 
		\, \rd \s \, \rd t 
		\biggr)^{\nicefrac{1}{2}} 
	\]
	and define, for a non-integer $r\in\bbR_+\setminus\bbN$, the 
	Sobolev--Slobodeckij space $H^r(\cD)$ by 
	\[
		H^r(\cD) 
		:= 
		\bigl\{
		u \in H^{\lfloor r\rfloor}(\cD) : 
		|D^{\boldsymbol{\alpha}} u|_{H^{r-\lfloor r \rfloor}(\cD)} < \infty
		\text{ for all } 
		|\boldsymbol{\alpha}| = \lfloor r\rfloor
		\bigr\}. 
	\]
	The norm 
	$\|u \|_{H^r(\cD)}^2
	:= 
	\| u \|_{L_2(\cD)}^2  
	+ 
	\sum_{0\leq|\boldsymbol{\alpha}|\leq  \lfloor r\rfloor} 
	| D^{\boldsymbol{\alpha}} u |_{H^{r-\lfloor r \rfloor}(\cD)}^2$ 
	and the corresponding inner product 
	render $H^r(\cD)$ a Hilbert space. 
	For any $r\in\bbR_+$, we let 
	$H^{-r}(\cD)$ denote the 
	dual space of $H^r(\cD)$ (after identifying $L_2(\cD)$ with 
	its dual). Note that these definitions yield 
	the continuous and dense embedding 
	$H^{r_2}(\cD) \hookrightarrow H^{r_1}(\cD)$ 
	for any $r_1,r_2\in\bbR$ with $r_1\leq r_2$. 
	
	Finally, we remark that there are various approaches in the literature 
	to define 
	the fractional-order Sobolev space 
	$H^r(\cD)$ for $r\in\bbR_+\setminus\bbN$. 
	For instance, 
	Adams and Fournier \cite{AdamsSobolev2003} 
	or 
	Lions and Magenes~\cite{LionsMagenesI} 
	define these spaces 
	(for complex-valued functions)
	by means of complex interpolation. 
	It is important to note that these definitions 
	are all equivalent in the sense that they yield 
	equivalent norms. 
	
	We refer the reader, e.g., to
	\cite[Chapters~3 and~7]{AdamsSobolev2003}, 
	\cite[Section~2]{DiNezza2012}, 
	\cite[Chapter~1, Section~9]{LionsMagenesI} 
	or 
	\cite[Sections~1.11.4/5]{Yagi2010} 
	for more details on (fractional-order) 
	Sobolev spaces.

	\subsection{Differential calculus} 
	\label{app:subsec:diff-calc} 
	
	\subsubsection{The trace \texorpdfstring{theorem on $H^r(\cD)$}{theorem}} 
	\label{app:subsubsec:trace} 
	
	The following definition of a smooth boundary $\partial\cD$ 
	is taken from \cite[Appendix~C.1]{Evans1997}. 
	
	\begin{definition}\label{def:smooth-bdry}
		Let $k\in\bbN$. 
		The boundary $\partial\cD$ of $\cD$ is said to be of class $C^k$ 
		if for every $\s_0\in\partial\cD$ 
		there exist a radius 
		$r\in\bbR_+$ and a $k$-times continuously 
		differentiable function $\gamma\from \bbR^{d-1}\to\bbR$ 
		such that, upon relabeling and reorienting 
		the coordinate axes if necessary, 
		we have 
		\begin{equation}\label{eq:def:smooth-bdry} 
			\cD \cap \clos{B}(\s_0,r) 
			= 
			\bigl\{ \s \in \clos{B}(\s_0,r) : \s_d > \gamma(\s_1,\ldots,\s_{d-1}) \bigr\}. 
		\end{equation}
		Here, $\clos{B}(\s_0,r)$ denotes the closed 
		ball in $\bbR^d$ with center 
		$\s_0$ and radius $r$. 
		We call the boundary of $\cD$ 
		Lipschitz (continuous) if \eqref{eq:def:smooth-bdry} 
		holds for a Lipschitz continuous function $\gamma$. 
		Finally, $\partial\cD$ is 
		smooth, or of class $C^\infty$\!, 
		if $\partial\cD$ is of class $C^k$ for every $k\in\bbN$. 
	\end{definition} 
	
	\begin{remark}\label{rem:onormal} 
		In the case that $\partial\cD$ is of class $C^1$\!, 
		the outward pointing unit normal vector field 
		is defined along $\partial\cD$ and 
		denoted by 
		$\partial\cD\ni \s_0 \mapsto \onormal(\s_0) 
		= (\mathrm{n}_1(\s_0),\ldots,\mathrm{n}_d(\s_0))^\top$\!. 
	\end{remark} 
	
	The next result is a general version of the trace theorem, 
	taken from 
	\cite[Theorem~9.4 in Chapter~1]{LionsMagenesI}. 
	It plays a crucial role for  
	characterizing the Cameron--Martin 
	spaces of generalized Whittle--Mat\'ern fields 
	in Subsection~\ref{subsec:wm-D:CM}. 
	
	\begin{theorem}\label{thm:trace} 
		Assume that $\partial\cD$ is smooth. 
		Then, the mapping 
		$u \mapsto
		\bigl\{ 
		\tfrac{\partial^j u}{\partial \onormal^j} 
		: j=0,1,\ldots,\ell
		\bigr\}$
		from $C^\infty(\clos{\cD})$ 
		to $[C^\infty(\partial\cD)]^\ell$ 
		extends by continuity 
		to a bounded linear mapping 
		\begin{equation}\label{eq:thm:trace} 
			\textstyle 
			u \mapsto
			\bigl\{ 
			\tfrac{\partial^j u}{\partial \onormal^j} 
			: j=0,1,\ldots,\ell
			\bigr\}
			\qquad 
			\text{of} 
			\qquad 
			H^r(\cD) 
			\to 
			\prod_{j=0}^\ell H^{r-j-\nicefrac{1}{2}}(\partial\cD), 
		\end{equation}
		where $\ell\in\bbN_0$ is the greatest 
		nonnegative integer such that 
		$\ell < r - \nicefrac{1}{2}$.  
		The mapping \eqref{eq:thm:trace} 
		is surjective. 
	\end{theorem} 
	
	\begin{remark}\label{rem:trace} 
		Theorem~\ref{thm:trace} shows, in particular, that 
		the trace maps $u\mapsto u|_{\partial\cD}$ 
		and $u\mapsto \tfrac{\partial u}{\partial \onormal}$ 
		have continuous extensions 
		as linear mappings from $H^r(\cD)$ 
		to $L_2(\partial\cD)$ 
		provided that $r\in(\nicefrac{1}{2},\infty)$ 
		and $r\in(\nicefrac{3}{2},\infty)$,
		respectively. 
	\end{remark} 
	
	\subsubsection{A divergence theorem} 
	\label{app:subsubsec:div-thm} 
	
	\begin{theorem}\label{thm:div-thm} 
		Assume that $\partial\cD$ 
		and $\ac\from\clos{\cD}\to\bbR^{d\times d}$ 
		are smooth, and let $\ac(\s)\in\bbR^{d\times d}$ be symmetric 
		for every $\s\in\clos{\cD}$. 
		Then,  
		for all $v_1, v_2 \in H^2(\cD)$,  
		\begin{equation}\label{eq:thm:div-thm}
		\scalar{ \nabla\cdot(\ac \nabla) v_1, v_2}{L_2(\cD)} 
		- 
		\scalar{ v_1, \nabla\cdot(\ac \nabla) v_2}{L_2(\cD)} 
		= 
		\int_{\partial\cD} 
		\bigl[ v_2 (\ac\nabla v_1 \cdot \onormal) - v_1 (\ac\nabla v_2 \cdot \onormal) \bigr] 
		\, \rd S. 
		\end{equation}
	\end{theorem} 
	
	\begin{proof} 
		For $v_1,v_2\in C^\infty(\clos{\cD})$, 
		\eqref{eq:thm:div-thm} 
		follows from the multidimensional 
		integration by parts formula, see e.g.\ 
		\cite[][Theorem~2 in Chapter~C.2]{Evans1997}. 
		For the general case $v_1,v_2 \in H^2(\cD)$ 
		we may approximate $v_1,v_2$ in $H^2(\cD)$ by smooth 
		functions $(\phi_j)_{j\in\bbN}$, $(\psi_j)_{j\in\bbN}$  
		in $C^\infty(\clos{\cD})$, 
		see e.g.~\cite[][Theorem~3 in Chapter~5.3]{Evans1997}. 
		By the trace theorem, 
		see Theorem~\ref{thm:trace} and Remark~\ref{rem:trace}, 
		also their traces 
		$(\phi_j|_{\partial\cD})_{j\in\bbN}$, 
		$(\psi_j|_{\partial\cD})_{j\in\bbN}$ 
		and
		$\bigl( \tfrac{\partial\phi_j}{\partial\onormal} \bigr)_{j\in\bbN}$, 
		$\bigl( \tfrac{\partial\psi_j}{\partial\onormal} \bigr)_{j\in\bbN}$
		converge in $L_2(\partial\cD)$ 
		so that the equality \eqref{eq:thm:div-thm} 
		for $v_1:=\phi_j$ and $v_2:=\psi_j$ shows that 
		for general $v_1, v_2 \in H^2(\cD)$ 
		by passing to the limit $j\to\infty$. 
	\end{proof} 
	
	\subsection{Elliptic second-order partial differential equations} 
	\label{app:subsec:pdes} 
	
	The purpose of this subsection is to 
	present some results on 
	the Dirichlet boundary value problem 
	\begin{equation}\label{eq:dir-pde} 
		\begin{cases}
			- \nabla \cdot(\ac \nabla u) + \kappa^2 u = f 
			&\text{in}\;\; \cD, \\
			\hspace*{2.57cm} u =0 
			&\text{on}\;\; \partial\cD. 
		\end{cases} 
	\end{equation}
	Here, 
	we suppose that 
	we are in the setting of Assumption~\ref{ass:a-kappa-D}, i.e., 
	the boundary $\partial\cD$ and 
	the coefficients 
	$\kappa$, $\ac = (\ac_{jk})_{j,k=1}^d$ 
	are smooth, 
	$\kappa,\ac_{jk}\in C^\infty(\clos{\cD})$, 
	and $\ac$ is symmetric and uniformly positive definite. 
	We remark that the boundary conditions in \eqref{eq:dir-pde}
	are referred to as homogeneous Dirichlet boundary 
	conditions ($u$ is vanishing at the boundary). 
	In what follows, we focus on existence and uniqueness 
	of a solution to \eqref{eq:dir-pde} 
	as well as its regularity in $H^r(\cD)$ for $r\in\bbR_+$. 
	
	To this end, we first note that 
	the differential operator in \eqref{eq:dir-pde} 
	induces 
	a symmetric, 
	continuous and coercive   
	bilinear form on~$H^1_0(\cD)$,    
	namely 
	\[ 
		a_L \from H^1_0(\cD)\times H^1_0(\cD) \to \bbR, 
		\qquad 
		a_L (u,v) 
		:= 
		\scalar{ \ac \nabla u, \nabla v }{L_2(\cD)}
		+ 
		\scalar{ \kappa^2 u, v }{L_2(\cD)}. 
	\] 
	Since $H^1_0(\cD)$ is a Hilbert space, 
	the Lax--Milgram theorem 
	is applicable which shows that, 
	for every $f\in H^1_0(\cD)^*$\!, 
	there exists a unique element $u\in H^1_0(\cD)$, 
	called the weak solution to~\eqref{eq:dir-pde}, 
	such that 
	$a_L(u,v) = \duality{f,v}{}$ holds 
	for all $v\in H^1_0(\cD)$. 
	In other words, 
	the operator associated with $a_L$, 
	\[
		L \from H^1_0(\cD) \to H^1_0(\cD)^*, 
		\qquad 
		\duality{Lu, v}{} 
		:= 
		a_L(u,v) 
		\quad 
		\forall u,v\in H^1_0(\cD), 
	\]
	is an isomorphism 
	as a linear mapping from $H^1_0(\cD)$ to its dual. 
	Moreover, by the Rellich--Kondrachov compactness theorem 
	(see~\cite[Theorem~6.3]{AdamsSobolev2003}  
	or~\cite[Theorem~1 and subsequent remark in Chapter~5.7]{Evans1997}) 
	the embedding $H^1_0(\cD)\hookrightarrow L_2(\cD)$ 
	is compact for any $d\in\bbN$,  
	denoted by $H^1_0(\cD) \overset{c}{\hookrightarrow} L_2(\cD)$, 
	and therefore 
	\[
		L^{-1} \from L_2(\cD) \to L_2(\cD) , 
		\qquad 
		L_2(\cD) 
		\hookrightarrow 
		H_0^1(\cD)^* 
		\overset{L^{-1}}{\longrightarrow} 
		H^1_0(\cD) 
		\overset{c}{\hookrightarrow} 
		L_2(\cD)
	\]
	is also compact. 
	On the space $H^2(\cD)\cap H^1_0(\cD)$ 
	the operator $L$ maps continuously to $L_2(\cD)$ 
	and, for all $u \in H^2(\cD)\cap H^1_0(\cD)$, 
	there exists a constant $C_u\in\bbR_+$ such that 
	\begin{equation}\label{eq:L-domain}
		\duality{Lu,v}{} 
		= 
		\scalar{Lu, v}{L_2(\cD)}
		= 
		\scalar{- \nabla \cdot(\ac \nabla u) + \kappa^2 u, v}{L_2(\cD)}	
		\leq C_u \|v\|_{L_2(\cD)} 
		\quad 
		\forall v \in H^1_0(\cD). 
	\end{equation}
	In fact, $H^2(\cD)\cap H^1_0(\cD)$ is 
	the smallest subspace of $H^1_0(\cD)$ 
	satisfying \eqref{eq:L-domain}. 
	Thus, 
	the domain of $L$, 
	when considered as an unbounded operator on $L_2(\cD)$, 
	is given by $H^2(\cD)\cap H^1_0(\cD)$, 
	where formally  
	\[
		L v  = - \nabla \cdot(\ac \nabla v) + \kappa^2 v,
		\qquad 
		v \in \scrD(L) = H^2(\cD) \cap H^1_0(\cD). 
	\]
	
	The question of interest in regularity theory 
	for \eqref{eq:dir-pde} is the following: 
	If we assume more regularity for $f=Lu$, e.g., 
	$f\in L_2(\cD)$ or $f\in H^r(\cD)$ for some $r\in \bbR_+$, 
	does this imply also more regularity for 
	the weak solution $u=L^{-1}f$?  
	The answer is ultimately related 
	to the regularity of the boundary and 
	the coefficients $\kappa,\ac$. 
	Since we assume that they are smooth, 
	the operator 
	${L^{-1} \from L_2(\cD)\to H^2(\cD)\cap H^1_0(\cD)}$ 
	is indeed bounded, see~\cite[Theorem~8.12]{GilbargTrudinger2001}. 
	More generally, for nonnegative integers $r\in\bbN_0$, 
	we have the following result on $H^{2+r}(\cD)$-regularity 
	taken from~\cite[Theorem~5.1 in Chapter~2]{LionsMagenesI}. 
	It is an essential component 
	for characterizing 
	the space $\Hdot{r} = \scrD\bigl( L^{\nicefrac{r}{2}} \bigr)$ 
	in Lemma~\ref{lem:Hdot-Sobolev}. 
	
	\begin{theorem}\label{thm:regularity} 
		Let $L\from\scrD(L) =H^2(\cD)\cap H^1_0(\cD) \to L_2(\cD)$ 
		be as described in this subsection, 
		and let $r\in\bbN_0$. Then, there is 
		$C_r\in\bbR_+$ (depending on $r$) 
		such that, for all 
		$u\in \scrD(L)$ with 
		$Lu\in H^r(\cD)$, 
		\[
			\| u \|_{H^{2+r}(\cD)} 
			\leq 
			C_r 
			\bigl(
			\| Lu \|_{H^r(\cD)}  
			+ 
			\| u \|_{H^{1+r}(\cD)} 
			\bigr). 
		\] 
	\end{theorem}

	\section{The Feldman--H\'ajek theorem}\label{appendix:feldman-hajek}
	
	In this section we 
	recall the Feldman--H\'ajek 
	theorem from \cite[Theorem~2.25]{daPrato2014} 
	that characterizes equivalence 
	of two Gaussian measures 
	on a Hilbert space 
	in terms of three necessary and sufficient 
	conditions. 
	For this, we let 
	$(E, \scalar{\,\cdot\,, \,\cdot\,}{E})$ 
	be a separable Hilbert space over $\bbR$ 
	with $\dim(E) = \infty$. 
	
	\begin{theorem}[Feldman--H\'ajek]\label{thm:feldman-hajek}
		Two Gaussian measures 
		$\mu = \normal(m,\cC)$ 
		and 
		$\mut = \normal(\mt,\CC)$ 
		on $E$ 
		(with positive definite covariance operators $\cC$ and $\CC$)
		are either equivalent or orthogonal.  
		They are equivalent 
		if and only if 
		the following three conditions 
		are satisfied:
		\begin{enumerate}[label = {\normalfont(\roman*)}, topsep=2pt]
			\item\label{fh-1} 
			The Cameron--Martin spaces 
			$\bigl( \cC^{\nicefrac{1}{2}} (E), 
				\scalar{\cC^{-\nicefrac{1}{2}} \, \cdot\,, \cC^{-\nicefrac{1}{2}} \,\cdot\,}{E}\bigr)\,$ 
			and 
			$\bigl( \CC^{\nicefrac{1}{2}} (E), 
				\scalar{\CC^{-\nicefrac{1}{2}} \, \cdot\,, \CC^{-\nicefrac{1}{2}} \,\cdot\,}{E}\bigr)$ 
			are norm equivalent spaces, 
			$\cC^{\nicefrac{1}{2}} (E) = \CC^{\nicefrac{1}{2}} (E) =: E_1$. 
			\item\label{fh-2} 
			The difference of the means 
			is an element of the Cameron--Martin space,  
			$m - \mt \in E_1$.
			\item\label{fh-3} 
			The operator 
			$\bigl(\cC^{\nicefrac{1}{2}} \CC^{-\nicefrac{1}{2}}\bigr)
			\bigl(\cC^{\nicefrac{1}{2}} \CC^{-\nicefrac{1}{2}}\bigr)^* - \id_{E}$ 
			is a Hilbert--Schmidt operator on $E$.  
		\end{enumerate}
	\end{theorem}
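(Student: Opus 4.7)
The plan is to follow the classical proof based on Kakutani's dichotomy theorem for product measures, which is the approach taken in \citep{daPrato2014}. I would first establish the zero-one dichotomy; then reduce the equivalence problem to comparing two centered Gaussian product measures via translation and simultaneous diagonalization; and finally compute the Hellinger affinity explicitly and characterize when it vanishes.

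\textbf{Step 1 (Dichotomy).} Using the Karhunen--Lo\`eve expansion for $\mu$ with respect to an orthonormal eigenbasis of $\cC$, the measure $\mu$ factors as a countable product of one-dimensional Gaussians, and similarly for $\widetilde{\mu}$. Combined with Kakutani's theorem on equivalence of infinite product measures, this yields the zero-one law: $\mu$ and $\widetilde{\mu}$ are either equivalent or mutually singular.

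\textbf{Step 2 (Reduction via translation).} The Cameron--Martin theorem states that $\normal(m,\cC)$ and $\normal(\widetilde{m},\cC)$ are equivalent if and only if $m - \widetilde{m} \in \cC^{\nicefrac{1}{2}}(E)$, and are mutually singular otherwise. This identifies condition \ref{fh-2} as necessary, and reduces the remaining analysis to the centered case $m=\widetilde{m}=0$, where we must prove that equivalence is equivalent to \ref{fh-1} combined with \ref{fh-3}.

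\textbf{Step 3 (Simultaneous diagonalization).} Under \ref{fh-1}, the operator $T := \cC^{\nicefrac{1}{2}}\CC^{-\nicefrac{1}{2}}$ extends to a bounded isomorphism on $E$, so that $TT^*$ is bounded, self-adjoint, and positive definite. Applying the spectral theorem to $TT^*$, I pick an orthonormal basis $\{f_k\}_{k\in\bbN}$ of $E$ with $TT^* f_k = \mu_k f_k$ for eigenvalues $\mu_k \in \bbR_+$. Expressing both Gaussian measures in the coordinates induced by the basis $\{\CC^{\nicefrac{1}{2}} f_k\}_{k \in \bbN}$ reduces the problem to comparing two infinite products of one-dimensional centered Gaussians with variance ratios $\mu_k$.

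\textbf{Step 4 (Hellinger integral and Kakutani).} The Hellinger affinity between $\normal(0,1)$ and $\normal(0,\mu_k)$ on $\bbR$ has the closed form $H_k = (2\mu_k^{\nicefrac{1}{2}})^{\nicefrac{1}{2}}/(1+\mu_k)^{\nicefrac{1}{2}}$, so the product measure affinity equals $\prod_{k\in\bbN} H_k$. By Kakutani's theorem this product is strictly positive (and then $\mu \sim \widetilde{\mu}$) if and only if $\sum_{k\in\bbN}(1-H_k)<\infty$, which by Taylor expansion around $\mu_k=1$ is equivalent to $\sum_{k\in\bbN}(\mu_k-1)^2 < \infty$. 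This last condition is precisely that $TT^* - \id_E \in \cL_2(E)$, i.e.\ condition~\ref{fh-3}. Orthogonality in the complementary case likewise follows from Kakutani.

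The main technical obstacle is Step~3: one must verify that condition \ref{fh-1} alone suffices to guarantee that $T=\cC^{\nicefrac{1}{2}}\CC^{-\nicefrac{1}{2}}$ is a bona fide bounded isomorphism on $E$ (and not merely a densely defined unbounded operator), and that the change of coordinates to the eigenbasis of $TT^*$ legitimately turns the abstract equivalence question into one about a product measure to which Kakutani applies. The remaining computations in Step~4 are elementary once the diagonal reduction is in place.
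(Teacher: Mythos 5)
You should first be aware that the paper does not prove this theorem at all: it is recorded as a citation of \cite[Theorem~2.25]{daPrato2014}, and the only argument supplied is the one-paragraph remark following the statement, which justifies replacing $\bigl(\CC^{-\nicefrac{1}{2}}\cC^{\nicefrac{1}{2}}\bigr)\bigl(\CC^{-\nicefrac{1}{2}}\cC^{\nicefrac{1}{2}}\bigr)^* - \id_E$ by $\bigl(\cC^{\nicefrac{1}{2}}\CC^{-\nicefrac{1}{2}}\bigr)\bigl(\cC^{\nicefrac{1}{2}}\CC^{-\nicefrac{1}{2}}\bigr)^* - \id_E$ in condition~(iii) via \citep[Lemma~6.3.1(ii)]{Bogachev1998}. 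Your proposal is therefore an attempt to reprove the cited classical result, and the route you choose (Cameron--Martin for the means, simultaneous diagonalization, Kakutani/Hellinger for the product reduction) is indeed essentially the proof in the cited reference. Your Step~4 computation of the one-dimensional Hellinger affinity and the equivalence of $\sum_k(1-H_k)<\infty$ with $\sum_k(\mu_k-1)^2<\infty$ is correct as a sketch, and you correctly flag that the reduction hinges on $\cC^{\nicefrac{1}{2}}\CC^{-\nicefrac{1}{2}}$ being a bounded isomorphism.

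There are, however, two genuine gaps. First, Step~1 does not work as stated: Kakutani's theorem applies to two measures that are both product measures with respect to the \emph{same} coordinate system, but the Karhunen--Lo\`eve basis of $\cC$ does not factor $\widetilde{\mu}$ as a product unless $\CC$ happens to be diagonal in that basis. The common product structure only becomes available after the simultaneous diagonalization of Step~3, so the dichotomy cannot be extracted from Kakutani at the point where you invoke it; it must either be deferred or established by a separate general argument. Second, and more seriously, the singularity direction in the case where condition~(i) \emph{fails} is not covered by your argument at all: there $T=\cC^{\nicefrac{1}{2}}\CC^{-\nicefrac{1}{2}}$ is not a bounded isomorphism, Step~3 collapses, and "orthogonality likewise follows from Kakutani" has nothing to apply to. One needs a separate construction (e.g., a sequence of measurable linear functionals whose normalized quadratic averages converge to different limits under the two measures) to exhibit a separating event; this is the technically hardest part of Feldman--H\'ajek and cannot be waved through. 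A minor further point: to obtain \emph{independent} coordinates under both measures in Step~3 you should diagonalize $T^*T$ rather than $TT^*$ (the variance of the functional $\duality{\,\cdot\,,\CC^{-\nicefrac{1}{2}}f_k}{E}$ under $\mu$ is $\scalar{T^*Tf_k,f_k}{E}$); since $T$ is invertible the two operators have the same spectrum and $TT^*-\id_E\in\cL_2(E)$ iff $T^*T-\id_E\in\cL_2(E)$, so this is repairable, but it is exactly the adjoint-swapping issue the paper's remark addresses via \citep[Lemma~6.3.1(ii)]{Bogachev1998}.
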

	
	We remark that 
	Theorem~\ref{thm:feldman-hajek}
	is a slight reformulation 
	of \cite[Theorem~2.25]{daPrato2014}: 
	Instead of the operator
	$\bigl( \CC^{-\nicefrac{1}{2}} \cC^{\nicefrac{1}{2}} \bigr) 
	\bigl( \CC^{-\nicefrac{1}{2}} \cC^{\nicefrac{1}{2}} \bigr)^* - \id_{E}$, 
	in~\ref{fh-3} 
	we require 
	$\bigl( \cC^{ \nicefrac{1}{2} } \CC^{-\nicefrac{1}{2}} \bigr) 
	\bigl( \cC^{\nicefrac{1}{2}} \CC^{-\nicefrac{1}{2}} \bigr)^* - \id_{E}$
	to be Hilbert--Schmidt on $E$. 
	Since $\cC^{\nicefrac{1}{2}} \CC^{-\nicefrac{1}{2}}$ 
	is the adjoint of $\CC^{-\nicefrac{1}{2}} \cC^{\nicefrac{1}{2}}$ 
	and since $\cC,\CC$ are assumed to be strictly positive definite, 
	these two conditions 
	are equivalent whenever~\ref{fh-1} holds, see
	\citep[Lemma~6.3.1(ii)]{Bogachev1998}. 
	
	The following lemma characterizes the conditions~\ref{fh-1} and \ref{fh-3} 
	of the Feldman--H\'ajek theorem and 
	of~\cite[Assumptions~3.3.I and~III]{bk-kriging}, respectively. 
	
	\begin{lemma}\label{lem:HS-compact-property} 
		Let $T\in\cL(E)$ be a bounded linear operator 
		on~$(E,  \scalar{\,\cdot\,, \,\cdot\,}{E})$. 
		\begin{enumerate}[label={\normalfont\Roman*.}, topsep=2pt]
			\item\label{lem:HS-compact-property-HS}  
			The following two statements are equivalent:
			\begin{enumerate}[label={\normalfont(\roman*)}, topsep=2pt] 
				\item $T$ is invertible on~$E$ 
				and $T T^* - \id_E$ is a Hilbert--Schmidt operator on $E$.
				\item $T = U(\id_E + S)$, 
				where $U\in\cL(E)$ is an orthogonal operator 
				and $S$ is a self-adjoint Hilbert--Schmidt operator on $E$  
				such that $\id_E + S$ is invertible.
			\end{enumerate} 
			\item\label{lem:HS-compact-property-compact} 
			The following two statements are equivalent:
			\begin{enumerate}[label={\normalfont(\roman*)}, topsep=2pt]  
				\item $T$ is invertible on~$E$ 
				and $T T^* - \id_E$ is a compact operator on $E$.
				\item $T = W(\id_E + K)$, where $W\in\cL(E)$ is an orthogonal 
				operator and $K$ is a self-adjoint compact operator on $E$ 
				such that $\id_E + K$ is invertible.
			\end{enumerate}
		\end{enumerate} 
	\end{lemma}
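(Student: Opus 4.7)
My plan is to handle assertions I and II uniformly by letting $\cJ$ denote either the Hilbert--Schmidt ideal $\cL_2(E)$ (for part I) or the compact ideal $\cK(E)$ (for part II). Both are two-sided ideals of $\cL(E)$ that are stable under taking adjoints, and this algebraic structure, together with the polar decomposition of a bounded invertible operator, is all that the argument will require.

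For the easy direction (ii) $\Rightarrow$ (i) I would simply compute
\[
TT^* - \id_E = U(\id_E + S)^2 U^* - \id_E = U(2S + S^2)U^*,
\]
which lies in $\cJ$ since $S, S^2 \in \cJ$, while invertibility of $T$ is immediate from that of $U$ and of $\id_E + S$.

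For the main direction (i) $\Rightarrow$ (ii) my plan is to apply the polar decomposition to $T$, which, for invertible $T$, produces a factorization $T = U|T|$ with $U\in\cL(E)$ orthogonal and $|T| := (T^*T)^{1/2}$ self-adjoint and strictly positive definite on~$E$. Setting $S := |T| - \id_E$, I immediately obtain the desired factorization $T = U(\id_E + S)$, with $S$ self-adjoint and $\id_E + S = |T|$ boundedly invertible.

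The only substantive remaining step will be to verify that $S \in \cJ$. My plan here is to first transfer the hypothesis from $TT^*$ to $T^*T$: since $T$ (and hence $T^*$) is invertible, the similarity identity
\[
T^*T - \id_E = T^*(TT^* - \id_E)(T^*)^{-1}
\]
places the left-hand side in $\cJ$ by the two-sided ideal property. Then, factoring
\[
(|T| - \id_E)(|T| + \id_E) = |T|^2 - \id_E = T^*T - \id_E \in \cJ,
\]
and using that $|T| \geq 0$ implies $|T| + \id_E$ is bounded below by $\id_E$ and therefore boundedly invertible on~$E$, I would conclude
\[
S = |T| - \id_E = (T^*T - \id_E)(|T| + \id_E)^{-1} \in \cJ,
\]
as required. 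The only mildly subtle point is that the polar factor $U$ is a genuine orthogonal operator (rather than merely a partial isometry), which is precisely where the invertibility hypothesis on $T$ is used; everything else is routine ideal-theoretic bookkeeping, and I do not anticipate any real obstacle in the write-up beyond being careful about this point.
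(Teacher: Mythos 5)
Your proof is correct, and it takes a genuinely different (and arguably more economical) route than the paper's. The paper cites \citep[Lemma~6.3.1(i)]{Bogachev1998} for part~I and, for part~II, applies the polar decomposition to $T^*$, writing $T^* = \widetilde{W}\sqrt{TT^*} = \widetilde{W}\bigl(\id_E + \widetilde{K}\bigr)$ with $\widetilde{K} = \sqrt{\id_E + K_0} - \id_E$ and $K_0 = TT^*-\id_E$; it then needs a separate auxiliary result (Lemma~\ref{lem:I+K}, proved by a spectral argument with the mean value theorem) to show that $\sqrt{\id_E+K_0}-\id_E$ inherits membership in the ideal. You instead decompose $T$ itself as $T = U\lvert T\rvert$, transfer the hypothesis from $TT^*$ to $T^*T$ by the similarity $T^*T-\id_E = T^*(TT^*-\id_E)(T^*)^{-1}$, and then extract $S=\lvert T\rvert-\id_E\in\cJ$ from the purely algebraic factorization $(\lvert T\rvert-\id_E)(\lvert T\rvert+\id_E)=T^*T-\id_E$ together with the bounded invertibility of $\lvert T\rvert+\id_E\geq\id_E$. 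This makes both parts of the lemma follow from a single ideal-theoretic argument and renders Lemma~\ref{lem:I+K} unnecessary for this particular proof (though the paper still uses that lemma elsewhere, in the proof of Lemma~\ref{lem:iff-A}); what the paper's route buys in exchange is a decomposition stated directly in terms of $TT^*$, matching the form in which the hypothesis is given. All the steps you flag as routine are indeed routine: $U$ is orthogonal precisely because $T$ is invertible ($UU^*=T(T^*T)^{-1}T^*=\id_E$), and both $\cL_2(E)$ and $\cK(E)$ are two-sided $*$-ideals, so the bookkeeping goes through.
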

	
	\begin{proof} 
		Assertion~\ref{lem:HS-compact-property-HS}   
		is proven in \cite[Lemma~6.3.1(i)]{Bogachev1998}. 
		
		Assertion~\ref{lem:HS-compact-property-compact} 
		can be shown similarly: 
		Suppose that $T$ satisfies \ref{lem:HS-compact-property-compact}(ii), 
		then
		\[
		T T^*  
		= 
		W (\id_E + K) (\id_E + K)^* W^* 
		= 
		\id_E + 2 W K W^* + W K^2 W^* 	
		= 
		\id_E + K_0, 
		\] 	
		where 
		$K_0 = 2 W K W^* + W K^2 W^*$ 
		is compact on $E$, since $K\in\cK(E)$, $W,W^*\in\cL(E)$, and the space 
		of compact operators $\cK(E)$ forms a two-sided ideal in $\cL(E)$. 
		This shows that $T T^* - \id_E = K_0$ is compact on $E$.  
		Since $W$ is orthogonal 
		and $\id_E + K$ is boundedly invertible on $E$, 
		also $T\from E \to E$ is invertible with 
		$T^{-1} = (\id_E+ K)^{-1} W^*\in\cL(E)$. 
		
		Conversely, assume now that $T$ satisfies 
		\ref{lem:HS-compact-property-compact}(i). 
		Since the operator $T$ is boundedly invertible, 
		$T^*$ has the polar decomposition 
		$T^* = \widetilde{W} \sqrt{T T^*}$, 
		where $\widetilde{W} := T^{-1} \sqrt{T T^*}$ 
		is orthogonal, 
		$\scalar{ \widetilde{W}^*\phi, \widetilde{W}^*\psi}{E} 
		= \scalar{ (T T^* ) (T^{-1})^* \phi, (T^{-1})^* \psi}{E} 
		= \scalar{\phi,\psi}{E}$. 
		We define the compact operator $K_0 := T T^* - \id_E$ 
		and write 
		$T^* = \widetilde{W} \sqrt{\id_E + K_0} 
		= \widetilde{W} (\id_E + \widetilde{K})$,  
		where the operator 
		$\widetilde{K} = \sqrt{\id_E + K_0} - \id_E\in\cL(E)$ 
		is self-adjoint. 
		Furthermore, since $K_0\in\cK(E)$, by 
		Lemma~\ref{lem:I+K} below also 
		$\widetilde{K}\in\cK(E)$ is compact. 
		Finally, we obtain that 
		$T = (T^*)^* = (\id_E + \widetilde{K}) \widetilde{W}^*
		= W (\id_E + K)$, 
		where  
		$W := \widetilde{W}^*$ 
		and  
		$K := \widetilde{W} \widetilde{K} \widetilde{W}^*$\!. 
		Here, $\id_E + K$ is invertible, 
		since $W$ is orthogonal and $T$ is invertible. 
	\end{proof}  
	
	\begin{lemma}\label{lem:I+K} 
		Suppose that 
		$S\in\cL_2(E)$ and $K\in\cK(E)$ are such 
		that $\id_E + S$ and $\id_E + K$ are self-adjoint and 
		nonnegative definite. 
		Then, 
		$\sqrt{\id_E + S} - \id_E \in \cL_2(E)$ 
		and 
		$\sqrt{\id_E + K} - \id_E \in \cK(E)$. 
	\end{lemma}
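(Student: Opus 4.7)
The plan is to apply the spectral theorem to the self-adjoint compact operators $S$ and $K$ and then use the elementary bound $|\sqrt{1+x} - 1| \leq |x|$ for $x \geq -1$, which follows from $\sqrt{1+x} - 1 = \frac{x}{\sqrt{1+x}+1}$ and $\sqrt{1+x}+1 \geq 1$. Throughout, self-adjointness of $S$ and of $K$ is inherited from that of $\id_E+S$ and $\id_E+K$, respectively, and the Hilbert--Schmidt property $S \in \cL_2(E)$ entails $S \in \cK(E)$.

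First I would treat the Hilbert--Schmidt claim. By the spectral theorem for compact self-adjoint operators on the separable Hilbert space $E$, there exist an orthonormal system $\{e_n\}_{n\in\bbN}$ of eigenvectors of $S$ spanning $\overline{\operatorname{Range}(S)}$, with real eigenvalues $\{s_n\}_{n\in\bbN}$ satisfying $s_n\to 0$. Nonnegative definiteness of $\id_E+S$ forces $s_n\geq -1$ for every $n$, so $\sqrt{\id_E+S}$ is defined via the Borel functional calculus and coincides with the identity on $\ker S$. Consequently $\sqrt{\id_E+S} - \id_E$ vanishes on $\ker S$ and equals $\sum_{n}(\sqrt{1+s_n}-1)\scalar{\,\cdot\,, e_n}{E}e_n$ on $\overline{\operatorname{Range}(S)}$. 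Combining this representation with the elementary inequality yields
\[
\norm{\sqrt{\id_E+S} - \id_E}{\cL_2(E)}^2
= \sum_{n\in\bbN}(\sqrt{1+s_n}-1)^2
\leq \sum_{n\in\bbN} s_n^2
= \norm{S}{\cL_2(E)}^2 < \infty.
\]

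The compact case proceeds analogously. Let $\{\tilde e_n\}_{n\in\bbN}$ be an orthonormal system of eigenvectors of $K$ in $\overline{\operatorname{Range}(K)}$ with real eigenvalues $\{k_n\}_{n\in\bbN}$, where $k_n\geq -1$ (by nonnegative definiteness of $\id_E + K$) and $k_n\to 0$ (by compactness of $K$). Then $\sqrt{\id_E+K} - \id_E$ vanishes on $\ker K$ and equals $\sum_{n}(\sqrt{1+k_n}-1)\scalar{\,\cdot\,, \tilde e_n}{E}\tilde e_n$ on $\overline{\operatorname{Range}(K)}$. Since $|\sqrt{1+k_n}-1|\leq |k_n| \to 0$, truncating the sum at $N$ yields a finite-rank operator that approximates $\sqrt{\id_E+K}-\id_E$ in the operator norm by $\sup_{n>N}|\sqrt{1+k_n}-1|\to 0$, so the limit is compact.

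The main obstacle is essentially bookkeeping: one must make sure the functional calculus is applied to \emph{self-adjoint} operators (which is why the self-adjointness of $S$ and $K$ is noted upfront), and one must handle $\ker S$ and $\ker K$ separately since the spectral expansion is only over the closure of the range. No deep machinery beyond the spectral theorem and the scalar inequality $|\sqrt{1+x}-1|\leq |x|$ is required.
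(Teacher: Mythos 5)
Your proof is correct and follows essentially the same route as the paper's: diagonalize $S$ and $K$ in an eigenbasis and control the transformed eigenvalues $\sqrt{1+s_n}-1$. The only difference is cosmetic: your global scalar bound $|\sqrt{1+x}-1|=\tfrac{|x|}{\sqrt{1+x}+1}\leq|x|$ on $[-1,\infty)$ lets you sum over all indices at once, whereas the paper splits off the finitely many eigenvalues with $|\sigma_j|>\nicefrac{1}{2}$ and applies the mean value theorem to the tail; both yield the same conclusion.
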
   
	
	\begin{proof} 
		Taking the eigenbases of the operators 
		$S$ and $K$ with eigenvalues 
		$( s_j )_{j\in\bbN}$ 
		and $( k_j )_{j\in\bbN}$, 
		respectively, 
		where $s_j, k_j \in[-1,\infty)$ for all $j\in\bbN$,  
		we find that 
		$\sqrt{\id_E + S} - \id_E$ 
		and 
		$\sqrt{\id_E + K} - \id_E$ 
		have eigenvalues 
		$\widetilde{s}_j := \sqrt{1+s_j} - 1$ 
		and 
		$\widetilde{k}_j := \sqrt{1+k_j} - 1$, 
		respectively. 
		Clearly, 
		$\lim_{j\to\infty} \widetilde{k}_j = 0$ 
		and $\sqrt{\id_E + K} - \id_E \in \cK(E)$ follows. 
		Furthermore, since the sequence 
		$( s_j )_{j\in\bbN}$ is square-summable, 
		there exists $J_0\in\bbN$ such that 
		$| s_j | \leq \nicefrac{1}{2}$ for all $j > J_0$. 
		Then, by the mean value theorem, 
		applied for the function $t\mapsto \sqrt{t}$, 
		we obtain that 
		\[  
			\sum\limits_{j\in\bbN} \widetilde{s}_j^2 
			= 
			\sum\limits_{j=1}^{J_0} \widetilde{s}_j^2 
			+ 
			\sum\limits_{j>J_0} \bigl( \sqrt{1+s_j} - 1 \bigr)^2 
			\leq 
			\sum\limits_{j=1}^{J_0} \widetilde{s}_j^2  
			+ 
			\frac{1}{2} 
			\sum\limits_{j>J_0} s_j^2 
			< \infty, 
		\]
		which shows that $\sqrt{\id_E + S} - \id_E \in \cL_2(E)$. 
	\end{proof}

	\section{An auxiliary result on fractional operators}
	\label{appendix:A-alpha}

	\begin{lemma}\label{lem:A-alpha-difference} 
		Assume that $A\from\scrD(A) \subseteq E \to E$ and 
		$\At\from\scrD(\At)\subseteq E \to E$ 
		are two densely defined, self-adjoint, 
		positive definite linear operators 
		with compact inverses 
		on a separable Hilbert space $(E, \scalar{\,\cdot\,, \,\cdot\,}{E})$ 
		over $\bbR$ such that 
		$\scrD(\At) = \scrD(A)$.  
		If the operator 
		$B := \At - A \in \cL\bigl(\hdot{2\vartheta}{A}; \hdot{2\eta}{\At} \bigr)$  
		is bounded 
		for some 
		$\vartheta,\eta \in \bbR$, 
		with $\hdot{2\vartheta}{A}\!, \hdot{2\eta}{\At}$ 
		defined as in \eqref{eq:def:hdotA}, 
		then also 
		$\At^\gamma - A^\gamma \in 
		\cL\bigl( \hdot{2\theta}{A}; \hdot{2\rho}{\At} \bigr)$ 
		holds for every $\gamma\in(0,1)$,  
		$\rho\in[\eta, 1+\eta)$, 
		$\theta\in(\vartheta-1, \vartheta]$ 
		with $\gamma+(\rho-\eta)+(\vartheta-\theta) < 1$. 
	\end{lemma}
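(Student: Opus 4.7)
The plan is to derive an operator-valued integral formula for $\At^\alpha-A^\alpha$, factor out a bounded operator that encodes the hypothesis on $B$, and then reduce the mapping property to a scalar integrability check. For $\alpha\in(0,1)$, combining the Balakrishnan identity \eqref{eq:Aalphainv-integral} (with $\theta=1-\alpha$) applied separately to $A$ and to $\At$ with the second resolvent identity $(\lambda\id_E+A)^{-1}-(\lambda\id_E+\At)^{-1}=(\lambda\id_E+A)^{-1}B(\lambda\id_E+\At)^{-1}$, I would first establish
\[
\At^\alpha-A^\alpha = \frac{\sin(\pi\alpha)}{\pi}\int_0^\infty \lambda^\alpha (\lambda\id_E+A)^{-1} B (\lambda\id_E+\At)^{-1}\, \rd\lambda,
\]
with convergence understood initially on a common core of $A$ and $\At$ and upgraded to the claimed mapping property a posteriori from the uniform norm bounds below.

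The assumption $B\in\cL\bigl(\hdot{2\vartheta}{A};\hdot{2\eta}{\At}\bigr)$ is equivalent to saying that $K:=\At^\eta B A^{-\vartheta}\in\cL(E)$, and substituting $B=\At^{-\eta}K A^{\vartheta}$ into the integrand produces the factorization
\[
\At^\sigma(\lambda\id_E+A)^{-1} B (\lambda\id_E+\At)^{-1} A^{-\theta} = \Phi_1(\lambda)\,K\,\Phi_2(\lambda),
\]
where $\Phi_1(\lambda):=\At^\sigma(\lambda\id_E+A)^{-1}\At^{-\eta}$ and $\Phi_2(\lambda):=A^\vartheta(\lambda\id_E+\At)^{-1}A^{-\theta}$. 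The boundedness of $\At^\alpha-A^\alpha:\hdot{2\theta}{A}\to\hdot{2\sigma}{\At}$ thus reduces to finiteness of $\int_0^\infty \lambda^\alpha\norm{\Phi_1(\lambda)}{\cL(E)}\norm{\Phi_2(\lambda)}{\cL(E)}\,\rd\lambda$.

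To estimate $\Phi_1$ and $\Phi_2$, I would exploit the hypothesis $\scrD(\At)=\scrD(A)$, which yields $\hdot{2}{A}\cong\hdot{2}{\At}$ and, by complex interpolation (Lemma~\ref{lem:hdot-interpol}) together with duality, that $\At^s A^{-s}$ is an isomorphism on $E$ for every $s\in[-1,1]$. Inserting $\id_E=A^s\At^{-s}\cdot\At^s A^{-s}$ at suitable places commutes $\At$-powers past the $A$-resolvent modulo bounded similarity factors, reducing $\Phi_1(\lambda)$ to the purely $A$-spectral operator $A^{\sigma-\eta}(\lambda\id_E+A)^{-1}$ sandwiched between bounded factors; by the spectral theorem and the positivity $\lambda_1>0$,
\[
\norm{A^c(\lambda\id_E+A)^{-1}}{\cL(E)}\leq C_c\min\bigl(1,\lambda^{c-1}\bigr), \qquad c\in[0,1),
\]
which yields $\norm{\Phi_1(\lambda)}{\cL(E)}\leq C\min\bigl(1,\lambda^{\sigma-\eta-1}\bigr)$, and the symmetric argument applied to $\Phi_2$ gives $\norm{\Phi_2(\lambda)}{\cL(E)}\leq C\min\bigl(1,\lambda^{\vartheta-\theta-1}\bigr)$.

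Splitting the integral at $\lambda=1$ then finishes the proof: on $(0,1)$ the integrand is dominated by $\lambda^\alpha$, which is integrable since $\alpha>0$; on $(1,\infty)$ it is dominated by $\lambda^{\alpha+(\sigma-\eta)+(\vartheta-\theta)-2}$, which is integrable exactly when $\alpha+(\sigma-\eta)+(\vartheta-\theta)<1$, matching the hypothesis. The main technical obstacle I anticipate is the third step: because $A$ and $\At$ do not commute, the mixed operator products $\Phi_1,\Phi_2$ require careful bookkeeping, and since $\At^s A^{-s}$ is \emph{a priori} only bounded for $s\in[-1,1]$ under $\scrD(\At)=\scrD(A)$, one must arrange the commutations so that only the shifts $\sigma-\eta,\vartheta-\theta\in[0,1)$ (rather than the exponents $\sigma,\eta,\vartheta,\theta$ themselves) appear inside the unbounded factors.
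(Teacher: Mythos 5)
Your overall skeleton is the right one and matches the paper's: represent $\At^\alpha-A^\alpha$ as a Bochner integral via the Balakrishnan formula and the second resolvent identity, factor out the bounded operator $\At^{\eta}BA^{-\vartheta}$, and reduce to a scalar integrability condition that is exactly $\alpha+(\sigma-\eta)+(\vartheta-\theta)<1$. However, there is a genuine gap in the middle step, and it is created by your choice of ordering in the resolvent identity. You write the resolvent difference as $(\lambda\id_E+A)^{-1}B(\lambda\id_E+\At)^{-1}$, which places the $A$-resolvent on the left and the $\At$-resolvent on the right; after multiplying by $\At^{\sigma}$ on the left and $A^{-\theta}$ on the right, your factors $\Phi_1(\lambda)=\At^{\sigma}(\lambda\id_E+A)^{-1}\At^{-\eta}$ and $\Phi_2(\lambda)=A^{\vartheta}(\lambda\id_E+\At)^{-1}A^{-\theta}$ are genuinely mixed. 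To reduce $\Phi_1$ to $A^{\sigma-\eta}(\lambda\id_E+A)^{-1}$ "sandwiched between bounded factors" you would need $\At^{\sigma}A^{-\sigma}$ and $A^{\eta}\At^{-\eta}$ bounded, i.e.\ the isomorphism property at the exponents $\sigma$ and $\eta$ themselves — not at the shift $\sigma-\eta$. Since $\scrD(\At)=\scrD(A)$ only gives $\At^{s}A^{-s}\in\cL(E)$ for $|s|\leq 1$, this fails whenever $|\eta|>1$ or $|\sigma|>1$, which the lemma permits ($\eta\in\bbR$ is arbitrary). You flag this tension yourself, but the rearrangement you hope for is impossible with your ordering: $\At^{\sigma}$ and $\At^{-\eta}$ are separated by $(\lambda\id_E+A)^{-1}$, so they cannot be merged into $\At^{\sigma-\eta}$ without first commuting one of them past an $A$-resolvent.

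The fix is one line: use the other, equally valid form of the second resolvent identity,
\begin{equation*}
(\lambda\id_E+A)^{-1}-(\lambda\id_E+\At)^{-1}
=(\lambda\id_E+\At)^{-1}B(\lambda\id_E+A)^{-1},
\end{equation*}
so that the integrand becomes $\lambda^{\alpha}(\lambda\id_E+\At)^{-1}\At^{\sigma}BA^{-\theta}(\lambda\id_E+A)^{-1}$. Now $\At^{\sigma}$ sits next to the $\At$-resolvent and $A^{-\theta}$ next to the $A$-resolvent, each power commutes with its own resolvent, and the factorization
$\bigl[(\lambda\id_E+\At)^{-1}\At^{\sigma-\eta}\bigr]\bigl[\At^{\eta}BA^{-\vartheta}\bigr]\bigl[A^{\vartheta-\theta}(\lambda\id_E+A)^{-1}\bigr]$
is exact, with only the shifts $\sigma-\eta,\vartheta-\theta\in[0,1)$ appearing in the spectral-calculus bounds $\norm{(\lambda\id_E+A)^{-1}A^{\nu}}{\cL(E)}\leq\min\{\lambda^{-(1-\nu)},\lambda_1^{-(1-\nu)}\}$. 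This is precisely the paper's argument; no commutation of $A$ with $\At$ and no isomorphism $\At^{s}A^{-s}$ is needed anywhere, and $\scrD(\At)=\scrD(A)$ enters only to justify applying the integral representation of both $\At^{\alpha}$ and $A^{\alpha}$ at the common point $A^{-\theta}\psi$. Your concluding integrability computation is then correct as written.
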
 
	
	\begin{proof} 
		We first observe the following chain of identities 
		\begin{align}  
			( t\id_{E} + \At )^{-1} \At  
			- 
			A ( t\id_{E} + A )^{-1}  
			&= 
			( t\id_{E} + \At )^{-1} 
			\bigl( 
			\At ( t\id_{E} + A ) 
			- 
			( t\id_{E} + \At ) A
			\bigr) 
			( t\id_{E} + A )^{-1} 	
			\notag 
			\\
			&= 
			t \, ( t\id_{E} + \At )^{-1} 
			B 
			( t\id_{E} + A )^{-1} .  	
			\label{eq:At-A-resolvent-difference} 
		\end{align}  
		We let 
		$\psi\in\scrD\bigl( A^{1-\theta} \bigr)\cap E$ 
		so that $A^{-\theta}\psi \in\scrD(A) = \scrD(\At)$ 
		and 
		combine the latter  
		equality with 
		the following integral representation 
		of $A^\gamma \phi$ for $\gamma\in(0,1)$, 
		\begin{equation}\label{eq:Aalpha-integral} 
			A^\gamma \phi 
			= 
			\frac{\sin(\pi\gamma)}{\pi} 
			\int_0^\infty 
			t^{\gamma-1} 
			A \, ( t\id_E + A )^{-1} \phi 
			\, \rd t, 	
			\qquad \phi\in\scrD(A), 
		\end{equation} 
		which converges pointwise 
		in~$E$, 
		see \cite[Theorem~6.9 in Chapter~2.6]{Pazy1983}. 
		Specifically, we apply the representation 
		\eqref{eq:Aalpha-integral}  
		for $\At^\gamma \bigl( A^{-\theta} \psi \bigr)$ 
		and $A^{\gamma} \bigl( A^{-\theta} \psi \bigr)$. 
		Since both $A, \At$ are closed operators 
		and commute with their respective resolvents, 
		by invoking \eqref{eq:At-A-resolvent-difference} 
		this yields  
		\[ 
			\At^{\rho} \bigl( \At^\gamma - A^\gamma \bigr) A^{-\theta} \psi 
			= 
			\frac{\sin(\pi\gamma)}{\pi} 
			\int_0^\infty 
			t^{\gamma} \,
			( t\id_{E} + \At )^{-1} 
			\At^{\rho} B A^{-\theta}
			( t\id_{E} + A )^{-1} 
			\psi 
			\, \rd t, 	
		\] 
		and, therefore, we obtain the bound  
		\begin{equation}\label{eq:proof:Aalpha-difference} 
		\begin{split}  
			\bigl\| \At^{\rho} \bigl( \At^\gamma - A^\gamma \bigr) A^{-\theta} \psi \bigr\|_{E}	
			&\leq 
			\frac{\sin(\pi\gamma)}{\pi} \, 
			\bigl\| \At^{\eta} B A^{-\vartheta} \bigr\|_{\cL(E)} 
			\norm{\psi}{E}
			\\
			&\times
			\int_0^\infty 
			t^{\gamma} \,
			\bigl\| ( t\id_{E} + \At )^{-1} \At^{\rho-\eta} \bigr\|_{\cL(E)}
			\bigl\| ( t\id_{E} +   A )^{-1}    A^{\vartheta-\theta} \bigr\|_{\cL(E)}
			\, \rd t. 	
			\hspace{-0.3cm} 
		\end{split} 	
		\end{equation}
		By assumption 
		$B \in \cL\bigl( \hdot{2\vartheta}{A}; \hdot{2\eta}{\At} \bigr)$ 
		and, thus, 
		$\bigl\| \At^{\eta} B A^{-\vartheta} \bigr\|_{\cL(E)} 
		< \infty$. 
		Furthermore, 
		with $\lambda_1, \lambdat_1 \in\bbR_+$ 
		denoting the smallest 
		eigenvalues of $A$ and of $\At$, 
		respectively, we obtain 
		by self-adjointness and positivity of $A$ that,   
		for all $t\in\bbR_+$ and $\nu\in[0,1]$, 
		\[ 
			\textstyle 
			\| ( t\id_{E} + A )^{-1} A^\nu \|_{\cL(E)} 
			\leq \sup_{x\in[\lambda_1, \infty)} 
			\frac{ x^\nu }{ t+x } 
			\leq  
			\sup_{x\in[\lambda_1, \infty)} 
			\frac{1}{ (t+x )^{1-\nu} } 
			\leq 
			\min\bigl\{ t^{-(1-\nu)}, \lambda_1^{-(1-\nu)} \bigr\}, 
		\] 
		and by the same argument 
		$\| ( t\id_{E} + \At )^{-1} \At^\nu \|_{\cL(E)} 
		\leq \min\bigl\{ t^{-(1-\nu)}, \lambdat_1^{-(1-\nu)} \bigr\}$. 
		For this reason, we can bound the remaining integral 
		in \eqref{eq:proof:Aalpha-difference} 
		by 
		\begin{align*}
			\int_0^\infty 
			t^{\gamma} \,
			\bigl\| ( t\id_{E} + \At )^{-1} \At^{\rho-\eta} \bigr\|_{\cL(E)}
			&
			\bigl\| ( t\id_{E} +   A )^{-1}    A^{\vartheta-\theta} \bigr\|_{\cL(E)}
			\, \rd t 
			\\
			&\leq 
			\lambdat_1^{-1 + \rho - \eta} 
			\lambda_1^{-1 + \vartheta - \theta } 
			\int_0^1  
			t^\gamma 
			\, \rd t 
			+ 
			\int_1^\infty 
			t^{- 2 + \gamma + \rho - \eta + \vartheta - \theta} \, \rd t 
			\\ 
			&= 
			\lambdat_1^{-1 + \rho - \eta} 
			\lambda_1^{-1 + \vartheta - \theta } 
			(1 + \gamma)^{-1} 
			+ 
			\tfrac{1}{ 1 - \gamma - (\rho - \eta) - (\vartheta - \theta) }, 
		\end{align*} 
		since $\gamma + (\rho-\eta) + (\vartheta-\theta) \in (0,1)$ 
		by assumption. 
		We set 
		\[ 
			C_{\gamma,\rho,\eta,\vartheta,\theta} 
			:= 
			\tfrac{\sin(\pi\gamma)}{\pi} 
			\Bigl( 
			\lambdat_1^{-1 + \rho - \eta} 
			\lambda_1^{-1 + \vartheta - \theta } 
			(1 + \gamma)^{-1} 
			+ 
			\tfrac{1}{ 1 - \gamma - (\rho - \eta) - (\vartheta - \theta) }
			\Bigr) \in\bbR_+,
		\]  
		and conclude that 
		\[
			\bigl\| \At^{\rho} ( \At^\gamma - A^\gamma ) A^{-\theta} \psi \bigr\|_{E}	
			\leq  
			C_{\gamma,\rho,\eta,\vartheta,\theta} 
			\| B \|_{\cL\bigl( \hdot{2\vartheta}{A}; \hdot{2\eta}{\At} \bigr)}  
			\norm{\psi}{E} 
			\quad 
			\forall
			\psi\in\scrD\bigl( A^{1-\theta} \bigr) \cap E. 
		\]
		This completes the proof, since 
		$\scrD\bigl( A^{1-\theta} \bigr) \cap E = E$ 
		for all $\theta \in [1,\infty)$ and, 
		in the case that 
		$\theta\in(-\infty,1)$, 
		${\scrD\bigl( A^{1-\theta} \bigr) \cap E = \scrD\bigl( A^{1-\theta} \bigr)}$ 
		is dense in $E$. 
	\end{proof} 
	
	\begin{remark}\label{rem:A-alpha-difference}  
		Lemma~\ref{lem:A-alpha-difference} 
		shows in particular that if
		$\scrD(\At)=\scrD(A)$ and $\At-A\in\cL(E)$, 
		we obtain boundedness 
		of the difference of the fractional operators, 
		$\At^\gamma - A^\gamma \in \cL\bigl( \hdot{2\theta}{A}; \hdot{2\rho}{A} \bigr)$, 
		for all 
		$\rho\in[0,1)$, $\theta\in(-1,0]$ with 
		$\gamma + \rho - \theta < 1$ 
		(here, we used that 
		$\hdot{2\rho}{\At} = \hdot{2\rho}{A}$ for all $\rho\in[0,1]$ since 
		$\scrD(\At)=\scrD(A)$, 
		see also Lemma~\ref{lem:hdot-interpol} in Appendix~\ref{appendix:interpol}). 
	\end{remark}  
	
	\section{Proofs of \texorpdfstring{Lemmas~\ref{lem:beta-gamma} 
		and~\ref{lem:iff-A}}{Lemmas~2.1 and~2.2}}\label{appendix:proofs-lemmas}
	
	To exploit auxiliary results based on complex interpolation theory, 
	we will consider the complexification of 
	a separable Hilbert space $(E, \scalar{\,\cdot\,, \,\cdot\,}{E})$ over $\bbR$, 
	and 
	the complexification 
	of a linear operator $A\from \scrD(A)\subseteq E \to E$. 
	Specifically, we introduce the vector space 
	\begin{equation}\label{eq:complexification-space} 
		E_{\bbC} 
		:= 
		E + i E 
		= 
		\{\phi + i \psi : \phi, \psi \in E \}
	\end{equation}
	over $\bbC$, 
	which is a Hilbert space 
	with respect to the inner product 
	(see, e.g.\ \citep[][Section~5.2]{Luna2012}) 
	\begin{equation}\label{eq:complexification-ip} 
		\scalar{\phi + i \psi, \phi' + i \psi'}{E_\bbC} 
		:= 
		\scalar{\phi, \phi'}{E} 
		+ 
		\scalar{\psi, \psi'}{E} 
		+ 
		i \, [ 
		\scalar{\psi, \phi'}{E} 
		- 
		\scalar{\phi, \psi'}{E} ]. 
	\end{equation}
	In addition, we define the operator 
	$A_\bbC \from \scrD(A_\bbC) \subseteq E_\bbC \to E_\bbC$ 
	as the canonical linear extension of the operator $A$ 
	from Section~\ref{section:gm-on-hs}, i.e., 
	$A_\bbC (\phi + i \psi) := A\phi + i A\psi$.  
	Note that $A_\bbC$ is densely defined, 
	self-adjoint and positive definite on $E_\bbC$, 
	since $A$ has these properties on~$E$.   
	Furthermore, 
	the inverse ${A_\bbC^{-1}\from E_\bbC \to E_\bbC}$ 
	satisfies 
	$A_\bbC^{-1}(\phi+ i \psi) = A^{-1} \phi + i A^{-1} \psi 
	= \bigl(A^{-1}\bigr)_{\bbC}(\phi+ i \psi)$ 
	and 
	inherits compactness from $A^{-1} \in \cK(E)$. 
	Finally, we emphasize that the 
	eigenvectors $\{e_j\}_{j\in\bbN}$ of $A$ 
	(and thus of~$A_\bbC$) 
	form an orthonormal 
	basis for $E$ (over $\bbR$) 
	and for $E_\bbC$ (over $\bbC$).

	Similarly  
	as in \eqref{eq:Aalpha-integral}, 
	one may also represent the 
	fractional inverse 
	by a Bochner integral: 
	\begin{equation}\label{eq:Aalphainv-integral} 
		A^{-\theta} 
		= 
		\frac{\sin(\pi\theta)}{\pi} 
		\int_0^\infty 
		t^{-\theta} 
		( t\id_E + A )^{-1}  
		\, \rd t, 	
		\qquad 
		\theta\in(0,1), 
	\end{equation}
	with convergence of this integral 
	in the operator norm on~$\cL(E)$ 
	(see e.g.\ \citep[][Equation~(6.4) in Chapter~2.6]{Pazy1983}). 
	This integral representation 
	is exploited in the proof of Lemma~\ref{lem:beta-gamma}.  
	
	\begin{proof}[Proof of Lemma~\ref{lem:beta-gamma}] 	
		We first show that the 
		isomorphism property of 
		$\At^{\beta} A^{-\beta}$ on $E$ 
		implies that also 
		for every $\gamma\in[-\beta,\beta]$ 
		the operator 
		$\At^{\gamma} A^{-\gamma}$ 
		is an isomorphism on $E$, i.e., 
		$\At^\gamma A^{-\gamma} \from E \to E$ 
		is bounded and admits a bounded inverse 
		on $E$.  
		For $\gamma\in\{0,\beta\}$ this clearly holds. 
		Suppose next that $\gamma\in(0,\beta)$. 
		Note that 
		the isomorphism property of  
		$\At^\beta A^{-\beta} \from E \to E$ 
		implies that the Hilbert spaces 
		$\hdot{2\beta}{A}$ and~$\hdot{2\beta}{\At}$ 
		are isomorphic, with equivalent norms. 
		This means that there exist constants 
		$c_0, c_1 \in \bbR_+$ such that 
		$c_0\norm{v}{2\beta,A} 
		\leq \norm{v}{2\beta,\At} 
		\leq c_1 \norm{v}{2\beta,A}$ 
		holds 
		for all $v\in\hdot{2\beta}{A} = \hdot{2\beta}{\At}$\!. 
		Let $A_\bbC$ and $\At_\bbC$ denote\vspace{-1mm}\linebreak\pagebreak 
		
		\noindent 
		the complexifications 
		of the linear operators~$A$ and~$\At$   
		with respect to the complex Hilbert 
		space~$E_\bbC$,  
		see~\eqref{eq:complexification-space} 
		and~\eqref{eq:complexification-ip}. 
		Then, we obtain  
		$c_0\norm{v}{2\beta,A_\bbC} 
		\leq \norm{v}{2\beta,\At_\bbC} 
		\leq c_1 \norm{v}{2\beta,A_\bbC}$  
		for all $v\in\hdot{2\beta}{A_\bbC} = \hdot{2\beta}{\At_\bbC}$\!. 
		By Lemma~\ref{lem:hdot-interpol} in Appendix~\ref{appendix:interpol} 
		we have that 
		$\hdot{2\gamma}{A_\bbC} 
		= \bigl[ \hdot{0}{A_\bbC}, \hdot{2\beta}{A_\bbC} \bigr]_{\nicefrac{\gamma}{\beta}}$ 
		and 
		$\hdot{2\gamma}{\At_\bbC} 
		= \bigl[ \hdot{0}{\At_\bbC}, \hdot{2\beta}{\At_\bbC} \bigr]_{\nicefrac{\gamma}{\beta}}$ 
		for every $\gamma\in(0,\beta)$ 
		with isometries, 
		where 
		$[E_0, E_1]_\theta$ for $\theta\in[0,1]$ 
		denotes the complex interpolation 
		space between $E_0$ and~$E_1$, see 
		Definition~\ref{def:complex-interpol}. 
		Since furthermore  
		$\norm{\,\cdot\,}{0,A_\bbC} 
		= \norm{\,\cdot\,}{E_\bbC} 
		= \norm{\,\cdot\,}{0,\At_\bbC}$, 
		we then conclude 
		with \cite[][Theorem~2.6]{Lunardi2018}  	
		that, 
		for all $v\in\hdot{2\gamma}{A}=\hdot{2\gamma}{\At}$\!,\\[-4pt]
		\[
			c_0^{\nicefrac{\gamma}{\beta}} 
			\norm{v}{2\gamma,A} 
			= 
			c_0^{\nicefrac{\gamma}{\beta}} 
			\norm{v}{2\gamma,A_\bbC} 
			\leq 
			\norm{v}{2\gamma,\At_\bbC} 
			= 
			\norm{v}{2\gamma,\At}   
			\leq 
			c_1^{\nicefrac{\gamma}{\beta}} 
			\norm{v}{2\gamma,A_\bbC} 
			= 
			c_1^{\nicefrac{\gamma}{\beta}} 
			\norm{v}{2\gamma,A}, 
		\]
		i.e., 
		$\At^\gamma A^{-\gamma}\from E \to E$ is an isomorphism 
		for every $\gamma\in(0,\beta)$. 
		For $\gamma\in[-\beta,0)$, 
		the isomorphism property of $\At^\gamma A^{-\gamma}$ on $E$ 
		follows from that of $\At^{-\gamma} A^\gamma$\!, since we have
		$\At^\gamma A^{-\gamma} = 
		\bigl( ( \At^{-\gamma} A^{\gamma} )^{-1} \bigr)^*$\!. 

		Proof of \ref{lem:beta-gamma-HS}: 
		Define 
		the operators 
		$\cA := A^{2\beta}$ and  
		$\cAt := \At^{2\beta}$\!. 
		Since $A\from\scrD(A)\subseteq E \to E$ is densely defined, 
		self-adjoint, 
		positive definite, and 
		has a compact inverse, 
		the same applies to $\cA = A^{2\beta}$\!, 
		and we have that $\cA e_j = \alpha_j e_j$ 
		with $\alpha_j := \lambda_j^{2\beta}$\!, 
		see~\eqref{eq:def:Abeta}. 
		Thanks to the identity  
		\[
			(t\id_E + \cAt )^{-1} 
			- 
			(t\id_E + \cA)^{-1} 
			= 
			(t\id_E + \cAt )^{-1} 
			( \cA - \cAt )
			(t\id_E + \cA)^{-1} , 
		\]
		and the integral representation 
		of the fractional inverse \eqref{eq:Aalphainv-integral}
		applied for the operators 
		$\cAt^{-\theta}$\!,  
		$\cA^{-\theta}$ and $\theta \in (0,1)$, 
		we obtain the following equality in $\cL(E)$,  
		\begin{equation}\label{eq:lem:beta-gamma-proof_1} 
			\cAt^{-\theta} - \cA^{-\theta}
			= 
			\frac{\sin(\pi\theta)}{\pi}
			\int_0^\infty t^{-\theta} 
			(t\id_E + \cAt )^{-1} 
			(\cA - \cAt)
			(t\id_E + \cA)^{-1} 
			\, \rd t . 
		\end{equation} 
		Note that, 
		for fixed $j\in\bbN$, 
		we have 
		$\cA^{\nicefrac{(\theta+1)}{2}} (t\id_E +\cA)^{-1} e_j  
		= \alpha_j^{\nicefrac{(\theta+1)}{2}}
		(t + \alpha_j)^{-1} e_j$.  
		We now assume that 
		$\gamma\in(0,\beta)$ and choose the parameter 
		$\theta:=\nicefrac{\gamma}{\beta}\in(0,1)$. 
		For this choice, we have the equality
		$\At^{\gamma}
		\bigl( \At^{-2\gamma} - A^{-2\gamma} \bigr) 
		A^{\gamma} 
		= 
		\cAt^{\nicefrac{\theta}{2}} 
		\bigl( \cAt^{-\theta} - \cA^{-\theta} \bigr)
		\cA^{\nicefrac{\theta}{2}}$
		and by \eqref{eq:lem:beta-gamma-proof_1} 
		we obtain that, for all $j\in\bbN$,  
		\begin{align} 
			\bigl\| \At^{\gamma}
			( \At^{-2\gamma} - &A^{-2\gamma} ) 
			A^{\gamma} e_j 
			\bigr\|_{E} 
			\leq 
			\bigl\| \cAt^{-\nicefrac{1}{2}} 
			( \cA - \cAt \, )
			\cA^{-\nicefrac{1}{2}} e_j
			\bigr\|_E
			\notag
			\\
			&\quad\times
			\frac{\sin(\pi\theta)}{\pi}
			\int_0^\infty 
			t^{-\theta} 
			\bigl\|
			(t\id_E + \cAt)^{-1} 
			\cAt^{\nicefrac{(1+\theta)}{2}} 
			\bigr\|_{\cL(E)} \,
			\alpha_j^{\nicefrac{(1+\theta)}{2}}  
			(t + \alpha_j)^{-1} 
			\, \rd t 
			\notag
			\\
			&\leq 
			\bigl\| \cAt^{-\nicefrac{1}{2}}
			\cA^{\nicefrac{1}{2}}
			\bigr\|_{\cL(E)} 
			\bigl\| \cA^{-\nicefrac{1}{2}}
			(\cAt - \cA )
			\cA^{-\nicefrac{1}{2}} e_j
			\bigr\|_E 	
			\notag	
			\\
			&\quad\times
			\frac{\sin(\pi\theta)}{\pi}
			\, 
			\alpha_j^{\nicefrac{(1+\theta)}{2}}  
			\int_0^\infty 
			t^{-\theta} 
			(t + \alpha_j)^{-1} 
			\bigl\|
			(t\id_E + \cAt)^{-1} 
			\cAt^{\nicefrac{(1+\theta)}{2}}  
			\bigr\|_{\cL(E)} 
			\, \rd t . 
			\label{eq:lem:beta-gamma-proof_2}
		\end{align} 	
		Self-adjointness and positive definiteness 
		of the operator $\cAt=\At^{2\beta}$ imply that  
		the estimate 
		\[ 
			\textstyle 
			\bigl\| ( t\id_{E} + \cAt )^{-1} \cAt^\nu \bigr\|_{\cL(E)} 
			\leq \sup_{x\in[\widetilde{\alpha}_1, \infty)} 
			\frac{ x^\nu }{ t+x } 
			\leq  
			\sup_{x\in[\widetilde{\alpha}_1, \infty)} 
			\frac{1}{( t+x )^{1-\nu}}  
			\leq 
			\min\bigl\{ t^{-(1-\nu)}, \widetilde{\alpha}_1^{-(1-\nu)} \bigr\} 
		\] 
		holds for all $t\in\bbR_+$ and every $\nu\in[0,1]$, 
		where $\widetilde{\alpha}_1\in\bbR_+$ 
		denotes the smallest eigenvalue of $\cAt$. 
		Therefore, we can 
		bound the integral in \eqref{eq:lem:beta-gamma-proof_2} 
		as follows,  
		\begin{align*}
			\int_0^\infty 
			t^{-\theta} 
			(t + \alpha_j)^{-1} 
			\bigl\|
			(t\id_E + \cAt)^{-1} 
			\cAt^{\nicefrac{(1+\theta)}{2}}  
			\bigr\|_{\cL(E)} 
			\, \rd t
			&\leq 
			\int_0^\infty 
			t^{-\nicefrac{(1+\theta)}{2}} 
			(t + \alpha_j)^{-1} 
			\, \rd t
			\\
			&= 
			\tfrac{ \pi }{ \sin( \pi(1+\theta)/2 ) } 
			\,  
			\alpha_j^{-\nicefrac{(1+\theta)}{2}}, 
		\end{align*} 
		where we have used the identity 
		$\frac{\sin(\pi\eta)}{\pi} 
		\int_0^\infty t^{-\eta} (t+\lambda)^{-1}\, \rd t 
		= 
		\lambda^{-\eta}$ 
		which holds for every $\eta\in(0,1)$ 
		and $\lambda\in\bbR_+$, 
		see~\cite[Equation 3.194.4 on p.~316]{GradshteynRyzhik2007}, 
		combined with 
		the fact that $\nicefrac{(1+\theta)}{2} \in ( \nicefrac{1}{2}, 1)$, 
		since $\theta\in(0,1)$. 
		Inserting this bound 
		in \eqref{eq:lem:beta-gamma-proof_2}  
		and recalling that by definition 
		$\cAt^{-\nicefrac{1}{2}} 
		\cA^{\nicefrac{1}{2}} 
		= 
		\At^{-\beta} 
		A^{\beta}$ 
		and 
		$\cA^{-\nicefrac{1}{2}}
		( \cAt - \cA )
		\cA^{-\nicefrac{1}{2}} 
		= 
		A^{-\beta}
		\bigl( \At^{2\beta} - A^{2\beta} \bigr)
		A^{-\beta}$\!, 
		yields the estimate 
		\[ 
			\bigl\| \At^{\gamma}
			\bigl( \At^{-2\gamma} - A^{-2\gamma} \bigr) 
			A^{\gamma} e_j  
			\bigr\|_{E} 
			\leq 
			C_{\beta, \gamma} 
			\bigl\| A^{-\beta}
			\bigl( \At^{2\beta} - A^{2\beta} \bigr)
			A^{-\beta} e_j
			\bigr\|_E 
			\quad 
			\forall j\in\bbN,  
		\] 
		where 
		$C_{\beta, \gamma} 
		:= 
		\frac{\sin(\pi\gamma/\beta)}{\sin(\pi(1+\nicefrac{\gamma}{\beta})/2)} 
		\bigl\|
		\At^{-\beta} 
		A^{\beta}
		\bigr\|_{\cL(E)} \in \bbR_+$. 
		We thus conclude that 
		\begin{align*}
			\bigl\| \At^{\gamma}
			&\bigl( \At^{-2\gamma} - A^{-2\gamma} \bigr) 
			A^{\gamma}  
			\bigr\|_{\cL_2(E)}^2 	
			=
			\sum\limits_{j\in\bbN} 
			\bigl\| \At^{\gamma}
			\bigl( \At^{-2\gamma} - A^{-2\gamma} \bigr) 
			A^{\gamma} e_j  
			\bigr\|_{E}^2  	
			\\
			&\leq 
			C_{\beta,\gamma}^2 
			\sum\limits_{j\in\bbN} 
			\, 
			\bigl\| A^{-\beta}
			\bigl( \At^{2\beta} - A^{2\beta} \bigr)
			A^{-\beta} e_j
			\bigr\|_E^2 
			= 
			C_{\beta,\gamma}^2 
			\bigl\| A^{-\beta} \At^{2\beta} A^{-\beta} 
			- \id_E 
			\bigr\|_{\cL_2(E)}^2  
			<\infty, 		
		\end{align*}
		which combined with 
		the identity 
		$A^{\gamma}
		\At^{-2\gamma} 
		A^{\gamma} 
		- 
		\id_E 
		= 
		A^{\gamma}
		\bigl( \At^{-2\gamma} - A^{-2\gamma} \bigr) 
		A^{\gamma}$\!, and 
		\[ 
			\bigl\| A^{\gamma} 
			\bigl( \At^{-2\gamma} - A^{-2\gamma} \bigr) 
			A^{\gamma} 
			\bigr\|_{\cL_2(E)}  
			\leq 
			\bigl\| 
			A^{\gamma} \At^{-\gamma}
			\bigr\|_{\cL(E)} 	
			\bigl\| 
			\At^{\gamma}
			\bigl( \At^{-2\gamma} - A^{-2\gamma} \bigr) 
			A^{\gamma}  
			\bigr\|_{\cL_2(E)}  	
		<\infty 
		\] 
		shows that 
		$A^{\gamma}
		\At^{-2\gamma}  
		A^{\gamma} - \id_E \in\cL_2(E)$ 
		for all $\gamma\in(0,\beta)$ 
		and, hence, 
		$A^{-\gamma}
		\At^{2\gamma}  
		A^{-\gamma} - \id_E$ 
		is Hilbert--Schmidt on $E$  
		for every $\gamma\in(-\beta,0)$. 
		Clearly, 
		$A^{-\gamma}
		\At^{2\gamma}  
		A^{-\gamma} - \id_E \in\cL_2(E)$ 
		also holds for $\gamma \in \{0,\beta\}$. 
		As the isomorphism property of $\At^\gamma A^{-\gamma}\in\cL(E)$ 
		has already been established 
		for all $\gamma\in[-\beta,\beta]$, 
		applying 
		\citep[Lemma~6.3.1(ii)]{Bogachev1998} 
		completes the proof of~\ref{lem:beta-gamma-HS} 
		for the whole range $\gamma\in[-\beta,\beta]$. 
		
		Proof of~\ref{lem:beta-gamma-compact}: 
		Since a linear operator $T\from E\to E$ 
		is compact on $E$ if and only if its complexification $T_\bbC$ 
		is compact on $E_\bbC$, 
		see \eqref{eq:complexification-space} and \eqref{eq:complexification-ip}, 
		we obtain, for all $\gamma\in\bbR$, 
		the equivalence 
		\[ 
			A_\bbC^{-\gamma} \At_\bbC^{2\gamma} A_\bbC^{-\gamma} 
			- \id_{E_\bbC} 
			= 
			\bigl( A^{-\gamma} \At^{2\gamma} A^{-\gamma} 
			- \id_E \bigr)_\bbC 
			\in \cK(E_\bbC) 
			\quad 
			\Longleftrightarrow 
			\quad 
			A^{-\gamma} \At^{2\gamma} A^{-\gamma} 
			- \id_E 
			\in \cK(E). 
		\] 
		By assumption 
		$A^{-\beta} \At^{2\beta} A^{-\beta} 
		- \id_E \in \cK(E)$ and, thus, 
		$A_\bbC^{-\beta} \At_\bbC^{2\beta} A_\bbC^{-\beta} 
		- \id_{E_\bbC} \in \cK(E_\bbC)$
		so that by Lemma~\ref{lem:beta-gamma-complex} 
		in Appendix~\ref{appendix:interpol}, 
		$A_\bbC^{-\gamma} \At_\bbC^{2\gamma} A_\bbC^{-\gamma} 
		- \id_{E_\bbC} 
		\in \cK(E_\bbC)$ 
		holds for every $\gamma\in[0,\beta]$. 
		Then, by the above equivalence 
		$A^{-\gamma} \At^{2\gamma} A^{-\gamma} 
		- \id_E \in \cK(E)$ 
		follows first for every $\gamma\in[0,\beta]$ 
		and, subsequently, 
		since 
		$\At^\gamma A^{-\gamma}$ is an isomorphism on $E$, 
		by \citep[][Lemma~B.1]{bk-kriging} also 
		for all $\gamma\in[-\beta,0)$. 
	\end{proof} 
	
	\begin{proof}[Proof of Lemma~\ref{lem:iff-A}]
		\ref{lem:iff-A-HS} 
		Assume first that 
		for all $\eta\in\frakN_\beta$  
		there exist an orthogonal operator ${U_\eta\in\cL(E)}$ 
		and $S_\eta\in\cL_2(E)$ such that 
		$A^{\eta-1} \At A^{-\eta} = U_\eta (\id_E+ S_\eta)$  
		and $\id_E + S_\eta$ is invertible on~$E$. 
		We show via induction that, 
		for all $n\in\frakN_\beta\cap\bbN$, 
		there are   
		$\widetilde{U}_n\in\cL(E)$ orthogonal 
		and $\widetilde{S}_n\in\cL_2(E)$ self-adjoint, 
		such that 
		$\id_E + \widetilde{S}_n$ is invertible on $E$ and 
		$\At^n A^{-n} = \widetilde{U}_n (\id_E + \widetilde{S}_n)$. 
		For $n=1$, by assumption 
		$\At A^{-1} = U_1 (\id_E + S_1)$,  
		where $U_1\in\cL(E)$ is orthogonal and $S_1\in\cL_2(E)$. 
		In order to obtain a self-adjoint 
		operator $\widetilde{S}_1\in\cL_2(E)$, 
		we perform a polar decomposition: 
		\[
		\id_E + S_1 
		= 
		\widehat{U}_1 \, | \id_E + S_1 | 
		= 
		\widehat{U}_1 \sqrt{ (\id_E + S_1^* ) (\id_E + S_1 ) }
		=
		\widehat{U}_1 \sqrt{ \id_E + \bar{S}_1  }. 
		\]
		Here, 
		$\bar{S}_1 
		:= 
		\dual{S}_1 + S_1 + \dual{S}_1 S_1 
		\in \cL_2(E)$ 
		and 
		$\widehat{U}_1 
		:= 
		(\id_E + \dual{S}_1 )^{-1} \sqrt{ (\id_E + \dual{S}_1 ) (\id_E + S_1 ) }$ 
		is orthogonal on~$E$, 
		since $\id_E + S_1$ is invertible on $E$. 
		The operator   
		$\widetilde{S}_1 := \sqrt{ \id_E + \bar{S}_1 } - \id_E$ 
		is self-adjoint  
		and ${\widetilde{S}_1\in\cL_2(E)}$ 
		by Lemma~\ref{lem:I+K} 
		in Appendix~\ref{appendix:feldman-hajek}. 
		We conclude that 
		$\At A^{-1} = \widetilde{U}_1 (\id_E + \widetilde{S}_1)$, 
		where the operator $\widetilde{U}_1 = U_1 \widehat{U}_1$ is orthogonal on~$E$,  
		$\widetilde{S}_1\in\cL_2(E)$ is self-adjoint, and  
		$\id_E + \widetilde{S}_1 = \dual{\widehat{U}}_1 ( \id_E + S_1)$ 
		is invertible on~$E$. 
		For the induction step $n-1\to n$, 
		we let $n\in\{2,\ldots,\lfloor\beta\rfloor\}$ and find 
		by the induction hypothesis and by the assumption 
		on $A^{n-1}\At A^{-n}$ that  
		\begin{equation}\label{eq:proof:lem:iff-A-0-a}
			\begin{split} 
				\At^n A^{-n} 
				&= 
				\At^{n-1} A^{-(n-1)} A^{n-1} \At A^{-n}
				= 
				\widetilde{U}_{n-1} 
				(\id_E + \widetilde{S}_{n-1}) 
				U_n (\id_E + S_n) 
				\\
				&= 
				\widetilde{U}_{n-1} 
				U_n 
				(\id_E + S'_n), 
				\quad 
				\text{where}
				\quad 
				S'_n
				:= 
				\dual{U}_n \widetilde{S}_{n-1} U_n  
				+ S_n 
				+ \dual{U}_n \widetilde{S}_{n-1} U_n S_n. 
			\end{split} 
		\end{equation} 
		Here, 
		$S'_n\in\cL_2(E)$ 
		and $\id_{E} + S_n'$ 
		is invertible on $E$, 
		so that 
		(similarly as for $n=1$)
		a polar decomposition 
		applied to $\id_E + S'_n$ yields the existence of 
		an orthogonal operator 
		$\widehat{U}_n$ on $E$
		and a self-adjoint  
		operator 
		$\widetilde{S}_n\in\cL_2(E)$ such that 
		$\id_E + S_n' = \widehat{U}_n (\id_E+ \widetilde{S}_n)$ 
		and 
		\begin{equation}\label{eq:proof:lem:iff-A-0-b}
			\id_E+ \widetilde{S}_n 
			= 
			\dual{ \widehat{U} }_n  
			(\id_E + S_n')
			= 
			\dual{ \widehat{U} }_n   
			\dual{ U }_n  
			(\id_E + \widetilde{S}_{n-1}) 
			U_n (\id_E + S_n)  
		\end{equation} 
		is invertible on $E$, since 
		$\id_E + \widetilde{S}_{n-1}$ 
		and $\id_E + S_n$ are. 
		For all $\gamma\in\frakN_\beta\cap\bbN$, 
		we thus have the representation 
		$A^{-\gamma} \At^\gamma 
		= 
		\bigl( \At^\gamma A^{-\gamma} \bigr)^*
		= 
		\dual{ \widetilde{U} }_\gamma 
		\bigl( \id_E + \widetilde{U}_\gamma \widetilde{S}_\gamma \dual{ \widetilde{U} }_\gamma \bigr)$, 
		and we conclude with  
		Lemma~\ref{lem:HS-compact-property}.I 
		in Appendix~\ref{appendix:feldman-hajek} 
		that 
		\begin{equation}\label{eq:proof:lem:iff-A-0} 
			\At^\gamma A^{-\gamma} 
			\text{ is an isomorphism on $E$}, 
			\qquad  
			A^{-\gamma} \At^{2\gamma} A^{-\gamma} - \id_E \in \cL_2(E). 
		\end{equation} 
		Subsequently, \eqref{eq:proof:lem:iff-A-0} 
		follows for all $\gamma\in[-\lfloor\beta\rfloor,\lfloor\beta\rfloor]$ 
		from Lemma~\ref{lem:beta-gamma}\ref{lem:beta-gamma-HS}.  
		Finally, in the case that $\beta\notin\bbN$,  
		we may again apply Lemma~\ref{lem:HS-compact-property}.I, 
		this time 
		for $\gamma := \beta-1 \in (0,\lfloor\beta\rfloor)$. 
		This yields the representation 
		$A^{-(\beta-1)}\At^{\beta-1} = 
		\bar{U}_{\beta-1} \bigl( \id_E + \bar{S}_{\beta-1} \bigr)$, 
		where $\bar{U}_{\beta-1}$ is orthogonal on $E$, 
		$\bar{S}_{\beta-1}\in\cL_2(E)$, and the operator 
		$\id_E + \bar{S}_{\beta-1}$ is invertible on~$E$. 
		By assumption we also have that 
		$A^{\beta-1}\At A^{-\beta} = U_\beta(\id_E+S_\beta)$. 
		Therefore, we obtain  
		\eqref{eq:proof:lem:iff-A-0} 
		for all $\gamma\in[-\beta,\beta]$ 
		by similar steps as above,  
		see \eqref{eq:proof:lem:iff-A-0-a} 
		and \eqref{eq:proof:lem:iff-A-0-b}. 
		
		Conversely, if $A^{-\gamma} \At^{2\gamma} A^{-\gamma} - \id_E \in \cL_2(E)$  
		and $\At^\gamma A^{-\gamma}$ is an isomorphism on~$E$ 
		for all ${\gamma\in[-\beta,\beta]}$, 
		then by Lemma~\ref{lem:HS-compact-property}.I, 
		for every $\gamma\in[-\beta,\beta]$, 
		there exist an orthogonal operator $\bar{U}_\gamma$ 
		on $E$ 
		as well as a self-adjoint operator 
		$\bar{S}_\gamma\in\cL_2(E)$ 
		such that 
		$A^{-\gamma} \At^\gamma 
		= 
		\bar{U}_\gamma (\id_E + \bar{S}_\gamma)$ 
		and $\id_E + \bar{S}_\gamma$ is 
		boundedly invertible on $E$. 
		Therefore, for every 
		$\eta\in\frakN_\beta$, 
		we obtain  
		\begin{align*} 
			A^{\eta-1} \At A^{-\eta} 
			&=
			\big( A^{\eta-1} \At^{-(\eta-1)} \bigr) \bigl( \At^\eta A^{-\eta} \bigr) 
			= 
			\bar{U}_{-(\eta-1)} \bigl( \id_E + \bar{S}_{-(\eta-1)} \bigr)  
			\bigl( \id_E + \bar{S}_\eta \bigr) \bar{U}_\eta^*  
			\\
			&= 
			\bar{U}_{-(\eta-1)}  
			\bigl( 
			\id_E + \bar{S}_{-(\eta-1)} + \bar{S}_\eta 
			+ \bar{S}_{-(\eta-1)} \bar{S}_\eta
			\bigr) 
			\bar{U}_\eta^*  
			= 
			U_\eta (\id_E + S_\eta). 
		\end{align*} 
		Here, $U_\eta := \bar{U}_{-(\eta-1)} \bar{U}_\eta^*$ 
		is orthogonal on $E$ and,  
		since $\bar{S}_{-(\eta-1)}, \bar{S}_\eta \in \cL_2(E)$, 
		\[
		S_\eta 
		:= 
		\bar{U}_\eta \bar{S}_{-(\eta-1)} \bar{U}_\eta^*  
		+ \bar{U}_\eta \bar{S}_\eta \bar{U}_\eta^*  
		+ \bar{U}_\eta \bar{S}_{-(\eta-1)} \bar{S}_\eta \bar{U}_\eta^*  
		\in\cL_2(E) 
		\]
		inherits the Hilbert--Schmidt property. 
		Finally, 
		$\id_E + S_\eta 
		= 
		\dual{U}_\eta \bigl( A^{\eta-1} \At^{-(\eta-1)} \bigr) 
		\bigl( \At^\eta A^{-\eta} \bigr)$
		is invertible on $E$ 
		by the isomorphism property 
		of $\At^{\gamma} A^{-\gamma}$ 
		which is assumed for all $\gamma\in[-\beta,\beta]$. 
		
		Assertion~\ref{lem:iff-A-compact}  
		can be proven along the same lines, 
		using Lemmas~\ref{lem:beta-gamma}\ref{lem:beta-gamma-compact}
		and~\ref{lem:HS-compact-property}.II. 
		
		\ref{lem:iff-A-iso} 
		Assume first that  
		$B := \At - A 
		\in    \cL\bigl( \hdot{2\eta}{A};   \hdot{2(\eta-1)}{A}   \bigr) 
		\cap \cL\bigl( \hdot{2\eta}{\At}; \hdot{2(\eta-1)}{\At} \bigr)$   
		for $\eta\in\{1,\beta\}$. 
		In the case that $\beta>1$,  
		by complex interpolation we then obtain that, 
		for all $\theta\in[0,1]$,  
		\[
			B_\bbC 
			\in    \cL\bigl( \bigl[ \hdot{2}{A_\bbC}, \hdot{2\beta}{A_\bbC}\bigr]_\theta;  
			\bigl[ \hdot{0}{A_\bbC}, \hdot{2(\beta-1)}{A_\bbC}\bigr]_\theta  \bigr) 
			\cap  \cL\bigl( \bigl[ \hdot{2}{\At_\bbC}, \hdot{2\beta}{\At_\bbC}\bigr]_\theta;  
			\bigl[ \hdot{0}{\At_\bbC}, \hdot{2(\beta-1)}{\At_\bbC}\bigr]_\theta  \bigr) 
		\] 
		for the complexification 
		$B_\bbC = (\At - A)_\bbC = \At_\bbC - A_\bbC$ of $B$. 
		The 
		choice $\theta := \tfrac{\eta-1}{\beta-1}$ 
		combined with Lemma~\ref{lem:hdot-interpol} 
		in Appendix~\ref{appendix:interpol}
		yields 
		$B_\bbC 
		\in    \cL\bigl( \hdot{2\eta}{A_\bbC};   \hdot{2(\eta-1)}{A_\bbC}   \bigr) 
		\cap \cL\bigl( \hdot{2\eta}{\At_\bbC}; \hdot{2(\eta-1)}{\At_\bbC} \bigr)$ 
		for all $\eta\in[1,\beta]$, and  
		\begin{equation}\label{eq:proof:lem:iff-A-1}
			\forall \eta \in [1,\beta]: 
			\quad 
			A^{\eta-1} B A^{-\eta} \in \cL(E), 
			\quad\;\; 
			\At^{\eta-1} B \At^{-\eta} \in \cL(E). 
		\end{equation} 
		
		For $\gamma=0$, 
		$\At^0 A^{-0} = \id_E$ clearly is 
		an isomorphism on $E$. 
		We next show via induction 
		that, for all $n\in\frakN_\beta\cap\bbN=\{1,\ldots,\lfloor\beta\rfloor \}$, 
		the operator $\At^n A^{-n}$  
		is bounded on~$E$. 
		For $n=1$, we obtain boundedness of 
		$\At A^{-1} 
		= 
		(A + B) A^{-1} 
		= 
		\id_E + B A^{-1}$
		on $E$ from \eqref{eq:proof:lem:iff-A-1}. 
		
		For the 
		induction step $n - 1 \to n$, 
		let $n\in\{2,\ldots,\lfloor\beta\rfloor\}$. 
		Then, 
		\begin{equation}\label{eq:proof:lem:iff-A-2}
			\At^n A^{-n} 
			= 
			\At^{n-1}( A + B ) A^{-n} 
			= 
			\At^{n-1} A^{-(n-1)}  
			+ 
			\bigl( \At^{n-1} A^{-(n-1)} \bigr) A^{n-1} B A^{-n} .
		\end{equation} 
		By the induction hypothesis 
		and by \eqref{eq:proof:lem:iff-A-1}, 
		respectively, 
		$\At^{n-1} A^{-(n-1)}$ 
		and 
		$A^{n-1} B A^{-n}$ are bounded on~$E$ 
		and, thus, \eqref{eq:proof:lem:iff-A-2} shows that 
		$\At^n A^{-n}\in\cL(E)$ holds 
		for all $n\in\{0,\ldots,\lfloor\beta\rfloor\}$, 
		which is equivalent to  
		$\bigl( \hdot{2n}{A}, \norm{\,\cdot\,}{2n,A} \bigr) 
		\hookrightarrow 
		\bigl( \hdot{2n}{\At}, \norm{\,\cdot\,}{2n,\At} \bigr)$.  
		Again by using the identification 
		of $\hdot{2\gamma}{A_\bbC}\!, \hdot{2\gamma}{\At_\bbC}$ 
		with the corresponding complex interpolation spaces, 
		see Lemma~\ref{lem:hdot-interpol} in Appendix~\ref{appendix:interpol}, 
		we obtain 
		boundedness of the operator 
		$\At^\gamma A^{-\gamma}$ on $E$ 
		for all $\gamma\in[0,\lfloor\beta\rfloor]$. 
		In the case that $\beta\notin\bbN$, 
		we have $\beta-1\in(0,\lfloor\beta\rfloor)$, and  
		the identity in \eqref{eq:proof:lem:iff-A-2} 
		with $n$ replaced by $\beta$ 
		combined with boundedness of the operators 
		$\At^{\beta-1}A^{-(\beta-1)}$ 
		and $A^{\beta-1} B A^{-\beta}$\!, 
		see \eqref{eq:proof:lem:iff-A-1}, 
		shows that $\At^\beta A^{-\beta}\in\cL(E)$. 
		Then, 
		again by complexification and interpolation 
		$\At^\gamma A^{-\gamma}\in\cL(E)$ follows 
		for all $\gamma\in[0,\beta]$. 
		
		Since also $\At^{\eta-1} B \At^{-\eta} \in \cL(E)$ 
		holds for all $\eta\in[1,\beta]$, see \eqref{eq:proof:lem:iff-A-1}, 
		we may change the roles of $A$ and $\At$, 
		showing that 
		$A^\gamma \At^{-\gamma} \in \cL(E)$ for all $\gamma\in[0,\beta]$. 
		Thus, for every $\gamma\in[0,\beta]$, 
		$\At^\gamma A^{-\gamma}$ is bounded on~$E$ 
		and has a bounded inverse 
		$( \At^\gamma A^{-\gamma} )^{-1} 
		= A^\gamma \At^{-\gamma}$\!. 
		Due to the identity  
		$\At^{-\gamma} A^\gamma 
		= ( A^\gamma \At^{-\gamma} )^*$\!, 
		the same statement is true 
		for all $\gamma\in[-\beta,0)$. 
		
		Assume now that $\At^\gamma A^{-\gamma}\in\cL(E)$ 
		for all $\gamma\in[-\beta,\beta]$. 
		Then, for every $\eta\in\{1,\beta\}$, 
		\begin{align*} 
			A^{\eta-1} B A^{-\eta} 
			&= 
			A^{\eta-1} \At A^{-\eta} - \id_E 
			= 
			\bigl( \At^{-(\eta-1)} A^{\eta-1} \bigr)^* \bigl( \At^\eta A^{-\eta} \bigr) - \id_E 
			\in 
			\cL(E),  
			\\
			\At^{\eta-1} B \At^{-\eta} 
			&= 
			\id_E 
			\,- 
			\At^{\eta-1} A \At^{-\eta} 
			\,
			= 
			\id_E 
			- \,
			\bigl( \At^{\eta-1} A^{-(\eta-1)} \bigr) \bigl( \At^{-\eta} A^\eta \bigr)^*
			\in \cL(E), 
		\end{align*}  
		i.e., 
		$B\in\cL\bigl( \hdot{2\eta}{A};    \hdot{2(\eta-1)}{A}   \bigr)
		\cap \cL\bigl( \hdot{2\eta}{\At}; \hdot{2(\eta-1)}{\At} \bigr)$. 
	\end{proof} 
	
	\section{Proof of \texorpdfstring{Lemma~\ref{lem:a-notcompact}}{Lemma~4.9}}
	\label{appendix:proof-a-notcompact}
	
	For the proof of Lemma~\ref{lem:a-notcompact}, 
	the following auxiliary result, taken from 
	\cite[Theorem~2]{Kato1952}, will be crucial.
	
	\begin{lemma}\label{lem:kato} 
		Let $A\from\scrD(A)\subseteq E \to E$ 
		and $\At\from\scrD(\At)\subseteq E \to E$ 
		be self-adjoint, 
		nonnegative definite operators 
		on a separable Hilbert space 
		$(E, \scalar{\,\cdot\,, \,\cdot\,}{E})$ 
		over $\bbR$. 
		Whenever $\| \At \psi\|_E \leq \| A \psi\|_E$ 
		holds for all $\psi\in\scrD(A)$, 
		then, for every $\gamma\in[0,1]$, 
		we have that 
		$\| \At^\gamma \psi\|_E \leq \| A^\gamma \psi\|_E$ 
		holds for all $\psi\in\scrD(A^\gamma)$. 
	\end{lemma}
	
	\begin{proof}[Proof of Lemma~\ref{lem:a-notcompact}]	
		We prove the claim by contradiction. 
		To this end, 
		assume that the operator 
		${T_c = L^{-\nicefrac{1}{4}} \Lt^{\nicefrac{1}{2}} L^{-\nicefrac{1}{4}} 
		- c^{\nicefrac{1}{2}} \id_{L_2(\cD)}}$ 
		is compact on $L_2(\cD)$ 
		and let 
		$\widehat{L}\from \scrD(\widehat{L}) \subseteq H^1_0(\cD) \to L_2(\cD)$  
		be defined 
		as in \eqref{eq:L-div} 
		with coefficients 
		$\widehat{\ac}=\id_{\bbR^d}$ and $\widehat{\kappa}=0$, i.e., 
		$\widehat{L}=-\Delta$ is the 
		negative Dirichlet Laplacian.  
		By Theorem~\ref{thm:CM} 
		$S_{\gamma} := L^{\gamma} \widehat{L}^{-\gamma}$
		is an isomorphism on $L_2(\cD)$ 
		for all $\gamma\in[0,\nicefrac{5}{4})$
		and, therefore, also the operator 
		$\widehat{T}_c 
		:= 
		\widehat{L}^{-\nicefrac{1}{4}} \bigl(\Lt^{\nicefrac{1}{2}} 
		- c^{\nicefrac{1}{2}} L^{\nicefrac{1}{2}} \bigr) \widehat{L}^{-\nicefrac{1}{4}} 
		= 
		\dual{S}_{\nicefrac{1}{4}} T_c S_{\nicefrac{1}{4}}$ 
		is compact on $L_2(\cD)$. 

		Define 
		$\Theta := \act - c\ac \in C^\infty(\clos{\cD})^{d\times d}$\!. 
		As $c\ac \neq \act$ is assumed, 
		there exists $\s_0\in\cD$ 
		such that the symmetric $d\times d$ 
		matrix 
		$\Theta_0:=\Theta(\s_0)$  
		has a non-vanishing eigenvalue 
		and, in particular, 
		$\theta_1 \neq 0$ if  
		$\theta_1\in\bbR$ is chosen such that 
		$|\theta_1| = \max_{\lambda\in\sigma(\Theta_0)}|\lambda|$. 
		Since $\Theta$ is continuous, 
		there exists $r_0\in\bbR_+$ 
		such that $\cD_0 := B(\s_0,r_0) \Subset \cD$ and 
		$\norm{ \Theta(\s) -\Theta(\s_0) }{\bbR^{d\times d}} < \tfrac{|\theta_1|}{5}$  
		for all $\s \in \cD_0$, 
		where $B(\s_0,r_0)$ denotes 
		the open ball in $\bbR^d$\!, 
		centered at $\s_0$ 
		with radius $r_0$, and 
		$\norm{\,\cdot\,}{\bbR^{d\times d}}$ is the 	
		operator matrix norm, induced 
		by the Euclidean norm $\norm{\,\cdot\,}{\bbR^d}$. 
		We let 
		$q_1\in\bbR^d$ be an eigenvector 
		for $\Theta_0$ such that 
		$\Theta_0 q_1 = \theta_1 q_1$ 
		and $\norm{q_1}{\bbR^d}=1$. 
		In addition, we let 
		$Q_1$  
		be the $\bbR^d$-orthogonal projection onto 
		the linear space generated by $q_1$,  
		we set 
		$Q_1^\perp := \id_{\bbR^d}-Q_1$, 
		and we define $R\from\bbR^d\to\bbR^d$ 
		as the rotation 	
		that 
		maps $q_1$ to the first unit vector of the Cartesian coordinate system. 
		We then consider the domain 
		$\cD_1 := 
		\bigl\{ 
		\s\in\bbR^d  
		\, \big| \, 
		R(\s-\s_0) 
		\in\widehat{\cD}_1 
		\bigr\}$, 
		where 
		$\widehat{\cD}_1 
		:=
		\bigl( 0, \tfrac{r_0}{2d} \bigr) 
		\times 
		\bigl( 0, \tfrac{r_0}{\sqrt{d}} \bigr)^{d-1}$\!.
		Note that 
		$\cD_1\Subset\cD_0$ is 
		an affine transformation 
		of $\widehat{\cD}_1$, 
		see the illustration in Figure~\ref{fig:domains}. 
		For every $n\in\bbN$, 
		we let $C_n \in \bbR_+$ be a constant to be specified below 
		and define for all   
		${\widehat{\s}=(\widehat{\s}_1,\ldots,\widehat{\s}_d)^\top\!\in\widehat{\cD}_1}$  
		and 
		$\s\in\cD$: 
		\[ 
			\widehat{v}_n (\widehat{\s})  
			:= 
			C_n \sin\biggl( \frac{2dn\pi \widehat{\s}_1}{r_0} \biggr) 
			\prod\limits_{j=2}^d 
			\sin\biggl( \frac{\sqrt{d} n\pi \widehat{\s}_j}{r_0} \biggr) , 
			\quad\;\;  
			v_n(\s) 
			:= 
			\begin{cases} 
				\widehat{v}_n( R(\s-\s_0) ) 
				&
				\text{if } \s\in\cD_1 , 
				\\
				0 
				&
				\text{in } \cD\setminus\cD_1. 
			\end{cases} 
		\] 
		The so-defined sequence 
		$\{v_n \}_{n\in\bbN}$ 
		is orthogonal in $L_2(\cD)$ and 
		in $\hdot{1}{\widehat{L}} \cong H^1_0(\cD)$: 
		\begin{align} 
			\scalar{v_m, v_n}{L_2(\cD)} 
			&= 
			\scalar{ \widehat{v}_m, \widehat{v}_n }{L_2(\widehat{\cD}_1)}
			= 
			\delta_{mn} 
			C_n^2 \, 
			\tfrac{r_0}{4d} 
			\bigl( \tfrac{r_0}{2\sqrt{d}} \bigr)^{d-1} 
			= 
			\delta_{mn} C_n^2 \,  
			\tfrac{  r_0^d }{ 2^{d+1} d^{ \nicefrac{(d+1)}{2} } }, 
			\notag 
			\\
			\scalar{v_m, v_n}{1,\widehat{L}} 
			&= 
			\scalar{\nabla v_m, \nabla v_n}{L_2(\cD)} 
			= 
			\scalar{ R^* \nabla \widehat{v}_m, 
				R^* \nabla \widehat{v}_n }{L_2(\widehat{\cD}_1)}
			= 
			\scalar{ \widehat{v}_m, 
				- \Delta \widehat{v}_n }{L_2(\widehat{\cD}_1)} 
			\label{eq:proof:lem:a-notcompact-0}
			\\
			&= 
			\bigl( 4d^2 + (d-1) d \bigr) 
			\tfrac{ n^2 \pi^2 }{ r_0^2 } 	
			\scalar{ \widehat{v}_m, \widehat{v}_n }{L_2(\widehat{\cD}_1)} 
			= 
			\delta_{mn} 
			C_n^2 \, 
			\tfrac{ (5d-1) n^2 \pi^2 r_0^{d-2} }{ 2^{d+1} d^{ \nicefrac{(d-1)}{2}} }, 
			\notag 
		\end{align}  
		where $\delta_{mn}$ is the Kronecker delta. 
		We set  
		$C_n^2 := \frac{ 2^{d+1} d^{(d-1)/2} }{ (5d-1) n^2 \pi^2 r_0^{d-2} }$ 
		so that 
		$v_n$ is normalized in $\hdot{1}{\widehat{L}}$ 
		for all $n\in\bbN$. 
		We let $\cV \subset \hdot{1}{\widehat{L}}$ be the subspace generated by 
		this orthonormal system, i.e., 
		$\cV := \operatorname{span}\{v_n\}_{n\in\bbN}$. 
		Since $\{v_n\}_{n\in\bbN}$ 
		is an orthonormal system in $\hdot{1}{\widehat{L}}$, 
		the subspace $\cV$ is closed in $\hdot{1}{\widehat{L}}$. 
		
		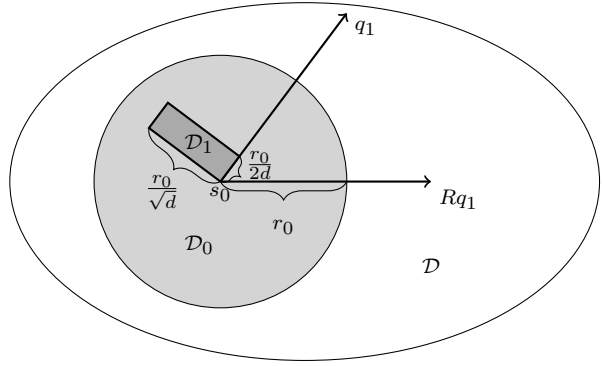
\begin{figure}[t] 
			\begin{center}
				\begin{tikzpicture}[scale=0.56] 
				\draw[fill={rgb:black,1;white,5}] (0,0) circle (3cm);
				\draw[thick,->] (0,0) -- (5,0) node[anchor=north west] {$R q_1$};
				\draw[thick,->] (0,0) -- (3,4) node[anchor=north west] {$q_1$};
				\draw[fill={rgb:black,1;white,2},thick,-] (0,0) -- (0.45,0.6) -- (-1.2471,1.8728) -- (-1.6971,1.2728) -- (0,0);
				\draw[decorate,decoration={brace,amplitude=7pt,mirror},xshift=0pt,yshift=0pt] (0,0) -- (3,0) node[midway,yshift=-0.6cm] {$r_0$};
				\draw[decorate,decoration={brace,amplitude=7pt},xshift=0pt,yshift=0pt] (0,0) -- (-1.6971,1.2728) node[midway,yshift=-0.5cm,xshift=-0.3cm] {$\frac{r_0}{\sqrt{d}}$};
				\draw[decorate,decoration={brace,amplitude=5pt,mirror},xshift=0pt,yshift=0pt] (0,0) -- (0.45,0.6) node[midway,yshift=0.05cm,xshift=0.4cm] {$\frac{r_0}{2d}$};
				\node (a) at (-0.5, -1.5) {$\cD_0$};
				\node (b) at (-0.5, 0.9) {$\cD_1$};
				\node (c) at (0, -0.3) {$s_0$};
				\node (c) at (5, -2) {$\cD$};
				\draw(2,0) ellipse (7cm and 4.25cm);
				\end{tikzpicture}
			\end{center}
			\vspace{-0.5cm} 
			\caption{\label{fig:domains}Illustration of the subdomains 
				$\cD_0 = B(s_0,r_0)\Subset\cD$ and $\cD_1\Subset\cD_0$ used in the proof 
				of Lemma~\ref{lem:a-notcompact}}
		\end{figure}
		
		In what follows, we distinguish the following two cases: 
		\textbf{Case I:} 
		$\theta_1 > 0$ and  
		\textbf{Case II:}  
		$\theta_1 < 0$. 
		
		\textbf{Case I:} 
		For any $v\in\cV$, we have 
		$\int_{\cD\setminus\cD_0} |v(\s)|^2 + \|\nabla v(\s)\|_{\bbR^d}^2 \, \rd \s = 0$  
		and find that 
		\begin{align} 
			\bigl\| \Lt^{\nicefrac{1}{2}} v &\bigr\|_{L_2(\cD)}^2   
			- 
			c 
			\, \bigl\| L^{\nicefrac{1}{2}} v \bigr\|_{L_2(\cD)}^2 
			=    
			\bigl\langle \bigl( \Lt - cL \bigr) v, v \bigr\rangle  
			= 
			\scalar{\Theta \nabla v, \nabla v}{L_2(\cD_0)} 
			+  
			\bigl( \bigl(\kappat^2 - c\kappa^2 \bigr) v, v \bigr)_{L_2(\cD_0)} 
			\notag 
			\\ 
			&\qquad\geq 
			\scalar{\Theta_0 Q_1 \nabla v, Q_1 \nabla v}{L_2(\cD_0)}  
			- 
			\bigl| \bigl( \Theta_0 Q_1^\perp \nabla v, Q_1^\perp \nabla v \bigr)_{L_2(\cD_0)} \bigr| 
			\notag 
			\\
			&\hspace*{4.8cm}  
			- 
			\bigl| \scalar{ (\Theta - \Theta_0) \nabla v, \nabla v}{L_2(\cD_0)} \bigr| 
			- 
			C_{\kappa^2} \norm{v}{L_2(\cD)}^2 
			\notag 
			\\
			&\qquad\geq 
			\theta_1 
			\left( \norm{Q_1 \nabla v}{L_2(\cD_0)}^2 
			- 
			\bigl\| Q_1^\perp \nabla v \bigr\|_{L_2(\cD_0)}^2 \right) 
			- 
			\tfrac{ \theta_1 }{5}  \norm{\nabla v}{L_2(\cD_0)}^2 
			- 
			C_{\kappa^2} \norm{v}{L_2(\cD)}^2, 
			\label{eq:proof:lem:a-notcompact-1} 
		\end{align} 
		where 
		$C_{\kappa^2} 
		:= 
		\bigl\| \kappat^2 - c\kappa^2 \bigr\|_{L_\infty(\cD_0)}\in\bbR_+$ 
		and the last step follows from the choice of $\cD_0\Subset\cD$. 
		Moreover, by the definition of  $\{v_n\}_{n\in\bbN}$ 
		(noting that 
		$Q_1^\perp \nabla v_m \perp Q_1^\perp \nabla v_n$ and 
		$Q_1 \nabla v_m \perp Q_1 \nabla v_n$ in $L_2(\cD)$ 
		whenever $m\neq n$ 
		since $Q_1 R^* \nabla = \tfrac{\partial}{\partial\widehat{\s}_1}$,  
		cf.~\eqref{eq:proof:lem:a-notcompact-0}) 
		we find that  
		\[
			\bigl\| Q_1^\perp \nabla v \bigr\|_{L_2(\cD_0)}^2  
			= 
			\bigl\| Q_1^\perp \nabla v \bigr\|_{L_2(\cD_1)}^2  
			= 
			\tfrac{d-1}{4d} 
			\norm{ Q_1 \nabla v }{L_2(\cD_1)}^2
			< 
			\tfrac{1}{4} \norm{ Q_1 \nabla v }{L_2(\cD_0)}^2  
			\quad 
			\forall v\in\cV. 
		\]
		This readily implies that 
		$\norm{\nabla v}{L_2(\cD)}^2 
		= 
		\norm{\nabla v}{L_2(\cD_0)}^2 
		> 
		5 \, \bigl\| Q_1^\perp \nabla v \bigr\|_{L_2(\cD_0)}^2$ 
		and, thus, 
		by \eqref{eq:proof:lem:a-notcompact-1} 
		\begin{equation}\label{eq:proof:lem:a-notcompact-2} 
		\begin{split}  
			\bigl\| \Lt^{\nicefrac{1}{2}} v \bigr\|_{L_2(\cD)}^2   
			- 
			c \, \bigl\| L^{\nicefrac{1}{2}} v \bigr\|_{L_2(\cD)}^2 
			&\geq 
			\tfrac{4}{5} \theta_1 
			\norm{\nabla v}{L_2(\cD_0)}^2 
			- 
			2 \theta_1 \bigl\| Q_1^\perp \nabla v \bigr\|_{L_2(\cD_0)}^2 
			- 
			C_{\kappa^2} \norm{v}{L_2(\cD)}^2 
			\\
			&> 
			\left( 
			\tfrac{2}{5} \theta_1  
			- 
			C_{\kappa^2} 
			\tfrac{ \norm{v}{L_2(\cD)}^2}{\norm{ \nabla v}{L_2(\cD)}^2 } 
			\right) 
			\norm{\nabla v}{L_2(\cD)}^2 
			\quad 
			\forall 
			v\in\cV. 
		\end{split}
		\end{equation} 
		
		\textbf{Case II:} For $\theta_1<0$, we similarly obtain that 
		\begin{equation}\label{eq:proof:lem:a-notcompact-3} 
			- \bigl\| \Lt^{\nicefrac{1}{2}} v \bigr\|_{L_2(\cD)}^2   
			+ 
			c \, \bigl\| L^{\nicefrac{1}{2}} v \bigr\|_{L_2(\cD)}^2 
			> 
			\left( 
			\tfrac{2}{5} |\theta_1|  
			- 
			C_{\kappa^2} 
			\tfrac{ \norm{v}{L_2(\cD)}^2}{\norm{ \nabla v}{L_2(\cD)}^2 } 
			\right) 
			\norm{\nabla v}{L_2(\cD)}^2 
			\quad 
			\forall 
			v\in\cV. 
		\end{equation}
		
		Next, define for $N\in\bbN$  
		the subspace $\cV_N^\perp \subset \cV$ by
		$\cV_N^\perp := \operatorname{span}\{v_n\}_{n=N+1}^\infty$ 
		and note that 
		\[ 
			\frac{\norm{v}{L_2(\cD)}^2}{\norm{\nabla v}{L_2(\cD)}^2} 
			= 
			\frac{\sum\nolimits_{n>N} \alpha_n^2 \norm{v_n}{L_2(\cD)}^2}{ 
				\sum\nolimits_{n>N} \alpha_n^2 } 
			\leq 
			\norm{ v_{N+1} }{L_2(\cD)}^2 
			= 
			\frac{ r_0^2 } {(5d-1)d (N+1)^2 \pi^2 }, 
		\] 
		for any $v = \sum_{n>N} \alpha_n v_n\in\cV_N^\perp$. 
		For this reason, there exists $N_0\in\bbN$ such that 
		$C_{\kappa^2} \tfrac{ \norm{v}{L_2(\cD)}^2}{\norm{\nabla v}{L_2(\cD)}^2 } 
		\leq \tfrac{|\theta_1|}{5}$ 
		holds for all $v \in \cV_{N_0}^{\perp}$.
		For this choice 
		\eqref{eq:proof:lem:a-notcompact-2} 
		and \eqref{eq:proof:lem:a-notcompact-3} 
		give,  
		for all $v\in\cV_{N_0}^\perp$, 
		the estimates 
		\begin{equation}\label{eq:proof:lem:a-notcompact-4} 
		\begin{split} 
			\text{\textbf{Case I:}}
			\quad 
			\bigl\| \Lt^{\nicefrac{1}{2}} v \bigr\|_{L_2(\cD)}^2   
			&> 
			c\, \bigl\| L^{\nicefrac{1}{2}} v \bigr\|_{L_2(\cD)}^2   
			+ 
			\tfrac{\theta_1}{5} 
			\bigl\| \widehat{L}^{\nicefrac{1}{2}} v \bigr\|_{L_2(\cD)}^2    
			\geq 
			(c + \theta_1') 
			\bigl\| L^{\nicefrac{1}{2}} v \bigr\|_{L_2(\cD)}^2 , 
			\hspace*{-0.2cm} 
			\\
			\text{\textbf{Case II:}}
			\;\;\,
			c\, \bigl\| L^{\nicefrac{1}{2}} v \bigr\|_{L_2(\cD)}^2  
			&>  
			\bigl\| \Lt^{\nicefrac{1}{2}} v \bigr\|_{L_2(\cD)}^2  
			+ 
			\tfrac{|\theta_1|}{5} 
			\bigl\| \widehat{L}^{\nicefrac{1}{2}} v \bigr\|_{L_2(\cD)}^2 
			\, \geq 
			(1+\widetilde{\theta}_1') 
			\bigl\| \Lt^{\nicefrac{1}{2}} v \bigr\|_{L_2(\cD)}^2 . 
			\hspace*{-0.2cm} 
		\end{split} 
		\end{equation} 
		Here, we used 
		that, for all $v\in\hdot{1}{\widehat{L}} \setminus\{0\}$, 
		we have 
		$\bigl\| \widehat{L}^{\nicefrac{1}{2}} v \bigr\|_{L_2(\cD)}^2 
		= 
		\langle \widehat{L} v, v \rangle 
		= \| \nabla v \|_{L_2(\cD)}^2$ and 
		\begin{equation}\label{eq:proof:lem:a-notcompact-5} 
			\frac{ \| \widehat{L}^{\nicefrac{1}{2}} v \|_{L_2(\cD)}^2 }{ 
				\| L^{\nicefrac{1}{2}} v \|_{L_2(\cD)}^2  }
			\geq 
			\norm{ L^{\nicefrac{1}{2}} \widehat{L}^{-\nicefrac{1}{2}} }{\cL(L_2(\cD))}^{-2} 
			\quad\;  
			\text{and} 
			\quad\;\; 
			\frac{ \| \widehat{L}^{\nicefrac{1}{2}} v \|_{L_2(\cD)}^2 }{ 
				\| \Lt^{\nicefrac{1}{2}} v \|_{L_2(\cD)}^2  }
			\geq 
			\norm{ \Lt^{\nicefrac{1}{2}} \widehat{L}^{-\nicefrac{1}{2}} }{\cL(L_2(\cD))}^{-2},  
		\end{equation} 
		since 
		$L^{\nicefrac{1}{2}} \widehat{L}^{-\nicefrac{1}{2}} $ 
		and 
		$\Lt^{\nicefrac{1}{2}} \widehat{L}^{-\nicefrac{1}{2}}$ 
		are isomorphisms on $L_2(\cD)$ by 
		Theorem~\ref{thm:CM}. 
		Thus, 
		${\theta_1', \widetilde{\theta}_1' \in \bbR_+}$ 
		in \eqref{eq:proof:lem:a-notcompact-4} 
		may be chosen as 
		$\theta_1' 
		:= 
		\tfrac{\theta_1}{5}  
		\norm{ L^{\nicefrac{1}{2}} \widehat{L}^{-\nicefrac{1}{2}} }{\cL(L_2(\cD))}^{-2}$ 
		and
		$\widetilde{\theta}_1' 
		:= 
		\tfrac{|\theta_1|}{5}  
		\norm{ \Lt^{\nicefrac{1}{2}} \widehat{L}^{-\nicefrac{1}{2}} }{\cL(L_2(\cD))}^{-2}$. 
		We next introduce the spaces  
		$\cU_{N_0}^\perp 
		:= 
		\widehat{L}^{\nicefrac{1}{2}} \bigl( \cV_{N_0}^\perp \bigr) 
		\subset L_2(\cD)$ 
		and 
		$\cW_{N_0}^\perp 
		:= 
		\widehat{L}^{\nicefrac{1}{4}} \bigl( \cV_{N_0}^\perp \bigr) 
		\subset\hdot{\nicefrac{1}{2}}{\widehat{L}}$\!.  
		By continuity of 
		$\widehat{L}^{\nicefrac{1}{2}}\from\hdot{1}{\widehat{L}}\to L_2(\cD)$ 
		and of 
		$\widehat{L}^{\nicefrac{1}{4}}\from\hdot{1}{\widehat{L}}\to \hdot{\nicefrac{1}{2}}{\widehat{L}}$
		as well as 
		closedness of $\cV_{N_0}^\perp$ in $\hdot{1}{\widehat{L}}$, 
		also $\cU_{N_0}^\perp$ and $\cW_{N_0}^\perp$ 
		are closed subspaces of $L_2(\cD)$ 
		and of $\hdot{\nicefrac{1}{2}}{\widehat{L}}$\!, respectively. 
		Thanks to self-adjointness of~$L$ and~$\Lt$ 
		we may 
		apply 
		Lemma~\ref{lem:kato} 
		for $\gamma:=\nicefrac{1}{2}$ 
		and the operators 
		$A := \Lt^{\nicefrac{1}{2}}$\!, 
		$\At := (c+\theta')^{\nicefrac{1}{2}} L^{\nicefrac{1}{2}}$\!, 
		respectively, 
		$A := c^{\nicefrac{1}{2}} L^{\nicefrac{1}{2}}$\!, 
		$\At := (1+\widetilde{\theta}_1')^{\nicefrac{1}{2}} \Lt^{\nicefrac{1}{2}}$\!, 
		to the relations in 
		\eqref{eq:proof:lem:a-notcompact-4}. 
		This in particular implies: 
		\begin{align*}
			\quad\text{\textbf{Case I:}}
			&
			& 
			\bigl\| \Lt^{\nicefrac{1}{4}} w \bigr\|_{L_2(\cD)}^2
			&\geq 
			(c+\theta_1')^{\nicefrac{1}{2}}
			\bigl\| L^{\nicefrac{1}{4}} w \bigr\|_{L_2(\cD)}^2  
			&& 
			\forall 
			w\in\cW_{N_0}^\perp,  
			\quad
			\\ 
			\quad\text{\textbf{Case II:}}
			&
			&
			c^{\nicefrac{1}{2}}  
			\bigl\| L^{\nicefrac{1}{4}} w \bigr\|_{L_2(\cD)}^2 
			&\geq 
			(1+\widetilde{\theta}_1')^{\nicefrac{1}{2}} 
			\bigl\| \Lt^{\nicefrac{1}{4}} w \bigr\|_{L_2(\cD)}^2 
			&& 
			\forall 
			w\in\cW_{N_0}^\perp. 
			\quad
		\end{align*}
		By using the lower bound   
		$\sqrt{x+y} \geq \sqrt{x} + \tfrac{y}{2\sqrt{x+y}}$ 
		for $x,y\in\bbR_+$, 
		we obtain the relations  
		\begin{align*}
			\quad\text{\textbf{Case I:}}
			&
			& 
			\bigl\| \Lt^{\nicefrac{1}{4}} w \bigr\|_{L_2(\cD)}^2 
			- c^{\nicefrac{1}{2}} \bigl\| L^{\nicefrac{1}{4}} w\bigr\|_{L_2(\cD)}^2 
			&\geq 
			c_1 \bigl\| \widehat{L}^{\nicefrac{1}{4}} w \bigr\|_{L_2(\cD)}^2 
			&& 
			\forall 
			w\in\cW_{N_0}^\perp,  
			\quad
			\\ 
			\quad\text{\textbf{Case II:}}
			&
			&
			c^{\nicefrac{1}{2}} \bigl\| L^{\nicefrac{1}{4}} w \bigr\|_{L_2(\cD)}^2 
			- \bigl\| \Lt^{\nicefrac{1}{4}} w \bigr\|_{L_2(\cD)}^2 
			&\geq 
			c_2  \bigl\| \widehat{L}^{\nicefrac{1}{4}} w \bigr\|_{L_2(\cD)}^2 
			&& 
			\forall 
			w\in\cW_{N_0}^\perp,  
			\quad
		\end{align*} 
		where we have proceeded similarly 
		as in \eqref{eq:proof:lem:a-notcompact-5} 
		using the isomorphism property 
		of $\widehat{L}^{\nicefrac{1}{4}} L^{-\nicefrac{1}{4}}$ 
		and $\widehat{L}^{\nicefrac{1}{4}} \Lt^{-\nicefrac{1}{4}}$ on $L_2(\cD)$, 
		see Theorem~\ref{thm:CM}, 
		and defined 
		the constants 
		$c_1, c_2 \in \bbR_+$ 
		by 
		\[
			c_1 := 
			\tfrac{\theta_1'}{2}
			(c+\theta_1')^{-\nicefrac{1}{2}}
			\bigl\| \widehat{L}^{\nicefrac{1}{4}} L^{-\nicefrac{1}{4}} \bigr\|_{\cL(L_2(\cD))}^{-2} 
			\quad 
			\text{and} 
			\quad 
			c_2 
			:= 
			\tfrac{\widetilde{\theta}_1'}{2} 
			(1+\widetilde{\theta}_1')^{-\nicefrac{1}{2}}
			\bigl\| \widehat{L}^{\nicefrac{1}{4}} \Lt^{-\nicefrac{1}{4}} \bigr\|_{\cL(L_2(\cD))}^{-2} . 
		\]
		Since 
		$\widehat{L}^{-\nicefrac{1}{4}} \bigl( \cU_{N_0}^\perp \bigr) 
		= \cW_{N_0}^\perp$,  
		we may reformulate these relations as   
		\begin{align*}
			\quad\text{\textbf{Case I:}}
			&
			& 
			\bigl( \widehat{L}^{-\nicefrac{1}{4}} 
				\bigl(\Lt^{\nicefrac{1}{2}} - c^{\nicefrac{1}{2}} L^{\nicefrac{1}{2}} \bigr) 
				\widehat{L}^{-\nicefrac{1}{4}} u, u \bigr)_{L_2(\cD)} 
			&\geq 
			c_1 \norm{u}{L_2(\cD)}^2 
			&& 
			\forall 
			u\in\cU_{N_0}^\perp,  
			\quad
			\\
			\quad\text{\textbf{Case II:}}
			&
			&
			\bigl( \widehat{L}^{-\nicefrac{1}{4}} 
				\bigl(c^{\nicefrac{1}{2}} L^{\nicefrac{1}{2}} - \Lt^{\nicefrac{1}{2}} \bigr)
				\widehat{L}^{-\nicefrac{1}{4}} u, u \bigr)_{L_2(\cD)} 
			&\geq 
			c_2 \norm{u}{L_2(\cD)}^2 
			&& 
			\forall 
			u\in\cU_{N_0}^\perp. 
			\quad
		\end{align*} 

		Let $\cP_{N_0}^\perp$ denote the 
		$L_2(\cD)$-orthogonal projection onto $\cU_{N_0}^\perp$. 
		By the above observation, 
		the spectrum of 
		$\cP_{N_0}^\perp \widehat{T}_c |_{\cU_{N_0}^\perp} 
		\from \cU_{N_0}^\perp \to \cU_{N_0}^\perp$ 
		is in \textbf{Case I} bounded from below by $c_1>0$  
		and in \textbf{Case~II} bounded from above by $-c_2<0$. 
		Thus, the spectrum of 
		$\widehat{T}_c 
		= \widehat{L}^{-\nicefrac{1}{4}} 
		\bigl( \Lt^{\nicefrac{1}{2}} - c^{\nicefrac{1}{2}} L^{\nicefrac{1}{2}} \bigr) 
		\widehat{L}^{-\nicefrac{1}{4}}  
		\in\cL( L_2(\cD) )$ 
		does not accumulate only at zero 
		and $\widehat{T}_c$ is not compact on $L_2(\cD)$. 
		We conclude that in the case  
		$c\ac\neq\act$, the operator 
		$T_c = L^{-\nicefrac{1}{4}}\Lt^{\nicefrac{1}{2}} L^{-\nicefrac{1}{4}} 
		- c^{\nicefrac{1}{2}}\id_{L_2(\cD)}$ 
		is not compact on $L_2(\cD)$. 
	\end{proof} 
	
	\section{Auxiliary results based on interpolation theory}\label{appendix:interpol}
	
	In this section we derive several auxiliary results 
	which are needed in Section~\ref{section:wm-D} 
	and for 
	the proofs of Lemmas~\ref{lem:beta-gamma} 
	and~\ref{lem:iff-A}  
	in Appendix~\ref{appendix:proofs-lemmas}. 
	Since we use the concept of complex interpolation 
	for this purpose, throughout this 
	section all Hilbert spaces are assumed to 
	be infinite-dimensional vector spaces over $\bbC$. 
	We start by recalling the definition of complex interpolation 
	between two separable Hilbert spaces $E_0$ and $E_1$  
	for the case that $E_1$ is continuously and densely embedded 
	in $E_0$, similarly as it can be found e.g.\ in 
	\cite[Section~1.5.1]{Yagi2010}, 
	see also 
	\cite[Definitions~2.1 and~2.3]{Lunardi2018}. 
	
	\begin{definition}\label{def:complex-interpol} 
		Let $(E_0, \scalar{\,\cdot\,, \,\cdot\,}{E_0})$,  
		$(E_1, \scalar{\,\cdot\,, \,\cdot\,}{E_1})$ 
		be two separable Hilbert spaces 
		over~$\bbC$ 
		such that the embedding 
		$(E_1, \norm{\,\cdot\,}{E_1}) \hookrightarrow (E_0,\norm{\,\cdot\,}{E_0})$ 
		is continuous and dense.  
		Then, the interpolation space 
		between $E_0$ and $E_1$ with parameter 
		$\theta \in [0,1]$ is defined by 
		\[
			[E_0, E_1]_{\theta}  
			:= 
			\{f(\theta) : f\in\cF(E_0, E_1) \}, 
			\qquad 
			\norm{v}{[E_0, E_1]_{\theta} }
			:= 
			\inf_{\substack{ f\in\cF(E_0, E_1), \\ f(\theta)=v }} 
			\| f \|_{\cF(E_0, E_1)}, 
		\]
		where $\cF(E_0, E_1)$ is the space 
		of all functions  
		\[
			f \from S \to E_0, 
			\qquad 
			S := \{a + ib : a \in [0,1], \ b \in \bbR \} \subset \bbC, 
		\]
		such that 
		\begin{enumerate*} 
			\item $f$ is holomorphic in the interior of $S$ and 
			continuous and bounded up
			to the boundary of~$S$ with values in $E_0$; 
			and 
			\item the mapping  
			$\bbR\ni t \mapsto f(1+it) \in E_1$ 
			is continuous and bounded, 
		\end{enumerate*} 
		with   
		\[
			\norm{f}{\cF(E_0, E_1)} 	
			:= 
			\max\left\{ 
			\sup\nolimits_{t\in\bbR} \norm{ f(it) }{E_0}, 
			\; 
			\sup\nolimits_{t\in\bbR} \norm{ f(1+it) }{E_1} 
			\right\}
			< \infty. 
		\]
	\end{definition}  
	
	In the next lemma the interpolation spaces 
	between two specific closed subspaces 
	$V_0\subseteq E_0$ and 
	$V_1 := E_1 \cap V_0 \subseteq E_1$ 
	are characterized. 
	
	\begin{lemma}\label{lem:interpol-subspace}
		Let $\bigl(E_0, \scalar{\,\cdot\,, \,\cdot\,}{E_0} \bigr)$ and 
		$\bigl(E_1, \scalar{\,\cdot\,, \,\cdot\,}{E_1}\bigr)$ be separable Hilbert spaces 
		over $\bbC$ such that 
		the embedding 
		${( E_1, \norm{\,\cdot\,}{E_1} ) \hookrightarrow (E_0, \norm{\,\cdot\,}{E_0} )}$ 
		is continuous and dense, 
		and such that there exists 
		an orthonormal basis $\{e_j\}_{j\in\bbN}$ for $E_0$ 
		whose basis vectors are pairwise orthogonal in $E_1$. 
		In addition,  
		let~$N\in\bbN$ and $V_0:=U_N^\perp$ 
		be the $E_0$-orthogonal complement of 
		$U_N:=\operatorname{span}\{e_j\}_{j=1}^N$, 
		and define the closed subspace 
		$V_1 := E_1 \cap V_0$ of $E_1$. Then, 
		\[
			[(V_0, \norm{\,\cdot\,}{E_0}), (V_1, \norm{\,\cdot\,}{E_1})]_\theta 
			= 
			[(E_0, \norm{\,\cdot\,}{E_0}), (E_1, \norm{\,\cdot\,}{E_1})]_\theta  \cap V_0, 
		\]
		with isometry, i.e., 
		$\norm{\,\cdot\,}{[(V_0, \norm{\,\cdot\,}{E_0}), (V_1, \norm{\,\cdot\,}{E_1})]_\theta } 
		= 
		\norm{\,\cdot\,}{[(E_0, \norm{\,\cdot\,}{E_0}), (E_1, \norm{\,\cdot\,}{E_1})]_\theta}$. 
	\end{lemma}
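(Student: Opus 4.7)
The plan is to prove the two continuous inclusions
$[V_0, V_1]_\theta \hookrightarrow [E_0, E_1]_\theta \cap V_0$
and
$[E_0, E_1]_\theta \cap V_0 \hookrightarrow [V_0, V_1]_\theta$
with operator norm at most one in each direction, thereby establishing the isometry. The key tool throughout will be the $E_0$-orthogonal projection $P$ onto $V_0 = U_N^\perp$, complemented by $Q := \id_{E_0} - P$, the projection onto $U_N = \operatorname{span}\{e_j\}_{j=1}^N$.

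First, I would verify that the pair $(V_0, V_1)$ is admissible for complex interpolation in the sense of Definition~\ref{def:complex-interpol}. Continuity of $V_1 \hookrightarrow V_0$ is inherited from $E_1 \hookrightarrow E_0$. For density, given any $v \in V_0$, approximate it in $E_0$-norm by a sequence $\{w_n\}\subset E_1$ and observe that $Pw_n \in E_1 \cap V_0 = V_1$ (since each $e_j \in E_1$ by the orthogonality hypothesis, so $Qw_n \in E_1$, whence $Pw_n = w_n - Qw_n \in E_1$) and $Pw_n \to Pv = v$ in $E_0$ by continuity of $P$ on $E_0$.

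Second, the crucial step is to show $P$ is a contraction on $E_1$. For $v \in E_1$, write $v = Pv + Qv$. Since $Qv \in U_N \subset E_1$ and $Pv = v - Qv \in E_1$, the pairwise orthogonality of $\{e_j\}$ in $E_1$ gives
\[
\scalar{Pv, Qv}{E_1}
= \sum_{k>N}\sum_{j=1}^N c_j\overline{c_k}\,\scalar{e_k,e_j}{E_1} = 0,
\]
where $c_j$ are the $E_0$-expansion coefficients of $v$; hence $\norm{v}{E_1}^2 = \norm{Pv}{E_1}^2 + \norm{Qv}{E_1}^2$ and $\norm{P}{\cL(E_1)} \leq 1$. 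Similarly, $\norm{P}{\cL(E_0)} = 1$. This is the only place where the specific structural assumption on $\{e_j\}$ is used, and I expect it to be the main (though mild) obstacle: one must be careful that the series representation of $v$ converging in $E_0$ actually makes $\scalar{Pv,Qv}{E_1}$ meaningful, which is ensured because $Qv$ is a finite linear combination of the $e_j\in E_1$.

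Third, with $P$ as a simultaneous contraction on $E_0$ and $E_1$, the inclusion $[E_0, E_1]_\theta \cap V_0 \hookrightarrow [V_0,V_1]_\theta$ with norm at most one follows by the standard composition trick: given $v \in [E_0,E_1]_\theta \cap V_0$ and any $f \in \cF(E_0,E_1)$ with $f(\theta) = v$, the function $g := P\circ f \from S \to V_0$ is holomorphic in the interior of $S$ with values in $V_0$, continuous and bounded up to the boundary, and satisfies $g(1+it) \in V_1$ with $\norm{g(1+it)}{E_1} \leq \norm{f(1+it)}{E_1}$ and $\norm{g(it)}{E_0} \leq \norm{f(it)}{E_0}$; moreover $g(\theta) = Pv = v$, which gives $\norm{v}{[V_0,V_1]_\theta} \leq \norm{f}{\cF(E_0,E_1)}$ and then $\norm{v}{[V_0,V_1]_\theta} \leq \norm{v}{[E_0,E_1]_\theta}$ upon taking the infimum over $f$. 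The converse inclusion is immediate: any $g \in \cF(V_0,V_1)$ is, via the isometric inclusions $V_0 \hookrightarrow E_0$ and $V_1 \hookrightarrow E_1$, an element of $\cF(E_0,E_1)$ with the same $\cF$-norm, showing $\norm{v}{[E_0,E_1]_\theta} \leq \norm{v}{[V_0,V_1]_\theta}$, while $v \in V_0$ is clear from $v = g(\theta) \in V_0$.
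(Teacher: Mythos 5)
Your argument is the same as the paper's: the easy inclusion via $\cF(V_0,V_1)\subseteq\cF(E_0,E_1)$ with equal $\cF$-norms, and the reverse one by composing admissible functions with the $E_0$-orthogonal projection $P$ onto $V_0$ and using that $P$ is a simultaneous contraction on $E_0$ and $E_1$ (you additionally verify that $V_1$ is dense in $V_0$, which the paper leaves implicit). The one place where you go beyond the paper --- which merely asserts $\norm{P v}{E_1}\leq\norm{v}{E_1}$ ``by the assumptions on $V_0$'' --- is also where your write-up is not airtight. The identity $\scalar{Pv,Qv}{E_1}=\sum_{k>N}\sum_{j=1}^N c_j\overline{c_k}\scalar{e_k,e_j}{E_1}$ expands $Pv=\sum_{k>N}c_ke_k$ inside the $E_1$-inner product, which requires that series to converge (at least weakly) in $E_1$, not merely in $E_0$; your caveat about $Qv$ being a finite combination only disposes of the finite sum over $j\leq N$, not of the infinite sum over $k$. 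What is actually needed is $\scalar{Pv,e_j}{E_1}=0$ for $j\leq N$, equivalently $\scalar{v,e_j}{E_1}=\norm{e_j}{E_1}^2\scalar{v,e_j}{E_0}$ for all $v\in E_1$. This follows by continuity once $\operatorname{span}\{e_k\}_{k\in\bbN}$ is dense in $E_1$, since the bounded linear functional $v\mapsto\scalar{v,e_j}{E_1}-\norm{e_j}{E_1}^2\scalar{v,e_j}{E_0}$ on $E_1$ vanishes on every $e_k$; but such density is not a formal consequence of ``pairwise orthogonal in $E_1$'' alone. In the lemma's only application (Lemma~\ref{lem:beta-gamma-complex}) the $e_j$ form an eigenbasis, hence an orthogonal basis of $\hdot{2\beta}{A}$ as well, so the point is harmless there; as a standalone argument, though, you should either record this totality in $E_1$ as part of the hypothesis or insert the functional-analytic argument above in place of the double-sum computation.
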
 
	
	\begin{proof} 	
		Since 
		$(V_0, \norm{\,\cdot\,}{E_0}) \subseteq (E_0, \norm{\,\cdot\,}{E_0})$ 
		and 
		$(V_1, \norm{\,\cdot\,}{E_1}) \subseteq (E_1, \norm{\,\cdot\,}{E_1})$ 
		are closed subspaces, the inclusion 
		$\cF(V_0, V_1) \subseteq \cF(E_0, E_1)$ 
		follows 
		(see Definition~\ref{def:complex-interpol}), 
		and 
		\[
			\norm{g}{\cF(E_0, E_1)} = \norm{g}{\cF(V_0, V_1)} 
			\quad \forall g\in \cF(V_0, V_1). 
		\]
		Therefore, we find, for any $v\in V_1$, 
		\begin{align*} 
			\norm{v}{[(E_0, \norm{\,\cdot\,}{E_0}), (E_1, \norm{\,\cdot\,}{E_1})]_\theta} 
			&= 
			\inf_{\substack{ f\in\cF(E_0, E_1), \\ f(\theta)=v}} \| f \|_{\cF(E_0, E_1)} 
			\leq 
			\inf_{\substack{ g\in\cF(V_0, V_1), \\ g(\theta)=v}} \| g \|_{\cF(E_0, E_1)} 
			\\
			&= 
			\inf_{\substack{ g\in\cF(V_0, V_1), \\ g(\theta)=v}} \| g \|_{\cF(V_0, V_1)}
			= 
			\norm{v}{[(V_0, \norm{\,\cdot\,}{E_0}), (V_1, \norm{\,\cdot\,}{E_1})]_\theta}. 
		\end{align*}
		This shows that 
		$[(V_0, \norm{\,\cdot\,}{E_0}), (V_1, \norm{\,\cdot\,}{E_1})]_\theta  
		\subseteq 
		[(E_0, \norm{\,\cdot\,}{E_0}), (E_1, \norm{\,\cdot\,}{E_1})]_\theta$. 
		Furthermore, clearly 
		$[(V_0, \norm{\,\cdot\,}{E_0}), (V_1, \norm{\,\cdot\,}{E_1})]_\theta 
		\subseteq V_0$. 
		It remains to prove that 
		\begin{align*} 
			[(E_0, \norm{\,\cdot\,}{E_0}), (E_1, \norm{\,\cdot\,}{E_1})]_\theta 
			\cap V_0 
			&\subseteq 
			[(V_0, \norm{\,\cdot\,}{E_0}), (V_1, \norm{\,\cdot\,}{E_1})]_\theta , 
			\\
			\norm{v}{[(V_0, \norm{\,\cdot\,}{E_0}), (V_1, \norm{\,\cdot\,}{E_1})]_\theta} 
			&\leq 
			\norm{v}{[(E_0, \norm{\,\cdot\,}{E_0}), (E_1, \norm{\,\cdot\,}{E_1})]_\theta} 
			\qquad\quad 
			\forall v \in V_1. 
		\end{align*} 
		To this end, 
		for $f \in \cF(E_0, E_1)$, 
		define 
		\[
			f_V \from S \to V_0 , 
			\qquad 
			z \mapsto f_V(z) := P_0 f(z),  
		\] 
		where $P_0 \from E_0 \to V_0$ is the $E_0$-orthogonal 
		projection onto $V_0$. 
		Note that by the assumptions on $V_0$, 
		we obtain that $P_0 v\in E_1$ holds for all $v\in E_1$, and  
		$\norm{P_0 v}{E_1}\leq \norm{v}{E_1}$. Therefore,  
		$f_V \in \cF(V_0, V_1)$ and 
		$\norm{f_V}{\cF(V_0, V_1)} 
		= 
		\norm{f_V}{\cF(E_0, E_1)} 
		\leq \norm{f}{\cF(E_0, E_1)}$. 
		We thus find that, for all $v\in V_1$, 
		\begin{align*} 
			\norm{v}{[(E_0, \norm{\,\cdot\,}{E_0}), (E_1, \norm{\,\cdot\,}{E_1})]_\theta} 
			&= 
			\inf_{\substack{ f\in\cF(E_0, E_1), \\ f(\theta)=v}} \norm{ f }{\cF(E_0, E_1)} 
			\geq 
			\inf_{\substack{ f\in\cF(E_0, E_1), \\ f(\theta)=v}} \norm{ f_V }{\cF(E_0, E_1)} 
			\\
			&= 
			\inf_{\substack{ f\in\cF(E_0, E_1), \\ f_V(\theta)=v}} \norm{ f_V }{\cF(V_0, V_1)}
			= 
			\norm{v}{[(V_0, \norm{\,\cdot\,}{E_0}), (V_1, \norm{\,\cdot\,}{E_1})]_\theta}, 
		\end{align*} 
		where the last equality follows from the fact that for every 
		$g\in \cF(V_0, V_1)$ there exists a function 
		${f \in \cF(E_0, E_1) \supseteq \cF(V_0, V_1)}$ such that 
		$g = f_V$ ($f$ may be chosen as $g$). 
	\end{proof} 
	
	In what follows, we let 
	$A \from \scrD(A) \subset E \to E$ be a 
	densely defined, self-adjoint, positive definite linear 
	operator on a separable Hilbert space 
	$(E, \scalar{\,\cdot\,, \,\cdot\,}{E} )$ over $\bbC$ with $\dim(E)=\infty$.  
	We furthermore assume that 
	$A$ has a compact inverse $A^{-1}\in\cK(E)$ 
	so that $A$ diagonalizes with respect to an 
	orthonormal eigenbasis $\{e_j\}_{j\in\bbN}$ for $E$ 
	and corresponding eigenvalues 
	$(\lambda_j)_{j\in\bbN}\subset\bbR_+$ accumulating only at~$\infty$. 
	For $r\in[0,\infty)$ the fractional 
	power operator $A^{\nicefrac{r}{2}}$ then is defined 
	as in the real-valued case, see \eqref{eq:def:Abeta}, 
	and its domain $\hdot{r}{A} = \scrD(A^{\nicefrac{r}{2}})$,  
	as given in \eqref{eq:def:hdotA}, 
	is itself a separable Hilbert space. 
	We identify $\hdot{0}{A}=E$ with its dual 
	and obtain 
	for $0\leq r_0 \leq r_1 < \infty$ 
	the inclusions 
	\[
		\bigl( \hdot{r_1}{A} , \norm{\,\cdot\,}{r_1, A} \bigr) 
		\hookrightarrow 
		\bigl( \hdot{r_0}{A} , \norm{\,\cdot\,}{r_0, A} \bigr) 
		\hookrightarrow 
		\bigl( \hdot{-r_0}{A} , \norm{\,\cdot\,}{-r_0, A} \bigr) 
		\hookrightarrow 
		\bigl( \hdot{-r_1}{A} , \norm{\,\cdot\,}{-r_1, A} \bigr),  
	\]
	where, for $r \geq 0$, 
	$\bigl(\hdot{-r}{A}, \norm{\,\cdot\,}{-r,A} \bigr)$ is 
	the dual space of $\bigl(\hdot{r}{A}, \norm{\,\cdot\,}{r,A} \bigr)$. 

	Furthermore, for every $r\in[r_0, r_1]$, 
	$\hdot{r}{A}$ coincides 
	with the corresponding complex interpolation 
	space between $\hdot{r_0}{A}$ and $\hdot{r_1}{A}$\!. 
	This identification readily follows from, e.g., 
	\cite[][Thorem~16.1]{Yagi2010}, see also \cite[][Chapter~3]{Lunardi2018}, 
	and we summarize it below.  
	
	\begin{lemma}\label{lem:hdot-interpol}
		Let $0\leq r_0 < r_1 < \infty$, 
		$r\in[r_0,r_1]$, 
		and define $\theta := \tfrac{r - r_0}{r_1 - r_0} \in [0,1]$. 
		Then, 
		\[
			\hdot{r}{A} 
			= 
			\bigl[ \hdot{r_0}{A}, \hdot{r_1}{A} \bigr]_{\theta}
			\quad 
			\text{and}
			\quad  
			\norm{\,\cdot\,}{r,A} 
			= 
			\norm{\,\cdot\,}{ \left[ \hdot{r_0}{A}, \hdot{r_1}{A} \right]_{\theta} }, 
		\]
		i.e., the space $\hdot{r}{A}$ is isometrically isomorphic 
		to the complex interpolation space 
		between $\hdot{r_0}{A}$ and $\hdot{r_1}{A}$ 
		with parameter~$\theta = \tfrac{r - r_0}{r_1 - r_0} \in [0,1]$. 
	\end{lemma}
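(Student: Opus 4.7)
The plan is to prove the two isometric inequalities defining the interpolation norm separately, using the spectral theorem to exploit the unitarity of imaginary powers of $A$ together with a Hadamard three-lines argument. Throughout I may assume $\sigma_0 < \sigma_1$ (the boundary cases $\theta \in \{0,1\}$ are trivial since $f(0), f(1)$ land in the respective spaces by definition). The spectral theorem identifies $E$ with $\ell^2(\bbN)$ via the eigenbasis $\{e_j\}_{j \in \bbN}$, and for every $\sigma \in \bbR$ the space $\hdot{\sigma}{A}$ becomes the weighted sequence space with weight $(\lambda_j^\sigma)$. A key fact is that for every $t \in \bbR$ the operator $A^{it}$ is a surjective isometry of every $\hdot{s}{A}$ onto itself, and that $A^z = \sum_j \lambda_j^z \langle \cdot, e_j\rangle_E \, e_j$ is well defined as a closed operator for each $z \in \bbC$.

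For the upper bound $\|v\|_{[\hdot{\sigma_0}{A}, \hdot{\sigma_1}{A}]_\theta} \leq \|v\|_{\hdot{\sigma}{A}}$, I would exhibit an explicit admissible function. Fix $v \in \hdot{\sigma_1}{A}$ (which is dense in $\hdot{\sigma}{A}$) and $\varepsilon > 0$, and set $s(z) := (1-z)\sigma_0 + z\sigma_1$, so $s(\theta) = \sigma$. Define
\[
f_\varepsilon(z) := e^{\varepsilon(z-\theta)^2} A^{(s(z)-\sigma)/2} v, \qquad z \in S.
\]
Then $f_\varepsilon(\theta) = v$; $f_\varepsilon$ is holomorphic on the interior of $S$ with values in $\hdot{\sigma_0}{A}$ and continuous up to the boundary; and using unitarity of $A^{it'}$ on each $\hdot{s}{A}$,
\[
\|f_\varepsilon(it)\|_{\hdot{\sigma_0}{A}} = e^{\varepsilon(\theta^2 - t^2)} \|v\|_{\hdot{\sigma}{A}},
\qquad
\|f_\varepsilon(1+it)\|_{\hdot{\sigma_1}{A}} = e^{\varepsilon((1-\theta)^2 - t^2)} \|v\|_{\hdot{\sigma}{A}},
\]
since at $z=it$ one has $\sigma_0/2 + (s(it)-\sigma)/2 = \sigma/2 - it(\sigma_1-\sigma_0)/2$ (purely imaginary correction), and analogously at $z=1+it$. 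Thus $f_\varepsilon \in \cF(\hdot{\sigma_0}{A}, \hdot{\sigma_1}{A})$ with $\|f_\varepsilon\|_{\cF} \leq e^{\varepsilon \max\{\theta^2,(1-\theta)^2\}} \|v\|_{\hdot{\sigma}{A}}$. Letting $\varepsilon \to 0^+$ and then extending from the dense subset $\hdot{\sigma_1}{A}$ to $\hdot{\sigma}{A}$ gives the claim.

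For the reverse inequality, fix any $f \in \cF(\hdot{\sigma_0}{A}, \hdot{\sigma_1}{A})$ with $f(\theta) = v$, set $M := \|f\|_{\cF(\hdot{\sigma_0}{A}, \hdot{\sigma_1}{A})}$, and consider any $w = \sum_{j\in F} w_j e_j$ with finite $F \subset \bbN$. Define
\[
g(z) := \langle f(z), A^{\overline{s(z)}/2} w\rangle_E
= \sum_{j\in F} \lambda_j^{s(z)/2} \langle f(z), e_j\rangle_E \, \overline{w_j}.
\]
The finiteness of $F$ makes $g$ holomorphic on the interior of $S$, and bounded on $S$: for each $j\in F$, $\lambda_j^{s(z)/2}$ is bounded on the strip, and $|\langle f(z), e_j\rangle_E| \leq \lambda_j^{-\sigma_0/2}\|f(z)\|_{\hdot{\sigma_0}{A}}$ is bounded on $S$ by assumption on $f$. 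Using that $|\lambda_j^{s(it)/2}|^2 = \lambda_j^{\sigma_0}$ and $|\lambda_j^{s(1+it)/2}|^2 = \lambda_j^{\sigma_1}$, a Cauchy--Schwarz argument on the spectral sum yields $|g(it)| \leq \|f(it)\|_{\hdot{\sigma_0}{A}} \|w\|_E$ and $|g(1+it)| \leq \|f(1+it)\|_{\hdot{\sigma_1}{A}} \|w\|_E$. By the Hadamard three-lines theorem, $|g(\theta)| \leq M \|w\|_E$. Since $g(\theta) = \langle A^{\sigma/2} v, w\rangle_E$ and finite-support $w$ are dense in $E$, this gives $\|A^{\sigma/2} v\|_E \leq M$, i.e. $v \in \hdot{\sigma}{A}$ with $\|v\|_{\hdot{\sigma}{A}} \leq M$; taking the infimum over admissible $f$ finishes the proof.

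The most delicate point is the third step, where one must verify boundedness of $g$ on the whole closed strip (not just the two boundary lines) so that the three-lines theorem applies. This is precisely why the test element $w$ is chosen with finite spectral support, so that only finitely many holomorphic building blocks $\lambda_j^{s(z)/2}$ appear; this also ensures strong holomorphy of $z \mapsto A^{\overline{s(z)}/2} w$ in $E$. The rest of the argument is straightforward bookkeeping using the unitarity of $A^{it}$ and the isometric spectral representation of the $\hdot{\sigma}{A}$-norm.
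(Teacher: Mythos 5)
Your argument is correct in substance but takes a genuinely different route from the paper. The paper does not prove the interpolation identity directly: it sets $\cA := A^{\nicefrac{(\sigma_1-\sigma_0)}{2}}$, regarded as a positive self-adjoint operator on $\hdot{\sigma_0}{A}$ with domain $\hdot{\sigma_1}{A}$, computes $\scrD(\cA^\eta)=\hdot{\eta\sigma_1+(1-\eta)\sigma_0}{A}$ spectrally, and then invokes the known identification $\scrD(\cA^\eta)=\bigl[\hdot{\sigma_0}{A},\scrD(\cA)\bigr]_\eta$ from Yagi. You instead prove that identification from scratch by the classical Calder\'on-type argument: an explicit admissible function built from complex powers of $A$ for the inequality $\norm{v}{[\hdot{\sigma_0}{A},\hdot{\sigma_1}{A}]_\theta}\leq\norm{v}{\sigma,A}$, and duality against finite-spectral-support test vectors combined with the Hadamard three-lines theorem for the reverse. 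What your approach buys is self-containedness (it is essentially the proof of the cited theorem); what the paper's buys is brevity. One correction is needed in the first half: the admissible function must be $f_\eps(z)=e^{\eps(z-\theta)^2}A^{(\sigma-s(z))/2}v$, not $A^{(s(z)-\sigma)/2}v$. With the sign as written, $f_\eps(1+it)\in\hdot{\sigma_1}{A}$ would require $v\in\hdot{2\sigma_1-\sigma}{A}$, which is not guaranteed even for $v\in\hdot{\sigma_1}{A}$, and the displayed exponent identity $\sigma_0/2+(s(it)-\sigma)/2=\sigma/2-it(\sigma_1-\sigma_0)/2$ is false; with the corrected sign that identity and both boundary norm computations hold exactly as you state them, so this is a consistent sign slip rather than a flaw in the strategy (and with the corrected sign the construction works directly for every $v\in\hdot{\sigma}{A}$, making the density step optional). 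The second half --- holomorphy and boundedness of $g$ on the closed strip via the finite spectral support of $w$, the Cauchy--Schwarz boundary estimates, and the three-lines bound $|g(\theta)|\leq M\norm{w}{E}$ --- is correct as written.
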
 
	
	\begin{proof}
		The linear operator 
		$\cA := A^{\nicefrac{ (r_1 - r_0) }{2}}$ 
		is self-adjoint and 
		positive definite 
		on the Hilbert space~$\hdot{r_0}{A}$. 
		Since 
		$A^{\nicefrac{ (r_1 - r_0) }{2}} 
		\from \hdot{\bar{r}}{A} \to \hdot{ \bar{r} + r_0 - r_1}{A}$ 
		is an isomorphism for every $\bar{r}\in\bbR$, the domain of 
		$\cA\from\scrD(\cA)\subseteq \hdot{r_0}{A}\to\hdot{r_0}{A}$ 
		is given by $\scrD(\cA) = \hdot{r_1}{A}\subseteq\hdot{r_0}{A}$\!.  
		By noting that the eigenpairs of $\cA$ 
		are 
		$\bigl\{ \bigl( \lambda_j^{ \nicefrac{ (r_1 - r_0) }{2}}, 
		\lambda_j^{-\nicefrac{r_0}{2}} e_j \bigr) \bigr\}_{j\in\bbN} 
		\subset \bbR_+ \times \hdot{r_0}{A}$\!, 
		we find that, for any $\eta\in[0,1]$,  
		\begin{align*}  
			\scrD ( \cA^\eta ) 
			&\textstyle 
			= 
			\Bigl\{
			\psi\in\hdot{ r_0 }{A} 
			: 
			\sum\nolimits_{j\in\bbN} 
			\bigl( \lambda_j^{ r_1 - r_0 } \bigr)^\eta \, 
			\bigl| \bigl( \psi, \lambda_j^{-\nicefrac{r_0}{2}} e_j \bigr)_{r_0,A} \bigr|^2  
			< \infty
			\Bigr\}
			\\
			&\textstyle 
			= 
			\Bigl\{
			\psi\in\hdot{r_0}{A} 
			: 
			\sum\nolimits_{j\in\bbN} 
			\lambda_j^{ \eta r_1 + ( 1 - \eta ) r_0 } \,
			| \scalar{\psi, e_j }{E} |^2  
			< \infty
			\Bigr\}
			=
			\hdot{ \eta r_1 + ( 1 - \eta ) r_0 }{A} 
			\subseteq \hdot{r_0}{A}\!, 
		\end{align*} 
		and 
		$\norm{\,\cdot\,}{2\eta,\cA} 
		= 
		\norm{\,\cdot\,}{\eta r_1 + ( 1 - \eta ) r_0, A}$.  
		Here, we have used Definition 
		\eqref{eq:def:hdotA}  for both~operators 
		$A$ and $\cA$. 
		By \cite[Theorem~16.1]{Yagi2010} we thus find that, 
		for any $\eta\in[0,1]$, 
		\[
			\hdot{\eta r_1 + ( 1 - \eta ) r_0}{A} 
			=
			\scrD( \cA^\eta ) 
			= 
			\bigl[\hdot{r_0}{A}, \scrD(\cA) \bigr]_{\eta} 
			= 
			\bigl[\hdot{r_0}{A}, \hdot{r_1}{A} \bigr]_{\eta}, 
		\]
		with isometry. 
		In particular, the choice 
		$\eta:=\theta=\tfrac{r - r_0}{r_1 - r_0}\in[0,1]$ 
		shows the assertion. 
	\end{proof}

	\begin{lemma}\label{lem:beta-gamma-complex}  
		Let $A\from\scrD(A) \subseteq E \to E$ and 
		$\At\from\scrD(\At)\subseteq E \to E$ 
		be two densely defined, self-adjoint, 
		positive definite linear operators 
		with compact inverses 
		on $E$, and assume that 
		there exists $\beta\in\bbR_+$ 
		such that 
		$A^{-\beta} \At^{2\beta} A^{-\beta} - \id_E \in\cK(E)$ 
		and, 
		for all $\gamma\in[-\beta,\beta]$, 
		the linear operator $\At^\gamma A^{-\gamma}$ 
		is an isomorphism on $E$. 
		Then, for all $\gamma\in[0,\beta]$, also the operator 
		$A^{-\gamma} \At^{2\gamma} A^{-\gamma} 
		- \id_E$ 
		is compact on $E$.  
	\end{lemma}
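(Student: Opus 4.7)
The plan is to adapt the Hilbert--Schmidt argument in the proof of part~\ref{lem:beta-gamma-HS} of Lemma~\ref{lem:beta-gamma} to the compact-operator setting, closing the argument by an approximation in the operator norm of $\cL(E)$. Since $\cK(E)$ is the operator-norm closure of $\cL_2(E)$ in $\cL(E)$, it will suffice to approximate the given compact operator $K_\beta := A^{-\beta}\At^{2\beta}A^{-\beta} - \id_E$ by self-adjoint Hilbert--Schmidt (in fact, finite-rank) operators, to construct corresponding approximating operators $\At^{(N)}$, and to invoke part~\ref{lem:beta-gamma-HS} for each pair $(A,\At^{(N)})$.

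Concretely, I would choose self-adjoint finite-rank operators $K_\beta^{(N)} \to K_\beta$ in $\cL(E)$. Since $\id_E + K_\beta = A^{-\beta}\At^{2\beta}A^{-\beta}$ is strictly positive definite, for all $N$ sufficiently large there is $\delta_0>0$ (independent of $N$) with $\id_E + K_\beta^{(N)} \geq \delta_0\id_E$. The symmetric, continuous, coercive sesquilinear form
\[
(v,w) \mapsto \scalar{(\id_E + K_\beta^{(N)}) A^\beta v, A^\beta w}{E}
\quad\text{on}\quad
\hdot{2\beta}{A_\bbC}\times\hdot{2\beta}{A_\bbC}
\]
defines by the Friedrichs construction a densely defined, self-adjoint, positive definite operator $(\At^{(N)})^{2\beta}$ on $E$ with form domain $\hdot{2\beta}{A_\bbC}$ and compact inverse inherited from $A^{-2\beta}$. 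Its spectral $(2\beta)$-th root $\At^{(N)}$ then satisfies, by construction, $A^{-\beta}(\At^{(N)})^{2\beta}A^{-\beta} - \id_E = K_\beta^{(N)} \in \cL_2(E)$, while the isomorphism property of $(\At^{(N)})^\beta A^{-\beta}$ on $E$ follows directly from the uniform two-sided bound $\delta_0\id_E \leq \id_E + K_\beta^{(N)} \leq C\id_E$ and, for $\gamma\in[-\beta,\beta]$, from Lemma~\ref{lem:hdot-interpol}. Applying part~\ref{lem:beta-gamma-HS} of Lemma~\ref{lem:beta-gamma} to the pair $(A,\At^{(N)})$ then yields
\[
T_\gamma^{(N)} := A^{-\gamma}(\At^{(N)})^{2\gamma} A^{-\gamma} - \id_E
\in \cL_2(E) \subset \cK(E)
\qquad
\forall\, \gamma \in [-\beta,\beta].
\]

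It remains to verify that $T_\gamma^{(N)} \to T_\gamma := A^{-\gamma}\At^{2\gamma}A^{-\gamma} - \id_E$ in $\cL(E)$ as $N\to\infty$; closedness of $\cK(E)$ in $\cL(E)$ then gives $T_\gamma \in \cK(E)$ for all $\gamma \in [0,\beta]$. This convergence is immediate at $\gamma=\beta$ (reducing to $K_\beta^{(N)} \to K_\beta$ in $\cL(E)$), and for intermediate $\gamma \in (0,\beta)$ it should follow from continuity of the fractional-power functional calculus together with uniform-in-$N$ resolvent bounds and the integral representation~\eqref{eq:Aalphainv-integral} underlying the proof of part~\ref{lem:beta-gamma-HS}.

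The main obstacle I anticipate is establishing this norm convergence uniformly in $\gamma\in[0,\beta]$, which reduces to controlling the constants $C_{\beta,\gamma}^{(N)}$ appearing in the pointwise bounds of the proof of part~\ref{lem:beta-gamma-HS} uniformly in~$N$. Such uniformity should be ensured by the uniform lower bound $\id_E + K_\beta^{(N)} \geq \delta_0\id_E$, which controls both the isomorphism constants of $(\At^{(N)})^s A^{-s}$ on~$E$ for $s\in[-\beta,\beta]$ (via Lemma~\ref{lem:hdot-interpol}) and the bounds on the resolvent products $(t+(\At^{(N)})^{2\beta})^{-1}(\At^{(N)})^\nu$ appearing in the integral representation, mirroring the role played by $\cAt$ in the proof of part~\ref{lem:beta-gamma-HS}.
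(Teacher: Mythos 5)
Your architecture is genuinely different from the paper's (which works directly with the tail projections $Q_{N}^\perp$ onto $\operatorname{span}\{e_j\}_{j>N}$ and quadratic-form interpolation, exhibiting $A^{-\gamma}\At^{2\gamma}A^{-\gamma}-\id_E$ as an $\cL(E)$-limit of finite-rank perturbations), and most of your construction is sound: the finite-rank approximants $K_\beta^{(N)}$, the uniform lower bound $\id_E+K_\beta^{(N)}\geq\delta_0\id_E$, the Friedrichs construction of $(\At^{(N)})^{2\beta}$, the identity $A^{-\beta}(\At^{(N)})^{2\beta}A^{-\beta}-\id_E=K_\beta^{(N)}$, and the applicability of Lemma~\ref{lem:beta-gamma}\ref{lem:beta-gamma-HS} to each pair $(A,\At^{(N)})$ all check out. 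The gap is the convergence step $T_\gamma^{(N)}\to T_\gamma$ in $\cL(E)$ for fixed $\gamma\in(0,\beta)$, which you defer to ``continuity of the fractional-power functional calculus together with uniform-in-$N$ resolvent bounds and the integral representation''. The integral-representation machinery from the proof of Lemma~\ref{lem:beta-gamma}\ref{lem:beta-gamma-HS} does not deliver an operator-norm estimate: the bound there is a \emph{per-eigenvector} estimate $\|\At^{\gamma}(\At^{-2\gamma}-A^{-2\gamma})A^{\gamma}e_j\|_{E}\leq C_{\beta,\gamma}\|A^{-\beta}(\At^{2\beta}-A^{2\beta})A^{-\beta}e_j\|_E$, whose summation over $j$ yields a Hilbert--Schmidt bound but which does not imply the corresponding $\cL(E)$-norm inequality. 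If you instead try to bound the Bochner integral directly in $\cL(E)$, the resolvent estimates $\|(t\id_E+\cA)^{-1}\cA^{(1+\theta)/2}\|_{\cL(E)}\lesssim t^{-(1-\theta)/2}$ produce an integrand of order $t^{-1}$, which diverges logarithmically at both endpoints; the convergence at $t\to 0$ and $t\to\infty$ in the paper's computation is rescued precisely by the eigenvalue factor $\alpha_j^{(1+\theta)/2}(t+\alpha_j)^{-1}$, which is unavailable in operator norm.

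The convergence you need is nevertheless true, but the honest way to get it is the quadratic-form interpolation inequality: from $\bigl|\|v\|_{2\beta,\At^{(N)}}^2-\|v\|_{2\beta,\At}^2\bigr|\leq\eps_N\|v\|_{2\beta,A}^2$ with $\eps_N:=\|K_\beta^{(N)}-K_\beta\|_{\cL(E)}\to 0$, Lemma~\ref{lem:hdot-interpol} combined with \cite[Theorem~2.6]{Lunardi2018} gives $(1+c\eps_N)^{-\gamma/\beta}\|v\|_{2\gamma,\At}^2\leq\|v\|_{2\gamma,\At^{(N)}}^2\leq(1+c\eps_N)^{\gamma/\beta}\|v\|_{2\gamma,\At}^2$, whence $\|A^{-\gamma}((\At^{(N)})^{2\gamma}-\At^{2\gamma})A^{-\gamma}\|_{\cL(E)}\lesssim\eps_N$ by self-adjointness. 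But this two-sided form-interpolation estimate is exactly the engine of the paper's direct proof (there applied with $\id_E+K_\beta$ restricted to $E_{N_\eps}^\perp$ in place of $\id_E+K_\beta^{(N)}$), so once you invoke it the approximation detour through $\At^{(N)}$ and Lemma~\ref{lem:beta-gamma}\ref{lem:beta-gamma-HS} buys you nothing. As written, the proof is incomplete at its decisive step.
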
 
	
	\begin{proof} 
		For $\gamma=0$ and $\gamma=\beta$, the claim 
		is trivial. 
		Suppose now that $\gamma\in(0,\beta)$. 
		For $N\in\bbN$, 
		define the subspace 
		$E_N = \operatorname{span}\{e_1, \ldots, e_N\}$ of $E$, 
		generated by the first $N$ eigenvectors of $A$, and 
		let $Q_N$ and $Q_{N}^\perp := \id_{E} - Q_N$ 
		denote the $E$-orthogonal projections onto $E_N$ 
		and onto the $E$-orthogonal complement 
		$E_N^\perp = \operatorname{span}\{e_j\}_{j=N+1}^\infty$, 
		respectively. 
		Compactness 
		of the operator $A^{-\beta} \At^{2\beta} A^{-\beta} 
		- \id_E$ on $E$ implies that, 
		for $\eps\in(0,1)$ fixed, 
		there exists an integer $N_\eps\in\bbN$ such that 
		\[
			\big\| 
			Q_{N_\eps}^\perp 
			\bigl( A^{-\beta} \At^{2\beta} A^{-\beta} - \id_E \bigr) 
			Q_{N_\eps}^\perp 
			\bigr\|_{\cL(E)} 
			\leq \eps.  
		\]
		Since the subspace $E_{N_\eps} \subset E$ is generated by the eigenvectors 
		of $A$,  
		we obtain the relation that 
		${\psi = A^{\beta} v \in E_{N_\eps}^\perp}$ holds 
		if and only if $v \in \hdot{2\beta}{A} \cap E_{N_\eps}^\perp$. 
		Consequently,  
		\begin{align}
			\sup_{v \in \hdot{2\beta}{A} \cap E_{N_\eps}^\perp \setminus\{0\}} 
			\,\Biggl|
			\frac{ \bigl( \At^{\beta} v, \At^{\beta} v \bigr)_{E} }{ 
				\bigl( A^{\beta} v, A^{\beta} v \bigr)_{E} } 
			- 1 		
			\Biggr|
			&= 
			\sup_{\psi \in E_{N_\eps}^\perp \setminus\{0\}} 
			\frac{ \bigl| \bigl( 
				\bigl( A^{-\beta} \At^{2\beta} A^{-\beta} - \id_{E_{N_\eps}^\perp} \bigr)
				\psi, \psi \bigr)_{E} \bigr| }{ \scalar{\psi, \psi}{E} } 
			\notag 
			\\
			&= 
			\sup_{\phi \in E\setminus\{0\}} 
			\frac{ \bigl| \bigl( \bigl( A^{-\beta} \At^{2\beta} A^{-\beta} - \id_E \bigr)
				Q_{N_\eps}^\perp \phi,  Q_{N_\eps}^\perp \phi \bigr)_{E} \bigr| }{ \scalar{\phi, \phi}{E} }
			\notag 
			\\
			&=
			\bigl\|  
			Q_{N_\eps}^\perp 
			\bigl( A^{-\beta} \At^{2\beta} A^{-\beta} - \id_E \bigr)
			Q_{N_\eps}^\perp 
			\bigr\|_{\cL(E)}
			\leq \eps. 
			\label{eq:lem:beta-gamma-proof-1}  
		\end{align} 
		Here, we used self-adjointness of the operator 
		$Q_{N_\eps}^\perp 
		\bigl( A^{-\beta} \At^{2\beta} A^{-\beta} - \id_E \bigr) 
		Q_{N_\eps}^\perp$ 
		which yields the equality with 
		the operator norm in the last step. 
		By combining the observation 
		\eqref{eq:lem:beta-gamma-proof-1}
		with the isomorphism property 
		of $\At^\beta A^{-\beta}$ on~$E$, 
		we conclude that 
		\[
			\sqrt{1-\eps}  
			\,
			\norm{v}{2\beta, A} 
			\leq 
			\norm{v}{2\beta, \At} 
			\leq 
			\sqrt{ 1+\eps }  
			\,
			\norm{v}{2\beta, A} 
			\quad
			\forall v \in \hdot{2\beta}{A}\cap E_{N_\eps}^\perp 
			= \hdot{2\beta}{\At}\cap E_{N_\eps}^\perp. 
		\]
		Furthermore, 
		$\norm{v}{0,A}
		=
		\norm{v}{E} 
		=
		\norm{v}{0,\At}$ 
		holds for all 
		$v \in E_{N_\eps}^\perp = E \cap E_{N_\eps}^\perp$. 
		Therefore, we obtain by interpolation 
		\cite[Theorem~2.6]{Lunardi2018} that, 
		for every $\theta\in(0,1)$ 
		and all $v \in \hdot{2\beta}{A}\cap E_{N_\eps}^\perp 
		= \hdot{2\beta}{\At}\cap E_{N_\eps}^\perp$,  
		\begin{equation}\label{eq:lem:beta-gamma-proof-2} 
			\hspace*{-0.14cm} 
			(1-\eps)^{\nicefrac{\theta}{2}}
			\norm{v}{\left[E_{N_\eps}^\perp, \hdot{2\beta}{A}\cap E_{N_\eps}^\perp\right]_\theta} 
			\leq 
			\norm{v}{\left[E_{N_\eps}^\perp, \hdot{2\beta}{\At}\cap E_{N_\eps}^\perp\right]_\theta}  
			\leq 
			(1+\eps)^{\nicefrac{\theta}{2}}
			\norm{v}{\left[E_{N_\eps}^\perp, \hdot{2\beta}{A}\cap E_{N_\eps}^\perp\right]_\theta}\!.  
		\end{equation}
		By Lemmas~\ref{lem:interpol-subspace} and~\ref{lem:hdot-interpol} 
		$\bigl[E_{N_\eps}^\perp, \hdot{2\beta}{A}\cap E_{N_\eps}^\perp \bigr]_\theta 
		= 
		\bigl[E, \hdot{2\beta}{A} \bigr]_\theta \cap E_{N_\eps}^\perp 
		= 
		\hdot{2\theta\beta}{A} \cap E_{N_\eps}^\perp$, 
		and 
		\[
			\norm{v}{\left[E_{N_\eps}^\perp, \hdot{2\beta}{A}\cap E_{N_\eps}^\perp\right]_\theta} 
			=
			\norm{v}{\left[E, \hdot{2\beta}{A}\right]_\theta} 
			=
			\norm{v}{2\theta\beta, A} 
			\quad 
			\forall 
			v \in 
			\hdot{2\theta\beta}{A} \cap E_{N_\eps}^\perp. 
		\] 
		By the same arguments 
		we also find that 
		\[
			\bigl[E_{N_\eps}^\perp, \hdot{2\beta}{\At}\cap E_{N_\eps}^\perp \bigr]_\theta 
			=
			\hdot{2\theta\beta}{\At} \cap E_{N_\eps}^\perp , 
			\qquad 
			\norm{\,\cdot\,}{
				\bigl[E_{N_\eps}^\perp, \hdot{2\beta}{\At}\cap E_{N_\eps}^\perp\bigr]_\theta} 
			=
			\norm{\,\cdot\,}{2\theta\beta,\At}. 
		\]
		Using these identities in \eqref{eq:lem:beta-gamma-proof-2} yields, 
		for all $\theta\in(0,1)$, that  
		\[ 
			(1-\eps)^{\theta} 
			\norm{v}{2\theta\beta, A}^2 
			\leq 
			\norm{v}{2\theta\beta, \At}^2 
			\leq 
			(1+\eps)^{\theta}
			\norm{v}{2\theta\beta, A}^2  
			\quad 
			\forall 
			v\in \hdot{2\theta\beta}{A} \cap E_{N_\eps}^\perp 
			= \hdot{2\theta\beta}{\At} \cap E_{N_\eps}^\perp. 
		\] 
		By subadditivity 
		of the function $x\mapsto x^\theta$
		for $\theta\in(0,1)$, we have the estimates  
		\[
			1 - \eps^\theta 
			= (1-\eps + \eps)^{\theta}  - \eps^\theta 
			\leq (1-\eps)^\theta  
			\quad 
			\text{and} 
			\quad 
			(1+\eps)^\theta 
			\leq 
			1 + \eps^\theta,  
		\]
		and conclude that, 
		for every $\theta\in(0,1)$, 
		\begin{equation}\label{eq:lem:beta-gamma-proof-3} 
			\bigl(1-\eps^{\theta} \bigr) 
			\norm{v}{2\theta\beta, A}^2 
			\leq 
			\norm{v}{2\theta\beta, \At}^2 
			\leq 
			\bigl( 1+\eps^{\theta} \bigr) 
			\norm{v}{2\theta\beta, A}^2  
			\quad 
			\forall 
			v\in \hdot{2\theta\beta}{A} \cap E_{N_\eps}^\perp 
			= \hdot{2\theta\beta}{\At} \cap E_{N_\eps}^\perp. 
		\end{equation}
		Since $A^{\theta\beta}v \in E_{N_\eps}^\perp$ 
		if and only if $v\in\hdot{2\theta\beta}{A}\cap E_{N_\eps}^\perp$, 
		using \eqref{eq:lem:beta-gamma-proof-3} 
		we find as in \eqref{eq:lem:beta-gamma-proof-1} that 
		\[
			\bigl\|  
			Q_{N_\eps}^\perp 
			\bigl( A^{-\theta\beta} \At^{2\theta\beta} A^{-\theta\beta} - \id_E \bigr)
			Q_{N_\eps}^\perp 
			\bigr\|_{\cL(E)} 
			=
			\sup_{v \in \hdot{2\theta\beta}{A} \cap E_{N_\eps}^\perp \setminus\{0\}} 
			\left|
			\frac{ \bigl( \At^{\theta\beta} v,  \At^{\theta\beta} v \bigr)_{E} }{ 
				\bigl( A^{\theta\beta} v, A^{\theta\beta} v \bigr)_{E} } - 1 		
			\right|
			\leq \eps^\theta. 
		\]
		Finally, the choice $\theta:=\nicefrac{\gamma}{\beta}$ 
		gives 
		$\bigl\| Q_{N_\eps}^\perp 
		\bigl( A^{-\gamma} \At^{2\gamma} A^{-\gamma} - \id_E \bigr)
		Q_{N_\eps}^\perp \bigr\|_{\cL(E)} 
		\leq \eps^{\nicefrac{\gamma}{\beta}}$ 
		showing that $A^{-\gamma} \At^{2\gamma} A^{-\gamma} - \id_E$ 
		is a $\cL(E)$-limit of finite-rank operators 
		and, thus, compact on $E$. 
	\end{proof} 
	
\end{appendix}

\section*{Acknowledgments} 
The authors thank the editor 
and the reviewers for their valuable comments 
which led to an improved, more accessible presentation 
of the results. 

\bibliographystyle{imsart-number} 
\bibliography{bk-matern-bib}

\end{document}